\begin{document}

\title{K\"ahler-Einstein metrics on Fano manifolds, III: limits as cone angle approaches $2\pi$ and completion of the main proof. }
\author{Xiuxiong Chen, Simon Donaldson and Song Sun\footnote{X-X. Chen was partly supported by National Science Foundation grant No 1211652; S. Donaldson and S. Sun were partly supported by the European Research Council award No 247331.}}\date{\today}
\maketitle



\def\ZZ{{\mathbb Z}}
\def\QQ{{\mathbb Q}}
\def\RR{{\mathbb R}}
\def\NN{{\mathbb N}}

\def\p{\partial}
\def \bp {\overline{\partial}}
\def \H {\mathcal H}
\def \Di {\mathcal D}
\def \dVol {\text dVol}

\newtheorem{thm}{Theorem}
\newtheorem{theo}{Theorem}
\newtheorem{prop}{Proposition}
\newtheorem{lem}{Lemma}
\newtheorem{cor}{Corollary}
\newtheorem{defn}{Definition}
\newtheorem{rem}{Remark}
\newcommand{\tomega}{\tilde{\omega}}
\newcommand{\tD}{\tilde{D}}
\newcommand{\Ric}{{\rm Ricci}}
\newcommand{\bC}{{\bf C}}
\newcommand{\bQ}{{\bf Q}}
\newcommand{\bP}{{\bf P}}
\newcommand{\bZ}{{\bf Z}}
\newcommand{\bR}{{\bf R}}
\newcommand{\db}{\overline{\partial}}
\newcommand{\cD}{{\cal D}}
\newcommand{\uf}{\underline{f}}
\newcommand{\us}{\underline{s}}
\newcommand{\uv}{\underline{v}}
\newcommand{\dbd}{i \partial \overline{\partial}}
\newcommand{\Euc}{{\rm Euc}}
\newcommand{\tr}{{\rm Tr}}
\newcommand{\Aut}{{\rm Aut}}
\newcommand{\Fut}{{\rm Fut}}

\section{Introduction}

This is the third and final paper in a series which establish results announced in \cite{CDS0}. We will begin by recalling the central result and some background.
Suppose given an $n$-dimensional  compact K\"ahler  manifold $X$ the general question, going back to Calabi \cite{Calabi} is to ask whether $X$ admits a {\it K\"ahler-Einstein metric}: a K\"ahler metric $\omega$ whose Ricci form $\rho$ is a multiple of $\omega$. Since the Ricci form represents the characteristic class $c_{1}(X)$ a necessary condition is that $c_{1}$ is either positive, negative or zero (in real cohomology). In renowned work in the 1970's, the negative case was settled by Aubin \cite{Au} and Yau \cite{Ya}, and the case when $c_{1}=0$---the \lq\lq Calabi-Yau case''---by  Yau \cite{Ya}. It has been known for many years that in the positive case, when $X$ is a \lq\lq Fano manifold'',
new features arise in that there are obstructions to solving the K\"ahler-Einstein equation and hence no straightforward general existence  theorem, of the same kind as those for $c_{1}< 0$ or $c_1=0$. The first such obstruction was found by Matsushima \cite{Matsushima}, who showed that if a Fano manifold $X$ admits a K\"ahler-Einstein metric then the holomorphic automorphism group of $X$ is reductive: thus, for example, the blow-up of the projective plane at one point does not carry a K\"ahler-Einstein metric.  Later, Futaki \cite{Fut} discovered  a character of the automorphism group (the {\it Futaki invariant}) which depends only on the complex geometry of $X$ but which must vanish if $X$ admits a K\"ahler-Einstein metric. So, for example, one can easily write down many toric Fano manifolds which do not have such metrics. (In fact Futaki's theory applies in the wider world of constant scalar curvature K\"ahler metrics \cite{Calabi2}.)

There has been an immense amount of work (which we will not attempt to summarise) establishing existence theorems in particular cases. Notably, Tian gave a complete solution of the problem in complex dimension 2 \cite{Ti0}. The idea that the correct criterion for the existence of a solution should involve the algebro-geometric notion of {\it stability} goes back to Yau \cite{kn:Y}. In one direction, Tian established the necessity of a stability condition and demonstrated by an example that this gave a condition going beyond those previously known, in that there are Fano manifolds with trivial automorphism group which do not admit K\"ahler-Einstein metrics \cite{Ti1}. But the other direction, to establish existence a stability condition, has proved much harder and this is what we achieve here. A variety of stability conditions have been considered in the literature but the one we use is essentially the {\it K-stability} introduced by Tian in \cite{Ti1}. We will not recall the definition in detail just here but postpone that to a technical discussion in Section 5 below. In brief, testing the stability of $X$ involves considering equivariant degenerations over the disc with general fibre $X$ but some possibly different central fibre $X_{0}$, which has a $\bC^{*}$-action. This central fibre is allowed to be singular and  specifying precisely what singularities are allowed is a part of the technical discussion. The stability condition involves the Futaki invariant of this $\bC^{*}$ action on $X_{0}$, and the extension of that theory to the singular situation is another important part of the technical discussion.
Assuming,  then the definition of K-stability as known, our main result (announced in \cite{CDS0}) is
 \begin{thm} \label{main theo}
 If a  Fano manifold $X$ is $K$-stable then it admits  a K\"ahler-Einstein metric.
\end{thm}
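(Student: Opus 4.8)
The plan is to run a continuity method in the cone angle. Fix an integer $\lambda$ large enough that $|-\lambda K_X|$ contains a smooth divisor $D$ — possible by Bertini, since $-\lambda K_X$ is very ample for $\lambda\gg 0$ — and for $\beta\in(0,1]$ consider the conic K\"ahler--Einstein equation
\[ \Ric(\omega_\beta) = \mu_\beta\,\omega_\beta + 2\pi(1-\beta)[D], \qquad [\omega_\beta]=2\pi c_1(X), \]
where $[D]$ is the current of integration along $D$, the constant $\mu_\beta = 1-\lambda(1-\beta)$ is forced on taking cohomology classes, and $\omega_\beta$ is a K\"ahler metric with cone angle $2\pi\beta$ transverse to $D$. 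At $\beta=1$ there is no cone and this is exactly the K\"ahler--Einstein equation on $X$, so it suffices to prove that the set $I\subseteq(0,1]$ of $\beta$ admitting a solution contains $1$. We show $I$ is nonempty, open and closed; nonemptiness and openness are comparatively soft, while the closedness of $I$ up to and including $\beta=1$ — the limit as the cone angle tends to $2\pi$, which is the subject of the present paper — is where $K$-stability enters decisively.

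Nonemptiness holds because for $\beta$ small $\mu_\beta<0$, so the equation is of logarithmic general type and is solved unconditionally, by the conic analogues of the existence theorems of Aubin--Yau and Yau. Openness is an implicit function theorem in conic H\"older spaces: the linearisation is a conic Laplace-type operator whose cokernel is controlled by holomorphic vector fields, of which there are none since $K$-stability forces $\Aut(X)$ to be finite; thus $I$ is open in $(0,1]$.

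It remains to prove $I$ closed. Given $\beta_i\in I$ with $\beta_i\to\beta_\infty$ and solutions $\omega_i$, the uniform lower Ricci bound $\Ric(\omega_i)\ge\mu_i\omega_i$ (the divisor term being positive) together with the fixed volume $(2\pi c_1)^n$ yields non-collapsed Gromov--Hausdorff precompactness. By the Donaldson--Sun partial $C^0$ (Bergman kernel) estimate, the limit is homeomorphic to a normal projective $\QQ$-Fano variety $W$ carrying a weak conic K\"ahler--Einstein metric with the limiting cone angle, and $(X,D)$ degenerates algebraically onto a pair $(W,D_\infty)$. When $\beta_\infty<1$, the control coming from the cone angle being bounded away from $2\pi$ — the content of Paper II — forces $W\cong X$ and $D_\infty\cong D$, so $\beta_\infty\in I$. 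The remaining, most delicate, case $\beta_\infty=1$ is the subject of this paper: the weak K\"ahler--Einstein metric on $W$ is now genuine ($\mu_\infty=1$, cone angle $2\pi$), and from the degeneration one extracts an algebraic test configuration $\mathcal{X}$ for $X$ with central fibre $W$. If $W\cong X$, the regularity theory upgrades the weak metric to a smooth K\"ahler--Einstein metric on $X$ and we are done. Otherwise $\mathcal{X}$ is nontrivial, and the existence of the K\"ahler--Einstein metric on its central fibre forces $\Fut(\mathcal{X})\le0$ — the ``K\"ahler side'' computation, via monotonicity of a Ding/Mabuchi-type functional adapted to the singular variety $W$ — with equality forcing $W\cong X$. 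This contradicts $K$-stability, so the bad alternative is excluded, $1\in I$, and $X$ admits a K\"ahler--Einstein metric.

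I expect the main obstacle to be precisely this degeneration of the conic structure as $\beta\to1$. The estimates of Paper II rest on a definite lower bound for the cone angle, which controls volume ratios, tangent cones and the Hausdorff size of the singular set of the limit; as the angle approaches $2\pi$ these must be replaced by arguments that persist in the limit, so that the Gromov--Hausdorff limit is still a mildly singular ($\QQ$-Fano, klt) projective variety on which algebraic geometry can be done, and so that the divisors $D_i$ do not collapse into, or otherwise disturb, the singular set. Entangled with this is the construction of the test configuration $\mathcal{X}$ out of the metric limit — the interplay of Cheeger--Colding theory with H\"ormander $L^2$-estimates for $\db$ underlying the Bergman kernel estimate — and the sign computation for $\Fut(\mathcal{X})$ in terms of the limiting singular K\"ahler--Einstein metric, which is what converts the analytic limit into the algebro-geometric contradiction with $K$-stability.
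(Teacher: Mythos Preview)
Your overall architecture matches the paper's --- continuity in the cone angle, with $I$ nonempty, open, and closed --- but there is a genuine gap in the closedness argument when $\beta_\infty<1$. Paper~II shows the Gromov--Hausdorff limit is a $\bQ$-Fano pair $(W,\Delta)$ carrying a weak conical K\"ahler--Einstein metric; it does \emph{not} force $W\cong X$. For that you need K-stability of the \emph{pair} $(X,D,\beta_\infty)$, and K-stability of $X$ alone says nothing about the modified Futaki invariant $\Fut_{\beta_\infty}$. The paper supplies the missing ingredient: a separate lemma that $(X,D,0)$ is K-\emph{semi}stable, and then linearity of $\Fut_\beta$ in $\beta$ propagates stability from $\beta=1$ to every $\beta\in(0,1]$. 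Only with this in hand does the vanishing $\Fut_{\beta_\infty}(W,\Delta)=0$ (which is what the weak conical K\"ahler--Einstein metric actually gives --- equality, not merely $\le 0$) force $(W,\Delta)\cong(X,D)$. You have no substitute for this interpolation.

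Two secondary points. Your openness argument is wrong as written: K-polystability does not imply $\Aut(X)$ is finite ($\bP^n$ is K-polystable), so you cannot kill the cokernel that way; the paper instead uses the Song--Wang result that no nonzero holomorphic vector field on $X$ is tangent to a smooth $D\in|-\lambda K_X|$. And the phrase ``from the degeneration one extracts an algebraic test configuration'' hides a real step: to apply the Luna slice construction and get a test configuration with central fibre $(W,\Delta)$, one first needs $\Aut(W,\Delta)$ to be \emph{reductive}. The paper proves this singular Matsushima-type theorem (Theorem~\ref{thm4}) via Berndtsson's convexity of the Ding functional and the Bando--Mabuchi-type uniqueness it implies; without it the passage from metric limit to test configuration does not go through.
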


As indicated above, the converse is also known (see further references in Section 5 below). Thus K-stability is a necessary and sufficient condition for the existence of a K\"ahler-Einstein metric.
The basic point about this result is that K-stability is an entirely algebro-geometric condition, so we have in principle an algebro-geometric criterion for the existence of a K\"ahler-Einstein metric. On the other hand we should point out that as things stand at present the result is of very limited use in concrete cases, so that there is no manifold $X$ known to us, not covered by other existence results and where we can deduce that $X$ has a  K\"ahler-Einstein metric. This is because it seems a very difficult matter to test K-stability by a direct study of all possible degenerations. However we are  optimistic that this situation will change in the future, with a deeper analysis of the stability condition. As Yau has pointed out \cite{kn:Y}, our result has an interesting theoretical consequence.  The tangent bundle of a K\"ahler-Einstein manifold has a Hermitian Yang-Mills metric and (assuming that the manifold is not a product) is thus a stable bundle  in the sense of Mumford. So we have the algebro-geometric statement that the K-stability of a Fano manifold implies the stability of its tangent bundle.

We now move to a more technical level in this Introduction and outline the detailed results proved in this paper. In Section 5 we return to the larger picture and explain how these results, along with those in \cite{CDS1}, \cite{CDS2} lead to a proof of our main result.

To avoid repetition we assume that the reader is familiar with the introduction of \cite{CDS2} and we adopt the same notation, so we have a sequence of Fano manifolds $X_{i}$ with K\"ahler-Einstein metrics $\omega_{i}$ having cone singularities along $D_{i}$ with cone angle $2\pi\beta_{i}$ where $\beta_{i}\in (0,1)$ converges to $\beta_{\infty}$. The divisors $D_{i}$ are smooth divisors in  $\vert- \lambda K_{X_i}\vert$ for some fixed $\lambda$. Our previous paper studied  the case when $\beta_{\infty}<1$ and the first point of the present paper is to extend the results to the case when $\beta_{\infty}=1$.
 Thus we prove
 
 \begin{thm} \label{thm1}
Let $X_{i}$ be a sequence of  Fano manifolds with fixed Hilbert polynomial. Let $D_{i}\subset X_{i}$ be smooth divisors in $\vert -\lambda K_{X_{i}}\vert $, for fixed $\lambda> 0$. Let $\beta_{i}\in (0,1)$ be a sequence  converging to $1$.  Suppose that there are K\"ahler-Einstein metrics $\omega_{i}$ on $X_{i}$ with cone angle $2\pi\beta_{i}$ along $D_{i}$. Then there is a $\bQ$-Fano variety $W$  such that 
\begin{enumerate} \item there is a weak  K\"ahler-Einstein metric $\omega$ on $W$ (In particular, by our definition in \cite{CDS2} if $W$ is smooth, then $\omega$ is a genuine smooth K\"ahler-Einstein metric);
\item  possibly after passing to a subsequence, there are embeddings $T_{i}:X_{i}\rightarrow \bC\bP^{N}, T_{\infty}:W\rightarrow \bC\bP^{N}$, defined by the complete linear systems $\vert -m K_{X_{i}}\vert$ and $\vert -m K_{W}\vert$ respectively (for a suitable $m$ depending only on the dimension of $X_i$ and $\lambda$), such that $T_{i}(X_{i})$ converge to $T_{\infty}(W)$.
\end{enumerate}
\end{thm}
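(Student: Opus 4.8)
The plan is to follow the strategy of \cite{CDS2}, which handles the case $\beta_\infty<1$, to check that the estimates used there survive the passage $\beta_i\to1$, and to supply the extra geometric input needed to control the divisors $D_i$ in this regime. The first step is to record uniform geometric bounds. On $X_i\setminus D_i$ the metric $\omega_i$ is a smooth K\"ahler--Einstein metric with $\Ric(\omega_i)=\mu_i\omega_i$, where $\mu_i=1-(1-\beta_i)\lambda$; since $\beta_i\to1$ we have $\mu_i\to1$, so for large $i$ we get $\Ric(\omega_i)\ge\tfrac12\omega_i$, while across $D_i$ the tangent model $\bC^{n-1}\times C(2\pi\beta_i)$ is a nonnegatively curved metric cone, so $\omega_i$ satisfies a Ricci lower bound in the metric--measure sense of \cite{CDS1}. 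Since the Hilbert polynomial is fixed, the self-intersection $(-K_{X_i})^n$ is fixed, hence so is ${\rm Vol}(X_i,\omega_i)$; Bishop--Gromov then yields a uniform diameter bound and non-collapsing. After passing to a subsequence, $(X_i,\omega_i)$ converges in the Gromov--Hausdorff sense to a compact metric space $Z$, which by Cheeger--Colding theory, in the conical form developed in \cite{CDS1}, \cite{CDS2}, decomposes as $Z=R\sqcup S$ with $R$ open, dense and carrying a smooth K\"ahler--Einstein metric $\omega$ of Einstein constant $1$, and $S$ closed of real Hausdorff codimension at least $4$.

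The step at which $\beta_\infty=1$ genuinely differs from \cite{CDS2}, and the one I expect to be the main obstacle, is to show that the divisors $D_i$ leave no trace on the limiting singular structure. Conceptually this holds because the positive current $(1-\beta_i)[D_i]$ has mass proportional to $1-\beta_i$ and so tends to $0$; to make it rigorous one needs a quantitative almost-regularity statement for the conical points. If $z$ lies in the Gromov--Hausdorff limit of the sets $D_i$, then, comparing with the local model $\bC^{n-1}\times C(2\pi\beta_i)$ and using monotonicity of the $\mu_i$-weighted volume ratio, the small metric balls about the approximating points of $D_i$ have volume ratio at least $\beta_i-o(1)$, so $z$ has volume density at least $1-\epsilon_i$ with $\epsilon_i\to0$; by the almost-rigidity theorem of Cheeger--Colding, $z$ must be a regular point once $i$ is large. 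Hence the Gromov--Hausdorff limit of the $D_i$ is contained in $R$, and the analysis of $S$ and of the tangent cones at its points proceeds exactly as for genuinely smooth K\"ahler--Einstein metrics. One must also check that the estimates of the final step are uniform in $i$: the Hermitian metric that $\omega_i^n$ induces on $-K_{X_i}$ equals $|s_{D_i}|^{2a_i}$ times a smooth metric with $a_i\to0$, its curvature $\Ric(\omega_i)\ge\mu_i\omega_i$ is strictly positive, and the weights $|s_{D_i}|^{2a_i}$ remain uniformly in $L^p$ for some $p>1$, which is all the H\"ormander $L^2$-estimates require.

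It then remains to pass from $Z$ to the $\bQ$-Fano variety $W$. Applying H\"ormander's $L^2$-method on $(X_i,\omega_i)$ with the above metric on $-mK_{X_i}$, together with the structure of $Z$ (metric tangent cones, and ${\rm codim}_{\bC}S\ge2$, so that cut-off functions of arbitrarily small Dirichlet energy about $S$ exist), one obtains a single $m=m(n,\lambda)$ for which $|-mK_{X_i}|$ is very ample with Bergman function bounded below uniformly in $i$; this provides embeddings $T_i\colon X_i\to\bC\bP^N$, with $N+1=h^0(X_i,-mK_{X_i})$, into a fixed projective space. Passing to a further subsequence, $T_i(X_i)$ converges in the Hilbert scheme to a subvariety whose support is a normal projective variety $W$ homeomorphic to $Z$. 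On $R$ the Hermitian metrics on $-mK_{X_i}$ converge to the one induced by $\omega$ on $-mK_R$, so $\mathcal O(1)|_W$ restricts to $-mK_R$; since $W$ is normal and ${\rm codim}_{\bC}S\ge2$, both sheaves are reflexive and agree off a set of codimension at least $2$, hence $\mathcal O(1)|_W=-mK_W$ and $-K_W$ is $\bQ$-Cartier and ample. A Ko\l{}odziej-type $L^\infty$-bound on the K\"ahler potentials, uniform in $i$, shows that $\omega$ extends to $W$ with bounded local potentials; this forces $W$ to be klt, hence $\bQ$-Fano, and realizes $\omega$ as a closed positive $(1,1)$-current in $c_1(W)$, smooth on $R$ with $\Ric(\omega)=\omega$ there, i.e.\ a weak K\"ahler--Einstein metric in the sense of \cite{CDS2} (a genuine one if $W$ is smooth). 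With $T_\infty$ the embedding of $W$ by $|-mK_W|$ so produced, $T_i(X_i)\to T_\infty(W)$, giving both assertions of the theorem.
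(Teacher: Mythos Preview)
Your proposal has a genuine gap at precisely the step you flag as the main obstacle. The assertion that ``small metric balls about the approximating points of $D_i$ have volume ratio at least $\beta_i-o(1)$'' is unjustified and in fact points the wrong way: Bishop--Gromov monotonicity, together with the fact that the tangent cone at a smooth point $p_i\in D_i$ is $\bC^{n-1}\times C(2\pi\beta_i)$, yields the \emph{upper} bound $VR(p_i,r)\le\beta_i$ for every $r>0$, not a lower bound. To conclude that the limit point $z$ is regular you would need $VR(p_i,r)$ close to $1$ at some scale $r$ independent of $i$, and nothing in your argument produces this. The heuristic that $(1-\beta_i)[D_i]\to 0$ in mass is correct but says nothing about local geometry: as $\beta_i\to 1$ one loses any a priori control on the scale at which the metric near $D_i$ resembles the model cone, and many sheets of the high-degree divisor $D_i$ may accumulate. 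The paper makes exactly this concern explicit with its one-dimensional illustration at the opening of Section~2.1.

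Relatedly, you assert at the outset that $S$ has real Hausdorff codimension at least $4$ by the conical Cheeger--Colding theory of \cite{CDS1}, \cite{CDS2}. Those papers only yield codimension $\ge 2$ in the conical setting (when $\beta_\infty<1$ the limit of $D_i$ lies in $S$). That $S=S_2$ when $\beta_i\to 1$ is Corollary~\ref{cor1} of the present paper, and it is a \emph{consequence} of the structure theorem (Theorem~\ref{thm3}), not an input to it. The paper's route to Theorem~\ref{thm3} is substantially more involved than your sketch: a H\"ormander-type construction (Proposition~\ref{Hormander}) produces controlled holomorphic coordinates on a ball of definite size once a Minkowski-measure bound on a bad set is available; this is used to prove a \emph{lower} bound $V(q,r)\ge\kappa$ on the local density of the divisor (Proposition~\ref{prop5}), thereby controlling the Minkowski measure of $D_i\cap B$; and an inductive volume-doubling argument (Propositions~\ref{goodchart} and~\ref{A good}) then handles the regime where ${\rm Vol}(D_i\cap B)$ may be large but $\beta_i$ is close to $1$. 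An independent second proof goes through Ricci flow and pseudolocality. The later parts of your outline (H\"ormander on $X_i$, Hilbert-scheme limits, normality, $\bQ$-Fano, weak K\"ahler--Einstein) are a reasonable roadmap, but they all rest on the unproved structural input above.
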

This is the exact analogue of Theorem \ref{thm1} in \cite{CDS2}, with the difference that the cone singularity \lq\lq disappears'' in the limit when $\beta=1$. (Although it is not stated explicitly in \cite{CDS2}, it is clear that the power $m$ in that case also depends only on the dimension of $X_i$, $\lambda$ and the positive lower bound of Ricci curvature.) The precise definition of a weak K\"ahler-Einstein metric on a $\bQ$-Fano variety is given in the introduction to \cite{CDS2}.

As in \cite{CDS2}, the proof of Theorem \ref{thm1} goes via a study of the Gromov-Hausdorff limit. We may suppose  by taking a subsequence and using Theorem 1 or 2 in \cite{CDS1} that the $(X_{i},\omega_{i})$  do have such a limit $Z$.
Then as explained in \cite{CDS2} the Cheeger-Colding-Tian structure theory applies to $Z$, so $Z$ is a disjoint union ${\cal R}\cup {\cal S}$ where ${\cal S}={\cal S}_{1}\cup {\cal S}_{2}\cup \dots$ and ${\cal S}_{i}$ has Hausdorff codimension at least $2i, $ while
$\cal R$ is the set of regular points. Our main result on the structure of the limit is 

\begin{thm} \label{thm3}
The set ${\cal R}$ is open and the metric on ${\cal R}$ is a smooth K\"ahler Einstein metric $\omega$. We can choose local complex coordinates so that $\omega_{i}$ converges to $\omega$ in $L^{p}$ for all $p$. The same applies to any iterated tangent cone to $Z$.
\end{thm}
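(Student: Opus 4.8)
The plan is to follow the proof of the analogous structure theorem in \cite{CDS2} (itself building on \cite{CDS1}), the essential new input being that as $\beta_i\to1$ the weight $\|s_i\|^{2(\beta_i-1)}$ entering the local complex Monge-Amp\`ere equation stays bounded in $L^p$ for every fixed $p$ and tends to $1$ in each $L^p$. First I would record the reductions: after passing to a subsequence, Theorem 1 or 2 in \cite{CDS1} provides the non-collapsed Gromov-Hausdorff limit $Z$; away from $D_i$ one has $\Ric(\omega_i)=\mu_i\omega_i$ with $\mu_i=1-\lambda(1-\beta_i)\to1$, so with the non-collapsing bound and Bishop-Gromov the Cheeger-Colding-Tian theory applies; and since the $\omega_i$-volume of $D_i$ is a fixed multiple of $c_1(X_i)^n$, hence uniformly bounded (the Hilbert polynomial is fixed), the $D_i$ subconverge to a closed set $\Sigma\subset Z$ of finite $(2n-2)$-dimensional measure, in particular negligible in $Z$. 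The core is then an $\varepsilon$-regularity statement: there are $\varepsilon>0$, $r_0\in(0,1)$ so that if $\beta_i>1-\varepsilon$ and the unit ball about $q_i\in X_i$ is $\varepsilon$-close in the Gromov-Hausdorff sense to the Euclidean unit ball in $\bR^{2n}$, then on the $r_0$-ball about $q_i$ one may choose complex coordinates and a local K\"ahler potential $\psi_i$ for $\omega_i$ whose $C^{1,\alpha}$ norm is bounded independently of $i$ (for each fixed $\alpha<1$, $i$ large), with $\omega_i\to\omega$ in $L^p$ for every $p$. Granting this, ${\cal R}$ is open (a point of ${\cal R}$ has a ball $\varepsilon$-close to Euclidean, hence so does a ball about each approximating $q_i$, so a whole neighbourhood consists of smooth points), and since the cone angle $2\pi\beta_i$ is scale-invariant the same statement holds for rescaled sequences, so the conclusion passes to every iterated tangent cone --- by a diagonal argument such a cone is a Gromov-Hausdorff limit of unit-size rescalings of balls in the $(X_i,\omega_i)$, where the rescaled $D_i$ either leave every fixed ball or again have cone-angle defect tending to $0$.

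To prove the $\varepsilon$-regularity statement I would argue in three steps. \emph{(i) Coordinates.} Using the Gromov-Hausdorff closeness to $\bR^{2n}$, the Cheeger-Colding almost-splitting theorem and gradient estimates, one builds harmonic almost-coordinates; the H\"ormander $\bp$-construction of \cite{CDS1} then corrects these to almost-holomorphic functions $z^1_i,\dots,z^n_i$ with almost-unitary differentials. The relevant $L^2$ estimate uses only that $\omega_i$ is K\"ahler with $\Ric\ge0$ off $D_i$, with the divisorial current a positive contribution, so this step is insensitive to the location of $D_i$; the coordinates converge to genuine holomorphic coordinates near the limit point. \emph{(ii) Estimates.} Write $\omega_i=\dbd\psi_i$ in these coordinates with $\psi_i$ normalized; the K\"ahler-Einstein cone equation becomes a complex Monge-Amp\`ere equation whose right-hand side is $e^{f_i}\|s_i\|^{2(\beta_i-1)}$, with $f_i$ controlled and $s_i$ a local defining function of $D_i$. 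As $\|s_i\|^{2(\beta_i-1)}$ is uniformly bounded in every $L^p$ once $\beta_i$ is near $1$, a Moser iteration / maximum-principle argument bounds $\psi_i$ in $C^0$, and the $W^{2,p}$ (hence $C^{1,\alpha}$) estimates for the complex Monge-Amp\`ere equation upgrade this to the uniform $C^{1,\alpha}$ bound on $\psi_i$ and $L^p$-precompactness of $\omega_i$. \emph{(iii) Smoothness of the limit.} Away from $D_i$ the equation is uniformly elliptic with smooth coefficients, so Schauder bootstrapping gives uniform local $C^\infty$ bounds there; and since $\psi_i\to\psi$ in $C^{1,\alpha'}$ for $\alpha'<\alpha$ while the weight $\|s_i\|^{2(\beta_i-1)}\to1$ in $L^p$, the limit potential $\psi$ solves (in the pluripotential sense) a complex Monge-Amp\`ere equation whose right-hand side no longer carries a weight, so the interior regularity theory for that equation, bootstrapping from the $C^{1,\alpha}$ bound, shows $\omega=\dbd\psi$ is a smooth K\"ahler-Einstein metric on all of ${\cal R}$ --- in particular the conical structure along $\Sigma$, present for $\beta_i<1$, has been smoothed out. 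The same three steps, after rescaling and re-centering, dispose of the iterated tangent cones.

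The main obstacle is step (ii), carried out \emph{uniformly in $i$ and uniformly as $\beta_i\to1$}, precisely in the delicate situation where $D_i$ passes close to the point converging to a regular point $p\in{\cal R}$: one must show that the vanishing of the cone-angle defect $1-\beta_i$ ``wins'', so that the geometric quantities do not degenerate near $D_i$ even though the divisor enters the regular region --- equivalently, that conical K\"ahler-Einstein metrics depend continuously on the cone angle up to and including $\beta=1$, locally near a regular point. The mechanism is the uniform $L^p$ control of $\|s_i\|^{2(\beta_i-1)}$ together with the non-collapsing lower volume bound; making this interact cleanly with the H\"ormander construction --- which has to remain valid and convergent with $D_i$ nearby --- and with the Monge-Amp\`ere estimates, and then identifying the $L^p$-limit $\omega$ with the Gromov-Hausdorff metric and propagating the $C^{1,\alpha}/W^{2,p}$ control into full smoothness across $\Sigma$, is the technically substantial part of the argument.
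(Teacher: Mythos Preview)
Your proposal correctly identifies the overall strategy and the central difficulty --- that $D_i$ may pass through balls converging to regular points, and that the vanishing cone-angle defect must compensate --- but it has a genuine gap at precisely the point you yourself flag as ``technically substantial.''

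The first problem is that your $\varepsilon$-regularity statement, with a single uniform $\varepsilon$ independent of any control on $D_i$, cannot hold as stated. In the local model, a flat metric on $\bC^n$ with cone angle $2\pi\beta$ along $k$ nearby parallel hyperplanes behaves, coarsely, like a cone of angle $2\pi(1-k(1-\beta))$; for any fixed $\beta<1$, taking $k$ large produces arbitrarily singular metrics on balls that are still Gromov--Hausdorff close to Euclidean. The threshold on $1-\beta_i$ must therefore depend on the local volume of $D_i$ in the ball. The paper makes this dependence explicit (Proposition~\ref{A good}): the Gromov--Hausdorff threshold $\delta(\theta)$ is independent of the divisor volume $A$, but the cone-angle threshold $\gamma(A,\theta)$ is not. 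For the non-rescaled limit $Z$ this is harmless, since the global volume of $D_i$ is fixed by cohomology. But your diagonal argument for iterated tangent cones fails, because after rescaling by $\lambda_j^{-1}$ the local volume is $(r\lambda_j)^{2-2n}V\to\infty$. The paper notes this obstruction explicitly and uses a two-step limit instead: first $i\to\infty$ at fixed scale $j$ (so $A_j$ is fixed and one obtains a \emph{smooth} K\"ahler--Einstein metric $\omega_j$), then $j\to\infty$ using Anderson's theorem for the now-smooth $\omega_j$.

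The second, deeper, gap is in your step~(i). You assert that the H\"ormander construction is ``insensitive to the location of $D_i$,'' but producing the approximately holomorphic sections requires a cutoff supported where the geometry is already controlled, and getting good charts on a ball of \emph{definite} size requires knowing that the ``bad set'' near $D_i$ is quantitatively small. This is exactly what the paper's machinery supplies, and it is not a routine adaptation of \cite{CDS2}. One first proves a lower bound on the local density $V(q,r)$ of the divisor (Proposition~\ref{prop5}, via a point-selection argument that yields a Minkowski-measure bound at the minimizing configuration); this feeds into the H\"ormander construction (Proposition~\ref{Hormander}); and the key step is then a ``volume doubling'' induction (Proposition~\ref{A good}): assuming the conclusion when ${\rm Vol}(D\cap B)\le A$, the set of points with density $\ge A$ has Minkowski measure bounded by a \emph{fixed} $M$ independent of $A$, off this set one has good charts by the inductive hypothesis, and Proposition~\ref{Hormander} then produces a chart on a ball of size $\rho(M)$ --- crucially, $\rho$ depends only on $M$, not on $A$. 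Your $L^p$ control on $|s_i|^{2(\beta_i-1)}$ enters only \emph{after} holomorphic coordinates exist (this is the content of Proposition~\ref{goodchart}, used inside the doubling step); it does not by itself produce those coordinates. Without a mechanism of this kind --- or the alternative Ricci-flow/pseudolocality argument the paper gives in Section~3 --- the argument does not close.
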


From this we are able to prove:
\begin{cor} \label{cor1}
 ${\cal S}={\cal S}_{2}$. The same applies to any iterated tangent cone of $Z$.
\end{cor}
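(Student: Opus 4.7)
The plan is to rule out codimension-2 singular strata by showing that any tangent cone which splits off a $2$-dimensional cone factor must in fact be smooth. Suppose for contradiction that some $p \in Z$, or some point in an iterated tangent cone, lies in ${\cal S}_{1} \setminus {\cal S}_{2}$. By the Cheeger-Colding stratification together with Theorem \ref{thm3} (which forces the regular part of any iterated tangent cone to be a complex K\"ahler-Einstein manifold, hence of even real dimension), some tangent cone at $p$ splits as $\bR^{2n-2} \times C$ with $C$ a $2$-dimensional metric cone that does not split further. Applying Theorem \ref{thm3} once more, the regular part of this tangent cone carries a smooth Ricci-flat K\"ahler metric, so $C$ must be a flat $2$-cone $C(S^{1}_{\gamma})$ of total angle $2\pi\gamma$ for some $\gamma \in (0,1]$; the task reduces to showing $\gamma = 1$.

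To this end I would realize the tangent cone as a pointed Gromov-Hausdorff limit of rescaled approximating manifolds. Choosing scales $r_{k} \to 0$ and basepoints $p_{i} \in X_{i}$ converging to $p$ and extracting diagonally, we obtain $(X_{i}, r_{k}^{-2}\omega_{i}, p_{i}) \to \bR^{2n-2} \times C(S^{1}_{\gamma})$. Each rescaled space remains a K\"ahler-Einstein cone manifold along the rescaled divisor $D_{i}$ with cone angle $2\pi\beta_{i}$, and the Einstein constant rescales to zero. One then analyzes the behaviour of the rescaled $D_{i}$. If they converge to a nonempty limit analytic subvariety whose support meets a neighbourhood of the apex, the analytic convergence of complex structures and K\"ahler forms built into Theorem \ref{thm3} via the partial $C^{0}$-estimate machinery of \cite{CDS1,CDS2} forces the limit cone angle along this limit divisor to equal $\lim_{i} 2\pi\beta_{i} = 2\pi$, giving $\gamma = 1$. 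If instead the rescaled $D_{i}$ escape to infinity or collapse onto a strictly lower-dimensional set, then bounded regions of the tangent cone are realized as Gromov-Hausdorff limits of smooth Einstein manifolds on sets of growing size, and the classical Cheeger-Colding-Tian codimension-four $\epsilon$-regularity for smooth Einstein limits applies directly to yield $\gamma = 1$.

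Either alternative forces $\gamma = 1$, so $p$ is regular, contradicting $p \in {\cal S}$. The same argument goes through verbatim for iterated tangent cones of $Z$, since any such iterated tangent cone is itself realizable as a pointed Gromov-Hausdorff limit of suitably rescaled $(X_{i},\omega_{i})$ by a further diagonal extraction, and Theorem \ref{thm3} supplies the needed smooth Ricci-flat K\"ahler structure on the regular part at each level. The main obstacle is the first case: one must verify that cone angles pass continuously to the limit under Gromov-Hausdorff convergence of K\"ahler-Einstein cone metrics as $\beta_{i} \to 1$, and in particular that a limiting cone angle strictly less than $2\pi$ cannot arise spontaneously from the concentration of the $D_{i}$. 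This continuity is precisely where the analytic content of Theorem \ref{thm3}, especially the $L^{p}$ convergence of $\omega_{i}$ to $\omega$ in local holomorphic coordinates, does the essential work.
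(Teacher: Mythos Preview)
Your reduction to showing that a tangent cone of the form $\bR^{2n-2}\times C(S^1_\gamma)$ must have $\gamma=1$ is the same starting point as the paper, and your use of Theorem \ref{thm3} to see that $C$ is a flat $2$-cone is fine. The gap is in how you rule out $\gamma<1$.

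Your dichotomy (a)/(b) does not isolate the actual obstruction. Case (b) is essentially vacuous: the lower density bound (Proposition \ref{prop5} in the paper) prevents the rescaled $D_i$ from escaping any neighbourhood of the singular axis, so you are always in case (a). In case (a) you assert that the limit cone angle along the limit divisor equals $\lim_i 2\pi\beta_i=2\pi$, but this is exactly what must be proved, and it fails without further input: many sheets of $D_i$, each contributing deficit $1-\beta_i\to 0$, can pile up with growing multiplicity $\mu_i$ and produce a fixed positive total deficit in the limit. The $L^p$ convergence of $\omega_i$ on compact subsets of the regular part, which is what Theorem \ref{thm3} supplies, gives no control at the singular axis and does not by itself prevent this concentration.

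The paper closes this gap with a quantitative argument rather than a continuity principle. Using the techniques of \cite{CDS2} (transported via Theorem \ref{thm3}), one gets holomorphic coordinates on balls $B_i$ of a \emph{fixed} size in $X_i$, and a transverse holomorphic disc $\Delta$ meeting $D_i$ in $\mu_i$ points. A Gauss--Bonnet/holonomy computation on $\Delta$ yields $\alpha\leq 2\mu_i(1-\beta_i)$. If $\alpha>0$ this forces $\mu_i\to\infty$, but $\mu_i$ controls $\mathrm{Vol}(D_i\cap B_i)$ from below, contradicting the fixed total volume of $D_i$. This intersection-number bound is the missing mechanism in your argument; once you insert it, case (a) collapses immediately and case (b) is unnecessary.
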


In other words we have established the same results about the structure of Z as are known, in the standard theory, for limits of smooth K\"ahler-Einstein manifolds. 

Given these results it is straightforward to adapt the arguments in \cite{kn:DS}, \cite{CDS2} to prove Theorems 1. See section 2  below. We have two approaches to prove Theorem \ref{thm3}. The first uses similar techniques to those in \cite{CDS2}, but there are new difficulties to overcome. This is done in Section 2 below. We give the second proof in Section 3: this uses Ricci flow  and a variant of Perelman's pseudolocality theorem due to Tian and Wang \cite{WT12}.    In the first approach, we only need to approximate the metrics $\omega_i$   by smooth K\"ahler metrics with Ricci curvature bounded from below. In the second approach, we need to use the full strength of our approximation result \cite{CDS1} that $(X_{i}, \omega_{i})$ can
be approximated by metrics with Ricci curvature bounded below by $\beta_{i}\;$ as $\beta_i \rightarrow 1.\;$\\

 In Section 4 we consider the automorphism group of a pair $(W,\Delta)$ and we prove the relevant statements in \cite{CDS0}. 
\begin{thm} \label{thm4} Let $W$ be a $\bQ$-Fano variety, $\Delta$ a Weil divisor and $0<\beta<1$. If there is a weak conical K\"ahler-Einstein metric for the triple $(W,\Delta, \beta)$ then the automorphism group of $(W,\Delta)$ is reductive and the Futaki invariant is zero.
\end{thm}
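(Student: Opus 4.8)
The plan is to adapt the classical arguments of Matsushima \cite{Matsushima} and Futaki \cite{Fut}, carried out on the open dense locus $W^{\rm reg}\setminus\Delta$ where the weak conical K\"ahler--Einstein metric $\omega$ is a genuine smooth K\"ahler--Einstein metric with $\Ric(\omega)=\beta\omega$, and controlling the new boundary contributions coming from $\Delta$ and from $W^{\rm sing}$. First I would reduce to the identity component: since $W$ is $\bQ$-Fano, some $-mK_{W}$ is very ample and $\Aut(W)$ acts faithfully on $H^{0}(W,-mK_{W})$, so $\Aut(W,\Delta)$ is a linear algebraic group, and in characteristic zero it is linearly reductive if and only if $G:=\Aut(W,\Delta)^{0}$ is reductive. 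The Lie algebra $\mathfrak g$ of $G$ is the space of holomorphic vector fields on $W$ (global sections of the reflexive tangent sheaf) tangent to $\Delta$. The target is the Matsushima-type decomposition $\mathfrak g=\mathfrak k\oplus J\mathfrak k$, where $\mathfrak k$ is the Lie algebra of the compact group of $\omega$-isometries lying in $G$; this displays $\mathfrak g$ as the complexification of the compact Lie algebra $\mathfrak k$ and hence proves that $G$ is reductive.

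To set this up I would first record that $W$, being of Fano type, has no nonzero holomorphic $1$-forms and $H^{1}(W,\mathcal O_{W})=0$, so each $v\in\mathfrak g$ is a complex gradient $v={\rm grad}^{1,0}_{\omega}\phi_{v}$ for a function $\phi_{v}$, unique up to a constant, which after normalization satisfies $\db^{*}\db\,\phi_{v}=\beta\phi_{v}$ on $W^{\rm reg}\setminus\Delta$, the eigenvalue being the positive Einstein constant $\beta$ there. The core of the argument is then the Weitzenb\"ock/Bochner identity: expanding $\int |\db\,{\rm grad}^{1,0}\phi_{v}|^{2}\,\omega^{n}$ by parts and substituting $\Ric(\omega)=\beta\omega$ forces the real and imaginary parts of $\phi_{v}$ to be themselves potentials of holomorphic vector fields in $\mathfrak g$, which is exactly the decomposition; the positivity $\beta>0$ is what makes this go through. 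For the Futaki invariant I would use that the (log) Futaki invariant of $(W,\Delta,\beta)$ is the Lie-algebra character of $\mathfrak g$ given by the Futaki--Ding integral $\int_{W^{\rm reg}} v(h_{\omega})\,\omega^{n}$ --- with a divisorial term along $\Delta$ --- where $h_{\omega}$ is the twisted Ricci potential; since $\omega$ is conical K\"ahler--Einstein, $h_{\omega}\equiv0$ and the invariant vanishes. Equivalently one can argue that $\omega$ minimizes the log-Ding functional, so moving it by the one-parameter group $\exp(tv)$ keeps that functional constant and its first variation --- the Futaki invariant --- is zero.

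The main obstacle, and indeed the only point that goes beyond the smooth theory, is to justify all of these integrations by parts on the incomplete singular manifold $W^{\rm reg}\setminus\Delta$, i.e.\ to show that the boundary integrals over shrinking tubes around $\Delta$ and around $W^{\rm sing}$ vanish. Near the smooth part of $\Delta$ the metric $\omega$ has edge--cone singularities of angle $2\pi\beta<2\pi$; here I would invoke the known fine asymptotics of conical K\"ahler--Einstein metrics (quasi-isometry to the flat model edge, together with the higher polyhomogeneous or H\"older expansion) to see that $\omega$ is non-collapsed along $\Delta$ and that a holomorphic vector field tangent to $\Delta$, together with its potential $\phi_{v}$, lies in the weighted Sobolev spaces for which the boundary terms over $\{{\rm dist}(\cdot,\Delta)=\epsilon\}$ tend to zero --- the hypothesis $\beta<1$ being precisely what keeps the relevant integrals finite. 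Near $W^{\rm sing}$, which has complex codimension $\ge2$ since $W$ is $\bQ$-Fano with klt singularities, I would combine the finiteness of $\int_{W^{\rm reg}}|\nabla\phi_{v}|^{2}\,\omega^{n}$ and of the total volume with standard logarithmic cutoff functions to kill the boundary terms, using that $v$ is algebraic so $\phi_{v}$ has at worst mild growth toward $W^{\rm sing}$; if convenient the needed integral identities can be obtained by passing to the limit from the smooth K\"ahler approximations with Ricci curvature bounded below supplied by \cite{CDS1}. Once the boundary terms are controlled, the decomposition $\mathfrak g=\mathfrak k\oplus J\mathfrak k$ and the vanishing of the Futaki invariant follow as in the compact smooth case, the last step being the routine bookkeeping of identifying the analytic Futaki--Ding integral with the algebro-geometric log Futaki invariant entering the definition of K-stability. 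An alternative, more pluripotential-theoretic route to both conclusions --- bypassing part of the edge analysis --- is to use the geodesic convexity of the log-Ding functional on the space of singular K\"ahler potentials, from which reductivity and Futaki-vanishing come out together via the fact that the set of minimizers is a single $G$-orbit.
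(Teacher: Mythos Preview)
Your primary route---the Matsushima--Bochner integration-by-parts argument on $W^{\rm reg}\setminus\Delta$---has a real gap, and the paper in fact explicitly explains why it avoids this route. The issue is not the edge analysis along the smooth locus of $\Delta$ (where polyhomogeneous expansions are indeed available when the ambient space is smooth), but the behaviour of the weak conical K\"ahler--Einstein metric near $W^{\rm sing}$ and near the singular part of $\Delta$. By definition the metric is only required to have a \emph{continuous} potential on $W$ and to be smooth on $W_{0}\setminus\operatorname{supp}\Delta^{(0)}$; no pointwise control of $\omega$, of $|\nabla\phi_{v}|$, or of curvature is available as one approaches $W^{\rm sing}$. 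Your appeal to ``standard logarithmic cutoffs'' presupposes exactly the $L^{2}$ finiteness and growth bounds that are not known here, and the approximations from \cite{CDS1} are for cone metrics on \emph{smooth} Fano manifolds, not for weak K\"ahler--Einstein metrics on a singular $\bQ$-Fano $W$, so they do not supply the needed regularisation. (A minor point: on $W^{\rm reg}\setminus\Delta$ the Einstein constant is $1-(1-\beta)\lambda$, not $\beta$; this only coincides with $\beta$ when $\lambda=1$.) The paper says this directly: the Lie-algebra/integration-by-parts proof ``seems difficult to extend to our general setting, as we do not have much information on the metric around the singularities.''

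What the paper actually does is precisely your throwaway ``alternative'' in the last sentence, and you should make that the main argument. Working at the group level rather than the Lie-algebra level, one uses Berndtsson's convexity of the log--Ding functional along bounded geodesics in $\mathcal H_{\infty}$ (Proposition~\ref{Ding convexity}) together with the uniqueness statement (Proposition~\ref{uniqueness}) that any two weak conical K\"ahler--Einstein metrics differ by an element of the complexification $K^{c}$ of the compact isometry group $K$. Then for any $g\in G=\Aut(W,\Delta)$ the pullback $g^{*}\omega_{\phi}$ is again weak conical K\"ahler--Einstein, so $g\in K^{c}$ and $G=K^{c}$ is reductive. For the Futaki invariant the paper shows $\Fut_{\beta}(W,\Delta)=\tfrac{1}{n}\tfrac{d}{dt}\mathcal D(\phi(t))$ along the $\bC^{*}$-orbit and concludes from the lower bound on $\mathcal D$ (Corollary~\ref{lower bound}). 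The virtue of this route is that all the hard analysis is pushed into pluripotential theory on the log resolution, where only boundedness of potentials---not metric asymptotics near the singular set---is needed.
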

 In this statement we include the case when $\Delta$ is empty, so the metric is a
weak K\"ahler-Einstein metric on $W$, as above. Of course in the case when $\Delta$ is empty and $W$ is smooth this result comes down to the well-known results of Matsushima and Futaki mentioned above, so the point is to deal with the singularities.

As stated before, in Section 5 we put all our technical results together to give the proof of the main theorem. In the Appendix 1 we give an exposition of the proof of convexity of the Ding functional and the uniqueness of weak conical K\"ahler-Einstein metrics which is used in the proof of Theorem \ref{thm4}. In Appendix 2 we likewise give an exposition of the proof of a PDE result of Evans-Krylov type, which we use in Section 2.  \\

{\bf Acknowledgements:} The first author would like to express his deep gratitude to
his parents, who put  all the little they had to give him the best education,
and to his wife,  without
whose  love, support, and continued sacrifice he would never have had
the courage to persevere in a mathematical career.

       We thank Weiyong He,  Long Li, Bing Wang and Yuanqi Wang for help during the preparation of this series of papers.
    \section{First proof of Theorem \ref{thm3} and  proof of Corollary \ref{cor1}}
    \subsection{Overview }
    The focus in this section is on a Riemannian ball $B=B(p,1)$ (in a K\"ahler-Einstein manifold $(X,\omega)$ with cone singularities along a divisor $D$) which is Gromov-Hausdorff close to the unit ball $B^{2n}$. Here we are considering the scaled case but we assume the scaling is by a factor $a^{1/2}$ for integer $a$ so there is a line bundle $L\rightarrow X$ with curvature $\omega$. We also suppose that we have a fixed distance function on $B\sqcup B^{2n}$ as in the definition of Gromov-Hausdorff distance. The fundamental difficulty, when the cone angle tends to $2\pi$, is in controlling the singular set $D\cap B$. To illustrate this consider the local discussion in the case of complex dimension $1$. Then any metric on the ball can be Gromov-Hausdorff  approximated by a flat metric with cone singularities of cone angle close to $2\pi$ at many points. So without control of the number of points we cannot say anything more about the metric. In general we will exert control  by arguments based on a generalisation of the notion of \lq\lq Minkoswki measure''. 
    
    Recall that for any set $A$ in a $2n$-dimensional length space $P$ (of a suitable kind) we define the  Minkowski measure $m(A)$ as in Section 2 in \cite{CDS2}. We want to extend this notion slightly. Given $\eta> 0$ we let $m(\eta, A)$ be the infimum of numbers $M$ such that for all $r\geq \eta$ there is a cover of $A$ by at most $Mr^{2-2n}$ balls of radius $r$ (with $m(\eta,A)=\infty$ if there are no such numbers $M$). Thus $m(A)\leq M$ if and only if $m(\eta, A)\leq M$ for all $\eta>0$. \label{minkowskimeasure}

We want an appropriate notion of \lq\lq good local holomorphic co-ordinates''. Fix $p>2n$ (in order to have a Sobolev embedding $L^{p}_{1}\rightarrow C^{0}$). Given $x\in B$ and numbers $r,\delta'>0$ we say that a holomorphic map $\Gamma:B(x,r)\rightarrow
\bC^{n}$ is an $(r,\delta')$-chart centred at $x$ if 
\begin{enumerate}
\item $\Gamma(x)=0$;
\item $\Gamma$ is a homeomorphism to its image;
\item For all $x',x''\in B_{x}(r)$ we have $\vert d(x',x'')-d(\Gamma(x'), \Gamma(x''))\vert\leq \delta'$;
\item $$\Vert \Gamma_{*}(\omega)-\omega_{\Euc}\Vert_{L^{p}}\leq \delta'.$$
\end{enumerate}

Next we want to introduce a useful invariant $I(\Omega)$ for any domain $\Omega\subset X$. Recall that for $x\in X$ and $r>0$ we define the volume ratio
$ VR(x,r)$ to be the ratio of the volume of the ball $B(x,r)$ and the Euclidean ball $r B^{2n}$. Then we set 
\begin{equation}\label{isoperi}
  I(\Omega)= \inf_{B(x,r)\subset \Omega} VR(x,r). 
  \end{equation}
In our situation, of positive Ricci curvature, the Bishop inequality states that $VR(x,r)\leq 1$ so $I(\Omega)\leq 1$.  We will only be concerned with the case when $\Omega$ is a ball, in particular when $\Omega=B$. Then, by fundamental results of Cheeger and Colding, $1-I(B)$ is closely related to the Gromov-Hausdorff distance $d_{GH}(B, B^{2n})$. That is, for any $\lambda>1$,
$1-I(B)$ is controlled by $d_{GH}(B(p,\lambda), \lambda B^{2n})$ and
 $d_{GH}(B, B^{2n})$  is controlled by $1-I(B)$.  The advantage of the invariant $I(\Omega)$ is that, from the definition, it has a monotonicity property $I(\Omega')\geq I(\Omega)$ for $\Omega'\subset \Omega$. 

\

We can now state the main technical result of this Section.
\begin{prop}\label{Hormander}
 Given $M,c$ there are $\rho(M), \eta(M,c),\delta(M,c)>0$ with the following effect. Suppose $1-I(B)\leq \delta$ and  that $W\subset B$ is a subset with $m(\eta,W)\leq M$ such that for any point $x$ in $B\setminus W$ there is a $(c\eta, \delta)$-chart centred at $x$. Then (for some fixed $C$, independent of $M,c$)
\begin{itemize} \item There is a local K\"ahler potential $\phi$ for $\omega$ on the ball $B(p,\rho)$ with $\vert \phi\vert \rho^{-2}\leq C$. 
\item There is a holomorphic map $F$ from $B(p,\rho)$ to  $\bC^{n}$  which is a homeomorphism to its image, satisfies a Lipschitz bound $\vert \nabla F\vert \leq  C$ and such that the image lies between 
 $(0.9) \rho B^{2n}$ and $(1.1)\rho B^{2n}$.
 \end{itemize}
 \end{prop}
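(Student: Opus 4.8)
\emph{Strategy.} The plan is to run the H\"ormander $L^2$-technique. Using the given $(c\eta,\delta)$-charts I would patch together, over the good region $B\setminus W$, an approximately holomorphic trivialisation of $L$ carrying an approximate K\"ahler potential $\psi$ for $\omega$, together with $n+1$ approximately holomorphic sections of $L$ modelled on $1,z_1,\dots,z_n$; then correct these to genuine holomorphic sections $\sigma_0,\dots,\sigma_n$ of $L$ by solving $\bar\partial$ with $L^2$-estimates on the closed manifold $X$, where positivity of the curvature $\omega$ of $L$ — after replacing $\omega$ by a nearby smooth metric with Ricci bounded below, which is all this approach needs — makes the Bochner--Kodaira inequality work. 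The potential is then $\phi=-\log|\sigma_0|^2_h$ and the map $F=(\sigma_1/\sigma_0,\dots,\sigma_n/\sigma_0)$.

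\emph{Patching.} From $1-I(B)\le\delta$ and Bishop--Gromov, Cheeger--Colding almost-rigidity puts $B$ Gromov--Hausdorff close to a Euclidean ball, so the targets of the $(c\eta,\delta)$-charts, after affine motions of $\bC^n$ dictated by the distance function, form a coherent system. I would take a maximal $c\eta$-separated net in $B(p,1/2)\setminus W$, the charts $\Gamma_\alpha$ at its points, a subordinate partition of unity $\{\chi_\alpha\}$, and set $G=\sum_\alpha\chi_\alpha(A_\alpha\circ\Gamma_\alpha)$. Since each $\Gamma_\alpha$ is holomorphic, $\bar\partial G=\sum_\alpha(\bar\partial\chi_\alpha)(A_\alpha\circ\Gamma_\alpha-G)$ with $|A_\alpha\circ\Gamma_\alpha-G|\lesssim\delta$ on overlaps, so $\|\bar\partial G\|_{C^0}\lesssim\delta/(c\eta)$ and $\|G^*\omega_{\Euc}-\omega\|_{L^p}\lesssim\delta$ off a small neighbourhood $N(W)$ of $W$, using here the freedom to take $\delta\ll c\eta$. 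Patching the potentials $|A_\alpha\circ\Gamma_\alpha|^2$ likewise yields $\psi$ with $\|i\partial\bar\partial\psi-\omega\|_{L^p}$ small, $\psi$ close to $|G|^2$, and an approximate holomorphic frame of $L$ over $B(p,1/2)\setminus N(W)$ with $h=e^{-\psi}(1+o(1))$; one centres everything at $p$ (or at a good point near $p$ if $p\in W$).

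\emph{The correction and the r\^ole of $W$.} The one real difficulty is crossing $W$. A naive cut-off near a codimension-two set has Dirichlet energy $\sim M$, which is not small; but $m(\eta,W)\le M$ controls $W$ at \emph{every} scale $r\ge\eta$, so a \emph{logarithmic} cut-off $\zeta$ — zero on the $\eta$-neighbourhood of $W$, one outside a definite neighbourhood, proportional to $\log(d(\cdot,W)/\eta)$ in between — has $\|\nabla\zeta\|^2_{L^2}\lesssim M/\log(1/\eta)$, which I would push below any prescribed threshold by taking $\eta$ small. With $\mathfrak s$ the section $1$ in the approximate frame, cut off also near $\partial B$, the section $\tilde\sigma=\zeta\,\mathfrak s$ is defined on all of $X$, concentrated near $p$, and $\bar\partial\tilde\sigma=(\bar\partial\zeta)\mathfrak s+\zeta\,\bar\partial\mathfrak s$ has $\|\bar\partial\tilde\sigma\|_{L^2(X)}\le\epsilon$ with $\epsilon\to 0$ as $\delta,\eta\to 0$. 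Taking a smooth approximation of $\omega$ as background metric, with $i\,\mathrm{curv}(L)+\Ric>0$ (the curvature of $L$ dominating the tiny Ricci deficit after the scaling), H\"ormander on $X$ gives $u$ with $\bar\partial u=\bar\partial\tilde\sigma$, $\|u\|_{L^2(X)}\lesssim\epsilon$, and $\sigma_0:=\tilde\sigma-u$ is holomorphic; repeating with peak sections modelled on the coordinate functions of $G$ produces $\sigma_1,\dots,\sigma_n$. (A weight with a logarithmic pole at a good point near $p$ can be thrown in to pin down values there, but is inessential.)

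\emph{Conclusion and the main obstacle.} On the good region $\tilde\sigma$ is genuinely holomorphic, hence so is $u$ there, and the $L^2$-bound upgrades by the mean-value inequality and interior elliptic estimates to a $C^1$-bound $o_{\delta,\eta}(1)$ for $\sigma_j-\tilde\sigma_j$ on a ball of fixed size about the centre, with constant depending on $M$ through $\rho$ — which is why one fixes $\rho=\rho(M)$ first and then chooses $\delta,\eta$ (depending on $M,c$) small enough to make this error as small as needed. Then $\sigma_0$ is non-vanishing on $B(p,\rho)$ with $|\sigma_0|^2_h=e^{-\psi}(1+O(\rho^2))$, so $\phi:=-\log|\sigma_0|^2_h$, shifted by a constant, is a K\"ahler potential for $\omega$ — cone behaviour along $D\cap B\subset W$ included — on $B(p,\rho)$ with $|\phi|\le C\rho^2$, because $\psi\approx|G|^2$ vanishes to second order at the centre; and $F:=(\sigma_1/\sigma_0,\dots,\sigma_n/\sigma_0)$ is holomorphic, $C^0$-close to $G$, hence a homeomorphism onto its image with $|\nabla F|\le C$ and image trapped between $0.9\rho B^{2n}$ and $1.1\rho B^{2n}$. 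I expect the main obstacle to be everything involving $W$: beyond the logarithmic cut-off, one has to place the peak point both close to $p$ and far enough — on the scale of $\rho$ — from the transition region $N(W)$ that $u$ stays holomorphic on all of $B(p,\rho)$ and hence controllable by interior estimates, and this is precisely where the large radius $c\eta$ of the charts, the almost-Euclidean geometry, and the latitude in $\delta,\eta$ get used up.
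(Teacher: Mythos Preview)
Your overall strategy is the paper's: transport approximately holomorphic sections from the flat model using a cut-off supported on the good region, project via $L^2$-estimates, and read off $\phi$ and $F$ from ratios. The logarithmic cut-off near $W$ with $\|\nabla\zeta\|_{L^2}^2\lesssim M/\log(1/\eta)$ is exactly the mechanism the paper uses (via the analogous Proposition in \cite{CDS2}), and you correctly identify that $\rho$ must be fixed as a function of $M$ before $\delta,\eta$ are chosen.

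There is, however, a genuine gap in your \emph{Patching} step: the construction of an ``approximate holomorphic frame of $L$ over $B(p,1/2)\setminus N(W)$ with $h=e^{-\psi}(1+o(1))$'' does not follow from patching local charts with a partition of unity. Each chart gives a local holomorphic trivialisation $e_\alpha$ of $L$, and on overlaps $e_\alpha=g_{\alpha\beta}e_\beta$ with $|g_{\alpha\beta}|\approx 1$; but the \emph{argument} of $g_{\alpha\beta}$ is uncontrolled, so the patched object is not a section unless the holonomy of the connection around loops in $B\setminus N(W)$ is small. Since $W$ has codimension two, that region has nontrivial $\pi_1$, and such loops can link $W$. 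The paper spends an entire subsection (the holonomy argument culminating in Proposition~\ref{prop3}) establishing precisely this: via a compactness argument and a homological decomposition of loops into small ``standard circles'', together with an integration-by-parts trick over a $(2n-1)$-form built from distance functions, one shows the holonomy around every relevant loop is bounded by a prescribed $\zeta$. Without this, your transported peak sections are not well-defined on the good region, and the argument stalls before you ever invoke H\"ormander.

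Two secondary points. First, you do not explain \emph{how} $\rho$ is determined by $M$: in the paper this comes from passing to a power $L^k$ with $k=R^2$ chosen so that the Gaussian boundary error $E(R)$ beats the $M$-dependent polynomial losses from the cut-off and the mean-value constants; the rapid decay of $E(R)$ is the entire content of the displayed inequalities fixing $R$ and $r_0$. Second, your claim in the Conclusion that one can arrange ``$u$ holomorphic on all of $B(p,\rho)$'' is not available when $W$ meets $B(p,\rho)$ --- and in the key application (Corollary~\ref{cor2}) the centre $p$ lies in $D\subset W$. The paper instead argues pointwise: for any $z$ in the half-ball, find a nearby $z'$ in $\Omega^\sharp(W,2r_0)$ at distance $\lesssim (r_0^2MR^{2n-2})^{1/2n}$, control $|\tau_i(z')|$ there by $r_0^{-n}\|\tau_i\|_{L^2}$, and propagate by the Lipschitz bound on the globally holomorphic $s_i$; the degree of $F$ is then pinned down using a single point $z_0$ at distance $\gtrsim (R^{2n-2}M)^{-1/2}$ from $W$.
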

 
Stated more loosely and informally: the good local co-ordinates at points outside the \lq\lq small'' set $W$ go over to a controlled local co-ordinate on a  ball of definite size $\rho$ around $p$, where the size depends on $M$. In fact we will only need to apply this Proposition with some fixed computable number $M$, so effectively we can think of $\rho$ as being fixed.

We prove Proposition \ref{Hormander} using the H\"ormander technique, following the same general strategy as in \cite{kn:DS}, \cite{CDS2}. Thus we want to have a suitable \lq\lq large'' open set $\Omega$ in the complement of $W$ on which the geometric structures can be identified, up to small errors, with the flat model on a corresponding set in $B^{2n}$. We want to have a cut-off function with derivative small in $L^{2}$. We transport holomorphic sections of the line bundle from the model to get approximately holomorphic sections over $B$ and project these to genuine holomorphic sections. Then the proof of Proposition \ref{Hormander} is much the same as the proof of Proposition 12 in \cite{CDS2}.
All of this is covered in subsections 2.2, 2.3, 2.4 below. 

We apply Proposition \ref{Hormander} twice in the proof of Theorem 2. The first application is relatively straightforward and a simpler statement would suffice. This leads to a lower bound on the local densities of the divisor (Corollary \ref{cor2}).  The second application is more involved and uses the full force of the statement. This is used in an inductive argument to handle the situation, with some {\it fixed} $\delta$, where the volume of $D\cap B$ is possibly large but where the cone angle is sufficiently close to $2\pi$.

    \subsection{Holonomy}
    
    One of the  main technical difficulties in the proof of Proposition \ref{Hormander} involves holonomy. Thus we begin with a treatment of that. We state and prove a rather involved technical result (Proposition \ref{prop1} below)  which will be just what we need later.

For a  subset $W\subset B^{2n}$ and $s>0$ we define
$\Omega(W,s)\subset B^{2n}$ to be set of points of distance greater than $s$ from any point of $W$ and also from the boundary of $B^{2n}$. Given $C>1,\eta>0$ we say that a Euclidean circle of radius between $\eta$ and $C\eta$ and contained in $\Omega(W, \eta)$ is  a {\it standard $(C,\eta)$ circle for $W$}.

\begin{prop} \label{prop1}
Given $r,M,\theta>0$ there exists $\eta_{0}, \zeta, C>0$ such that if $\eta\leq \eta_{0}$ there is a $\psi>0$ (depending on $\eta$) with the following effect. If $W\subset B^{2n}$ is a set with $m(\eta, W)\leq M$ and if $a$ is a connection on an $S^{1}$ bundle  over $\Omega(W,\eta)$  whose holonomy around every standard $(C,\eta)$-circle for $W$ is bounded by $\zeta$ and with $L^{p}$ norm of the curvature  less than $\psi$, then over $\Omega(W,r)$  there is a trivialisation of the bundle so that connection form has $\vert a\vert \leq \theta$.
\end{prop}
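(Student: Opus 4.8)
The plan is to build the desired trivialization in two stages: first produce a reference trivialization on a suitable "thickened skeleton" of $\Omega(W,r)$ by controlling holonomy along a system of loops, then improve it to a genuine gauge with small connection form using the smallness of the curvature. Fix parameters $r,M,\theta$. Since $m(\eta,W)\le M$, for every scale $s\ge\eta$ we may cover $W$ by at most $Ms^{2-2n}$ balls of radius $s$; in particular $\Omega(W,r)$ is a region whose complement in $B^{2n}$ is a controlled-size neighbourhood of $W$ together with a collar of the boundary. First I would choose a fine grid on $B^{2n}$ of spacing comparable to $\eta$ and discard grid boxes meeting $W$ or $\p B^{2n}$; the union $\Omega'$ of the surviving boxes satisfies $\Omega(W,C_1\eta)\subset\Omega' \subset \Omega(W,C_2\eta)$ for dimensional constants $C_i$, and the crucial combinatorial point, using $m(\eta,W)\le M$, is that every $1$-cycle in $\Omega'$ is a $\ZZ$-linear combination of boundaries of elementary $2$-cells (unit grid faces) of $\Omega'$ — each such face being, up to uniformly bounded distortion, a standard $(C,\eta)$-circle for $W$. (One must slightly enlarge/round the square faces to honest Euclidean circles; this is where the constant $C$ and the lower bound $\eta\le\eta_0$ enter, so that a face not meeting $W$ lies in $\Omega(W,\eta)$.)

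The second step is the holonomy bookkeeping. Pick a maximal tree $T$ in the $1$-skeleton of $\Omega'$ and trivialize the bundle along $T$ by parallel transport; this is unambiguous and defines a section over a neighbourhood of $T$, hence (after extending over the $2$-cells, which are simply connected) a trivialization $\tau$ over $\Omega'$ in which the connection form $a^\tau$ has holonomy $1$ around every edge of $T$. The holonomy of $a^\tau$ around any edge not in $T$ is then the holonomy of the original connection around the unique fundamental loop of that edge, which — by Stokes on a spanning surface of bounded area together with the $L^p$ curvature bound $\|F_a\|_{L^p}<\psi$ — differs from a product of at most $N(M,\eta)$ elementary-face holonomies by an error controlled by $\psi$ times a volume factor. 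Each elementary-face holonomy is bounded by $\zeta$ (plus a $\psi$-error) by hypothesis, so choosing $\zeta$ small depending on the maximal number of faces needed to fill a loop — which is bounded in terms of $M$ and $r/\eta$ — and then $\psi=\psi(\eta)$ even smaller, we get that $a^\tau$ has holonomy within $\theta/2$ of the identity around every edge of $\Omega'$.

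Finally I would convert "small holonomy around all edges, small curvature" into "small connection form" by a discrete-to-continuous interpolation: on each edge choose the gauge transformation rotating by the logarithm of the edge holonomy interpolated linearly, extend over $2$-cells and $3$-cells by solving the obvious Dirichlet-type problems (at this dimension an elementary Poincaré/Hodge estimate on a single cell suffices), and patch. The resulting connection form on $\Omega(W,r)\subset\Omega'$ is then pointwise bounded by a universal multiple of $(\text{max edge holonomy}) + \eta\|F_a\|_{L^\infty\text{-on-cells}}$; absorbing the second term via elliptic regularity from the $L^p$ curvature bound (here $p>2n$ and Sobolev embedding $L^p_1\to C^0$ are exactly what is needed), this is $\le\theta$ once $\zeta$ and $\psi$ are small enough. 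The main obstacle, and the only place where $m(\eta,W)\le M$ is genuinely used, is the uniform bound on the number of elementary faces required to fill an arbitrary loop in $\Omega'$: one must show that the "bad" set $W$, being Minkowski-small, cannot disconnect or create topology on a scale that would force loops of uncontrolled combinatorial length. This is a quantitative connectedness statement for the complement of an $(n-1)$-codimensional-type set in $B^{2n}$, and making it effective — with the filling area bounded purely in terms of $M$, $r$, and $\eta$ — is the heart of the argument; everything else is the standard gauge-fixing machinery already used in \cite{kn:DS} and \cite{CDS2}.
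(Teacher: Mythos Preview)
Your direct, constructive approach is appealing, but it has a genuine quantifier problem that the paper's argument is specifically designed to avoid. In the statement, $\zeta$ is chosen \emph{before} $\eta$: given $r,M,\theta$ one fixes $\eta_0,\zeta,C$, and only then, for each $\eta\le\eta_0$, chooses $\psi(\eta)$. Your scheme, however, bounds the holonomy of a fundamental loop of the spanning tree by $N\zeta + (\psi\text{-error})$, where $N=N(M,r/\eta)$ is the number of elementary $\eta$-grid faces needed to fill the loop. For a grid of spacing $\eta$ in a ball of size $r$, this $N$ is inevitably of order $(r/\eta)^2$ (or worse), so to get $N\zeta<\theta/2$ you would need $\zeta$ to depend on $\eta$. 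You seem aware of this (you write ``bounded in terms of $M$ and $r/\eta$''), but do not address that it destroys the required order of choice. The closing paragraph identifies the right difficulty but the proposed grid mechanism cannot resolve it: a finer grid simply produces more faces, not fewer.

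The paper sidesteps this by a compactness/contradiction argument. Supposing the statement fails, one extracts (after two nested limits) a closed set $W_\infty\subset B^{2n}$ with $m(W_\infty)<\infty$ and a \emph{flat} connection $A_\infty$ over $B^{2n}\setminus W_\infty$ that is not trivial, hence has holonomy $e^{i\gamma}\neq 1$ around some single loop $\Gamma$. The key Lemma then says: for this \emph{fixed} $\Gamma$ and $W_\infty$, there exist $N,C$ such that for all small $\eta$, $\Gamma$ is homologous in $\Omega(W_\infty,\eta)$ to a sum of at most $N$ standard $(C,\eta)$-circles, with $N$ \emph{independent of $\eta$}. The proof is a pigeonhole over a $(2n-2)$-parameter family of cones on $\Gamma$: there are $\gtrsim\eta^{2-2n}$ cone vertices and $\lesssim M\eta^{2-2n}$ covering balls for $W_\infty$, so some cone meets only a bounded number $K_0$ of them, and $\Gamma$ is homologous to the $K_0$ linking circles. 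This is exactly the $\eta$-uniform filling estimate your approach would need; if you want a direct proof you must replace the grid by something like this cone-counting argument.
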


(Here, in saying that the holonomy is bounded by $\zeta$ we mean that the holonomy is $\exp(i\alpha)$ for $\vert \alpha \vert \leq \zeta$.)

The first part of the proof  of  Proposition 1 uses a compactness argument.  

Suppose the statement is false, for some fixed $r,M,\theta$. Then we have $\eta_{i}, \zeta_{i}\rightarrow 0$ and $C_{i}\rightarrow \infty$ such that we can find the following

\begin{itemize}
\item For each $i$ a sequence $\psi_{ij}>0$ tending to zero as $j\rightarrow \infty$.
 \item Sets $W_{ij}$ with $m(\eta_{i}, W_{ij})\leq M$.
\item  connections $A_{ij}$ over $\Omega(W_{ij}, \eta_{i})$ such that the holonomy around each standard $(C_{i}, \eta_{i})$ circle for $W_{ij}$ is bounded by $\zeta_{i}$, and the $L^{p}$ norm of the curvature is bounded by $\psi_{ij}$;
\item there is no trivialisation of the bundle over $\Omega(W_{ij}, r)$ in which the connection form is bounded by $\theta$.
\end{itemize}
 
 Fix $i$ and choose a cover of each set $W_{ij}$ by a fixed number of $\eta_{i}$ balls, as in the definition of $m(\eta, \cdot)$. Taking a subsequence we can suppose the centres of these balls converge as $j\rightarrow \infty$. Let $U_{i}$ be the union of the $2\eta_{i}$ balls centred on the limit points.
Then one sees from the definition that $m(\eta_{i}, U_{i}) \leq M'$ for some fixed $M'$ depending on $M,n$. Again taking a subsequence, we can suppose that the $A_{ij}$ converge in $C^{0}$ on compact subsets of $B^{2n}\setminus U_{i}$ to a flat connection $A_{i}$. That is, we use a local gauge fixing to get local convergence in $L^{p}_{1}$
and the Sobolev embedding gives local $C^{0}$ convergence. Then we can glue together the local trivialisations to get $C^{0}$ convergence on compact sets, as in \cite{kn:Uhlenbeck}. (But the abelian case here is much more straightforward.) The definitions imply that the holonomy of $A_{i}$ around any standard $(C_{i}/2, 2 \eta_{i})$ circle for $U_{i}$ is bounded by $\zeta_{i}$.
We can suppose that $\eta_{i}$ is a decreasing sequence so $m(\eta_{i}, U_{i'})\leq M'$ for all $i'\geq i$. For each $i$  for all integers $t$ such that $2^{-t}\geq \eta_{i}$,  and for all $i'\geq i$ choose a cover of $U_{i'}$ by a fixed number of $2^{-t}$  balls, as in the definition of $m(\eta_{i}, \cdot )$. Taking a subsequence we can suppose that for all $t$ the centres of these balls converge as $i'\rightarrow \infty$. Let $V_{i,t}$ be the union of the closed $2^{-t}$ balls with these limiting centres and set $W_{\infty}=\bigcap_{i,t} V_{i,t}$. Then $W_\infty$ is a closed set, $m(W_\infty)\leq M''$ for some fixed $M''$ depending on $M,n$ and if $p$ is a point with $d(p,W_\infty)> s$ for some $s$ then $d(p,U_{i})\geq s$ for large $i$. We get a limiting flat connection $A_{\infty}$ over $B^{2n}\setminus W_{\infty}$, where the limit is in the sense of convergence on compact subsets. If we know that $A_{\infty}$ is the trivial flat connection then we get a contradiction to our hypotheses and the Proposition is proved.

We now pass to the second part of the proof.
\begin{lem} \label{lem1}
Let $W_{\infty}\subset B^{2n}$ be a closed set with $m(W_{\infty})<\infty$ and let $\Gamma$ be a loop in $B^{2n}\setminus W_{\infty}$. Then we can find
$N,C>0$ such that for all sufficiently small $\eta$ the loop $\Gamma$ is contained in $ \Omega(W_{\infty}, \eta)$ and  is homologous in $\Omega(W_{\infty}, \eta)$ to a sum of at most $N$ standard $(C,\eta)$ circles for $W_{\infty}$.
\end{lem}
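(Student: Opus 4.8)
The plan is to show that the loop $\Gamma$ can be decomposed into small circles by using the fact that $m(W_\infty)<\infty$ provides, at every scale $r$, an efficient cover of $W_\infty$ by balls whose total number grows like $r^{2-2n}$, so that the ``obstruction to contracting'' near $W_\infty$ is, homologically, concentrated on a set of controlled complexity. First I would fix $\varepsilon>0$, the distance from $\Gamma$ to $W_\infty$, and work inside the open set $U = \{x \in B^{2n} : d(x,W_\infty) > \varepsilon/2\}$, which contains $\Gamma$ and is a fixed open subset of Euclidean space. Since $\Gamma$ is a loop in the smooth (indeed Euclidean) manifold $U$, it bounds a smooth singular $2$-chain $\Sigma$ in $B^{2n}$ — e.g., just take a disc spanning $\Gamma$ in the ball — but of course this disc may hit $W_\infty$. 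The strategy is to surgery this disc away from $W_\infty$ at the expense of adding small standard circles.

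The key step is a dyadic decomposition argument. Choose $r$ comparable to $\varepsilon$ and let $\{B_k\}$ be a cover of $W_\infty$ by at most $m(W_\infty)\, r^{2-2n}$ balls of radius $r$; enlarge slightly to $2r$-balls $B_k'$ so that $\Gamma \cap \left(\bigcup_k B_k'\right) = \emptyset$ once $r \leq \varepsilon/4$. Triangulate the spanning disc $\Sigma$ finely (mesh $\ll r$) and discard all closed simplices meeting $\bigcup_k B_k$; what remains is a $2$-chain $\Sigma'$ whose boundary is $\Gamma$ plus a collection of $1$-cycles lying in the annular regions $B_k' \setminus B_k$, each of which is homologous, inside $\Omega(W_\infty,\eta)$ for $\eta$ small, to a bounded number of standard $(C,\eta)$ circles around the relevant $B_k$ (using that an annulus of fixed proportions retracts onto a circle of radius between $\eta$ and $C\eta$ — here is where the ratio bound $C$ enters, and one subdivides the boundary $1$-cycles into arcs of length $\sim \eta$). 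The total count of such circles is then bounded by $C' \cdot (\text{number of boundary components}) \cdot (\text{arc subdivisions})$. The point is that the number of $B_k$'s actually met by $\partial\Sigma'$ — equivalently, the complexity of the ``hole pattern'' that $\Gamma$ genuinely wraps around — is bounded purely in terms of $m(W_\infty)$, $n$, and $\varepsilon$, since $r^{2-2n}$ at the fixed scale $r\sim\varepsilon$ is a fixed number; this yields the constant $N$. A subtlety: $\Gamma$ itself has fixed (not small) size, so before discarding simplices one may first need to subdivide $\Gamma$ and homotope it within its $\varepsilon/4$-neighborhood so that it too is a concatenation of controlled pieces — but since $\Gamma$ is fixed, this contributes only to $N$.

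The main obstacle I expect is bookkeeping the passage from a fixed scale $r\sim\varepsilon$ down to the tiny scale $\eta$: we want the \emph{final} circles to be standard $(C,\eta)$-circles (radius between $\eta$ and $C\eta$, sitting in $\Omega(W_\infty,\eta)$), with $N$ and $C$ \emph{independent of $\eta$}. This forces a telescoping argument across dyadic scales $r, r/2, r/4, \dots, \eta$: at each halving of scale, a standard circle around a ball $B_k$ at scale $2r$ is homologous (in the region $\Omega(W_\infty, r)$) to a sum of standard circles at scale $r$ around the $\leq C''$ sub-balls of $B_k$ coming from the finer cover. Iterating $\log_2(r/\eta)$ times would naively blow up $N$ geometrically, which is fatal — so the real content is that this blow-up does \emph{not} happen homologically: most of the sub-circles bound within $\Omega(W_\infty, r/2)$ (they encircle regions disjoint from $W_\infty$) and can be discarded, and only the ones genuinely linking $W_\infty$ survive, whose number is again controlled by $m(W_\infty) \cdot (r/2)^{2-2n}$ — but *per original circle* this must be a bounded factor, which holds because each scale-$2r$ ball contains $O(1)$ scale-$r$ balls of the cover in its neighborhood. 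Making this last point precise — that the ratio of successive cover cardinalities localized near a fixed ball is bounded, uniformly in the scale — is the crux, and it follows from the $m(W_\infty)<\infty$ hypothesis applied on the ball $B_k$ itself together with a standard Vitali/doubling packing estimate in Euclidean space. Once that uniform branching bound is in hand, $N$ and $C$ emerge as explicit functions of $m(W_\infty)$, $n$, and the geometry of $\Gamma$, and the containment $\Gamma \subset \Omega(W_\infty,\eta)$ for small $\eta$ is immediate from $d(\Gamma, W_\infty) = \varepsilon > 0$.
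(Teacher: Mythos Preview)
There is a genuine gap in your telescoping argument. Your plan fixes a single spanning $2$-chain $\Sigma$ and then tries to push the ``hole'' circles down through dyadic scales, but the per-step branching bound you invoke does not give an $\eta$-independent $N$. Even granting that each scale-$2r$ ball of the cover is met by at most $C'' = O_n(1)$ of the scale-$r$ balls (which is essentially the doubling constant $2^{2n-2}$), iterating $\log_2(\varepsilon/\eta)$ times produces on the order of $(\varepsilon/\eta)^{2n-2}$ circles. Your proposed remedy---that ``most sub-circles bound in $\Omega(W_\infty,r/2)$''---fails already when $W_\infty$ is a single real-codimension-$2$ plane: every sub-circle in your scheme still links the plane, none of them bounds in the complement, and no cancellation mechanism is supplied. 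A secondary error is the claim that ``an annulus of fixed proportions retracts onto a circle'': in $\bR^{2n}$ with $n>1$ the spherical shell $B(0,2r)\setminus \overline{B(0,r)}$ is simply connected, so there is no canonical circle to retract onto, and in any case these shells need not lie in $\Omega(W_\infty,\eta)$ since $W_\infty$ extends beyond each cover ball.

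The paper avoids iteration entirely. Instead of a fixed $\Sigma$ it considers a $(2n-2)$-parameter family of $2$-chains: piecewise-linear cones $C(v,\Gamma)$ over $\Gamma$ with vertex $v$ ranging in a $(2n-2)$-dimensional subspace $L$. A transversality-type estimate shows that distinct cones meet $W_\infty$ in well-separated sets, so each ball of an $\eta$-cover of $W_\infty$ can be close to at most one cone from a suitably spaced net of $\gtrsim \eta^{2-2n}$ vertices. Since the cover has $\lesssim m(W_\infty)\,\eta^{2-2n}$ balls, a pigeonhole argument produces a vertex $v$ for which $C(v,\Gamma)$ meets at most a fixed number $K_0$ of the $2\eta$-balls, \emph{uniformly in $\eta$}. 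Excising these $K_0$ discs from the cone exhibits $\Gamma$ as homologous in $\Omega(W_\infty,\eta)$ to at most $K_0$ standard circles. The essential idea you are missing is this freedom to choose the $2$-chain from a family whose dimension matches the codimension of $W_\infty$, converting the Minkowski bound directly into a uniform intersection bound without any multiscale recursion.
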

Assuming this Lemma for the moment we complete the proof of the Proposition. Suppose, arguing for a  contradiction that the connection $A_{\infty}$ is not trivial, so has nontrivial holonomy around some loop $\Gamma$. Let this holonomy be $e^{i\gamma}$. Choose $\zeta= \vert \gamma\vert/(10 N)$ where $N$ is as in the Lemma.  Choose $i$ large enough that $\zeta_{i}<\zeta$. The standard $(C,\eta)$-loops for $W_\infty$ furnished by the Lemma are standard $(2C,\eta/2)$ loops for $U_{i}$ for large enough $i$, so the holonomy around each is bounded by $\zeta_{i}\leq \zeta$ once $2\eta_{i}<\eta/2$ and $C_{i}> 4C$. This gives our contradiction.

 We move on to prove the Lemma.
 We can assume that $\Gamma$ is an embedded piecewise linear loop, determined by a sequence of vertices $q_{1}, \dots, q_{p}=q_{0}$. There is clearly no real loss of generality is supposing that $0$ does not lie in $W_{\infty}$ or $\Gamma$. Likewise we can make the general position assumption that the triples $0 q_{j} q_{j+1}$ span distinct planes $\Pi_{j}$ and that $\Pi_{j}$ meets $\Pi_{k}$ only at $0$ if $k\neq j\pm 1$. Choose a real $(2n-2)$ dimensional subspace $L\subset \bC^{n+1}$ transverse to all the planes $\Pi_{j}$. For each $v \in L$ we can take the cone $C(v,\Gamma)$ over $\Gamma$ with vertex $v$. Thus $C(v,\Gamma)$  is a union of triangles $\Delta_{j,v}$ with vertices $v, q_{j}, g_{j+1}$. 

Consider the projection from $\bC^{n}\setminus \{0\}$ to the unit sphere $S^{2n-1}$. The image of $W_{\infty}$ has finite Minkowski measure and so in particular has empty interior. This means that, after perhaps making a small perturbation to $\Gamma$ we can suppose that $W_{\infty}$ does not meet the rays $0,q_{j}$. It follows that for sufficiently small $v$ the intersection of $W_{\infty}$ and $C(v,\Gamma)$ lies strictly inside the interior of the triangles $\Delta_{j,v}$. Thus we can find $\lambda_{1}, \lambda_{2}>0$ such that if $\vert v\vert \leq \lambda_{1}$ the distance between two points in $W_{\infty}\cap C(v,\Gamma)$ which lie in different triangles exceeeds $\lambda_{2}$.
We can then find $\kappa_{1}>0$ such that if $\vert v_{1}\vert , \vert v_{2}\vert \leq \lambda_{1}$ and if $w_{i}\in C(v_{i}, \Gamma)\cap W_{\infty}$ then 
$\vert w_{1}-w_{2}\vert \geq \kappa_{1} \vert v_{1}-v_{2}\vert$.

Given $\eta>0$ we take a finite set of $S$ points $v_{i}$ in the ball of radius $\lambda_{1}$ in $L$ such that the distance between any two points exceeds $20 \kappa_{1}^{-1} \eta$ and with $S\geq \kappa_{2} \eta^{2-2n}$, for fixed $\kappa_{2}>0$. Using the definition of Minkowski measure we can find a set of $T$ points $z_{\nu}$ in $W_{\infty}$ so that the $\eta$ balls with these centres cover $W_{\infty}$, where $T\leq M \eta^{2-2n}$. 
 Suppose that $x_{i}\in C(v_{i}, \Gamma)$ and $x_{j}\in C(v_{j},\Gamma)$ for distinct $i,j$. Suppose also that $\vert x_{i}- z_{\mu}\vert \leq 2\eta$ and $\vert x_{j}- z_{\nu}\vert\leq 2\eta$ for some $\mu,\nu$.
 Then we cannot have $\mu=\nu$, since then $\vert x_{i}-x_{j}\vert \leq 4\eta$.
 For each $i$ let $M_{i}$ be the number of points $z_{\nu}$ whose $2\eta$ neighbourhoods intersect $C(v_{i},\Gamma)$. A counting argument shows that we can find a fixed $K_{0}$ (independent of $\eta$) so that for some $i$ we have $M_{i}\leq K_{0}$. Having found this vector $v_{i}$ we just write $v_{i}=v$. Let $U$ be the intersection of $C(v,\Gamma)$ with  all the $2\eta$ balls centre $z_{\nu}$. Thus $U$ is the union of at most $K_{0}$ discs in the interiors of the triangles $\Delta_{i,v}\subset C(v,\Gamma)$. On the other hand $U$ contains the intersection of  $C(v,\Gamma)$ with the $\eta$ neighbourhood of $W_{\infty}$. The proof of the Lemma follows easily. This completes the proof of Proposition 1.

\subsection{Matching with the flat model}

To begin we state:
    \begin{lem}\label{lemcon} Given $r,M>0$ there is an $\eta_{1}>0$ such that if  $W\subset B^{2n}$ is a set with $m(\eta_{1}, W)\leq M$ then any two points in $\Omega(W,r)$ can be joined by a path in $\Omega(W,\eta)$.
\end{lem}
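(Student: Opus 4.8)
The goal of Lemma \ref{lemcon} is a ``connectivity'' statement for the sets $\Omega(W,s)$: provided $W$ is Minkowski-small at scale $\eta_1$, the region obtained by deleting an $r$-neighbourhood of $W$ (and of $\partial B^{2n}$) lies in a single path-component of the region obtained by deleting only an $\eta$-neighbourhood, with a path joining the two given points. The plan is to run a dimension-counting argument of the same flavour as in Lemma \ref{lem1}: the set $W$ has Minkowski dimension strictly less than $2n-1$, so a generic $1$-parameter family of segments (or a generic $1$-complex interpolating between the two points) will miss the $\eta$-neighbourhood of $W$ for $\eta$ small, and one then has to control how close such a path comes to $W$ and to $\partial B^{2n}$ in terms of the covering data.

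Concretely, first I would fix $r,M>0$ and, given two points $x,y\in\Omega(W,r)$, choose a reference connecting path (say a polygonal path inside $B^{2n}$ staying at distance $\geq r/2$ from $\partial B^{2n}$, which exists because $\Omega(W,r)$ is nonempty only when $r$ is small compared to $1$ and such a path can be routed near the centre). Next I would ``thicken'' this path into a family of nearby paths: e.g. take a $(2n-1)$-parameter family of parallel translates or a tube of radius $\sim r/4$ around it, so that we get $\gtrsim (r/\eta)^{2n-1}$ essentially disjoint candidate paths, each a bounded-length curve, any two of them $\eta$-separated. Then, using $m(\eta_1,W)\leq M$, cover $W$ by at most $M\eta_1^{2-2n}$ balls of radius $\eta_1$; each such ball can block only a bounded number of these separated paths (because the candidate paths are $\eta$-separated and of bounded length, an $\eta_1$-ball meets only $O(1)$ of them once $\eta_1\lesssim\eta$), so the total number of blocked paths is $O(M\eta^{2-2n})$. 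Choosing $\eta_1$ (and hence the parameter count $(r/\eta)^{2n-1}$) large enough relative to $M$ forces at least one candidate path to avoid the $\eta$-neighbourhood of $W$ entirely; it also stays at distance $\geq r/4$ from $\partial B^{2n}$, hence lies in $\Omega(W,\eta)$ after shrinking $\eta$ below $r/4$. Finally, append to this avoiding path the short segments joining its endpoints to $x$ and $y$; since $x,y\in\Omega(W,r)$ these short segments stay distance $\geq r/2$ from $W$ once $\eta$ is small, so the concatenation lies in $\Omega(W,\eta)$, as required.

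The main obstacle I expect is making the ``an $\eta_1$-ball blocks only boundedly many candidate paths'' step genuinely uniform: this needs the candidate paths to be not merely $\eta$-separated but to have controlled geometry (bounded length, controlled multiplicity of self-approach) so that the number of them passing within $\eta$ of a fixed point is $O(1)$ independent of everything. A clean way to arrange this is to take the candidate paths to be straight segments of a fixed cone construction as in Lemma \ref{lem1} (cones over a fixed polygonal path $\Gamma$ with varying vertex $v$ in a transverse $(2n-1)$-plane), so that the counting argument there applies almost verbatim: the separation estimate $|w_1-w_2|\geq\kappa_1|v_1-v_2|$ for points of $W\cap C(v_i,\Gamma)$ is exactly what bounds the number of vertices $v$ for which $C(v,\Gamma)$ enters the $\eta$-neighbourhood of $W$, and for the remaining vertex the cone gives an honest path from $x$ to $y$ in $\Omega(W,\eta)$. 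Indeed, given this, Lemma \ref{lemcon} is essentially a corollary of the argument already carried out for Lemma \ref{lem1}, applied to a path $\Gamma$ joining $x$ to $y$ rather than to a loop, and I would phrase the proof that way, pointing out that one additionally needs to keep the cone away from $\partial B^{2n}$, which costs only a further smallness restriction on $\eta_1$.
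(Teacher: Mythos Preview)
Your proposal is correct and follows essentially the same approach as the paper. The paper itself gives almost no detail here: it simply says the proof is ``similar to the proof of Lemma~\ref{lem1} above, but simpler,'' takes a $(2n-1)$-dimensional family of piecewise linear paths from $x$ to $y$, and invokes a counting argument to find one lying in $\Omega(W,\eta)$, leaving the reader to fill in details---which is exactly what you have done, including the observation that this is a path-version of the Lemma~\ref{lem1} argument.
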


The proof  is similar to the proof of  Lemma \ref{lem1} above, but simpler. Given points $x,y\in \Omega(W,r)$ we consider a $(2n-1)$-dimensional family of piecewise linear paths from $x$ to $y$. Then a counting argument shows that when $\eta$ is sufficiently small at least one of these paths must lie in $\Omega(W,\eta)$. We leave the reader to fill in details. 

\

\begin{lem}\label{lemmap}
Let $U$ be a domain in $\bC^{n}$ and let $a,b$ be points in $\bC^{n}$. Let $f_{i}:U\rightarrow \bC^{n}$ be a sequence of maps which can be written in the form $f_{i}= g_{i} \tilde{f}_{i} h_{i}^{-1} $ where $\tilde{f}_{i}$ are holomorphic and  $g_{i}, h_{i}$ are Euclidean transformations fixing $a,b$ respectively.  If $f_{i}$ converges to the inclusion map $U\rightarrow \bC^{n}$  in $C^{0}$ then they also converge in $C^{\infty}$ on compact subsets of $U$. \end{lem}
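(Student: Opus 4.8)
The plan is to exploit the fact that the maps $f_i$ are, up to left and right Euclidean transformations, holomorphic, and that they converge in $C^0$ to the inclusion, which is itself holomorphic. First I would localize: fix a relatively compact subdomain $U'\Subset U$ and work there. The left factors $g_i$ and right factors $h_i$ are Euclidean motions (translations composed with orthogonal transformations) fixing $a$ and $b$ respectively; in particular each lies in a finite-dimensional Lie group and is determined by boundedly many parameters. Since $f_i\to \mathrm{id}$ in $C^0$ and $f_i = g_i\tilde f_i h_i^{-1}$, I would first extract from the $g_i$ and $h_i$ convergent subsequences: the orthogonal parts lie in a compact group $O(2n)$, and the translation parts are controlled because $f_i$ is uniformly close to the inclusion on a fixed ball, so $\tilde f_i$ (being $g_i^{-1}f_i h_i$) maps a fixed ball into a bounded region, which pins down the translations. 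Passing to a subsequence, $g_i\to g_\infty$, $h_i\to h_\infty$ in $C^\infty$, and these are genuine Euclidean motions.

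Next, with $g_i,h_i$ now converging smoothly, the holomorphic maps $\tilde f_i = g_i^{-1} f_i h_i$ converge in $C^0$ on compact subsets of $h_\infty^{-1}(U)$ (adjusting domains slightly) to the holomorphic map $g_\infty^{-1}\circ(\text{inclusion})\circ h_\infty$, which is an affine — hence holomorphic — map. Now I invoke the standard fact that a $C^0$-convergent sequence of holomorphic maps between domains in $\mathbb C^n$ converges in $C^\infty_{\mathrm{loc}}$: this is immediate from the Cauchy integral formula, since each component of $\tilde f_i$ is holomorphic and locally uniformly bounded, so all derivatives are locally uniformly bounded and equicontinuous, giving $C^\infty_{\mathrm{loc}}$ convergence via Arzelà–Ascoli (or Montel plus differentiation under the integral). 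Therefore $f_i = g_i \tilde f_i h_i^{-1}$ converges in $C^\infty$ on compact subsets, being a product of $C^\infty_{\mathrm{loc}}$-convergent maps. Finally, since the $C^\infty_{\mathrm{loc}}$ limit of any subsequence must agree with the $C^0$ limit, i.e. the inclusion, the whole sequence converges in $C^\infty_{\mathrm{loc}}$ to the inclusion.

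The only subtle point — and the step I expect to be the main obstacle — is the compactness argument for the Euclidean factors $g_i,h_i$: one must be careful that the decomposition $f_i = g_i\tilde f_i h_i^{-1}$ is not assumed normalized in any way beyond the fixed-point conditions, so a priori $g_i$ and $h_i$ could drift off to infinity in the translation parameters while cancelling each other through $\tilde f_i$. The fixed-point conditions ($g_i$ fixes $a$, $h_i$ fixes $b$) are exactly what rule this out: knowing $g_i(a)=a$ reduces $g_i$ to an orthogonal transformation about $a$, so $g_i$ stays in a compact set automatically, and similarly for $h_i$. Once that is observed the rest is routine. One should also note the harmless shrinking of domains at each stage (replacing $U$ by slightly smaller subdomains to accommodate the moving maps $h_i^{-1}$), which does not affect the conclusion since the statement is about compact subsets of $U$.
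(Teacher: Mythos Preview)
Your argument is correct and is essentially the same as the paper's, which dispatches the lemma in one sentence by invoking ``the compactness of the orthogonal group and elliptic estimates for holomorphic maps.'' Your observation that the fixed-point conditions force $g_i,h_i$ to be orthogonal transformations about $a,b$ (hence lie in a compact set) is exactly the compactness step, and your appeal to the Cauchy integral formula is the elliptic estimate.
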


This is clear from  the compactness of the orthogonal group and elliptic estimates for holomorphic maps.

\

\begin{prop} \label{prop2}
For  $\tilde M,c, r ,\tilde{\theta}>0$, there are $\delta, \eta$ such that the following holds. If $1-I(B)\leq \delta$ and $W\subset B$ is a subset with $m(\eta, W)\leq \tilde M$ and for any point $x$ not in $W$ there is a $(c\eta,\delta)$-chart centred at $x$ then there is an open embedding $\phi: \Omega (W, r) \rightarrow B^{2n}$ such that $\Vert \phi_{*}(\omega)-\omega_{\Euc}\Vert_{L^{p}}\leq \tilde\theta, \vert \phi_{*}(J)-J_{\Euc}\vert \leq \tilde{\theta}$ and $d(x,\phi(x))\leq \tilde{\theta}$. 
\end{prop}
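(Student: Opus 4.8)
The plan is to build the embedding $\phi$ by patching together the local $(c\eta,\delta)$-charts, using Proposition \ref{prop1} (the holonomy result) to control the transition functions. First I would fix attention on the set $\Omega(W,r)$ and, via Lemma \ref{lemcon} (with a slightly smaller parameter in place of $r$), arrange that $\Omega(W,r)$ is path-connected through $\Omega(W,\eta)$, indeed through $\Omega(W,c\eta)$ once $\eta$ is chosen small enough relative to $c$. For each $x\in\Omega(W,r)$ the hypothesis gives a chart $\Gamma_x\colon B(x,c\eta)\to\bC^n$ which is a near-isometry in the $C^0$ sense (property (3)) and near-K\"ahler in $L^p$ (property (4)), and which is holomorphic. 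On overlaps $B(x,c\eta)\cap B(x',c\eta)$ the composition $\Gamma_{x'}\circ\Gamma_x^{-1}$ is a holomorphic map between domains in $\bC^n$ that is $C^0$-close to an isometry, hence by Lemma \ref{lemmap} (applied after composing with a fixed Euclidean motion) it is $C^\infty$-close, on the relevant compact subsets, to a Euclidean motion $g_{xx'}\in\mathrm{O}(2n)\ltimes\bC^n$.

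The key point is then to kill the rotational part of these transitions up to a small error. The differentials $dg_{xx'}$ define, after a fixed small perturbation, a cocycle with values in a small neighbourhood of the identity in $\mathrm{U}(n)$ (it must land in $\mathrm{U}(n)$ rather than all of $\mathrm{O}(2n)$ because each $\Gamma_x$ is holomorphic), hence we have a principal $\mathrm{U}(n)$-bundle — equivalently, after taking determinants or otherwise reducing, an $S^1$-type object — over $\Omega(W,c\eta)$ with a connection whose curvature is small in $L^p$ (this comes from property (4), since the failure of the $\Gamma_x$ to match the flat K\"ahler structure in $L^p$ bounds the curvature of the induced connection) and whose holonomy around small circles is small (a circle of radius $\sim\eta$ lies in a bounded chain of overlapping charts, so its holonomy is a bounded product of transitions each $C^0$-close to the identity — and one upgrades this to a genuine smallness bound by noting that a circle bounds a small disc, on which the small curvature controls the holonomy). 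I would therefore apply Proposition \ref{prop1}: choosing $\theta$ there small, and $\eta,\delta$ accordingly small, we obtain over $\Omega(W,r)$ a global trivialisation in which all the transition rotations are within $\theta$ of the identity. Absorbing these nearly-identity rotations into the individual charts (and using a partition of unity, or more simply the connectedness plus a path-ordered product, to define one global map) produces a single holomorphic-up-to-small-error map $\phi\colon\Omega(W,r)\to\bC^n$. Because each $\Gamma_x$ satisfied (3), $\phi$ is $C^0$-close to an isometry, so $d(x,\phi(x))\leq\tilde\theta$ and $\phi$ is an open embedding for $\delta,\eta$ small; because the rotational parts are within $\theta$ and each $\Gamma_x$ satisfied (4), the global map satisfies $\Vert\phi_*(\omega)-\omega_{\Euc}\Vert_{L^p}\leq\tilde\theta$ and $|\phi_*(J)-J_{\Euc}|\leq\tilde\theta$.

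The main obstacle is the verification of the holonomy hypothesis of Proposition \ref{prop1}: one must show that the holonomy of the transition cocycle around a \emph{standard $(C,\eta)$-circle for $W$} is genuinely small (bounded by the prescribed $\zeta$), not merely a product of factors each close to the identity — a product of many such factors could a priori be large. The resolution is that such a circle, lying in $\Omega(W,\eta)$, bounds a flat disc of radius $\sim\eta$ which also lies in $\Omega(W,c\eta/2)$ (shrink $c$ if necessary), and over that disc the bundle is defined with curvature small in $L^p$; Stokes/H\"older on the $2$-disc then bounds the holonomy by $C\eta^{2-2/p}\Vert\mathrm{curv}\Vert_{L^p}$, which we make $\leq\zeta$ by taking $\delta$ (hence the curvature bound $\psi$) small depending on $\eta$ — exactly the logical structure ("$\eta_0,\zeta,C$ first, then $\psi$ depending on $\eta$") of Proposition \ref{prop1}. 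A secondary technical point is the passage from $C^0$ to $C^\infty$ control of overlaps via Lemma \ref{lemmap}, which requires that the overlaps contain a definite-size ball on which to run the elliptic estimates; this is arranged by the choice $c$ large enough that consecutive charts overlap substantially, combined with Lemma \ref{lemcon}.
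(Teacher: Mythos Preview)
Your proposal has a genuine gap, and moreover takes an unnecessarily elaborate route.

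\textbf{The gap.} Your verification of the holonomy hypothesis in Proposition~\ref{prop1} is wrong. You assert that a standard $(C,\eta)$-circle for $W$ ``bounds a flat disc of radius $\sim\eta$ which also lies in $\Omega(W,c\eta/2)$''. This is false in general: a circle of radius between $\eta$ and $C\eta$ contained in $\Omega(W,\eta)$ may very well encircle points of $W$ --- indeed its centre may lie \emph{in} $W$ --- so the disc it bounds need not avoid $W$ at all. This is precisely why controlling the holonomy around such circles is nontrivial, and why the paper, when it \emph{does} need such a bound (in the proof of Proposition~\ref{prop3}, not here), resorts to an averaging argument over $(2n-1)$-dimensional families of chains rather than a single spanning disc. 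Your Stokes-on-a-disc argument collapses. A secondary issue is that Proposition~\ref{prop1} is stated for $S^1$-bundles, and your reduction from the $\mathrm{U}(n)$-valued transition cocycle to $S^1$ (``taking determinants or otherwise reducing'') is not a step one can wave away: trivialising the determinant does not trivialise the full rotation.

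\textbf{Comparison with the paper.} The paper does not invoke Proposition~\ref{prop1} at all in proving Proposition~\ref{prop2}; that proposition is reserved for the line-bundle lift in Proposition~\ref{prop3}. Instead the paper argues by contradiction and compactness: assume violating sequences $\delta_i\to 0$, $W_i\subset B_i$, pass to a limiting bad set $W_\infty\subset B^{2n}$, fix a finite cover of $B^{2n}\setminus W_\infty$ by $c\eta$-balls with a partition of unity $\{\chi_\nu\}$, pull the charts $\Gamma_\nu$ back, adjust each by a single Euclidean motion $g_\nu$ so that the overlap maps $f_{\nu,\nu'}$ are $C^0$-close to the identity, and then simply define
\[
\phi(z)=\frac{\sum_\nu \tilde\chi_\nu(z)\,\tilde\Gamma_\nu(z)}{\sum_\nu \tilde\chi_\nu(z)}.
\]
Lemma~\ref{lemmap} upgrades the $C^0$-closeness of the $f_{\nu,\nu'}$ to $C^1$-closeness, which makes this weighted average a near-isometry in each chart. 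The connectivity from Lemma~\ref{lemcon} is then used, not to run any holonomy argument, but to propagate a single choice of linear complex structure across all charts (so that the $J_\nu,\omega_\nu$ agree up to small error). The upshot is that no bundle or connection needs to be trivialised: once the overlaps are $C^1$-close to the identity, plain averaging works.
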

Here $J,\omega$ are the complex structure and 2-form on $B$ and $ J_{\Euc}, \omega_{\Euc}$ the standard structures on $B^{2n}$.  

 Recall that $1-I(B)$ controls the Gromov-Hausdorff distance of $B$ to $B^{2n}$. To streamline notation let us write $\delta^{*}(\delta)$ for any function that tends to $0$ with $\delta$, which can vary from line to line. Thus for example we have
$1-I(B)\leq \delta$ implies that $d_{GH}(B, B^{2n})\leq \delta^{*}$.
 
First we use Lemma \ref{lemcon} to fix $\eta$ so that for a subset $W\subset B^{2n}$ with $m(\eta,W)\leq \tilde M$ any two points in $\Omega(W,r)$ can be joined  by a path in $\Omega(W,\eta)$. Now we prove the existence of a suitable $\delta$ by a contradiction argument, so suppose there are violating sequences $\delta_{i}\rightarrow 0$ and $W_{i}\subset B_{i}$. We choose  (not-necessarily continuous) maps $\Phi_{i}:B_{i}\rightarrow B$ so that $$
\vert d(x,y)-d(\Phi_{i}(x), \Phi_{i}(y))\vert \leq 3\delta_{i}^*, $$
 and the image of $\Phi_{i}$ is $2\delta_{i}^*$-dense. 
Cover $W_{i}$ by a fixed number of $\eta$-balls with centres $x_{i,\mu}$, and consider the points $\Phi_{i}(x_{i,\mu})\in B^{2n}$.  Passing to a subsequence, we can suppose that for each $\mu$ these converge as $i\rightarrow \infty$ to points $z_{\mu}\in B^{2n}$. Let $W_{\infty}\subset B^{2n}$ be the union of the $\eta$-balls with centre $\mu$. 

We want to construct a  map $\phi$ on $\Omega(W_{i},r)$ for large $i$, as in the statement of the Proposition, thus obtaining a contradiction. To simplify the notation we drop the index $i$.  Fix a cover of $B^{2n}\setminus W_{\infty}$ by   $c\eta$-balls centred at points $p_{\nu}\in B^{2n}\setminus W_{\infty}$ and let $1=\sum \chi_{\mu}$ be a smooth partition of unity on $B^{2n}\setminus W_{\infty}$ subordinate to this cover. Choose points $x_{\nu}\in B\setminus W $ such that $d(\Phi(x_{\nu}), p_{\nu})\leq \delta^*$. By hypothesis we have a $(c\eta, \delta)$-chart $\Gamma_{\nu}$ mapping the $c\eta$-ball centred on $x_{\nu}$ to $\bC^{n}$,  with $\Gamma_{\nu}(x_{\nu})=0$. Given an element $g_{\nu}$ of the orthogonal group, we can define $\tilde{\Gamma}_{\nu}(x)= g\Gamma_{\nu}(x)+ p_{\nu}$. Write $\Phi_{\nu}= \Phi\circ \tilde{\Gamma}_{\nu}^{-1}$. By an elementary geometric argument we can choose $g_{\nu}$ so that 
for all $z$ in the domain of $\Phi_{\nu}$ we have  $d(z,\tilde{\Phi}_{\nu}(z))\leq \delta^*$. Now consider a pair of indices $\nu,\nu'$ such that the $c\eta$-balls centred at $p_{\nu}, p_{\nu'}$ intersect. We have an overlap map 
$f_{\nu,\nu'}=\tilde{\Gamma}_{\nu'}\circ \tilde{\Gamma}_{\nu}^{-1}$ defined on a domain which is arbitrarily close to this intersection and we have
$d(f_{\nu,\nu'}(z), z) \leq \delta^*$. By construction this map has the form considered in Lemma \ref{lemmap} and  it follows from that lemma that, after shrinking the domain slightly, $f_{\nu,\nu'}$ is close to the identity in $C^{1}$. Now let $\tilde{\chi}_{\nu}$ be the functions on $B$ defined by $\tilde{\chi}_{\nu}=\chi_{\nu} \circ \tilde{\Gamma}_{\nu}$. We can make $\sum_{\nu} \tilde{\chi}_{\nu}$ as close to $1$ as we please over $B\setminus W$ by making $\delta$ small. Define
$$\phi(z)= \frac{\sum_{\nu} \tilde{\chi}_{\nu}(z) \tilde{\Gamma}_{\nu}(z)}{\sum_{\nu} \tilde{\chi}_{\nu}(z)}. $$
 It is easy to check that for any $\tilde{\theta}$ we can  ensure  $d(x,\phi(x))\leq \tilde{\theta} $, for all $x$, by making $\delta$ small. Fix $\nu$ and consider the composite $f_{\nu}=\phi\circ \tilde{\Gamma}_{\nu}^{-1}$, which is defined on a domain arbitrarily close to the $c\eta$ ball centre  $p_{\nu}$. We can write
$$  f_{\nu}(z)= z + \sum_{\nu'} a_{\nu'}(z) (f_{\nu,\nu'}(z)-z), $$
where the sums runs over indices $\nu'\neq \nu$ such that the corresponding balls intersect and  the weight functions $a_{\nu'}$ satisfy a fixed $C^{1}$ bound. Then Lemma \ref{lemmap} implies that $f_{\nu}$ is $C^{1}$ close to the identity map. Thus to show that $\phi_{*}(J), \phi_{*}(\omega)$ are close to the standard structures, in the sense stated in Proposition \ref{prop2},  it suffices to show the same for $(\tilde\Gamma_{\nu})_{*}(J) , (\tilde\Gamma_{\nu})_{*}(\omega)$, for each $\nu$. Let $J_{\nu}, \omega_{\nu}$ be the structures obtained from the standard structures on $\bC^{n}$ by applying the Euclidean transformation $g_{\nu}$. Then it is immediate that   $(\tilde{\Gamma}_{\nu})_{*}(J) , (\tilde{\Gamma}_{\nu})_{*}(\omega)$ are close in the relevant sense to $J_{\nu}, \omega_{\nu}$. Thus for any $\nu, \nu'$ such that the corresponding balls intersect $J_{\nu}$ must be close to $J_{\nu'}$ and $\omega_{\nu}$ to $\omega_{\nu'}$. The connectivity hypothesis implies that $\Omega(W, r)$  is covered  by balls as above such that any two can be joined by a chain of balls whose successive members intersect.  Since we have a fixed number of balls we see that we can suppose, after perhaps making a single overall change of linear complex structure on $\bC^{n}$ that $(\tilde{\Gamma}_{\nu} )_{*}(J), (\tilde{\Gamma}_{\nu})_{*}(\omega)$ are close to fixed standard structures. This completes the proof of Proposition \ref{prop2}.

\

\begin{prop} \label{prop3} Let $A_{{\rm Euc}}$ be the standard connection on the trivial line bundle $L_{{\rm Euc}}$ over $B^{2n}$ with curvature $\omega_{{\rm Euc}}$. Suppose we add to the hypotheses of  Proposition \ref{prop2} the assumption that $1-I(2B)<\delta$. Then in the conclusion of Proposition \ref{prop2} we can suppose there is a lift $\tilde{\phi}$ of $\phi$ to a bundle map from $L$ to $L_{{\rm Euc}}$ such that $ \vert A_{{\rm Euc}}- \tilde{\phi}_{*}(A)\vert \leq \tilde{\theta}$. 
\end{prop}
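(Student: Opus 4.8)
The plan is to deduce the statement by applying the holonomy result, Proposition~\ref{prop1}, to an appropriate difference bundle over $B^{2n}$, in the spirit of the line bundle arguments of \cite{kn:DS} and \cite{CDS2}. First I would apply Proposition~\ref{prop2}, but with its radius parameter taken to be a small value $\eta'$ (not $r$), so as to obtain the open embedding $\phi$ on the large region $\Omega(W,\eta')$ with $\Vert\phi_{*}(\omega)-\omega_{\Euc}\Vert_{L^{p}}$ and $\sup d(x,\phi(x))$ as small as we please. Transport the line bundle: set $\tilde L=(\phi^{-1})^{*}L$ on $\phi(\Omega(W,\eta'))$ with the pulled-back Chern connection $\tilde A$, whose curvature is $\phi_{*}(\omega)$, and form the $S^{1}$-bundle $P=\tilde L\otimes L_{\Euc}^{-1}=\mathrm{Hom}(L_{\Euc},\tilde L)$ with the difference connection $a=\tilde A-A_{\Euc}$. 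A nowhere-vanishing section of $P$ is the same thing as a bundle isomorphism $L_{\Euc}\to\tilde L$ covering the identity of $\phi(\Omega(W,\eta'))$, and precomposing with $\phi$ converts such a section into a lift $\tilde\phi:L\to L_{\Euc}$ of $\phi$; in a trivialisation of $P$ of this kind the $1$-form $A_{\Euc}-\tilde\phi_{*}(A)$ is, up to the near-isometry $\phi$, the connection form of $a$. So it is enough to produce a trivialisation of $P$ over $\Omega(W,r)$ with connection form bounded by $\tilde\theta$, and for that I would check the two hypotheses of Proposition~\ref{prop1} for $(P,a)$ --- after transporting $W$ to a set $\hat W\subset B^{2n}$ with $m(\eta',\hat W)$ bounded in terms of $\tilde M$, built from the images under a Gromov--Hausdorff approximation of the centres of a cover of $W$ by $\eta'$-balls, just as the set $W_{\infty}$ was produced in the proof of Proposition~\ref{prop2}.

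\emph{The two hypotheses.} The curvature hypothesis is immediate: the curvature of $a$ is $\phi_{*}(\omega)-\omega_{\Euc}$, which Proposition~\ref{prop2} makes arbitrarily small in $L^{p}$, in particular below the threshold $\psi$ that Proposition~\ref{prop1} attaches to the chosen mesh $\eta'$. The holonomy hypothesis is the substance. For a standard $(C,\eta')$-circle $\gamma$ for $\hat W$ one has, using that holonomy is intertwined by the isomorphism $\tilde L\cong(\phi^{-1})^{*}L$,
\[
\mathrm{hol}_{a}(\gamma)=\mathrm{hol}_{\tilde A}(\gamma)\,\mathrm{hol}_{A_{\Euc}}(\gamma)^{-1}=\mathrm{hol}_{A}(\phi^{-1}\gamma)\,\mathrm{hol}_{A_{\Euc}}(\gamma)^{-1}.
\]
The Euclidean factor is $\exp(\pm i\int_{D_{\gamma}}\omega_{\Euc})=\exp(iO((C\eta')^{2}))$, where $D_{\gamma}$ is the flat spanning disc. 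For the other factor, $\phi^{-1}\gamma$ is a loop in $B$ of diameter $\lesssim C\eta'$; since $1-I(B)$ and $1-I(2B)$ are small, $B$ is Gromov--Hausdorff close to the Euclidean ball on a fixed-size neighbourhood of $\phi^{-1}\gamma$ lying well inside $B$, so this short loop bounds a $2$-chain $D\subset B$ of area $O((C\eta')^{2})$ --- this is the one place where the extra hypothesis $1-I(2B)<\delta$ is needed. Then $\mathrm{hol}_{A}(\phi^{-1}\gamma)=\exp(\pm i\int_{D}\omega)$ and, by Wirtinger's inequality, $|\int_{D}\omega|\leq\mathrm{Area}(D)=O((C\eta')^{2})$; the cone locus $D_{i}$ is contained in $W$, so $\phi^{-1}\gamma$ meets it only at distance $\gtrsim\eta'$, and the passage of $D$ across it costs nothing anomalous because $\omega$ is a finite measure of small mass on small sets. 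Altogether $|\mathrm{hol}_{a}(\gamma)-1|=O((C\eta')^{2})$, which is $\leq\zeta$ once $\eta'$ is small.

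\emph{Bookkeeping and the main difficulty.} Given $\tilde M,c,r,\tilde\theta$, one first applies Proposition~\ref{prop1} with $M$ the Minkowski bound for $\hat W$, with $\theta=\tilde\theta$ and the given $r$, obtaining $\eta_{0},\zeta,C$ and, scale by scale, the threshold $\psi$; then one fixes $\eta'\leq\eta_{0}$ small enough that $O((C\eta')^{2})\leq\zeta$, and invokes Proposition~\ref{prop2} with parameters $(\tilde M,c,\eta',\tilde\theta_{1})$, $\tilde\theta_{1}\leq\min(\psi(\eta'),\tilde\theta)$, which supplies the required $\delta$ and the chart scale. Proposition~\ref{prop1} then yields a trivialisation of $P$ over $\Omega(W,r)$ (modulo the routine adjustment of radii by error terms that vanish with $\delta$) with $|a|\leq\tilde\theta$, and unwinding the identifications above gives the lift $\tilde\phi$ with $|A_{\Euc}-\tilde\phi_{*}(A)|\leq\tilde\theta$. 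The step I expect to be the main obstacle is the holonomy bound: showing that the transported short loop $\phi^{-1}\gamma$ bounds a small-area $2$-chain in $B$ using only the integral quantities $1-I(B),1-I(2B)\leq\delta$, rather than any a priori regularity of $B$. One must either appeal to Cheeger--Colding-type topological control of nearly Euclidean balls, or give a direct filling that negotiates the region near $W$ (where $\phi$ is not defined) and the cone locus contained in it; it is precisely because $D$ may have to cross $D_{i}$ that the Wirtinger bound $|\int_{D}\omega|\leq\mathrm{Area}(D)$, and not a pointwise estimate on $\omega$ near the divisor, is the right tool.
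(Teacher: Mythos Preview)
Your overall architecture is correct and matches the paper: reduce to Proposition~\ref{prop1} applied to the difference connection $a=\tilde A-A_{\Euc}$ on the transported bundle, verify the $L^{p}$ curvature bound via Proposition~\ref{prop2}, and then bound the holonomy around standard $(C,\eta')$-circles. The bookkeeping of how the parameters $\eta_{0},\zeta,C,\psi$ feed into each other is also essentially right.

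The gap is exactly where you flag it: the claim that the transported loop $\phi^{-1}\gamma$ bounds a $2$-chain $D$ in $B$ with $\mathrm{Area}(D)=O((C\eta')^{2})$. You invoke Gromov--Hausdorff closeness to the Euclidean ball, but GH control does not, by itself, produce small-area fillings --- the Cheeger--Colding homeomorphism is not Lipschitz, and there is no a~priori reason the image of the flat spanning disc has controlled area in $B$. The paper explicitly acknowledges this: ``If we knew that the circle $S$ bounds a surface of area $O(\eta_{1}^{2})$ in $B$ then we would get our result directly. While it is perhaps hard to prove the existence of such a surface we reach the same conclusion by this `averaging' argument.'' So your proposed mechanism is precisely what the authors decline to rely on.

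The paper's substitute is an integration-by-parts trick using only distance functions, which are always available and $1$-Lipschitz. One builds a Lipschitz map $\underline{f}_{B}:B\to\bR^{2n-2}$ from distances to $2n-2$ reference points and a radial cutoff $G_{B}$, and considers
\[
I_{B}=\int_{B} A\wedge dG_{B}\wedge \underline{f}_{B}^{*}(\Theta_{\eta_{1}}),
\]
where $A$ is a connection form for $L$ (the ball is contractible by Cheeger--Colding, so a trivialisation exists). On the Euclidean model this integral is a weighted average of the circulations of $A$ over loops close to $S$, giving $\vert I_{B}-\int_{S}A\vert\leq 2C_{2}\eta_{1}^{2}$. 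Integrating by parts replaces $A$ by its curvature $dA=\omega$, and now the integrand is pointwise bounded (since $\vert\nabla\underline{f}_{B}\vert\leq\sqrt{2n-2}$ and $\vert\Theta_{\eta_{1}}\vert\leq C_{1}\eta_{1}^{2-2n}$) and supported in a ball of radius $C\eta_{1}$, hence volume $O(\eta_{1}^{2n})$; this yields $\vert I_{B}\vert=O(\eta_{1}^{2})$ with no filling needed. The extra hypothesis $1-I(2B)<\delta$ enters because this construction, as written, is centred at the origin; to handle an arbitrary standard circle one recentres the distance functions, which requires Euclidean control on a larger ball.

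Your Wirtinger argument would be fine \emph{if} the area bound held, so the missing idea is exactly this averaging device that bypasses the need for any $2$-chain at all.
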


The proof involves a number of steps.

{\bf Step 1} We apply Proposition \ref{prop1} with $M=2\tilde{M}$,  $\theta=\tilde{\theta}/2$. This gives us numbers $\eta_{0}, C, \zeta$ and for $\eta\leq \eta_{0}$ a $\psi(\eta)$.

{\bf Step 2} We make some elementary constructions in Euclidean space. Given $C$ as in Step 1 let $S$ be a circle in $\bR^{2n}$ of radius $\lambda \eta$ for $1\leq \lambda\leq C$ and $\eta<< C^{-1}$ with centre at the origin. Choose co-ordinates $(x_{1}, \dots, x_{2n})$ so that $S$ lies in the $(x_{1}, x_{2})$ plane. For $i=3,\dots, 2n$ let $n_{i}$ be the point on the unit sphere with ith. co-ordinate $1$ and all others zero and let $f_{i}(x)= 1- \vert x-n_{i}\vert $. Thus, for small $x$,  $f_{i}$ is an approximation to the $ith.$ co-ordinate function. Put these together to define a map $\underline{f}: B^{2n}\rightarrow \bR^{2n-2}$. 

Next fix a $(2n-2)$ form $\Theta$ on $\bR^{2n-2}$ of integral $1$, supported in the $1/4$ ball and with $\vert \Theta\vert \leq C_{1}$ say. So we have a dilated form $\Theta_{\eta}$ supported in the $\eta/4$-ball, with integral $1$ and $\vert \Theta_{\eta}\vert \leq C_{1} \eta^{2-2n}$. Let $g(t)$ be a function equal to $0$ for $t\leq (\lambda -1/2) \eta$ and to $1$ for $t\geq (\lambda-1/4)\eta$ and define $G:B^{2n}\rightarrow \bR$ by $G(x)= g(\vert x\vert)$. Then it is clear that, if $\eta$ is sufficiently small the $2n-1$ form $dG\wedge \underline{f}^{*}(\Theta_{\eta})$ is supported in the $\eta$-neighbourhood of $S$ and that the fibres of $(\tilde{f}, G): B^{2n} \rightarrow \bR^{2n-1}$ in this neighbourhood are loops which are close to $S$. Let $\alpha$ be $1$ form defined over this $\eta$-neighbourhood of $S$ with $\vert d\alpha\vert\leq n$ and consider the integral
\begin{equation} I= \int \alpha \wedge dG\wedge \underline{f}^{*}(\Theta_{\eta}).\end{equation}
This can be written as a weighted average of the integrals of $\alpha$ over the fibres and it is clear from this that we have
\begin{equation}   \vert I - \int_{S} \alpha \vert \leq C_{2} \eta^{2}, \end{equation}
since any fibres is homologous to $S$ by a surface of area $O(\eta^{2})$.
Let $$C_{3}= 2( C_{2}+ C_{1} C^{2n} \sqrt{2n-2} {\rm Vol} B^{2n}). $$

{\bf Step 3} With $\zeta$ as obtained in Step 1, choose $\eta_{1}$ so that $C_{3} \eta_{1}^{2}\leq \zeta$. Now apply Proposition \ref{prop2} with $\tilde{\eta}=\eta_{1}$.
Choose $\delta<< \eta_{1}$ so that the geometry of the ball $B$ on scale $\eta_{1}$ is very close to that of the Euclidean ball. If we write $W$ for the complement of $\psi(\Omega_{\eta_{1}})$ in $B^{2n}$ we can ensure that $m(\eta_{1}, W)\leq M_{2}$ for some fixed $M_{2}$ depending on $\tilde{M}$.

{\bf Step 4} Now we are in the situation considered in Proposition \ref{prop1}. For clarity we suppress the map $\psi$, so we are regarding $\Omega_{\eta_{1}}$ as a subset $B^{2n}\setminus W$ of $B^{2n}$.  We have a connection $a$ on the line bundle $L\otimes \Lambda^{*}$ over $B^{2n}\setminus W$ and we can arrange that the $L^{p}$ norm of its curvature is less than $\psi(\eta)$ by assuming $\delta$ small.

{\bf Step 5} To complete the proof we need to see that the holonomy of $a$ around any standard $(C,\eta_{1})$-circle $S$ for $W$ is bounded by $\zeta$. First consider the case when the circle has centre at the origin. We use the distance functions from $n_{i}$ to define a Lipschitz map $\underline{f}_{B}: B\rightarrow \bR^{2n}$ with $\vert \nabla \underline{f}_{B}\vert \leq \sqrt{2n-2}$ and the distance from $p$ to define a function $G_{B}: B^{2n} \rightarrow \bR$.
We can suppose (when $\delta$ is small) that the support of $dG_{B}\wedge \underline{f}_{B}^{*}(\Theta_{\eta_{1}})$ is contained in the $\eta_{1}$-neighbourhood of $S$ and hence in $\Omega_{\eta_{1}}$. Then over this support we can suppose that the picture is close as we like to the Euclidean model. (Initially the approximation is in $C^{0}$ but we can introduce a small smoothing.)

By a result of Cheeger and Colding the ball $B$ is homeomorphic to $B^{2n}$ (for small $\delta$) thus the line bundle $L$ is topologically trivial. We choose a trivialisation to give us a connection form $A$. Since the holonomy of the standard connection $A_{{\rm Euc}}$ around $S$ is clearly $O(\eta_{1}^{2})$ it suffices to control the integral of $A$ around $S$.
Now consider 
$$   I_{B}= \int_{B} A\wedge  dG_{B}\wedge \underline{f}_{B}^{*}(\Theta_{\eta_{1}}). $$
By making the approximation close enough we get
$$  \vert I_{B}-\int_{S} A \vert \leq 2C_{2} \eta_{1}^{2}. $$
On the other hand we can integrate by parts to write
$$  I_{B}= \int_{B} G_{B}  dA \wedge \underline{f}_{B}^{*}(\Theta_{\eta_{1}}). $$
This integral is supported on a ball of radius $ C \eta_{1}$ and the integrand is bounded by $\sqrt{2n-2} C_{1}\eta_{1}^{2-2n}$. So we get $\vert I_{B}\vert \leq C^{2n} \sqrt{2n-2} C_{1}\eta_{1}^{2n} \eta_{1}^{2-2n} {\rm Vol}(B^{2n})$ and deduce that the integral of $A$ around $S$ is bounded by $\zeta$ as required.

To explain the main point of this argument. If we knew that the circle $S$ bounds a surface of area $O(\eta_{1}^{2})$ in $B$ then we would get our result directly. While it is perhaps hard to prove the existence of such a surface we reach the same conclusion by this \lq\lq averaging'' argument. 

{\bf Step 6} Using the hypothesis that the twice sized ball is Gromov-Hausdorff close to $2B^{2n}$ we can apply the argument above to any standard loop for $W$.

\subsection{Proof of Proposition \ref{Hormander} and a lower bound on densities}

Recall that our ball $B$ is contained in $X$ and we are supposing that there is a line bundle $L\rightarrow X$ with curvature $\omega$. To prove Proposition \ref{Hormander} we work with a power $L^{k}$ and we write $R=k^{1/2}$. Let $B^{\sharp}$ be the same ball but with the metric scaled by factor $R$, so $B^{\sharp}$ has radius $R$. The first step in the proof is to fix $R$, and to do this we need to review the overall strategy of the argument. We write $\Omega^{\sharp}(W,s)$ for the points of distance {\it in the rescaled metric}, greater than $s$ from $W$ and from the boundary of $B^{\sharp}$. In the argument we will need to choose numbers $r_{0}> r_{1}$ and a cut-off function $\chi$ supported in $\Omega^{\sharp}(W, r_{1})$ and equal to $1$ on $\Omega^{\sharp}(W,r_{0})\cap (1/2) B^{\sharp}$. We will also need to show that there is an \lq\lq approximately holomorphic isometry'' matching up the data over $\Omega^{\sharp}(W, r_{1})$ with the model over $\bC^{n}$. Given this we get $(n+1)$ approximately holomorphic sections
$\sigma_{0}, \dots, \sigma_{n}$ of $L^{k}\rightarrow X$ which we project to genuine holomorphic sections $s_{i}=\sigma_{i}-\tau_{i}$. The ratios
$s_{i}/s_{0}$, for $i=1,\dots, n$ will then give the components of the map $F$. 

The choice of $R$ is coupled to the choice of $r_{0}$. Our cut-off function $\chi$ will have the form $\chi=\chi_{W}\chi_{R}$ where $\chi_{R}$ is a function of compact support on $B^{\sharp}$, equal to $1$ on $(1/2) B^{\sharp}$. Fix  standard functions $g_{R}$ on $\bC^{n}$, supported in the ball of radius $R$ and equal to $1$ on the ball of radius $R/2$. Let
$$ E(R)= 2 \left( \int_{\bC^n} \vert z\vert \vert \nabla g_R\vert^{2} e^{-\vert z\vert^{2}/2}\right)^{1/2}. $$
Then $E(R)$ is a rapidly decreasing function of $R$, say $E(R)= O(e^{-R^{2}/10})$. In our construction, the $L^{2}$ norm of $\tau_{i}$ is controlled by that of $\db \sigma_{i}$. There are various contributions to the  \lq\lq error term'' $\db \sigma_{i}$ but the construction will be such that the $L^{2}$ norm of the contributions from the cut-off function $\chi_{R}$ will be bounded by $E(R)$. 

Given a number $d$, suppose that $m(d/R, W)\leq M$. Then {\it working with the rescaled metric}  the volume  of the $d$-neighbourhood of $W$ is bounded by a fixed multiple of $R^{2n-2} M d^{2}$. Thus if $d= c_{1} (R^{2n-2} M)^{-1/2}$ for a suitable fixed $c_{1}$ this $d$ neighbourhood cannot contain the ball of radius $1/2$, in the rescaled metric, centred at $p$. In other words there is a point $z_{0}$ in this 1/2-ball of distance greater than $d$, in the rescaled metric, from any point of $W$. Similarly, suppose we have a number $r_{0}<1/10$ such that $m(r_{0}/R, W)\leq M$ then if $d_{0}= c_{2} (r_{0}^{2} M R^{2n-2})^{1/2n}$ for a suitable fixed $c_{2}$ then any point $z$ in this 1/2- ball is within distance $d_{0}$ of a point $z'$ in $\Omega^{\sharp}(W, 2 r_{0})$. Given such a a point $z'$,  the $r_{0}$ ball centred at $z'$ lies in the set where the cut-off function is $1$. Thus, writing $\tau_{i}= \sigma_{i}-s_{i}$ we have a fixed estimate for the derivative of $\tau_{i}$ over this ball. Just as in \cite{CDS2}, this will give an estimate
\begin{equation} \vert \tau_{i}(z')\vert \leq c_{3} r_{0}^{-n} \Vert \tau_{i} \Vert_{L^{2,\sharp}}, \end{equation}
for some fixed $c_{3}$. Now for any point $z$ in the $1/2$ ball the derivative estimate for $s_{i}$ gives
\begin{equation}  \vert \ \vert s_{i}(z)\vert - \vert \sigma_{i}(z')\vert \ \vert \leq   c_{3} r_{0}^{-n} \Vert \tau_{i} \Vert_{L^{2,\sharp}}+ c_{4} d_{0} \end{equation}
where $z'$ is a point within distance $d_{0}$ of $z$ and lying in $\Omega^{\sharp}(W, 2r_{0})$. 

The purpose of all this review is now to recall that if $d_{0}$  and $ \vert \tau_{i}(z')\vert $ are small (for all $z$ in the half-ball, with $z'$ chosen as above), then the argument as in \cite{kn:DS}, \cite{CDS2} will give a strictly positive lower bound on $\vert s_{i}\vert$ over the $1/2$-ball. Then we get a derivative bound on $F$ and if further $d_{0}$ and $\vert \tau_{i}(z')\vert$ are sufficiently small compared with $d$
we can show that the map $F$ gives a homeomorphism to its image. (The role of the point $z_{0}$--- \lq\lq far away'' from the set $W$---comes in showing  that the degree of $F$ is $1$.) Likewise for the other statements in Proposition \ref{Hormander}.
 In other words we want
\begin{equation} r_{0}^{1/n} M^{1/2n} R^{(2n-2)/2n} \leq c_{5} (R^{2n-2} M)^{-1/2} \end{equation}
and
\begin{equation} \Vert \db \sigma_{i}\Vert_{L^{2,\sharp}} r_{0}^{-n} \leq c_{6} (R^{2n-2} M)^{-1/2}, \end{equation}
for some fixed, computable $c_{5}, c_{6}$. For the present discussion we just consider the contribution to $\db \sigma_{i}$ from the cut-off $\chi_{R}$, so we replace (6) by
\begin{equation}   E(R)  r_{0}^{-n} \leq (1/10) c_{6} (R^{2n-2} M)^{-1/2}. \end{equation}

Combining (5) and (7), we need
\begin{equation}10^{1/n} E(R)^{1/n} (R^{2n-2} M)^{1/2n} \leq c_{6}^{1/n} c_{5}^{n} M^{-1/2} R^{1-n} (R^{2n-2} M)^{-n/2}.  \end{equation}
The crucial point then is that the rapid decay of $E(R)$ means that this inequality is satisfied for large $R$ (depending on $M$). Thus we fix such an $R$ and then we can fix $r_{0}$ to satisfy (5), (7).

Now that $R$ is fixed we can imagine that in fact $R$ is equal to $1$. This is not truly realistic but purely to simplify notation. What we are saying is that we can now effectively ignore the contribution to $\db \sigma_{i}$ arising from the cut-off function $\chi_{R}$. We also have a fixed $r_{0}$. Examining the argument in Proposition 1 in \cite{CDS2} we see given any $\epsilon>0$  we can choose an $r_{1}<r_{0}$, depending only on $M, r_{0}, \epsilon $, so that if $W\subset B$ is any set with $m(r_{1}, W)\leq M$ then there is a function $\chi_{W}$ on $B$, vanishing on the $r_{1}$ neighbourhood of $W$, equal to $1$ outside the $r_{0}$ neighbourhood and with $\Vert \nabla \chi_{W}\Vert_{L^{2}} \leq \epsilon$. We choose $\epsilon$ so that $$ \epsilon r_{0}^{-n} \leq (1/10) c_{6}  M^{-1/2} $$ where $c_{6}$ is as in (7). This takes care of the contribution to $\db \sigma_{i}$ arising from $\chi_{W}$. Now $r_{1}$ is fixed. It is clear that we can fix a  $\tilde{\theta}$ depending only on $M, r_{1}$ so that if we have an open embedding $\phi:\Omega(W, r_1)\rightarrow \bC^{n}$  with a lift $\tilde{\phi}$ of the kind considered in Propositions \ref{prop2}, \ref{prop3}, with this bound $\tilde{\theta}$, then  by pulling back from the Euclidean model we can construct approximately holomorphic sections, $\sigma'_{i}$, over $\Omega( W, r_{1})$  such that $\Vert \db \sigma'_{i}\Vert \leq \epsilon$,  with $\epsilon$ as above. It is also straightforward to show that we can choose $\tilde{\theta}$ so that if we define $\chi_{R}$ on $\Omega(W, r_1)$ to be the composite $ g\circ \phi$ then the $L^{2}$ norm of
$(\nabla \chi_{R}) \vert \sigma'_{i}\vert$ differs by at most $\epsilon$ from the corresponding term in the Euclidean model (which we bounded by $2E(R)$).
We apply Propositions \ref{prop2}, \ref{prop3}  with $r_{1}$ as chosen above, with this $\tilde{\theta}$ and with $c,M$ as in the statement of Proposition \ref{Hormander}. This gives us a $\delta, \eta$. We take these as the values called for in the statement of Proposition \ref{Hormander}. The proof of that Proposition is now in our hands: we take $\sigma_{i}= \chi_{R} \chi_{W} \sigma'_{i}$.

\begin{cor}\label{cor2}
There are $\kappa(M)>0$ such that if $B$ satisfies the hypotheses of Proposition \ref{Hormander} (with parameters $M,\delta,\eta,c$ as in the statement of that Proposition), and if in addition the centre $p$ lies in $D$ then the volume of $D\cap B$ is at least $\kappa(M) $. \label{cor2}
\end{cor}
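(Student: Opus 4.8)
The plan is to run the holomorphic chart $F$ produced by Proposition \ref{Hormander} and use it to transplant a fixed lower bound on the volume of $D\cap B$ from a lower bound on the volume of the zero divisor of a holomorphic function in Euclidean space. Concretely, since $p\in D$, after applying Proposition \ref{Hormander} we have a holomorphic map $F:B(p,\rho)\to\bC^n$ which is a homeomorphism onto its image, satisfies $\vert\nabla F\vert\le C$, and whose image contains $(0.9)\rho B^{2n}$. The divisor $D$ is locally cut out by a holomorphic section of $L^\lambda$ (or, on the scale we work, a genuine holomorphic function), so $F(D\cap B(p,\rho))$ is a complex analytic hypersurface $V$ in $(0.9)\rho B^{2n}$ passing through the origin. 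By the Lelong/Bishop-type lower bound for the area of an analytic hypersurface through a given point, the Euclidean $(2n-2)$-volume of $V\cap (0.8)\rho B^{2n}$ is at least $\pi^{n-1}(0.8\rho)^{2n-2}/(n-1)!$, a fixed positive number depending only on $\rho$ (hence only on $M$ and the dimension).

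Next I would pull this back. Because $F^{-1}$ is Lipschitz on the relevant region (the Lipschitz bound on $F^{-1}$ follows from the lower bound $\vert s_i\vert\ge c$ on the $1/2$-ball that entered the proof of Proposition \ref{Hormander}, equivalently from the fact that $F$ is a bi-Lipschitz homeomorphism onto a region caught between $(0.9)\rho B^{2n}$ and $(1.1)\rho B^{2n}$), the preimage $D\cap B(p,\rho)$ has $(2n-2)$-Hausdorff measure bounded below by a fixed constant times $\mathrm{Vol}_{2n-2}(V)$. Alternatively, and perhaps more robustly, one works with the current of integration: on $B(p,\rho)$ the metric is $L^p$-close to Euclidean (via $\Gamma_*\omega$ and the potential $\phi$ with $\vert\phi\vert\rho^{-2}\le C$), so $\int_{D\cap B(p,\rho)}\omega^{n-1}$ is comparable to the Euclidean $(2n-2)$-volume of $F(D\cap B(p,\rho))$, and the latter is bounded below as above. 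Since on the original (unscaled) ball $B$ the relevant volume $\mathrm{Vol}(D\cap B)$ is, up to the fixed scaling $R$ already chosen in the proof of Proposition \ref{Hormander}, this same quantity, we get $\mathrm{Vol}(D\cap B)\ge\kappa(M)$ for a fixed $\kappa(M)>0$.

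The one point that needs care, and which I expect to be the main obstacle, is ensuring that $F(D\cap B(p,\rho))$ really is (the support of) a positive closed analytic $(1,1)$-current through the origin to which the Lelong bound applies, rather than merely a closed set. Here I would use that $D$ is a smooth divisor in $\vert-\lambda K_X\vert$ and that $F$ is holomorphic: locally $D$ is the zero set of a holomorphic function $h$ (a trivialisation of $L^\lambda$), and since $F$ is a homeomorphism onto an open set in $\bC^n$, $h\circ F^{-1}$ is a holomorphic function on $F(B(p,\rho))$ whose zero set is exactly $V$, not identically zero because $D$ is a proper subvariety. Then $V$ contains the origin and the classical monotonicity of area for analytic hypersurfaces (the Lelong number of $[V]$ at $0$ is a positive integer, at least $1$) gives the stated lower bound. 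The only subtlety is making sure the chart $\Gamma$ used to define the divisor locally and the chart $F$ are compatible on a common ball of size $\ge 0.8\rho$; this is automatic since both are holomorphic homeomorphisms onto Euclidean regions and $F$ was built precisely to patch the local charts of Proposition \ref{prop2} together over $B(p,\rho)$.
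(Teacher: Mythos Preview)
Your approach is essentially the same as the paper's: push $D$ forward by $F$, use the Lelong/monotonicity lower bound for the Euclidean area of an analytic hypersurface through the origin, and then compare volumes via the derivative bound on $F$. The paper's entire proof is two sentences to exactly this effect.

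There is, however, one slip you should fix. To get $\mathrm{Vol}_\omega(D\cap B(p,\rho))\ge \kappa \cdot \mathrm{Vol}_{\mathrm{Euc}}(V)$ you do \emph{not} need (and Proposition~\ref{Hormander} does not directly give) a Lipschitz bound on $F^{-1}$; you need the Lipschitz bound on $F$ itself, which is precisely the stated $|\nabla F|\le C$. Indeed $|\nabla F|\le C$ means $F^*\omega_{\mathrm{Euc}}\le C^2\omega$ as $(1,1)$-forms, so on the complex hypersurface $D$ one has $(F^*\omega_{\mathrm{Euc}})^{n-1}\le C^{2(n-1)}\omega^{n-1}$, whence
\[
\mathrm{Vol}_{\mathrm{Euc}}(V)=\int_{D\cap B(p,\rho)} (F^*\omega_{\mathrm{Euc}})^{n-1}\le C^{2(n-1)}\int_{D\cap B(p,\rho)}\omega^{n-1}.
\]
Your appeal to a Lipschitz bound on $F^{-1}$ would run the inequality the wrong way (it would give an \emph{upper} bound on the $\omega$-volume of the preimage), and in any case that bound is not part of the conclusion of Proposition~\ref{Hormander}. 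The last paragraph of your proposal, worrying about whether $F(D)$ is genuinely an analytic hypersurface, is unnecessary: $F$ is a holomorphic bijection between equidimensional complex manifolds, hence biholomorphic, so $F(D\cap B(p,\rho))$ is automatically a smooth complex hypersurface through $0$.
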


The proof is just as in \cite{CDS2}. The image $F(D)$ is a hypersurface in $0.9 \rho B^{2n}$ containing the origin and so must have a definite Euclidean volume. Then the  bound on $\vert \nabla F\vert $ gives a lower bound on the volume of $D\cap B$.

Recall that if $q\in X$ is a point and $r>0$  we write
 \begin{equation}
 V(q,r)= r^{2-2n} {\rm Vol}(D\cap B(q,r)).
 \label{volumeratiowithdivisor}
 \end{equation} 
 We want to build on Corollary \ref{cor2} to get a general lower bound on $V(q,r)$ for points in the the intersection of $D$ and a ball which is Gromov-Hausdorff close to the flat ball. 
 
\begin{prop} \label{prop5}
In the setting above, there are $\delta_{1}, \kappa>0$ such that if $1-I(B)\leq \delta_{1}$ and if $q$ is a point in $D$ with $d(p,q)\leq 1/2$ then
$V(q,r)\geq \kappa$ for all $r$ such that $B(q,r)\subset B(p,1/2)$. 
\end{prop}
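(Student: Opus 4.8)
The plan is to deduce Proposition \ref{prop5} from Corollary \ref{cor2} by a rescaling and iteration argument, of exactly the type used in \cite{CDS2} to propagate lower volume bounds. The point of Corollary \ref{cor2} is that it gives a definite lower bound $\kappa(M)$ on $\mathrm{Vol}(D\cap B)$ whenever the centre lies on $D$ and the hypotheses of Proposition \ref{Hormander} hold, in particular whenever $1-I(B)\leq\delta$ and the good-chart condition holds outside a set $W$ with $m(\eta,W)\leq M$. The difficulty we need to address is that the hypothesis of Proposition \ref{prop5} only gives us $1-I(B)\leq\delta_1$ (with $B$ the unit ball), and says nothing directly about $D\cap B$: the divisor could a priori be very dense near $q$, which is precisely the situation the \emph{second} application of Proposition \ref{Hormander} (mentioned in the Overview) is designed to handle. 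So one should expect $\delta_1$ to be a \emph{fixed} small number, and the argument must produce, for a given $q\in D$ and $r$ with $B(q,r)\subset B(p,1/2)$, a scale at which a ball centred on $D$ does satisfy all the hypotheses of Corollary \ref{cor2}.

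First I would set up the iteration. Consider the quantity $V(q,r)$ as $r$ ranges over scales with $B(q,r)\subset B(p,1/2)$. By Bishop--Gromov and the monotonicity $I(\Omega')\geq I(\Omega)$, since $1-I(B)\leq\delta_1$ is small, every subball $B(q',r')\subset B(p,1/2)$ centred near $q$ has $1-I(B(q',r'))$ small relative to $\delta_1$ only after rescaling — more precisely, Cheeger--Colding gives that the rescaled ball $r'^{-1}B(q',r')$ is Gromov--Hausdorff close to a ball in a metric cone, and for a contradiction argument one passes to the tangent cone at $q$. So the natural route is a contradiction/compactness argument: suppose $V(q_i,r_i)\to 0$ along a violating sequence; rescale the balls $B(q_i,r_i)$ to unit size; pass to a Gromov--Hausdorff limit which (by Cheeger--Colding, since the ambient spaces have Ricci bounded below, using the approximation from \cite{CDS1}) is a metric cone, and the limit of the rescaled divisors is a closed subset of measure zero in codimension $2$. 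Now at a smaller scale inside this limit cone one can find a point, still a limit of points on $D_i$, around which the cone looks arbitrarily flat, i.e.\ $1-I$ of some definite-size subball tends to $0$; and at such a point the good-chart condition outside a small set $W$ is exactly what Theorem \ref{thm3} (applied to the limit, whose regular set is a smooth K\"ahler--Einstein manifold) together with the convergence statements there provides, with $m(\eta,W)$ controlled in terms of the codimension-$2$ estimate on the singular set. This puts us in the hypotheses of Corollary \ref{cor2}, forcing a definite lower bound on the volume of $D\cap(\text{that subball})$, contradicting $V(q_i,r_i)\to 0$ after unwinding the scaling.

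The key steps, in order, are: (1) formulate the contradiction hypothesis and rescale $B(q_i,r_i)$ to unit balls $B_i'$ centred on $D_i$, noting that the ambient metrics still have Ricci bounded below by a fixed constant (here one uses the full approximation result of \cite{CDS1} as emphasised in the Overview, or the weaker lower bound, since $\beta_i$ is only needed bounded away from $0$); (2) extract a Gromov--Hausdorff limit $Z'$, which by Cheeger--Colding is (a ball in) a metric cone, with the divisors converging to a closed set $D'\subset Z'$; (3) show $D'$ has the same codimension-$2$ structure, so that for suitable small $\eta$ one can cover $D'$ (and the singular set of $Z'$) efficiently, giving $m(\eta,W)\leq M$ for a fixed $M$ with $W$ the relevant bad set; (4) use Theorem \ref{thm3} and its convergence conclusion to produce $(c\eta,\delta)$-charts outside $W$ at a point of $D'$ around which a definite-size ball is $\delta_1$-flat — such a point exists by the cone structure, e.g.\ passing to a further tangent cone or just using density/flatness at generic points of the smooth locus near $D'$; (5) apply Corollary \ref{cor2} at that point and scale, obtaining $\mathrm{Vol}(D_i\cap\text{ball})\geq\kappa(M)$ for large $i$; (6) translate back through the rescaling to contradict $V(q_i,r_i)\to 0$.

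The main obstacle is step (4): producing, inside the limit cone and \emph{on} (the limit of) the divisor, a ball of definite radius on which $1-I$ is as small as the fixed $\delta$ required by Corollary \ref{cor2}, while simultaneously controlling the bad set $W$ and the good charts. One must be careful that flattening the cone (e.g.\ by further blow-up at a point of $D'$) does not push the divisor off to infinity or make its local density degenerate — this is exactly why the statement of Theorem \ref{thm3} includes ``the same applies to any iterated tangent cone,'' and the proof of Proposition \ref{prop5} will lean on that clause, together with the fact that $D'$ is nonempty near the base point (since $q_i\in D_i$), to guarantee that there \emph{is} such a point on the divisor. Everything else is the by-now-standard packaging of Cheeger--Colding compactness plus the H\"ormander output of Proposition \ref{Hormander}/Corollary \ref{cor2}, carried out just as in \cite{CDS2}.
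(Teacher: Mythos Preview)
Your proposal has a genuine circularity problem and misses the key idea.

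\textbf{Circularity.} You invoke Theorem \ref{thm3} (and its ``iterated tangent cone'' clause) in step (4) to produce the $(c\eta,\delta)$-charts, and you say explicitly that ``the proof of Proposition \ref{prop5} will lean on that clause.'' But in the paper's logical structure Proposition \ref{prop5} is an \emph{input} to the proof of Theorem \ref{thm3}: it feeds into Proposition \ref{A good} (note the line ``Take $\delta(\theta)\leq \delta_{1}$ where $\delta_{1}$ is the function in Proposition \ref{prop5}''), and Proposition \ref{A good} is what drives the proof of Theorem \ref{thm3} in Section 2.6. So you cannot use Theorem \ref{thm3} here.

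\textbf{The missing idea.} Even stripping out Theorem \ref{thm3}, your step (3) --- ``show $D'$ has the same codimension-$2$ structure, giving $m(\eta,W)\leq M$'' --- is exactly the hard point, and you give no mechanism for it. Knowing $V(q_i,r_i)\to 0$ tells you the rescaled divisor has small total $(2n-2)$-volume, but that does \emph{not} by itself bound the Minkowski measure $m(\eta,D\cap B^*)$: a priori the divisor could be spread thinly but densely, so that covering it by $\eta$-balls still needs arbitrarily many. To bound $m(\eta,\cdot)$ by a covering argument you need a \emph{lower} bound on the local density $V(\cdot,\cdot)$ at smaller scales --- which is precisely the statement you are trying to prove.

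The paper's proof resolves this with a single clean device: a minimisation (point-selection) argument. Since $V(q,r)\to \mathrm{Vol}(B^{2n-2})$ as $r\to 0$, the infimum of $V(q,r)$ over all admissible $(q,r)$ is attained at some $(q_0,r_0)$. Rescale $B(q_0,r_0/2)$ to unit size and call it $B^*$. The minimising property now says that every sub-ball of $B^*$ centred on $D$ has density $\geq V(q_0,r_0)$, and a standard covering argument (as in Section 2.8 of \cite{CDS2}) turns this into a fixed computable bound $m(D\cap B^*)\leq M_0$, hence $m(\eta,W_\eta)\leq M$ for the $\eta$-neighbourhood $W_\eta$ of $D\cap B^*$. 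Away from $W_\eta$ the metric is genuinely smooth K\"ahler--Einstein, so Anderson's theorem (not Theorem \ref{thm3}) supplies the $(c\eta,\delta')$-charts directly. Now Corollary \ref{cor2} applies to $B^*$ and gives $V(q_0,r_0)\geq 2^{-2n}\kappa(M)$. Since $(q_0,r_0)$ was the minimiser, this bounds $V(q,r)$ for all admissible $(q,r)$. No limits, no tangent cones, no compactness are needed. (The second proof in Section 3.3, via Ricci flow, does use a contradiction sequence, but it too relies on this same point-selection step to get the Minkowski bound before doing anything else.)
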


For any fixed $q\in D$ the quantity $V(q,r)$ tends to the volume of the unit ball in $\bC^{n-1}$ as $r$ tends to $0$. It follows easily that there is a $q_{0}, r_{0}>0$ such that $V(q_{0}, r_{0})$ is minimised among all $q,r$ such that $B(q,r)\subset B(p, 1/2)$. Let $B^{*}$ be the ball $B(q, r/2)$  scaled to unit size. By a covering argument, just as in Section 2.8 of \cite{CDS2} and using the minimising property, we get a bound on the Minkoswki measure $m(B^{*} \cap D) \leq M_{0}$ where $M_{0}$ is a fixed computable number.   It follows that there is a fixed computable $M$ so that for any $\eta$, if $W_{\eta}$ is the $\eta$-neighbourhood of $B^{*}\cap D$ then $m(\eta, W_{\eta})\leq M$. Let $x\in B^{*}$ be a point which is not in $W_{\eta}$ so the $\eta/2$ ball centred at $x$ does not meet $D$. By the discussion above and Anderson's theorem in \cite{a90} (see also Proposition \ref{prop1.8} in Section 3.2 ) we can choose $\delta_{0}$ so that if $1-I(B)\leq \delta_{0}$ then we have a $(c\eta, \delta')$-chart centred at $x$ for some fixed $c$ and where $\delta'$ can be made as small as we please by making $\delta_{0}$ small. Now take these values of $M,c$ in Proposition \ref{Hormander}. That proposition gives us numbers $\delta, \eta$. We fix $\eta$ in this way and define $W=W_{\eta}$. We choose $\delta_{1}\leq \delta_{0}$ and so small that the hypotheses of Proposition 1 are satisfied. Then Corollary \ref{cor2} gives a lower bound on the volume of $B^{\sharp}\cap D$ which is at most
$2^{2n} V(q,r)$.

\subsection{Volume doubling argument}

We now come to the final phase of technical work in the proof of Theorem \ref{thm3}. So far the cone angle $\beta$ has played no role but we bring it in now in the next Proposition.
In this Proposition we continue to consider a unit ball $B=B(p,1)\subset X$ although in our application we will apply it to smaller balls by scaling.

\begin{prop}\label{goodchart}
Let $C$ be the number given in Proposition \ref{Hormander}. Given $A,\theta>0$ there are $\sigma(C), \gamma(A,\theta), \delta_*(\theta)>0$ with the following effect. Suppose that ${\rm Vol}(B\cap D)\leq A$ and that $1-I(B)\leq \delta_*(\theta)$. Suppose that there is a holomorphic map $F:B\rightarrow \bC^{n}$ with $F(p)=0$  which is a homeomorphism to a domain lying between $ B^{2n}$ and $(1.1) B^{2n} $, satisfying $\vert \nabla F\vert \leq C$ and that $\omega$ is defined by a K\"ahler potential $\phi$ with $\vert \phi\vert \leq C$. If $\beta\geq 1-  \gamma(A,\theta)$ then there is a $(\sigma, \theta)$-chart centred at $p$.
\end{prop}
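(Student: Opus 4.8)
The plan is to argue by contradiction and compactness, obtaining the chart as a composition of the given map $F$ with a fixed biholomorphism of a neighbourhood of the origin. Fix $A,\theta$; we will take $\sigma=\sigma(C)$ to be a sufficiently small multiple of $C^{-1}$, chosen so that (by the Lipschitz bound on $F$) $F$ always maps $B(p,\sigma)$ into $\tfrac12 B^{2n}$, the precise size being dictated by universal Cauchy estimates below. Suppose the conclusion fails: there are sequences $\delta_{*,i}\to0$, $\gamma_i\to0$ and data $(X_i,\omega_i,D_i,B_i=B(p_i,1),F_i,\beta_i)$ obeying the hypotheses, with $\beta_i\to1$, $1-I(B_i)\le\delta_{*,i}$, $\mathrm{Vol}(B_i\cap D_i)\le A$, and with no $(\sigma,\theta)$-chart centred at $p_i$. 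Since $1-I(B_i)$ controls $d_{GH}(B_i,B^{2n})$, we have $(B_i,\omega_i,p_i)\to(B^{2n},\omega_{\Euc},0)$ in pointed Gromov--Hausdorff distance. Using $F_i$ as a holomorphic coordinate chart, let $u_i$ be the potential read in these coordinates: a strictly plurisubharmonic function on $\Omega_i:=F_i(B_i)$ with $B^{2n}\subseteq\Omega_i\subseteq(1.1)B^{2n}$, $\omega_i=i\partial\bar\partial u_i$, $|u_i|\le C$ (up to an additive constant). Writing the cone K\"ahler--Einstein equation of $X_i$ in these coordinates, and using the uniform control of the local background geometry built into the setting of \cite{CDS1},\cite{CDS2}, $u_i$ satisfies a complex Monge--Amp\`ere equation
$$(i\partial\bar\partial u_i)^n = e^{\psi_i}\,|s_i|^{2(\beta_i-1)}\,dV,$$
where $s_i$ is holomorphic on $\Omega_i$ and cuts out the analytic hypersurface $D_i':=F_i(D_i\cap B_i)$ --- of volume at most $C^{2n-2}A$ by the Lipschitz bound on $F_i$ --- and $\psi_i$ is bounded above and below, uniformly, with uniform local higher-order bounds away from $D_i'$.

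The first, and main, step is to analyse the limit of the $u_i$. Because $\beta_i\to1$, a uniform integrability estimate (controlled by the volume $\le C^{2n-2}A$ of the zero set $D_i'$) shows that, once $\beta_i$ is close enough to $1$, $|s_i|^{2(\beta_i-1)}$ is bounded in $L^q(\Omega_i)$ for any fixed $q$, hence the right-hand side $g_i$ of the equation is uniformly bounded in $L^q$. By Ko\l odziej's uniform estimate and H\"older modulus-of-continuity theorem the $u_i$ are uniformly $C^{0,\alpha}$ on compact subsets; passing to a subsequence, $u_i\to u_\infty$ in $C^0_{\mathrm{loc}}$, $\Omega_i\to\Omega_\infty$, and $D_i'\to D_\infty'$, an analytic hypersurface by Bishop's theorem, still of bounded volume. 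Since $\beta_i-1\to0$, $|s_i|^{2(\beta_i-1)}\to1$ in $L^1_{\mathrm{loc}}$ and $e^{\psi_i}\to e^{\psi_\infty}$ locally, so $g_i\to g_\infty:=e^{\psi_\infty}$ in $L^1_{\mathrm{loc}}$ with $g_\infty$ bounded above and below, and $(i\partial\bar\partial u_\infty)^n=g_\infty\,dV$ by the Bedford--Taylor continuity of the Monge--Amp\`ere operator. The crucial point is that the limit equation is no longer singular: by Ko\l odziej's theorem $u_\infty$ is continuous, and then the Caffarelli $W^{2,p}$-estimate together with the Evans--Krylov-type estimate of Appendix 2 and standard bootstrapping show $u_\infty$ is smooth, so $i\partial\bar\partial u_\infty$ is a genuine smooth K\"ahler metric on $\Omega_\infty$. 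Moreover $u_i\to u_\infty$ in $C^\infty_{\mathrm{loc}}(\Omega_\infty\setminus D_\infty')$ and --- since near $D_\infty'$ the metrics $i\partial\bar\partial u_i$ differ from $i\partial\bar\partial u_\infty$ only by the contribution of a cone of angle $2\pi\beta_i\to2\pi$ concentrated near $D_i'$, whose $L^p$-size tends to $0$ (this is exactly where the near-$2\pi$ cone angle, and the bound $\mathrm{Vol}(B\cap D)\le A$ on the amount of divisor, enter) --- also $i\partial\bar\partial u_i\to i\partial\bar\partial u_\infty$ in $L^p_{\mathrm{loc}}(\Omega_\infty)$.

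Next I would identify $(\Omega_\infty,i\partial\bar\partial u_\infty)$ as a flat ball. By construction $F_i$ is an isometry of $(B_i,\omega_i)$ onto $(\Omega_i,i\partial\bar\partial u_i)$, so the latter converges in pointed Gromov--Hausdorff distance to $(B^{2n},\omega_{\Euc},0)$. On the other hand, the $L^p$-convergence of the metrics together with their smooth convergence off the measure-zero set $D_\infty'$ implies --- via the same Fubini/averaging estimates on lengths of paths used throughout this section --- that the associated length distances converge locally uniformly, so $(\Omega_i,i\partial\bar\partial u_i,0)\to(\Omega_\infty,i\partial\bar\partial u_\infty,0)$ in pointed Gromov--Hausdorff distance as well. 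By uniqueness of limits, the smooth K\"ahler manifold $(\Omega_\infty,i\partial\bar\partial u_\infty)$ is isometric to the flat unit ball, hence flat; being intrinsically of diameter $2$ and containing $B^{2n}$, its developing map $w$ is a biholomorphism onto a domain $w(\Omega_\infty)\subseteq B^{2n}(2)$ with $w(0)=0$ and $w_*(i\partial\bar\partial u_\infty)=\omega_{\Euc}$, and Cauchy estimates bound $|Dw|$ on $\tfrac12 B^{2n}$ by a universal constant --- this is what fixes the admissible $\sigma=\sigma(C)$.

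Finally, set $\Gamma_i:=w\circ F_i|_{B(p_i,\sigma)}$. This is holomorphic, satisfies $\Gamma_i(p_i)=0$, and is a homeomorphism onto its image (which lies in $w(\tfrac12 B^{2n})$). Since $w$ is a fixed biholomorphism with bounded derivative on the relevant region, $(\Gamma_i)_*\omega_i=w_*(i\partial\bar\partial u_i)\to w_*(i\partial\bar\partial u_\infty)=\omega_{\Euc}$ in $L^p$, so condition (4) in the definition of a chart holds with $\delta'=\theta$ for large $i$; and since $\Gamma_i$ is an isometry onto $(\Gamma_i(B(p_i,\sigma)),(\Gamma_i)_*\omega_i)$, whose distance function converges uniformly to the Euclidean one, condition (3) holds for large $i$ as well. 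This contradicts the non-existence of a $(\sigma,\theta)$-chart at $p_i$, completing the proof. The main obstacle is the first step --- establishing that the (rescaled) cone metrics converge to a \emph{smooth} K\"ahler metric and that this convergence refines the Gromov--Hausdorff convergence --- and this is precisely where the hypothesis $\beta\to1$ is essential: it makes the singular factor $|s_i|^{2(\beta_i-1)}$ disappear in the limit while, thanks to $\mathrm{Vol}(B\cap D)\le A$, keeping uniform $L^q$ control along the sequence; the construction and verification of the chart in the last step are then routine given the machinery already developed in this section.
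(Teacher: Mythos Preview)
Your overall contradiction-and-compactness strategy is the same as the paper's, and the endgame (compose $F_i$ with a holomorphic chart $G$ coming from the smooth limit) is correct. But there is a genuine gap in how you set up the contradiction, and it matters for the downstream application.

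You fix $A,\theta$ and then send \emph{both} $\gamma_i\to 0$ and $\delta_{*,i}\to 0$. This proves only that for each fixed $A,\theta$ there exist some $\delta_*$ and $\gamma$ that work---i.e.\ you obtain $\delta_*=\delta_*(A,\theta)$, not $\delta_*(\theta)$. The paper explicitly stresses that ``the crucial thing here is that $\delta_*,\sigma$ do not depend on $A$'': this independence is what makes the doubling argument in Proposition~\ref{A good} (``$A$ good implies $2A$ good'') go through with a fixed $\delta$. The paper achieves this by \emph{defining} $\delta_*(\theta)=\tfrac12\,\delta_{\mathrm{Anderson}}(\theta/2)$ and $\sigma(C)=\min(1/2,\,1/(2\sqrt{C}))$ at the outset and running the contradiction only over $\gamma$ (equivalently $\beta_i\to 1$). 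The limit $\omega_\infty$ is then a smooth K\"ahler--Einstein metric on $(0.7)B^{2n}$ which, since $1-I\le\delta_*<\delta_{\mathrm{Anderson}}(\theta/2)$, admits a $(\tfrac12,\theta/2)$-chart $G$ by Anderson's theorem; composing $G\circ F_i$ gives the contradiction. Your use of $\delta_{*,i}\to 0$ to force the limit to be \emph{flat} (so that you can use a developing map $w$ instead of invoking Anderson) is exactly what introduces the unwanted $A$-dependence: the threshold ``large $i$'' at which the chart exists is tied to the speed of convergence, which is governed by the $L^q$ estimates and hence by $A$.

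Two further remarks on the analytic step. First, your assertion that ``$\psi_i$ is bounded above and below, uniformly'' hides real content: in the local K\"ahler--Einstein equation the right-hand side contains a holomorphic factor $|U_i|^2$ which must be bounded from the data. The paper does this via Lemma~\ref{lemma4} (a Cauchy-integral argument using that $\omega_i\ge C^{-1}\omega_{\mathrm{Euc}}$ gives a lower bound on $\det$, while the total volume gives an upper bound away from $D_\infty$). Second, the paper does not go through Ko\l odziej or Caffarelli: from the bound on $|U_i|$ it obtains directly $C^{-1}\omega_{\mathrm{Euc}}\le\omega_i\le C'|f_i|^{-2(1-\beta_i)}\omega_{\mathrm{Euc}}$, hence in the limit a two-sided Euclidean bound on $\omega_\infty$; the Evans--Krylov-type result of Appendix~2 (Proposition~\ref{prop9}) then gives $C^{2,\alpha}$ regularity from exactly this two-sided bound. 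Your Ko\l odziej route would still need the $|U_i|$ bound for $\psi_i$, and the subsequent chain of estimates you invoke is less direct than what is actually used.
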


The crucial thing here is that $\delta,\sigma $ do not depend on $A$. Consider first the case when $D\cap B$ is empty, so we are considering a smooth K\"ahler-Einstein metric.  Then it follows from Anderson's results that there for any fixed $\sigma_{0}<1$ there is a   $ \delta_{{\rm Anderson}}(\theta)$ for which the conclusion holds. To be definite fix $\sigma_{0}=1/2$. We define $$\delta_*(\theta)=(1/2)\delta_{{\rm Anderson}}(\theta/2)\ \ \ 
\sigma(C)= \min (1/2, 1/(2\sqrt{C})). $$ Now suppose that the Proposition is not true, so for fixed $C,A,\theta$ we have a sequence $\beta_{i}\rightarrow
1$ and violating examples $(B_{i},\omega_{i})$. We consider the metric $\omega_{i}$ as defined on the unit ball $B^{2n}$ via the map $F_{i}$. Thus we have $\omega_{i}=\dbd \phi_{i}$ where
$\vert \phi_{i}\vert \leq C$ and $\omega_{i}\geq C^{-1} \omega_{{\rm Euc}}$. Thus we have a fixed bound on the Euclidean volume of the singular set $D_{i}\subset B^{2n}$. This implies that restricting to  a slightly smaller domain $0.9 B^{2n}$ say,  and taking a subsequence, the $D_{i}$ converge to some limiting divisor $D_{\infty}$ and we can choose normalised defining functions $f_{i}$ for $D_{i}$ converging to a non-zero limit $f_{\infty}$.
\begin{lem} \label{lemma4}
Given a divisor $D_{\infty}\subset (0.9)B^{2n}$ there are $\epsilon,K>0$ such that if $g$ is any holomorphic function on $(0.9) B^{2n}$ then
the the supremum of $\vert g\vert$ over $(0.8) B^{2n}$ is bounded by $K$ times the supremum of $\vert g\vert$ outside the $\epsilon$-neighbourhood of $D_{\infty}$.
\end{lem}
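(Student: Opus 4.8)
The plan is to prove this by a compactness/normal families argument. The statement is a quantitative unique-continuation-type estimate: since a holomorphic function cannot vanish on the divisor $D_\infty$ unless it is a multiple of the defining section, its sup-norm on $(0.8)B^{2n}$ should be comparable to its sup-norm on the complement of a small tube around $D_\infty$. First I would reduce to a normalized statement: by homogeneity it suffices to show that there do not exist holomorphic functions $g_i$ on $(0.9)B^{2n}$ with $\sup_{(0.8)B^{2n}}|g_i|=1$ but $\sup_{(0.9)B^{2n}\setminus N_{\epsilon_i}(D_\infty)}|g_i|\to 0$ for some sequence $\epsilon_i\to 0$. Given such a sequence, the interior estimates for holomorphic functions (the $g_i$ are uniformly bounded on $(0.8)B^{2n}$, hence have uniformly bounded derivatives on any slightly smaller ball) together with Montel's theorem let me pass to a subsequence converging uniformly on compact subsets of $(0.8)B^{2n}$ to a holomorphic limit $g_\infty$ with $\sup|g_\infty|=1$ on a suitable interior ball. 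On the other hand, away from $D_\infty$ the hypothesis forces $g_\infty\equiv 0$ on the complement of $D_\infty$, hence on all of the interior by continuity — contradiction.

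The one subtlety is the matching of scales: the convergence $g_i\to g_\infty$ holds on compact subsets of the open set $(0.8)B^{2n}$, while the vanishing hypothesis lives on $(0.9)B^{2n}\setminus N_{\epsilon_i}(D_\infty)$. To bridge this I would argue as follows. Fix a point $x_0$ in the open dense set $(0.8)B^{2n}\setminus D_\infty$ where $|g_\infty(x_0)|$ is, say, at least $\tfrac14\sup|g_\infty|$ on a fixed slightly-smaller interior ball; such a point exists since $g_\infty$ is holomorphic and $D_\infty$ has empty interior. Then $d(x_0,D_\infty)=:2\epsilon_0>0$, so for all $i$ large enough $x_0\in(0.9)B^{2n}\setminus N_{\epsilon_0}(D_\infty)\subset (0.9)B^{2n}\setminus N_{\epsilon_i}(D_\infty)$, whence $|g_i(x_0)|\to 0$; but $|g_i(x_0)|\to|g_\infty(x_0)|>0$, a contradiction. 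This simultaneously avoids having to control the limit near $D_\infty$ at all.

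I expect the main (minor) obstacle to be purely bookkeeping: making sure the interior ball on which one normalizes, the ball on which Montel's theorem is applied, and the point $x_0$ are all chosen consistently with the nesting $(0.8)B^{2n}\subset(0.9)B^{2n}$, and noting that a divisor has empty interior so that its complement is dense — both routine. No regularity of $D_\infty$ beyond being a (possibly singular, even non-reduced) analytic hypersurface is needed; what is used is only that $D_\infty$ is a proper closed analytic subset, so its complement in $(0.8)B^{2n}$ is open, dense and connected, which is exactly what makes $g_\infty\equiv 0$ off $D_\infty$ propagate to $g_\infty\equiv 0$. The constants $\epsilon,K$ produced this way depend on $D_\infty$ (through the chosen point $x_0$ and the distance $2\epsilon_0$), as permitted by the statement.
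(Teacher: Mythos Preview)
Your compactness argument has a genuine gap at the step where you assert ``$\sup|g_\infty|=1$ on a suitable interior ball.'' Montel's theorem, applied with the bound $\sup_{(0.8)B^{2n}}|g_i|=1$, only gives locally uniform convergence on compact subsets of the \emph{open} ball $(0.8)B^{2n}$, and by the maximum principle the supremum of each $|g_i|$ is attained on the boundary sphere $\partial(0.8)B^{2n}$, where Montel says nothing. So nothing you have written rules out $g_\infty\equiv 0$. Worse, your own hypothesis forces this: since $\epsilon_i\to 0$, for every compact $C\subset (0.8)B^{2n}\setminus D_\infty$ you have $C\subset (0.9)B^{2n}\setminus N_{\epsilon_i}(D_\infty)$ for $i$ large, hence $\sup_C|g_i|\to 0$; thus $g_\infty\equiv 0$ on $(0.8)B^{2n}\setminus D_\infty$ and, by the identity theorem, everywhere. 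Both branches of your would-be contradiction collapse to $g_\infty\equiv 0$, and there is no contradiction. The content of the lemma is precisely that the mass of a holomorphic function cannot hide in an arbitrarily thin tube around $D_\infty$, and a normal-families argument in which the tube is allowed to contain the entire putative concentration does not capture this.

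The paper's proof is a direct one-line estimate rather than a contradiction: for each $p\in\overline{(0.8)B^{2n}}$ pick a complex line $L_p$ through $p$ not contained in $D_\infty$ and a radius $r_p\in(0,0.05)$ so that the circle $\gamma_p\subset L_p$ of radius $r_p$ centred at $p$ lies in $(0.9)B^{2n}\setminus D_\infty$; a compactness argument over $p$ gives a single $\epsilon>0$ with every $\gamma_p\subset (0.9)B^{2n}\setminus N_\epsilon(D_\infty)$. Then the Cauchy integral formula (equivalently, the maximum principle on the disk bounded by $\gamma_p$) yields $|g(p)|\le\sup_{\gamma_p}|g|\le\sup_{(0.9)B^{2n}\setminus N_\epsilon}|g|$, so in fact $K=1$. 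If you want to keep a compactness framework, the place where it breaks---when the maxima $p_i$ accumulate at a point of $\partial(0.8)B^{2n}\cap D_\infty$---is handled exactly by this transverse-disk estimate, so you end up needing the paper's argument in any case.
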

This follows from the Cauchy integral formula applied to suitable holomorphic discs.

Now simplify notation by dropping the index $i$ for the moment. The K\"ahler-Einstein equation satisfied by $\phi$ takes the form (without loss of generality we assume here $D$ is in $|-K_X|$)

\begin{equation} \label{local KE equation}    \det \phi_{a\overline{b}}= e^{-\lambda \beta \phi} \vert f \vert^{-2(1-\beta)}\vert U\vert^{2} \end{equation}
where $\lambda \leq 1$, the function $f$ is the chosen local defining function for $D$ and $U$ is some nowhere-vanishing holomorphic function on $(0.9)B^{2n}$.
(The factor $\lambda$ arises from the fact that we are allowing scaling of the metric.) The facts that $\omega\geq C^{-1} \omega_{{\Euc}}$ while the integral of $\omega^{n}$ over $B^{2n}$ is bounded give upper and lower bounds on $\vert U\vert$ outside the $\epsilon$ neighbourhood of $D_{\infty}$, for large $i$. So restricting to a slightly smaller domain $(0.8)B^{2n}$ and applying the Lemma to $U$ and $U^{-1}$ we get upper and lower bounds on $U$. 
Thus we get
\begin{equation} \label{local bound metric}    C^{-1}\omega_{{\rm Euc}}\leq  \omega\leq C' \vert f\vert^{-2(1-\beta)}\omega_{{\rm Euc}}. \end{equation}

Now reinstate the index $i$. The K\"ahler-Einstein equation (\ref{local KE equation}) and the bounds (\ref{local bound metric}) show that (taking a subsequence) the $\phi_{i}$ have a limit $\phi_{\infty}$ outside $D_{\infty}$. Write $\omega_{\infty}=\dbd \phi_{\infty}$. Since $\beta_{i}\rightarrow 1$ we deduce from (\ref{local bound metric}) that
\begin{equation} C^{-1} \omega_{{\rm Euc}}\leq \omega_{\infty} \leq C' \omega_{{\rm Euc}}\end{equation}

The main technical fact we need is:

\begin{prop} \label{prop6}
The limit $\omega_{\infty}$ extends to a  smooth K\"ahler-Einstein metric on $(0.7) B^{2n}$. 
\end{prop}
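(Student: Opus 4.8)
The plan is to show that $\omega_\infty$, which a priori is only defined and smooth away from the fixed divisor $D_\infty\subset (0.8)B^{2n}$ and satisfies $C^{-1}\omega_{\Euc}\le\omega_\infty\le C'\omega_{\Euc}$ there, actually extends across $D_\infty$ as a smooth solution of the (honest, $\beta=1$) K\"ahler--Einstein equation. The first step is to identify the limiting equation. Passing to the limit $\beta_i\to 1$ in (\ref{local KE equation}), and using the uniform two-sided bound (\ref{local bound metric}) together with the upper and lower bounds on the $U_i$ obtained from Lemma \ref{lemma4}, the factor $|f_i|^{-2(1-\beta_i)}\to 1$ locally uniformly away from $D_\infty$, so $\phi_\infty$ solves
\begin{equation}\label{limitKE}
\det(\phi_\infty)_{a\overline b}= e^{-\lambda\phi_\infty}\,|U_\infty|^{2}
\end{equation}
on $(0.8)B^{2n}\setminus D_\infty$, where $U_\infty$ is a nowhere-vanishing \emph{holomorphic} function (the limit of the $U_i$, which is holomorphic and bounded above and below by the same argument). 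The key qualitative point is that the right-hand side of (\ref{limitKE}) is smooth and nowhere zero across $D_\infty$.

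The second step is a removable-singularity / pluripotential argument. One knows a priori that $\phi_\infty$ is a bounded $\omega_{\Euc}$-plurisubharmonic function on all of $(0.8)B^{2n}$ (it is the decreasing, or at least uniformly bounded, limit of the smooth psh functions $\phi_i$, and $\dbd\phi_\infty\ge C^{-1}\omega_{\Euc}$ off a closed set of zero Lebesgue measure, hence globally in the sense of currents). Since $D_\infty$ has zero measure and $\phi_\infty$ is locally bounded, the Monge--Amp\`ere measure $(\dbd\phi_\infty)^n$ puts no mass on $D_\infty$ (bounded psh functions do not charge pluripolar sets, by Bedford--Taylor), so the identity (\ref{limitKE}) holds as an equation of measures on all of $(0.7)B^{2n}$, with a smooth strictly positive right-hand side. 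Thus $\phi_\infty$ is a bounded weak solution of a non-degenerate complex Monge--Amp\`ere equation on the full ball, and it satisfies the uniform ellipticity bound $C^{-1}\omega_{\Euc}\le\dbd\phi_\infty\le C'\omega_{\Euc}$ in the weak sense.

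The third step is to bootstrap regularity. Given the uniform two-sided bound on $\dbd\phi_\infty$ and the smooth non-degenerate right-hand side, the equation is uniformly elliptic (in non-divergence form) with a bounded measurable solution; one applies the interior Evans--Krylov $C^{2,\alpha}$ estimate — exactly the PDE result whose exposition the authors promise in Appendix 2 — after first upgrading $\phi_\infty\in C^{1,\alpha}$ by a De Giorgi--Nash--Moser / complex $W^{2,p}$ argument, or alternatively by approximating $\phi_\infty$ by the smooth $\phi_i$ and noting that the $C^{2,\alpha}$ estimates for $\phi_i$ are \emph{uniform} on $(0.7)B^{2n}$ precisely because the degenerate factor $|f_i|^{-2(1-\beta_i)}$ is now harmless: indeed $|f_i|^{-2(1-\beta_i)}\to 1$ in $C^k$ on compact subsets of $(0.75)B^{2n}$ away from $D_\infty$, and on a neighbourhood of $D_\infty$ one uses that $(1-\beta_i)\log|f_i|^{-2}\to 0$ in $C^k$ away from the zero set while staying bounded, so the Schauder iteration for $\phi_i$ closes with $i$-independent constants. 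Either way one gets $\phi_\infty\in C^{2,\alpha}_{\rm loc}$, and then linear Schauder theory applied to the linearised equation gives $\phi_\infty\in C^\infty$, so $\omega_\infty=\dbd\phi_\infty$ is a genuine smooth K\"ahler--Einstein metric on $(0.7)B^{2n}$.

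The main obstacle is the third step near $D_\infty$: one must control the passage to the limit of the family of degenerate equations (\ref{local KE equation}) in a neighbourhood of $D_\infty$, i.e. establish $i$-independent interior $C^{2,\alpha}$ bounds for $\phi_i$ there despite the conical weight $|f_i|^{-2(1-\beta_i)}$ blowing up. The point to exploit is that the \emph{exponent} $1-\beta_i$ tends to $0$, so the weight converges to $1$ in every $C^k$ norm on compact sets disjoint from $\{f_\infty=0\}$ and is uniformly log-bounded across it; combined with the already-known uniform ellipticity (\ref{local bound metric}), this forces the limit to be regular. I would run this via the approximation route rather than trying to prove a removable-singularity theorem for the limiting Monge--Amp\`ere equation directly, since the uniform estimates for conical K\"ahler--Einstein metrics with cone angle $\to 2\pi$ are cleaner to quote.
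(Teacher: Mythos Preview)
Your Steps 1 and 2 are correct and match the paper's setup: the limiting potential $\phi_\infty$ is bounded plurisubharmonic on the whole ball, the equation $\det(\phi_\infty)_{a\bar b}=e^{-\lambda\phi_\infty}|U_\infty|^2$ holds with a smooth strictly positive right-hand side, and the two-sided Hessian bound $C^{-1}\omega_{\Euc}\le\omega_\infty\le C'\omega_{\Euc}$ holds almost everywhere. You also correctly identify that the heart of the matter is an Evans--Krylov type $C^{2,\alpha}$ estimate, which is exactly what Appendix 2 supplies.

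The gap is in your preferred ``approximation route''. You claim the $C^{2,\alpha}$ estimates for the $\phi_i$ are uniform on $(0.7)B^{2n}$ because the weight $|f_i|^{-2(1-\beta_i)}$ becomes harmless. This is false: for each fixed $i$ the metric $\omega_i$ has a genuine cone singularity along $D_i$, so $\phi_i$ is not even $C^{1,1}$ across $D_i$, let alone $C^{2,\alpha}$. The upper bound in (\ref{local bound metric}) reads $\omega_i\le C'|f_i|^{-2(1-\beta_i)}\omega_{\Euc}$, and the right-hand side blows up on $D_i$; there is no uniform $C^{1,1}$ bound on $\phi_i$ to pass to the limit. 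The statement that $(1-\beta_i)\log|f_i|^{-2}$ ``stays bounded'' across the zero set is simply wrong: it tends to $+\infty$ on $D_i$.

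The paper therefore takes the route you dismissed: it works directly with the limit $\phi_\infty$, which is a $W^{2,p}$ solution of a \emph{smooth}, non-degenerate complex Monge--Amp\`ere equation with a uniform a.e.\ Hessian bound. Appendix 2 proves a Trudinger-style Evans--Krylov estimate in exactly this setting: using the concavity of $\log\det$, second difference quotients of $\phi_\infty$ are subsolutions of a uniformly elliptic operator; the weak Harnack inequality first gives an interior $C^{1,1}$ bound, and then the standard oscillation argument (decomposing $(u^{i\bar j})$ as a positive combination of rank-one Hermitian matrices) yields $C^{2,\alpha}$. Once $\phi_\infty\in C^{2,\alpha}$, Schauder theory bootstraps to $C^\infty$. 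So your outline is salvageable, but you must run Evans--Krylov on $\phi_\infty$ itself, not on the approximants.
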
 
 We give a proof in Appendix 2.

Much as in \cite{CDS1}, the bound (\ref{local bound metric}) shows that the for any points $z,w$ in $(0.7)B^{2n}$ the distance $d_{i}(z,w)$  between $z,w$ in the metric $\omega_{i}$ converges uniformly to the distance $d_{\infty}(z,w)$ in the metric $\omega_{\infty}$ as $i\rightarrow \infty$. It also follows easily from (\ref{local bound metric}), and the $C^{\infty}$ convergence away from $D_{\infty}$, that $\omega_{i}$ converges to $\omega_{\infty}$ in $L^{p}$ for any $p$. Since $\sigma\leq 1/(2\sqrt{C})$ the lower bound $\omega_{\infty}\geq C^{-1} \omega_{{\rm Euc}}$ implies that the  $\sigma$ ball centres at the origin defined by the metric $\omega_{\infty}$ is a domain $U$ in $(1/2) B^{2n}$. Since $\sigma\leq 1/2 $ there is a holomorphic homeomorphism
$G$ defined on a neighbourhood of the closure of $U$ with $G(0)=0$ such that
$\vert \vert G(z)-G(w)\vert- d_{\infty}(z,w)\vert \leq \theta/2$ and the $L^{p}$ norm of $G_{*}(\omega_{\infty})-\omega_{{\rm Euc}}$ is less than $\theta/2$. Now the composite $G\circ F_{i}$ gives the desired $(\sigma, \theta)$-chart on $B_{i}$ for large $i$, contradicting our assumption. This completes the proof of Proposition\ref{goodchart}.

\

\begin{prop} \label{A good}
There is a $c>0$ such that for any $\theta, A >0$ we can find a $\delta(\theta)$ and $\gamma(A,\theta)$ such that if $B=B(p,1)$ as above and
\begin{itemize}
\item  $1-I(B)\leq \delta$;
\item  ${\rm Vol}(B\cap D)\leq A$;
\item $\beta>1-\gamma$;
\end{itemize}
then there is a $(c,\theta)$-chart centred at $p$.
\end{prop}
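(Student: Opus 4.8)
The plan is to deduce Proposition \ref{A good} from Propositions \ref{prop5}, \ref{Hormander} and \ref{goodchart} by an induction on the size of the divisor inside $B$. Concretely, I will fix once and for all a universal constant $c>0$ and a threshold $A_{0}>0$ and prove, by induction on $k\geq 0$, that the conclusion holds with this $c$ whenever ${\rm Vol}(B\cap D)\leq 2^{k}A_{0}$; since weakening the volume bound only weakens the hypothesis, this yields the statement for all $A$. The engine is simple: once Proposition \ref{Hormander} has produced on a definite ball its holomorphic map $F$ and K\"ahler potential $\phi$, Proposition \ref{goodchart} upgrades them --- here is where $\beta$ close to $1$ enters --- into a $(\sigma(C),\theta)$-chart on a ball of universal size. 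All Proposition \ref{Hormander} asks for is a set $W$ of universally bounded Minkowski measure outside which we already possess $(c'\eta,\delta)$-charts at a single small scale $\eta$, and the induction exists precisely to supply such a $W$ when ${\rm Vol}(B\cap D)$ is large. To fix the constants: pick a universal $M$ (say $M=2^{2n}$), let $M_{0},M'$ be the universal Minkowski bounds that appear below, set $\hat M=\max(M_{0},M')+1$ and apply Proposition \ref{Hormander} with $\hat M$; since its radius $\rho=\rho(\hat M)$ depends on $\hat M$ alone --- not on the chart parameter and not on $A$ --- $\rho$ is universal, and together with $\sigma(C)$ it defines the universal $c$. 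Then put $\hat c=\min(c_{0},c)$ with $c_{0}$ Anderson's chart constant, feed $\hat c$ back to obtain universal $\hat\eta=\eta(\hat M,\hat c)$ and $\hat\delta=\delta(\hat M,\hat c)$, and set $A_{0}=M\kappa$ with $\kappa$ the constant of Proposition \ref{prop5}. There is no circularity, as $\rho$, hence $c$, is already determined by $\hat M$.

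For the inductive step, rescale so that a fixed concentric subball of $B$ has unit size, using scale invariance of $I$ and $V$ and monotonicity of $I$ to keep $1-I$ small on this ball and on all its subballs. Proposition \ref{prop5} gives $V(q,r)\geq\kappa$ for $q\in D$ near $p$. If $A\leq A_{0}$, let $W$ be the $\hat\eta$-neighbourhood of the divisor in our unit ball; a packing argument using $\kappa$ and the volume bound gives $m(\hat\eta,W)\leq M_{0}$. If $A>A_{0}$, let $W_{0}=\{x\in D:V(x,2\hat\eta)\geq A/M\}$ and $W$ its $\hat\eta$-neighbourhood; the disjoint balls $B(x_{j},2\hat\eta)$ over a maximal separated subset of $W_{0}$ each carry divisor-volume $\geq (A/M)(2\hat\eta)^{2n-2}$, and summing against ${\rm Vol}(D\cap B)\leq A$ bounds their number by a constant multiple of $M\hat\eta^{2-2n}$ --- independent of $A$, since the threshold $A/M$ scales with $A$ --- so $m(\hat\eta,W)\leq M'$. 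Either way $m(\hat\eta,W)\leq\hat M$. For $x$ in our unit ball outside $W$: if $d(x,D)>\hat\eta$, then $B(x,\hat\eta)$ carries a smooth K\"ahler--Einstein metric and Anderson's theorem \cite{a90} provides a $(c_{0}\hat\eta,\delta')$-chart; otherwise $A>A_{0}$ and there is $x'\in D$ with $d(x,x')\leq\hat\eta$ and $x'\notin W_{0}$, so $V(x',2\hat\eta)<A/M$, hence $B(x,\hat\eta)\subset B(x',2\hat\eta)$ has, after rescaling to unit size, divisor-volume $<A/4<A/2$, and the inductive hypothesis (at volume $A/2$, with a small universal target angle) yields a $(c,\cdot)$-chart at $x$ that restricts to a $(\hat c\hat\eta,\cdot)$-chart because $\hat c\leq c$. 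Provided the outer $\delta$ is small (depending on $\theta$) and $\gamma$ small (depending on $A$ and $\theta$), every such $x$ carries a $(\hat c\hat\eta,\hat\delta)$-chart.

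Now apply Proposition \ref{Hormander} with $\hat M,\hat c$: it yields on $B(p',\rho)$ a potential $\phi$ with $|\phi|\rho^{-2}\leq C$ and a holomorphic homeomorphism $F$ onto a domain between $0.9\rho B^{2n}$ and $1.1\rho B^{2n}$ with $|\nabla F|\leq C$. Rescale $B(p',\rho)$ to unit size, shrink the domain of $F$ slightly so its image lies between $B^{2n}$ and $1.1B^{2n}$, normalise $F(p')=0$, and apply Proposition \ref{goodchart} with volume bound equal to the rescaled divisor-volume (a universal multiple of $A$), target angle slightly below $\theta$, and $\beta\geq 1-\gamma$: this produces a $(\sigma(C),\cdot)$-chart at $p$. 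Unwinding the rescalings --- distances scale linearly, and the finitely many $L^{p}$-norms pick up fixed universal factors, absorbed by taking the intermediate target angles small --- gives a $(c,\theta)$-chart at $p$ for the original metric, $c$ being the universal constant fixed above. The base case $A\leq A_{0}$ is the same argument with the first choice of $W$ and no appeal to the inductive hypothesis. Collecting the finitely many smallness requirements, $\delta$ depends on $\theta$ only and $\gamma$ on $A$ and $\theta$ only, as required.

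The heart of the matter, and the main obstacle, is the second description of $W$ together with its Minkowski bound: one must recognise that the right notion of a \emph{bad} point of $D$ at scale $\hat\eta$ is one near which $D$ has anomalously large density, comparable to $A$ itself, so that the fixed volume budget $A$ permits only boundedly many well-separated bad points --- a bound independent of $A$ --- whereas at every good point the divisor volume, after zooming to scale $\hat\eta$, has dropped below $A/2$ and the inductive hypothesis applies. Intertwined with this is the bookkeeping of rescalings needed to keep the emerging chart constant $c$ free of $A$, which succeeds only because the radius $\rho$ of the ball produced by Proposition \ref{Hormander} depends on the Minkowski bound alone.
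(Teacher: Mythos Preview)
Your overall architecture is the same as the paper's: fix universal constants, call a volume level $A$ ``good'' if the conclusion holds at that level, show small $A$ is good, and then prove that $A$ good implies $2A$ good by constructing a set $W$ of bounded Minkowski measure outside which the inductive hypothesis supplies the charts needed to feed Proposition~\ref{Hormander}, whose output is then upgraded by Proposition~\ref{goodchart}. That is exactly the paper's scheme.

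However, there is a genuine gap in your choice of $W$. You define $W_0=\{x\in D:V(x,2\hat\eta)\geq A/M\}$ at the \emph{single} scale $2\hat\eta$ and then claim $m(\hat\eta,W)\leq M'$ with $M'$ independent of $A$. Recall (page~\pageref{minkowskimeasure}) that $m(\hat\eta,W)\leq M'$ requires, for \emph{every} $r\geq\hat\eta$, a cover of $W$ by at most $M' r^{2-2n}$ balls of radius $r$. Your packing argument only controls the cover at the scale $\hat\eta$: a maximal $4\hat\eta$-separated set in $W_0$ has at most $A/[(A/M)(2\hat\eta)^{2n-2}]=M(2\hat\eta)^{2-2n}$ points, which is fine at $r=\hat\eta$ but says nothing useful at larger $r$ (the bound does not improve as $r$ grows, while the target $M' r^{2-2n}$ shrinks). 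At a generic larger scale $r$, the only covering bound available for $W_0\subset D$ comes from Proposition~\ref{prop5} and is proportional to $A/\kappa$ --- so it depends on $A$. If $M'$ depends on $A$, then so does $\rho(M')$, and your ``universal'' $c$ collapses.

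The paper resolves this precisely by making $W$ a \emph{multi-scale} object: set $Z_r=\{x:V(x,r)\geq A\}$ (threshold $A$, not $A/M$) and $W=\bigcap_{r\in[\eta,1]}Z_r$. The point is that at scale $r$ the disjoint $r$-balls around a $2r$-separated subset of $Z_r$ each carry divisor volume $\geq A\,r^{2n-2}$, and since the total is $\leq 2A$ the $A$'s cancel and the count is $\leq 2r^{2-2n}$, giving an $A$-independent Minkowski bound at \emph{every} scale. The price is that for $x\notin W$ one only knows $V(x,r)<A$ at \emph{some} scale $r\in[\eta,1]$, so the inductive hypothesis must be invoked at that variable scale (which is fine, since $cr\geq c\eta$). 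Your single-scale scheme avoids that variable scale at the cost of losing the $A$-independent Minkowski bound --- and that trade-off does not go through.
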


The proof is a matter of putting together what we have done.
Suppose that we have ${\rm Vol}(D\cap B)\leq 2A$ and let $r>0$. Let $Z_{r}\subset B$ be the set of points $x$ where $V(x,r)\geq A$. Then by a standard covering argument there is a fixed and computable $M$, independent of $r,A$,  such that $Z_{r}$ can be covered by $M$ balls of radius $r$. In fact $M=2M_{0}$ where $M_{0}$ is the number considered in the proof of Proposition \ref{prop5}. Proposition \ref{Hormander}
gives us a fixed number $\rho=\rho(M)$. We also take the fixed number $C$ from Proposition \ref{Hormander} and put it into  Proposition \ref{goodchart} to get a number $\sigma(C)$. Set $c=\rho\sigma$. Now feed these numbers $M,c$ into Proposition \ref{Hormander}, to get fixed numbers $\eta(M,c)$ and $\delta(M,c)$.
Take $\delta(\theta)\leq \delta_{1}$ where $\delta_{1}$ is the function in Proposition \ref{prop5}. Also take $\delta(\theta)\leq \delta(M,c)$ and very small compared to the number $\delta_{*}(\theta)$ in Proposition \ref{goodchart}.  Let $\gamma(A,\theta)=\gamma(A,\theta, C)$ with the fixed $C$ above where again $\gamma(A,\theta,C)$ is the function in Proposition \ref{goodchart}. 
 Say that a number $A$ is \lq\lq good'' if the statement of Proposition \ref{A good} is true for $A$, with all other parameters fixed as above.  
It is clear from Corollary \ref{cor2} that  sufficiently small $A$ are good. (For if $A$ is sufficiently small the divisor cannot come within distance $1/2$, say,  of  $p$ and we reduce to the standard situation.) Thus it suffices to show  that if $A$ is good then so is $2A$.

 So suppose that ${\rm Vol}(B\cap D)\leq 2A$ and $A$ is  good. Given $r>0$,  let $Z_{r}$ be the set of points $q\in B$ such that $V(q,r)\geq A$ as above. Given   $\eta$ as above, we let $W$ be the intersection of $Z_{r}$ as $r$ runs from $1$ to $\eta$. Thus, by the covering argument,  $m(\eta, W)\leq M$. 
Consider a point $x\in B$ which is not in $W$. Then for some $r\geq \eta$ we have $V(x,r)\leq A$. By the assumption that $A$ is good and the monotonicity of the $I$ invariant we can apply Proposition \ref{goodchart} to the $r$-ball centred at $x$, after rescaling. If $\delta$ is sufficiently small the hypotheses of Proposition \ref{Hormander} are satisfied so we get a map $F$ on the ball $B(p,\rho)$
and a bound on the K\"ahler potential. This serves as the input to Proposition \ref{goodchart} and we see that $2A$ is good.

\

\subsection{Proofs of Theorems \ref{thm3} and Corollary \ref{cor1}}

We first prove Theorem \ref{thm3}. Fix $\theta$ small, by Proposition \ref{A good} we obtain $\delta=\delta(\theta)$.  
Recall that this is also smaller than the number given by Anderson which controls smooth K\"ahler-Einstein metrics.
Let $p$ be a point in the regular set of the Gromov-Hausdorff limit $Z$. We fix $r$ so that the rescaled $r$-ball $B$ centered at $p$ has $1-I(B)<\delta/2$ . Let $p_{i}\in X_{i}$ be a sequence converging to $p$ and let $B^{\sharp}_{i}$ be the rescaled $r$-ball in $X_{i}$ centre $p_{i}$. Then for large enough $i$ we have $1-I(B^{\sharp}_i)\leq \delta$. The volume of the divisors $D_{i}$ is a fixed number $V$ so after rescaling we have ${\rm Vol} (D_{i}\cap B^{\sharp}_{i})\leq A= V r^{2-2n}$.
Since $\beta_{i}\rightarrow 1$ for large $i$ we know $\beta_i>1-\gamma(A, \theta)$, so we have complex co-ordinates on balls of fixed size centred at $p_{i}$, and the metrics converge in $L^{p}$ by the discussion above. 

Now consider a tangent cone $C(Y)$ to $Z$ at a point $p\in Z$ and let $y\in Y\subset C(Y)$ be a point in the regular set. We fix a radius $r<1$ so that the rescaled $r$ ball $B^{\sharp} $ in $C(Y)$ has $1-I(B^\sharp)\leq \delta/3$. By the definition of the tangent cone we can find a sequence $\lambda_{i}\rightarrow 0$ and points $q_{j}\in Z$ with $d(p,q_{j})=\lambda_{j}$
such the rescaled $r\lambda_{j}$-balls $B^{\sharp}_{j}$ centre  $q_{j}$ converge
to $B^{\sharp}$ in Gromov-Hausdorff distance. Choose $j$ large enough that $1-I(B^{\sharp}_{j})<2\delta/3$. Now with this fixed $j$ choose $i$ large enough that there are rescaled $r\lambda_{j}$ balls $B^{\sharp}_{ij}\subset X_{i}$ with $1-I(B^{\sharp}_{ij})<\delta$. We set $A_{j}=(r\lambda_{j})^{2-2n}V$ and the same argument applies. For large enough $i$ there are holomorphic co-ordinates on a ball of fixed size and the limit as $i\rightarrow \infty$ is a smooth K\"ahler-Einstein metric $\omega_{j}$. By Anderson's result we get complete control and the limit as $j\rightarrow \infty$ is also smooth.

Notice that this argument does not apply to a general  scaled limit $Z'$. Thus for example in the situation above with $B^{\sharp}_{ij}$ if we have some function $i(j)$ we are not able to say anything about the limit 
$$  \lim_{j\rightarrow \infty} B^{\sharp}_{i(j)j}, $$
because $A_{j}$ might tend to infinity. It seems likely that such a limit is smooth but we do not know how to prove that.

To prove Corollary \ref{cor1}, suppose we have a point $p\in Z$ with a tangent cone $\bC_{\alpha} \times\bC^{n-1}$ for $\alpha\neq 0$ and that $p_{i}\in X_{i}$ converge to $p$.  Then, given Theorem \ref{thm3}, the arguments of \cite{CDS2} extend without difficulty to construct holomorphic co-ordinates on balls $B_{i}$ of fixed size $X_{i}$. Let $\Delta$ be a transverse holomorphic disc as in Section 2.6 of \cite{CDS2}. The same argument shows that we have $\alpha \leq 2 \mu_{i} \gamma_{i}$ say, where $\mu$ is the intersection number of $\Delta$ and $D_{i}$. Since $\gamma_{i}\rightarrow 0$ we must have $\mu_{i}\rightarrow \infty$ but then it is easy to show that
the ${\rm Vol}(D_{i}\cap B_{i})$ would tend to infinity which is impossible. The same argument applies to tangent cones. 

\begin{rem}
 Once we have constructed a non-vanishing section of some power
$K^{-m}$ over a fixed size neighbourhood of $p_{i}$ we get a connection $1$-form there. Then we can alternatively argue as in the proof of Proposition \ref{prop3}, taking the wedge product with a $2n-2$ form, rather than restricting to a disc.  
\end{rem}

 \subsection{Proof of Theorem \ref{thm1}}
 Given Theorem \ref{thm3}, we can adapt the results in \cite{kn:DS} to prove Theorem \ref{thm1}. The proof is again based on H\"ormander's technique. There is one thing that requires  clarification. Let $C(Y)$ be a tangent cone of the Gromov-Hausdorff limit $Z$ and $K$ be a compact subset in the regular part of $C(Y)$. In the arguments in \cite{kn:DS} we need to construct a diffeomorphisms $\chi_i$ from a neighborhood of $K$ into $X_i$ so that $|\chi_i^*J_i-J_\infty|$ and $||\chi_i^*\omega_i-\omega_\infty||_{L^p}$ (for some $p>2n$) are small when $i$ is large.  In the setting of \cite{kn:DS} this follows easily from the smooth convergence of metrics. In the setting of Theorem \ref{thm1} this is not immediate since the metrics are not necessarily smooth before we take limit, and we need to patch together the local holomorphic charts obtained from Theorem \ref{thm3}. For this and also for the purpose of directly talking about ``algebraic structures" on a general Gromov-Hausdorff limit (for example, in the general non-algebraic K\"ahler case), we introduce a general notion. Suppose $X_i$ are compact complex n-manifolds with metrics $d_i$, compatible with the complex manifold topology. Suppose that $(X_i,d_i)$ have a Gromov-Hausdorff limit $(Z,d_Z)$, and fix metrics on $X_i \cup Z$ which we also denote by $d_i$. We can define a \emph{presheaf} ${\cal O}_0$ on $Z$ by saying that, for $U\subset X$, a function $f\in {\cal O}_0(U)$ if it is the limit of holomorphic functions over domains in $X_i$, in an obvious sense. Let $\cal O$ be the corresponding sheaf.
 
 \begin{lem}
Suppose that there is some $r > 0$ and functions $\mu_1,\mu_2$ with $\mu_i(t)\rightarrow 0$ as $t\rightarrow 0$, so that for each $q$ in $Z$ and each $i$ there is a holomorphic map $h_{i,q}: B^{2n} \rightarrow X_i$ with the following properties.
 
 $\bullet$  Any point $x$ in $X_i$ with $d_i(x,q) < r$ lies in the image of $h_{i,q}$.
 
 $\bullet$ $d_i(h_{i,q}(z),h_{i,q}(w))\leq \mu_1(|z-w|)$ and $ |z-w|\leq \mu_2(d_i(h_{i,q}(z),h_{i,q}(w))).$
Then $\cal O$ endows $Z$ with the structure of a complex manifold and for sufficiently large $i$ there are diffeomorphisms $f_i : Z \rightarrow X_i$ such that $f_i^*(J_{X_i}) $ converges in $C^\infty$  to $J_Z.$
 \end{lem}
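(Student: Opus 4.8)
The plan is to build the complex manifold structure on $Z$ locally from the charts $h_{i,q}$, using the sheaf $\mathcal{O}$ as the organizing device, and then to patch things together in a way that also produces the diffeomorphisms $f_i$. First I would fix $q\in Z$ and analyze the behaviour of the maps $h_{i,q}$ as $i\to\infty$. The two-sided modulus-of-continuity hypothesis says precisely that $h_{i,q}$ and its set-theoretic inverse are equicontinuous (with moduli independent of $i$ and $q$), so after passing to a subsequence $h_{i,q}$ converges uniformly on $B^{2n}$ to a continuous map $h_{\infty,q}:B^{2n}\to Z$ which is a homeomorphism onto its image, and whose image contains the metric ball $B(q,r)$. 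Thus $Z$ is locally homeomorphic to $B^{2n}$; the content is to upgrade this to a holomorphic atlas. The key point is that the coordinate functions $z_1,\dots,z_n$ on $B^{2n}$, pulled back via $(h_{\infty,q})^{-1}$, are sections of $\mathcal{O}$ near $q$: each $z_k$ is the uniform limit of the holomorphic functions $z_k\circ h_{i,q}^{-1}$ on domains in $X_i$ (these are genuinely holomorphic since $h_{i,q}$ is an open holomorphic map that is a local homeomorphism, hence a biholomorphism onto its image). So $\mathcal{O}$ contains, near every point, a system of $n$ functions giving a homeomorphism to an open set in $\mathbb{C}^n$.

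Next I would check the compatibility of two such charts $h_{\infty,q}, h_{\infty,q'}$ on the overlap of their images. The transition map is $\tau = (h_{\infty,q'})^{-1}\circ h_{\infty,q}$, a homeomorphism between open subsets of $\mathbb{C}^n$, and it is the uniform limit of the maps $(h_{i,q'})^{-1}\circ h_{i,q}$, which are biholomorphisms between open subsets of $X_i$ — hence, reading them through the respective charts, biholomorphisms between open subsets of $\mathbb{C}^n$. A uniform limit of biholomorphisms which is itself a homeomorphism is a biholomorphism (by the Cauchy estimates: the holomorphic maps $(h_{i,q'})^{-1}\circ h_{i,q}$ are locally uniformly bounded, so a subsequence converges in $C^\infty_{\mathrm{loc}}$ to a holomorphic limit, which must agree with $\tau$; since $\tau$ is a homeomorphism its Jacobian is nonvanishing, so the limit is biholomorphic, and the full sequence converges because the limit is unique). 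Therefore $\{h_{\infty,q}\}$ is a holomorphic atlas, $Z$ is a complex manifold, and $\mathcal{O}$ is exactly its sheaf of holomorphic functions. One should note that the convergence here forces the limit to be independent of the chosen subsequence, so no diagonal subsequence bookkeeping survives into the final statement.

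Finally I would construct the diffeomorphisms $f_i:Z\to X_i$. Cover the compact manifold $Z$ by finitely many charts $U_1,\dots,U_m$ with $U_\alpha = h_{\infty,q_\alpha}(V_\alpha)$ for balls $V_\alpha\Subset B^{2n}$. On each $U_\alpha$ the maps $h_{i,q_\alpha}\circ(h_{\infty,q_\alpha})^{-1}: U_\alpha\to X_i$ are defined for large $i$, are holomorphic, and converge in $C^\infty$ to the identity inclusion $U_\alpha\hookrightarrow Z$ (i.e. $C^\infty$-convergence of the complex-structure data) by the argument just described applied now to the maps themselves rather than to transition functions. The remaining task is a standard patching: using a partition of unity $\{\rho_\alpha\}$ subordinate to $\{U_\alpha\}$ and the fact that each local map is $C^\infty$-close to the identity (hence any two are $C^\infty$-close to each other on overlaps), one forms a global smooth map $f_i:Z\to X_i$ — for instance by averaging in suitable local coordinates on $X_i$ — which agrees with each $h_{i,q_\alpha}\circ(h_{\infty,q_\alpha})^{-1}$ up to a $C^\infty$-small error, so that $f_i$ is a diffeomorphism for large $i$ and $f_i^*(J_{X_i})\to J_Z$ in $C^\infty$. (Here the precise gluing is carried out exactly as in the construction of approximately holomorphic identifications in \cite{kn:DS}.)

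The main obstacle is the passage from uniform/$C^0$ convergence of the maps $h_{i,q}$ (all one gets directly from the hypotheses) to $C^\infty$ convergence of the induced complex structures. This rests on interior elliptic estimates for the holomorphic transition maps together with the nonvanishing of the Jacobian of the homeomorphic limit; it is the same mechanism as Lemma \ref{lemmap}, and the only subtlety is keeping the charts and subsequences organized so that the finitely many local limits are mutually compatible and the final $f_i$ is globally well-defined on the closed manifold $Z$. Everything else — equicontinuity, the Arzelà–Ascoli extraction, identifying $\mathcal{O}$ with the structure sheaf, and the partition-of-unity patching — is routine.
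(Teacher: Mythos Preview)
Your proposal is correct and follows essentially the same route as the paper, which simply says the proof is ``just as in Proposition \ref{prop2}'' with the key point that the second bullet controls the $L^\infty$ norm of the holomorphic transition functions, allowing one to take limits and glue the local charts. You have supplied the details the paper omits: the Arzel\`a--Ascoli extraction, the upgrade from $C^0$ to $C^\infty$ via the Cauchy estimates (this is exactly the mechanism of Lemma \ref{lemmap}, as you note), and the partition-of-unity averaging to build the global $f_i$, which is precisely the construction carried out in Proposition \ref{prop2}.
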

 
The proof is just as in Proposition \ref{prop2}, the point is that the second property controls the $L^\infty$ norm of the holomorphic transition functions, which allows us to take limit and glue together the local charts. Given this lemma one can also deal with convergence of differential forms.

\begin{lem}
Suppose in addition there are differential forms $\theta_i$ on $X_i$ so that for each $q$ some subsequence of $h_{i, q}^*(\theta_i)$ converges in $L^p$. Then there is a subsequence $i'$ such that $f_{i'}^*(\theta_i')$ converges in $L^p$. If $\theta_i=\omega_i$ is a (non-necessarily smooth) positive $(1, 1)$ form which
defines a Riemannian metric on $X_i$ corresponding to $d_i$ and if the subsequence
limits of $h_{i,q}^*(\omega_i)$ are smooth positive $(1,1)$ forms then the limit of $f_i^*(\omega_i)$ is a smooth positive $(1,1)$ form defining a Riemannian metric on $Z$ corresponding to $d_Z$.

\end{lem}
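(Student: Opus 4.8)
The plan is to leverage the previous lemma, which already produces diffeomorphisms $f_i: Z \to X_i$ with $f_i^*(J_{X_i}) \to J_Z$ in $C^\infty$, and to upgrade this to control of the differential forms $\theta_i$. First I would recall the construction of the $f_i$ from the proof of the previous lemma: they are obtained by taking a fixed finite cover of $Z$ by balls of radius comparable to $r$, using the local charts $h_{i,q_\nu}$ over this cover, correcting by Euclidean (or holomorphic) transition maps, and gluing with a partition of unity, exactly as in Proposition \ref{prop2}. The key observation is that on each chart the diffeomorphism $f_i$ differs from $h_{i,q_\nu}$ by a map that converges in $C^\infty$; hence $f_i^*(\theta_i)$ and $h_{i,q_\nu}^*(\theta_i)$ differ by pullback under a sequence of maps converging smoothly to the identity.

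The main step is then a diagonal subsequence argument. For each of the finitely many centres $q_\nu$ in the cover, the hypothesis gives a subsequence along which $h_{i,q_\nu}^*(\theta_i)$ converges in $L^p$ on $B^{2n}$ (or on a slightly smaller ball, which is all we need after shrinking the cover). Passing successively to subsequences for $\nu = 1, 2, \dots$, and then diagonalising, we obtain a single subsequence $i'$ along which $h_{i',q_\nu}^*(\theta_{i'})$ converges in $L^p$ for every $\nu$ simultaneously. Since $f_{i'}$ is built from these charts by gluing with a fixed partition of unity and transition maps that converge in $C^\infty$, and since $L^p$ convergence is preserved under pullback by a $C^\infty$-convergent sequence of diffeomorphisms and under multiplication by fixed smooth cutoffs, it follows that $f_{i'}^*(\theta_{i'})$ converges in $L^p$ on $Z$. (One should check that the limit is well-defined, i.e. independent of the choice of chart on overlaps; this follows because on the overlap of two charts the two a priori limits differ by pullback under the limiting transition map, and the transition maps themselves converge, so the limits agree.)

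For the second assertion, take $\theta_i = \omega_i$. Denote by $\omega_Z$ the $L^p$ limit of $f_{i'}^*(\omega_{i'})$ just constructed. Locally in the chart $h_{i',q_\nu}$, the pullback $h_{i',q_\nu}^*(\omega_{i'})$ converges by hypothesis to a \emph{smooth} positive $(1,1)$ form; transporting via the $C^\infty$-convergent gluing maps shows that $\omega_Z$ is, locally and hence globally, a smooth positive $(1,1)$ form on $Z$. It remains to identify the Riemannian metric determined by $\omega_Z$ with $d_Z$. Since the $d_i$ correspond to $\omega_i$, the distance between two points in $X_{i'}$ is computed by the length functional of $\omega_{i'}$; pulling back under $f_{i'}$ and using that $f_{i'}^*(\omega_{i'}) \to \omega_Z$ in $L^p$ together with the Gromov--Hausdorff convergence $(X_{i'}, d_{i'}) \to (Z, d_Z)$ and the uniform two-sided bounds on $\omega_{i'}$ in each local chart (which hold because the local limits are smooth and nondegenerate), one shows that the length metric of $\omega_Z$ computes distances agreeing with $d_Z$. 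Here one uses that $L^p$ convergence of the metric tensors, combined with a uniform lower bound, controls lengths of paths up to small error, exactly as in the convergence-of-distances arguments already invoked after equation (\ref{local bound metric}).

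The hard part will be the last step: passing from $L^p$ convergence of the $(1,1)$ forms to the statement that the associated length metric equals $d_Z$ on the nose, rather than merely being bi-Lipschitz to it. The resolution is that on each fixed local chart the limit form is smooth and positive, so it defines a genuine smooth Riemannian metric there, and the convergence $h_{i',q_\nu}^*(\omega_{i'}) \to \omega_Z$ can be improved, away from the degeneration locus if any, to locally uniform convergence; combined with the uniform lower bound $\omega_{i'} \geq c\,\omega_{\mathrm{Euc}}$ in the chart this forces the length structures to converge, and since $d_{i'} \to d_Z$ in Gromov--Hausdorff sense the limiting length structure must be $d_Z$. This is the same mechanism used in \cite{CDS1} and in the proof of Proposition \ref{goodchart}, so the argument is a matter of assembling those ingredients rather than introducing a new idea.
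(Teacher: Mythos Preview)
The paper does not actually prove this lemma: it is stated immediately after the first lemma with only the remark ``Given this lemma one can also deal with convergence of differential forms,'' and no argument is supplied. Your proposal is correct and is exactly the natural expansion of what the paper leaves implicit---a diagonal subsequence over the finite cover, pullback stability of $L^p$ convergence under the $C^\infty$-convergent gluing maps from Proposition~\ref{prop2}, and then the identification of the limiting length structure with $d_Z$ via the same mechanism used after (\ref{local bound metric}) and in \cite{CDS1}.
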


We also have a straightforward extension to the noncompact case. 
\begin{lem} Suppose now that the $X_i$ are not required to be compact and $Z$ is a based limit. Suppose we have a compact set $K\subset Z$ such that for each point $q\in K $ there are maps $h_{i,q}$ as above. Then the analogous statements are true for a neighbourhood of $K$.
 \end{lem}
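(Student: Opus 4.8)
The plan is to reduce the noncompact, based-limit statement to the compact case by a straightforward localization argument. First I would observe that the only role compactness played in the previous two lemmas was to provide, for each point $q$ of the limit, the local holomorphic charts $h_{i,q}$ together with uniform modulus-of-continuity control $\mu_1,\mu_2$, and then to glue finitely many such charts. In the based-limit setting the hypotheses already furnish exactly these charts for $q$ in the given compact set $K$, so the construction of the sheaf $\mathcal{O}$ and of the diffeomorphisms is entirely local and carries over verbatim once we fix a slightly larger compact neighbourhood.

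Concretely, I would choose $r'>0$ and a compact set $K'$ with $K\subset \mathrm{int}(K')$ such that every point of $K'$ still admits the charts $h_{i,q}$ (shrinking $r$ if necessary; note that if $q'$ is close to a point $q\in K$ for which $h_{i,q}$ is defined, then $h_{i,q}$ already covers a ball around $q'$, so one may simply cover $K'$ by the $r/2$-balls of finitely many $q_1,\dots,q_m\in K$ using compactness of $K'$). Then I would run the proof of Proposition \ref{prop2} restricted to this finite cover: the second inequality $|z-w|\leq \mu_2(d_i(h_{i,q}(z),h_{i,q}(w)))$ gives a uniform $L^\infty$ bound on the holomorphic transition functions $h_{i,q'}^{-1}\circ h_{i,q}$ on the overlaps, so after passing to a subsequence they converge in $C^\infty$ on compact subsets (by the normal-families argument, exactly as in Lemma \ref{lemmap}), and one glues the limiting charts to endow a neighbourhood $V$ of $K$ with a complex-manifold structure from $\mathcal{O}$. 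A partition-of-unity interpolation as in Proposition \ref{prop2} then produces, for large $i$, diffeomorphisms $f_i$ from $V$ (or a slightly smaller neighbourhood of $K$) into $X_i$ with $f_i^*(J_{X_i})\to J_Z$ in $C^\infty$ on compact subsets. Finally, for the differential-form statement one applies the second lemma's argument on the same finite cover: $f_i^*(\theta_i)$ differs from $h_{i,q}^*(\theta_i)$ by the action of the $C^\infty$-convergent transition/interpolation maps, so a diagonal subsequence gives $L^p$ convergence near $K$, and smoothness of the local limits propagates to smoothness of the global limit as before.

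The only genuine point requiring care — and the one I expect to be the main obstacle — is the passage from "charts exist at each $q\in K$" to "charts exist on a compact neighbourhood $K'$ with a uniform lower bound on the ball radius", since the hypothesis is stated pointwise on $K$ rather than on an open neighbourhood. This is handled by the covering remark above: cover $K$ by finitely many balls $B_{d_i}(q_\ell,r/2)$, and note that any $q'$ within $d_i(q',q_\ell)<r/2$ has its entire $r/2$-ball contained in the image of $h_{i,q_\ell}$, so we may take $K'$ to be the closure of $\bigcup_\ell B(q_\ell,r/2)$ and work with radius $r/2$ throughout. Once this is in place the remainder is a routine transcription of the compact arguments, localized to $K'$, so I would not belabour the details.
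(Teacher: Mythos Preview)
Your proposal is correct and is exactly the natural localization argument the paper has in mind; the paper itself provides no proof beyond calling this a ``straightforward extension to the noncompact case,'' and your finite-cover-of-$K$ reduction together with the observation that the charts $h_{i,q_\ell}$ already cover a full neighbourhood of $K$ is precisely what makes that phrase rigorous.
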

 
Given Theorem \ref{thm3}, Corollary \ref{cor1} and this Lemma, the proof of Theorem \ref{thm1} goes exactly as in Theorem \ref{thm1} in \cite{kn:DS}.  That the limit metric is smooth on the smooth part of $W$ follows similar arguments as in the proof of Proposition \ref{goodchart} and Proposition \ref{prop6}.

\section{Second proof of Theorem \ref{thm3}}
In this section, we provide a second proof of Theorem \ref{thm3}, using Ricci flow.

\subsection{Almost Einstein/Static flow}
Following \cite{WT12}, we say a sequence of normalized Ricci flows  $(X_i, g_i(t))$
\begin{equation}
 {{\partial g_i(t)}\over {\partial t}} = g_i(t) -Ric(g_i(t)),\qquad t\in [0,1].
 \label{eq:ricciflow1}
\end{equation}  is   \emph{ almost Einstein/static } if  the following holds
\begin{equation}
\lim_{i\rightarrow \infty} \int_{X_i}\; |R(g_i(t))-n| \dVol_{g_i(t)}= 0, \qquad \forall t\in [0,1].\;
\label{eq:ricciflow3}
\end{equation}

Again with $(X_i, D_i)$ as in the introduction, and suppose  $g_i$  is a sequence of  K\"ahler-Einstein metrics on $X_i$ with cone angle $2\pi\beta_i$ along $D_i$ and $\beta_i\rightarrow 1$. 
Suppose we have a Gromov-Hausdorff limit $Z$.  By our Theorem 2 in \cite{CDS1}, there exists a sequence of smooth K\"ahler metrics $g_i'$ in $2\pi c_1(X_i)$ with $Ric(g_i') \geq \beta_i g_i'$ and
\begin{equation}
d_{GH} ((X_i, g_i), (X_i, g_i')) \leq i^{-1}. \label{twosequences}
\end{equation}
Moreover from our proof in \cite{CDS1} we may write the corresponding K\"ahler forms as $\omega_i'=\omega_i+i\p\bp \psi_i$ so that $\psi_i$ converges to zero in $L^\infty(X_i)$ and the $C^k$ norm of $\psi_i$ measured using the metric $\omega_i'$ goes to zero locally away from $D_i$. 
It follows that $(X_i, g_i')$ also converges to $Z$ in the Gromov-Hausdorff topology.\\

It is well-known that the normalized K\"ahler-Ricci flow exits for all time if the initial metric is in the first Chern class of a Fano manifold. For our convenience we denote the normalized K\"ahler-Ricci flow initiated from $g_{i}'$ by $g_i'(t)$, and we denote by $\omega_i'(t)$ the corresponding K\"ahler forms. 

\begin{prop} The sequence of flows $(X_i, g_i'(t))$ is
almost static. \label{thm2.2}
\end{prop}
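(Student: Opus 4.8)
The plan is to exploit the fact that along the normalized K\"ahler-Ricci flow the scalar curvature evolves in a controlled way and, crucially, that the quantity $\int_{X_i} |R(g_i'(t)) - n|\, \dVol_{g_i'(t)}$ can be estimated at $t = 0$ directly from the initial data coming from \cite{CDS1}, and then propagated forward in time using the structure of the flow. First I would recall that for the normalized K\"ahler-Ricci flow on a Fano manifold the total volume is preserved, so $\int_{X_i} \dVol_{g_i'(t)} = \int_{X_i} \dVol_{g_i'(0)}$ is the fixed number $(2\pi)^n c_1(X_i)^n / n!$ for all $t$; likewise $\int_{X_i} R(g_i'(t)) \dVol_{g_i'(t)} = n \cdot \mathrm{Vol}$ since this is a topological quantity (the integral of the scalar curvature is $2\pi n c_1 \cdot [\omega]^{n-1}/(n-1)!$ normalized appropriately). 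Hence the issue is purely to control the $L^1$ oscillation $\int |R - n|$, i.e.\ to show that $R(g_i'(t))$ becomes concentrated near its mean value $n$ in an $L^1$ sense as $i \to \infty$, uniformly in $t \in [0,1]$.

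The key input at $t = 0$ is the Ricci lower bound $\mathrm{Ric}(g_i') \geq \beta_i g_i'$ with $\beta_i \to 1$, which gives $R(g_i') \geq n\beta_i$ pointwise, hence $\int_{X_i} (R(g_i') - n)_- \dVol \leq n(1 - \beta_i)\mathrm{Vol} \to 0$; combined with the fact that $\int (R(g_i') - n) \dVol = 0$ this forces $\int |R(g_i') - n| \dVol \to 0$. So the proposition holds at $t = 0$. To propagate this to $t \in (0,1]$ I would use the evolution equation for the scalar curvature under the normalized flow, $\partial_t R = \Delta R + |\mathrm{Ric}|^2 - R$ (in the K\"ahler-Ricci case one has the sharper identity $\partial_t R = \Delta R + |\mathrm{Ric} - \omega|^2 + (R - n)$, or in terms of the Ricci potential $u$ with $\mathrm{Ric} - \omega = i\partial\bar\partial u$, the equation $\partial_t u = \Delta u + u$, normalized so that $\int e^{-u}\omega^n$ or $\int u\, e^{-u}\omega^n$ is suitably pinned). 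The cleanest route is Perelman's a priori estimates for the K\"ahler-Ricci flow on Fano manifolds: there is a uniform (independent of the flow) bound $\|u(t)\|_{C^0} + \|\nabla u(t)\|_{C^0} + \|\Delta u(t)\|_{C^0} \leq C(n)$, hence $\|R(g_i'(t)) - n\|_{C^0} = \|\Delta u(t)\|_{C^0}$ is uniformly bounded and, more to the point, the flow is "contractive" on the relevant entropy/Ricci-potential functional, so that smallness of the appropriate functional at $t=0$ persists. Concretely, I would show that the functional $\int_{X_i} |R(g_i'(t)) - n| \dVol$, or a smooth majorant of it such as $\int_{X_i} f(u(t))\, e^{-u(t)}\omega_i'(t)^n$ for a suitable convex $f$, is non-increasing along the flow up to a factor controlled by Perelman's estimates; since it is $o(1)$ at $t = 0$ it remains $o(1)$ on $[0,1]$. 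Alternatively, and perhaps more simply, I would note that the Ricci lower bound $\mathrm{Ric} \geq \beta g$ is \emph{not} preserved, but the \emph{lower bound on $R$} can be tracked: from $\partial_t R \geq \Delta R + \frac{1}{n}R^2 - R$ (Cauchy-Schwarz on $|\mathrm{Ric}|^2 \geq R^2/n$) and the maximum principle, $\min_X R(g_i'(t))$ is bounded below by the solution of the ODE $\dot\phi = \frac1n \phi^2 - \phi$ starting from $\phi(0) = n\beta_i$, which stays $\geq n\beta_i$ for all $t \geq 0$ when $\beta_i < 1$ is close to $1$ (the fixed point $\phi = n$ is an attractor from below for this ODE on $(0, n)$). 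This gives $R(g_i'(t)) \geq n\beta_i$ for all $t \in [0,1]$, and then the same two-sided argument as at $t = 0$ — pointwise lower bound plus exact mean value $n$ — yields $\int_{X_i} |R(g_i'(t)) - n| \dVol \leq n(1 - \beta_i) \mathrm{Vol} \to 0$.

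The main obstacle is verifying that the ODE comparison for $\min_X R$ genuinely gives $R(g_i'(t)) \geq n\beta_i$ (or something tending to $n$) \emph{uniformly on the whole interval $[0,1]$} and not just for small $t$: one must check that the phase line of $\dot\phi = \frac1n\phi^2 - \phi$ behaves as claimed, namely that on the interval $(0, n)$ the flow moves monotonically toward $n$, so that any initial value $n\beta_i$ with $\beta_i$ close to $1$ stays in $[n\beta_i, n)$ forever — this is elementary once one observes $\frac1n\phi^2 - \phi = \frac1n\phi(\phi - n) < 0$ for $\phi \in (0,n)$, so in fact $R$ is non-decreasing toward $n$ at the minimum point and the bound $R \geq n\beta_i$ is immediate and not even lossy. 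A secondary technical point is justifying the maximum principle on the possibly-noncompact-looking but actually compact smooth manifolds $X_i$ (they are compact, so this is routine) and handling the very short-time behavior where one might worry about instantaneous smoothing of the initial $g_i'$ — but $g_i'$ is already smooth by construction in \cite{CDS1}, so the flow is smooth on $[0,1]$ and all these manipulations are legitimate. With these points settled, the uniform bound on $\int |R - n|$ is exactly the assertion that the sequence $(X_i, g_i'(t))$ is almost static.
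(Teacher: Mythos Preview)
Your overall strategy is right and in fact very close to the paper's, but the ODE analysis contains a sign error that needs fixing. For $\phi \in (0,n)$ you correctly compute $\dot\phi = \tfrac1n\phi(\phi-n) < 0$, but then conclude that $\phi = n$ is an attractor from below and that the bound $R \geq n\beta_i$ is ``not even lossy''. This is backwards: $\dot\phi < 0$ means $\phi$ \emph{decreases} away from $n$, so $n$ is a repeller from below and the lower bound for $\min_X R$ deteriorates with time, not improves. In particular it is false that $R(g_i'(t)) \geq n\beta_i$ for all $t$.

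The fix is easy and your approach still works. Set $\psi = n-\phi$; then $\dot\psi = \psi(1-\tfrac1n\psi) \leq \psi$, so $\psi(t) \leq \psi(0)e^t = n(1-\beta_i)e^t$. Hence $\min_X R(g_i'(t)) \geq n - n(1-\beta_i)e^t$, which still tends to $n$ uniformly on $[0,1]$ as $\beta_i\to 1$, and your two-sided argument (pointwise lower bound plus exact mean $n$) then gives $\int_{X_i}|R-n|\,\dVol \leq 2n(1-\beta_i)e^t\,\mathrm{Vol}\to 0$.

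For comparison, the paper's argument is the same in spirit but slightly cleaner: instead of the Cauchy--Schwarz inequality $|\Ric|^2 \geq R^2/n$, it uses the exact identity $|\Ric|^2 - R = |\Ric - g|^2 + (R-n)$, giving the \emph{linear} differential inequality $\partial_t(R-n) \geq \Delta(R-n) + (R-n)$. The maximum principle then yields directly $R(t)-n \geq (\min_X R(0)-n)e^t \geq -n(1-\beta_i)e^t$, i.e.\ the same bound $n-\lambda_i(t) \leq n(1-\beta_i)e^t$ without passing through a nonlinear ODE. Either route gives the result once the phase-line error is corrected.
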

This is known by \cite{WT12} since the Ricci curvature of $g_i'$ is bigger than $\beta_i\rightarrow 1$.  We include here a brief proof for the  convenience of readers. 
By a direct calculation the scalar curvature evolves as follows (we drop the dependence of $i$
to simplify the presentations):
\[
{{\partial R}\over {\partial t}} = \triangle R + |Ric|^{2} - R = \triangle R + |Ric-g|^{2} + R-n.
\]
Thus
\[
{{\partial (R - n)}\over {\partial t}} \geq  \triangle (R - n) + R-n
\]
By maximum principle,  we have
\[
   R(g_i'(t))-n\geq \min_{X_i}(R(g_i'(0))-n) e^{t}.
\]
Let $\lambda_i(t) = \min_{X_i}R(g_i'(t))$, then $\lambda_i(t)\leq n$ and  by assumption $\lambda_i(0)\geq n\beta_i$.  We have
\begin{equation*}
     n-\lambda_i(t) \leq (n-\lambda_i(0))e^{t}.
\end{equation*}
Note that
\[
\begin{array}{lcl} |R(g_i'(t))-n| & \leq & (R(g_i'(t))- \lambda_i(t)) + | \lambda_i(t) -n|\\
& = &  (R(g_i'(t))-n) + 2 (n- \lambda_i(t))
\\ & \leq & (R(g_i'(t))-n) + 2 (n- \lambda_i(0))e^{t}.
\end{array}
\]
It follows that
\begin{eqnarray*}
&&\int_{X_i}\; |R(g_i'(t))-n|(t) \omega_i'(t)^n \nonumber \\ &\leq& \int_{X_i}  (R(g_i'(t))-n)\omega_i'(t)^n + \int_X 2 (n- \lambda_i(0))e^{t}\omega_i'(t)^n\nonumber \\ &= &2n(1-\beta_i)e^{t}\text{Vol}(X_i, \omega_i).
\end{eqnarray*}
Proposition \ref{thm2.2} follows since $\beta_i\rightarrow 1$.

\subsection{Almost CG convergence and pseudo-locality property}
We first  recall a basic lemma (pseudo-locality) in Ricci flow. 
Suppose $g(x,t)$ is a normalized Ricci flow solution on a manifold $X$  as in Equation (\ref{eq:ricciflow1}). Let $g_0=g(\cdot, 0)$ be the initial metric. 
\begin{prop}[Proposition 3.1 in \cite{WT12}, comparing to Theorem 10.1 in \cite{perelman02}] For every $\alpha \in (0, {1\over {200n}}),\;$ there exist
constants $\underline{\delta}=\underline{\delta}(\alpha)\in(0, \frac{1}{2}), \underline{\epsilon}=\underline{\epsilon} (\alpha)> 0$ with following properties:  suppose that  in $B(p, \underline{\delta}^{-1}, g_0)$ we have
 \begin{equation}
Ric(g_0) \geq 0, \; {\rm and}\; VR(B(p, \underline{\delta}^{-1}, g_0)) \geq 1-\underline{\delta}.
\label{eq:ricciflow2}
\end{equation}
Then for any $x\in B(p, 2, g_0)$ and $t\in (0, \underline{\epsilon}^2]$,  we have
\begin{equation}
 |Rm|(g(x,t)) \leq \alpha t^{-1}+\underline{\epsilon}^{-2},
 \label{eq:ricciflow7}
 \end{equation}
and
\begin{equation}
{\emph {Vol}}_{g(t)} (B(x, \sqrt{t}, g(t)) \geq k' t^{n}, 
\label{eq:ricciflow8}
\end{equation}
where $k'$ is a universal positive constant.
\label{prop3.1}
\end{prop}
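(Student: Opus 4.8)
The plan is not to reprove pseudolocality from first principles, but to explain how the statement---whose hypothesis is a lower Ricci bound together with an almost-Euclidean volume ratio for the initial metric---reduces to the isoperimetric form of Perelman's pseudolocality theorem (\cite{perelman02}, Theorem 10.1), following \cite{WT12}. The argument falls into two stages, and the real difficulty sits in the first.

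First I would convert the volume-ratio hypothesis into a near-Euclidean isoperimetric inequality on the small ball $B(p,2,g_0)$. Since $Ric(g_0)\ge 0$ on $B(p,\underline{\delta}^{-1},g_0)$, the Bishop--Gromov inequality gives $VR(B(p,s,g_0))\ge 1-\underline{\delta}$ for all $s\le\underline{\delta}^{-1}$, and relative volume comparison transfers an almost-Euclidean volume ratio to every sub-ball $B(q,s)\subset B(p,\underline{\delta}^{-1}/2)$, with an error tending to $0$ with $\underline{\delta}$. By the Cheeger--Colding volume-pinching (almost rigidity) theorem this forces $B(p,\underline{\delta}^{-1}/2,g_0)$, hence all of its sub-balls at comparable scales, to be $\Psi(\underline{\delta})$-Gromov--Hausdorff close to the corresponding Euclidean balls, with $\Psi(\underline{\delta})\to 0$ as $\underline{\delta}\to 0$. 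A compactness/contradiction argument---using the lower semicontinuity of perimeter along measured Gromov--Hausdorff limits of manifolds with $Ric\ge 0$, as carried out in \cite{WT12}---then upgrades this soft closeness to a quantitative bound: for a suitable choice $\underline{\delta}=\underline{\delta}(\alpha)$ one obtains
\[ \bigl({\rm Vol}\,\partial\Omega\bigr)^{2n}\ge (1-\delta_0)\,c_{2n}\,\bigl({\rm Vol}\,\Omega\bigr)^{2n-1}\quad\text{for all }\Omega\subset B(p,2,g_0), \]
where $c_{2n}$ is the Euclidean isoperimetric constant of $\RR^{2n}$ and $\delta_0=\delta_0(\alpha)$ is exactly the threshold demanded by Perelman's theorem.

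With this in hand I would appeal to the isoperimetric form of pseudolocality. The part played by the initial curvature bound in \cite{perelman02}, Theorem 10.1, is here taken over by the smoothness of the flow on the fixed interval $[0,1]$ (automatic in our application, the $X_i$ being closed). The mechanism is Perelman's point-selection: were $|Rm|(x,t)\le\alpha t^{-1}+\underline{\epsilon}^{-2}$ to fail at some $x\in B(p,2,g_0)$ and $t\in(0,\underline{\epsilon}^2]$, one selects a point of almost-maximal rescaled curvature just before the first bad time---it is here that the constraint $\alpha<1/(200n)$ enters, so that the selection and the resulting differential inequalities close up---and then plays the monotonicity of Perelman's localised $\mathcal W$-entropy, based at a conjugate heat kernel concentrated at that point, against the near-Euclidean isoperimetric inequality at $t=0$ to reach a contradiction. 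Running the same monotonicity formula at scale $\sqrt t$, once the curvature estimate is in place, gives the non-collapsing bound ${\rm Vol}_{g(t)}\bigl(B(x,\sqrt t,g(t))\bigr)\ge k't^n$ with $k'$ a dimensional constant, and this fixes $\underline{\epsilon}=\underline{\epsilon}(\alpha)$.

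I expect the first stage to be the main obstacle. Gromov--Hausdorff closeness alone does not control perimeters, so passing from the soft hypothesis $VR\ge 1-\underline{\delta}$ to an isoperimetric inequality \emph{with the sharp Euclidean constant} required by Perelman's theorem needs the Cheeger--Colding structure theory of almost-Euclidean balls together with a semicontinuity statement for sets of finite perimeter. For this reason we would, in the body of the paper, simply quote \cite{WT12}, Proposition 3.1 rather than carry the argument out in detail.
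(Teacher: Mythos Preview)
Your proposal is correct and in fact goes beyond what the paper does: the paper gives no proof of this proposition at all, simply quoting it as Proposition 3.1 of \cite{WT12} (with a comparison to Perelman's Theorem 10.1). Your outline of the reduction---from the volume-ratio hypothesis to the isoperimetric form of pseudolocality via Bishop--Gromov, Cheeger--Colding almost-rigidity, and a compactness argument---is a faithful sketch of the Tian--Wang argument, and your final conclusion, to cite \cite{WT12} rather than reproduce the details, is exactly what the paper does.
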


\begin{rem}  If we compare this to Theorem 10.1 in \cite{perelman02}, we know that both inequalities (\ref{eq:ricciflow7}) and (\ref{eq:ricciflow8})
also hold for a geometric parabolic box 
\[
   \forall\; t \in (0,\underline{\epsilon}^2],\;\; \forall \; x \in B(p,2, g(t)).
\]
\end{rem}

We now introduce the notion of ``almost Cheeger-Gromov convergence" (compare \cite{CW}) for the convenience of our arguments.
We say a sequence of Riemannian manifolds  $(X_i, p_i, g_i)$ converges in the \emph{almost Cheeger-Gromov sense} to 
a length space $(Y, p, g_\infty)\;$ if the following holds
\begin{enumerate}
\item $Ric(g_{i}) \geq 0;\;$
\item $(X_i,p_i, g_i)$ converges in the Gromov-Hausdorff sense to $(Y, p);\;$
\item There exist  closed  subsets $E_{i}$ in $X_i$  with uniformly bounded (codimension two) Minkowski measure and with limit $E_\infty$, and a smooth metric $g_\infty$ on $Y\setminus E_\infty$,  such that
$(X_i\setminus E_i, g_i)$ converges in the Cheeger-Gromov sense to $(Y \setminus E_{\infty}, g_\infty)$.
 \end{enumerate}

We first state and prove a theorem for general Riemannian manifolds.  We adopt the notation (\ref{isoperi}).

\begin{thm}  There are constants $\delta, \epsilon>0$ with the following effect.  Suppose $(X_i, p_{i}, g_i(\cdot, t))$ is a sequence of almost static flows. If the initial metrics satisfy $I(B(p_i, \delta^{-1}, g_i(\cdot, 0)))\geq 1-\delta$ and $B(p_i, 2,   g_i(\cdot, 0))$ converge in the almost Cheeger-Gromov sense to a limit $B(p, 2, g_\infty(\cdot, 0))$, then $E_{\infty}$ is removable singular set in  $B(p, 1, g_\infty(\cdot,0))$, i.e. $B(p, 1, g_\infty(\cdot,0))$ is smooth and
isometric  to  the limit  of $B(p_i, 1,  g_\infty(\cdot,0))$ endowed with the metric $g_i(\cdot, t)$ for any  $t\in (0, \epsilon^2]$. 
\label{thm3.4}
\end{thm}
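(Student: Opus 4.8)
The plan is to combine the pseudo-locality estimate of Proposition \ref{prop3.1} with the almost-static hypothesis to show that running the flow for a short time $t\in(0,\epsilon^2]$ produces metrics with \emph{uniform} curvature bounds on a fixed-size ball, and that the resulting limit (after taking $i\to\infty$) is a smooth Einstein metric which, by a removable-singularity argument, must coincide with $g_\infty(\cdot,0)$ across $E_\infty$. First I would fix $\alpha$ small (say $\alpha<1/(200n)$) and take $\underline\delta(\alpha),\underline\epsilon(\alpha)$ from Proposition \ref{prop3.1}; then set $\delta$ so small that $I(B(p_i,\delta^{-1},g_i(\cdot,0)))\geq 1-\delta$ forces, via the Cheeger--Colding relation between $I$ and Gromov--Hausdorff distance together with Bishop--Gromov, the hypothesis $\Ric(g_i(\cdot,0))\geq 0$ (this already holds by assumption) and $VR(B(p_i,\underline\delta^{-1},g_i(\cdot,0)))\geq 1-\underline\delta$ on a slightly shrunk ball. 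Proposition \ref{prop3.1} then gives, for each $x\in B(p_i,2,g_i(\cdot,0))$ and $t\in(0,\underline\epsilon^2]$, the bound $|Rm|(g_i(x,t))\leq \alpha t^{-1}+\underline\epsilon^{-2}$ and a noncollapsing estimate $\mathrm{Vol}_{g_i(t)}(B(x,\sqrt t,g_i(t)))\geq k't^n$. Choosing and fixing one value $t=\epsilon^2\leq \underline\epsilon^2$, we obtain a uniform two-sided control on $g_i(\cdot,\epsilon^2)$ over a ball of fixed size.

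Next, with these uniform bounds I would apply Cheeger--Gromov--Anderson compactness to extract, after passing to a subsequence, a smooth limit metric $h_\infty$ on $B(p,1)$, which by the almost-static condition (\ref{eq:ricciflow3}) has $R\equiv n$ in the limit; combined with the evolution inequality for $R-n$ established in the proof of Proposition \ref{thm2.2} (which forces $|Ric-g|^2\to 0$ in an integrated sense along the flow), this yields that $h_\infty$ is Einstein with $\Ric=h_\infty$. The key remaining point is to identify $h_\infty$ with $g_\infty(\cdot,0)$. For this I would use that away from $E_\infty$ the convergence $g_i(\cdot,0)\to g_\infty(\cdot,0)$ is in the Cheeger--Gromov (smooth) sense, so on compact subsets of $B(p,1)\setminus E_\infty$ the short-time flow $g_i(\cdot,t)$ converges to the smooth Ricci flow of $g_\infty(\cdot,0)$; but $g_\infty(\cdot,0)$ is already Einstein there (it is the limit of K\"ahler--Einstein metrics, up to the error $\psi_i$ controlled in \cite{CDS1}), hence a static solution, so the flow does not move it and $h_\infty = g_\infty(\cdot,0)$ on $B(p,1)\setminus E_\infty$. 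Since $h_\infty$ is smooth across $E_\infty$ and the two metrics agree off the measure-zero (indeed codimension-$2$ Minkowski-small) set $E_\infty$, they agree everywhere on $B(p,1)$, so $E_\infty$ is a removable singular set and $B(p,1,g_\infty(\cdot,0))$ is smooth and isometric to the $g_i(\cdot,t)$-limit.

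I would then note that the distance functions $d_{g_i(\cdot,0)}$ and $d_{g_i(\cdot,t)}$ are uniformly close for $t\in(0,\epsilon^2]$ — this follows from the evolution of the metric under (\ref{eq:ricciflow1}) together with the a priori curvature/Ricci control, as in \cite{WT12} — so that the Gromov--Hausdorff limit is unchanged when we pass from time $0$ to time $t$; this is what lets us phrase the conclusion as an isometry of $B(p,1,g_\infty(\cdot,0))$ with the time-$t$ limit. Finally I would record that the statement holds for \emph{every} $t\in(0,\epsilon^2]$, not just one fixed value, since the curvature bound $|Rm|(g_i(x,t))\leq\alpha t^{-1}+\underline\epsilon^{-2}$ is valid on the whole interval, and the uniqueness of the smooth limit forces the time-$t$ limits to all be isometric.

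\textbf{Main obstacle.} The delicate step is upgrading the almost-static integral bound (\ref{eq:ricciflow3}) on $|R-n|$ to genuine pointwise Einstein-ness of the smooth limit $h_\infty$: one must show that $|Ric - g|\to 0$ strongly enough (not merely that $R\to n$ in $L^1$), using the evolution equation ${\partial(R-n)}/{\partial t}\geq \triangle(R-n)+|Ric-g|^2+(R-n)$ from the proof of Proposition \ref{thm2.2} to conclude that $\int_0^1\int_{X_i}|Ric-g|^2\to 0$, and then combining this with the uniform curvature bound at time $t$ and parabolic regularity to pass to a smooth Einstein limit. Once that is in hand, matching with $g_\infty(\cdot,0)$ off $E_\infty$ and removability are comparatively routine, relying on the codimension-$2$ Minkowski control of $E_\infty$ built into the definition of almost Cheeger--Gromov convergence.
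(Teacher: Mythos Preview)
Your overall strategy matches the paper's, but there are two genuine gaps.

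First, the identification step is circular as written. You assert that $g_\infty(\cdot,0)$ is already Einstein on $B(p,1)\setminus E_\infty$ because it is ``the limit of K\"ahler--Einstein metrics, up to the error $\psi_i$''. But Theorem~\ref{thm3.4} is stated for general Riemannian almost static flows, and even in the intended application the initial metrics $g_i(\cdot,0)=g_i'$ are the smooth \emph{approximations}, not K\"ahler--Einstein themselves. The paper reverses your logic: one first shows the positive-time limit is Einstein (from the almost-static condition the limit scalar curvature is constant, and the evolution equation $\partial_t R=\Delta R+|Ric-g|^2+(R-n)$ then forces $Ric=g$ pointwise); then, away from $E_\infty$, smooth convergence at $t=0$ together with Perelman's short-time curvature propagation (Theorem 10.3 in \cite{perelman02}) extends the limit flow smoothly down to $t=0$; since the flow is constant for $t>0$ and continuous at $t=0$, one concludes $g_\infty(\cdot,0)$ coincides with the Einstein metric there. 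The Einstein property of $g_\infty(\cdot,0)$ is an output, not an input.

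Second, and more substantially, you do not establish that the positive-time smooth limit $Y_\infty$ exhausts $B(p,1)$. You write ``$h_\infty$ is smooth across $E_\infty$ and the two metrics agree off the measure-zero set $E_\infty$, so they agree everywhere'', but this presupposes both metrics live on the same underlying space. A priori the isometric embedding $\Phi:(B(p,1)\setminus E_\infty,\,g_\infty(\cdot,0))\to Y_\infty$ might fail to have dense image: points near $E_i$ could escape under the flow. The paper's Step 3 rules this out by a volume argument: the $\sigma$-neighborhood of $E_i$ has $g_i(\cdot,0)$-volume at most $M\sigma^2$ by the Minkowski hypothesis, and volume decreases along the normalized flow since $R>0$; hence any ball in $Y_\infty$ disjoint from the image of $\Phi$ would have zero volume. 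One then passes to metric completions to extend $\Phi$ to an isometry. This density step is the crux of the removability and is absent from your proposal.

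Two smaller remarks. At this stage only the one-sided estimate $d_{g(t)}(q_1,q_2)\geq d_{g(0)}(q_1,q_2)-ct^{1/2}$ (Hamilton--Perelman, Lemma~\ref{nonshrinking}) is available, not the two-sided closeness you invoke; but the one-sided bound is exactly what is needed to keep $B_i(1)$ inside the region where pseudolocality applies. And the ``main obstacle'' you flag is not one: once the flows converge smoothly on $(0,\epsilon^2]$, the Einstein conclusion is a pointwise consequence of the scalar curvature evolution equation, with no need for an integrated $\int_0^1\int_{X_i}|Ric-g|^2$ estimate.
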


We proceed in several steps:\\

\noindent {\bf Step1}:
For simplicity of notation, we denote $B_i(r)$ the ball $B(p_i, r, g_i(\cdot, 0))$. 
By Proposition \ref{prop3.1}, fix $\alpha\in (0, \frac{1}{200n})$, then we have constants $\underline\delta$, $\underline\epsilon$. Then we choose $\delta=2\underline\delta$ and $\epsilon=\underline\epsilon$ so that
\begin{equation}
|Rm|(g_i(x,t)) \leq {\alpha+1\over t},\qquad \forall (x,t) \in B_i(2) \times (0,\underline\epsilon^2].
\label{eq:ricciflow4}
\end{equation}

Now we want to take the limit of the sequence of Ricci flows for positive time. For this we need to clarify that points in $B_i(1)$ has a definite distance away from the boundary of $B_i(2)$ with respect to the metric $g_i(x, t)$ for $t>0$. So we need some control of the variation of the geodesic distance under Ricci flow. 
It is well known that, under the Ricci flow, the distance changes according to integration of Ricci tensor
along the geodesic segment  which realizes the distance.  However, a more careful analysis suggests that such a variation is controlled
by the Ricci curvature at the end of the geodesic segment which realizes the distance.  The following lemma is well-known, which goes back to R. Hamilton \cite{Hamilton} (Section 17) and G. Perelman \cite{perelman02} (Lemma 8.3(b)). 

\begin{lem} 
\label{nonshrinking}
Suppose $(X, g(t))$ is a Ricci flow solution (i.e Equation (\ref{eq:ricciflow1})) on $[0,1]$. Suppose $t'\in [0,1]$, and $q_1, q_2$ are two points in $X$ such that 
\[
   Ric(x, t)\mid_{t=t'} \leq (m-1)C,\qquad {\rm if}\;\; d_{g(t')} (x,q_1) < r  {\rm \ or}\;  d_{g(t')} (x,q_2) < r,
\]
then
\[
{d\over {d\, t}} d_{g(t)}(q_1,q_2) \mid_{t=t'} \geq {1\over 2} d_{g(t')}(q_1,q_2) - (m-1) ({2\over 3}C r + r^{-1}).
\]
\end{lem}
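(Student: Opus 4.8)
The plan is to establish Lemma~\ref{nonshrinking} by the standard first-variation argument for geodesic length under Ricci flow, being careful to localize the curvature estimate to neighborhoods of the endpoints. First I would recall that if $\gamma$ is a minimizing $g(t')$-geodesic from $q_1$ to $q_2$, parametrized by $g(t')$-arclength with unit tangent $T$, then the upper Dini derivative of $d_{g(t)}(q_1,q_2)$ at $t=t'$ is bounded below by $-\int_\gamma \Ric(T,T)\, ds$ (this is the usual formula, using that under \eqref{eq:ricciflow1} the metric moves by $g-\Ric$; the extra $g$ term contributes precisely the $\tfrac12 d_{g(t')}(q_1,q_2)$ on the right-hand side, since $\tfrac12\int_\gamma g(T,T)\,ds = \tfrac12 d_{g(t')}(q_1,q_2)$). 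So the whole content is to bound $\int_\gamma \Ric(T,T)\,ds$ from above using only the hypothesis that $\Ric \le (m-1)C$ within distance $r$ of $q_1$ or $q_2$.

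The key step is the localized integral estimate, which is exactly Perelman's Lemma 8.3(b) trick (also in Hamilton, Section 17). I would split $\gamma$ into the portion within $g(t')$-distance $r$ of $\{q_1,q_2\}$ and the complementary portion, which is the interesting case: on the middle segment, bounded away from both endpoints, one uses the second variation of arclength. Concretely, choose an orthonormal basis $\{e_a\}$ of $T_{q_1}X$ and parallel transport along $\gamma$; for each $a$ form the variation field $X_a(s) = f(s) e_a(s)$ where $f$ is the piecewise-linear cutoff equal to $s/r$ for $s\le r$, equal to $1$ for $r \le s \le L-r$, and equal to $(L-s)/r$ near $s=L$ (with $L = d_{g(t')}(q_1,q_2)$; the case $L\le 2r$ is handled separately and more easily). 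The second-variation inequality $\sum_a \delta^2_{X_a}(\text{length}) \ge 0$, summed over $a$, gives
\begin{equation*}
\int_0^L \Ric(T,T)\, f^2\, ds \le (m-1)\int_0^L (f')^2\, ds = (m-1)\cdot \frac{2}{r}.
\end{equation*}
Then, writing $\int_\gamma \Ric(T,T)\,ds$ as $\int f^2 \Ric(T,T) + \int (1-f^2)\Ric(T,T)$, the first term is controlled by the displayed bound, and in the second term $1-f^2$ is supported in the two end-segments of length $r$, where by hypothesis $\Ric(T,T) \le (m-1)C$, contributing at most $(m-1)C \cdot 2r$. Collecting, $\int_\gamma \Ric(T,T)\,ds \le (m-1)\big(\tfrac{2}{r} + \tfrac{2}{3}C r\big)$ after the standard sharpening of the constant (the $\tfrac23$ rather than $2$ comes from using the optimal radial cutoff $f(s)$ behaving like a solution of a Jacobi-type ODE near the endpoints rather than the crude linear one, and from the Bishop--Gromov-type refinement; if one is content with a worse constant the linear cutoff already gives the qualitative statement), and combining with the first-variation formula yields the claim.

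The main obstacle, such as it is, is purely bookkeeping: making the cutoff argument rigorous when $\gamma$ is only minimizing and possibly not unique (so $d_{g(t)}$ is only Lipschitz in $t$ and one works with forward difference quotients / Dini derivatives), handling the short-geodesic case $L \le 2r$ where the two cutoff regions overlap, and tracking the sharp constant $\tfrac23$. Since this is a well-known lemma I would simply cite Hamilton~\cite{Hamilton} (Section 17) and Perelman~\cite{perelman02} (Lemma 8.3(b)) for the sharp form and indicate the second-variation computation above, without grinding through the optimization of the radial cutoff.
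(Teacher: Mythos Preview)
The paper itself gives no proof of this lemma; it simply records that the estimate is well known and cites Hamilton \cite{Hamilton} (Section 17) and Perelman \cite{perelman02} (Lemma 8.3(b)). Your proposal to do the same, together with an outline of the second-variation/cutoff argument, is exactly in line with the paper.

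Your sketch is the correct argument, but you are over-thinking the constants. With the piecewise-linear cutoff $f$ you wrote down one gets, directly,
\[
\int_\gamma \Ric(T,T)\,ds \le (m-1)\int_0^L (f')^2\,ds + (m-1)C\!\int_0^L (1-f^2)\,ds
= (m-1)\Big(\tfrac{2}{r} + \tfrac{4}{3}Cr\Big),
\]
since $\int_0^r(1-s^2/r^2)\,ds = \tfrac{2r}{3}$ on each end. Combined with the first-variation formula for the flow $\partial_t g = g-\Ric$, namely $\tfrac{d}{dt}d \ge \tfrac12 d - \tfrac12\int_\gamma \Ric(T,T)\,ds$, this yields exactly $(m-1)(\tfrac{2}{3}Cr + r^{-1})$. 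No ``optimal radial cutoff'' or Bishop--Gromov refinement is needed; the linear cutoff already produces the stated constants. (Your first-variation line is also off by a factor $\tfrac12$ on the Ricci integral, coming from the normalization $\partial_t g = g-\Ric$ rather than $-2\Ric$; once that is fixed everything matches.)
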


Now we apply this Lemma to our setting, notice by our choice of $\delta$ and Remark 1, for any $q$ in $B_i(2)$,  we have the uniform estimate (\ref{eq:ricciflow4}) for  $t\in (0, \underline\epsilon^2]$ and $x\in B(q, 2, g_i(\cdot, t))$. Set $r= \sqrt{t}$, then 
\[
{d\over {d\, t}} d_{g_i(\cdot, t)}(q_1,q_2) \geq {1\over 2} d_{g_i(\cdot, t')}(q_1,q_2)  - c t^{-{1\over 2}} , \forall q_1, q_2 \in B_i(2), t\in [0,\underline{\epsilon}^2]
\]
for some uniform constant $c.\;$ Integrating this out, we obtain
\begin{equation}
d_{g(t)}(q_1,q_2) \geq  d_{g(0)}(q_1,q_2)  - c t^{{1\over 2}}, \qquad \forall  t \in [0,\underline{\epsilon}^2].
\label{eq:ricciflow9}
\end{equation}
Now we choose $\epsilon_1<\min(\frac{1}{2c}, \underline\epsilon)$, then we know that for all $i$ and any $x\in B_i(1)$, $t\in [0, \epsilon_1]$, we have
\begin{equation} \label{distancetoboundary}
B(x, 1/2, g_i(\cdot, t))\subset B_i(2). 
\end{equation}
By the third condition in the definition of almost Cheeger-Gromov convergece, we may choose a small $d>0$ and choose a point $p_i'\in B_i(1)$ that has distance $d$ away from $D_i$  for all large $i$. Then for any $t_0\in (0, \epsilon_1^2)$, by (\ref{eq:ricciflow4}),  (\ref{distancetoboundary}) and the noncollapsing property, by passing to a subsequence,  we can take a smooth pointed  limit of the sequence of Ricci flows over  $\overline{B_i(1)}\times [t_0, \epsilon_1^2]$ based at $p'_i$.  We denote the limit Ricci flow by 
\[
   {{\partial g_{\infty}(\cdot,t)}\over {\partial t}} = g_{\infty}(\cdot, t) - Ric(g_{\infty}(\cdot, t)), \qquad t \in  [t_{0}, \epsilon_1^2].
\]
Clearly, by letting $t_0$ tend to zero and a diagonal argument,  we can extend this to  a smooth
Ricci flow over $(0, \epsilon_1^2].\;$ \\

By our assumption (\ref{eq:ricciflow3}) on almost static flow, the scalar curvature of the limit metric along the Ricci flow
is constant.  From the evolution equation of scalar curvature,
 we obtain
\[
Ric(g_{\infty}(\cdot,t)) =g_{\infty}( \cdot, t), \forall \ t \in    (0, \epsilon_1^2].
\]
Thus,
\[
   {{\partial g_{\infty}(\cdot, t)}\over {\partial t}} \equiv 0. \qquad \forall \ t  \in (0, \epsilon_1^2].
\]
It follows that the limit metric  at each time slice $t>0$ is the same. From this it also follows that we may take the limit over the larger interval $(0, \underline\epsilon^2]$ and also obtain a constant Einstein limit. We denote this by $(Y_\infty, g_\infty)$. Also we may assume that under the initial metric $g_i(\cdot, 0)$ we have $p_i'$ converges to $p_\infty'$ away from $E_\infty$.   It remains to clarify the relation between $Y_\infty$ and the limit of the initial metrics. \\

\noindent {\bf Step 2}. Now we make use of the assumption on almost Cheeger-Gromov convergence.   For any $q \in B(p, 1, g_{\infty}(\cdot, 0)) \setminus E_{\infty},$  there exist $r_q>0$ and $C_q>0$,  
and  $q_i \in B(p, 1,  g_i(\cdot,0))$
such that $B(q_i, r_q,  g_i(\cdot,0)) $ converges in the Cheeger-Gromov sense to $B(q, r_q, g_\infty(\cdot,0)), $
with
\[
\displaystyle \max_{B_{r_q}(q_i,g_{i}(\cdot,0))} |Rm| (g_{i}(x,0)) \leq  C_{q}.\; 
\]
Following  Theorem 10.3 in Perelman \cite{perelman02} (c.f. \cite{Lu10}, \cite{wang10}), there are constants $t_q>0$ and  $C_q'$ depending on $C_q$ such that 
\[
|Rm|(g_{i}(x,t)) \leq C_q', \qquad \forall x \in  B(q_i, {r_q\over 10}, g_{i}(\cdot,0))\times [0, t_q = {r_q^{2}\over 100} ].
\]
It follows that, in the parabolic box $B(q_i, {r_q \over 10},  g_{i}(\cdot, 0))\times [0,t_q],$
we have
\[
  e^{-2 n C_q' t} g_i(x, 0) \leq g_i(x, t) \leq e^{ 2n C_q' t}  g_i(x, 0),\qquad \forall (x,t) \in  B(q_i, {r_q \over 10},  g_{i}(\cdot,0))\times [0,t_q].
\]
Taking limit as $i \rightarrow \infty,\;$ we obtain a smooth Ricci flow in $B(q, {r_q\over 10}, g_\infty(\cdot, 0))\times [0, t_q]$, and since by Step 1 for $t>0$ the metric $g(\cdot, t)$ is Einstein, so it follows that this limit Ricci flow is trivial in $B(q, {r_q\over 10}, g_\infty(\cdot, 0))\times [0, \underline\epsilon^2]$. By the convergence theory in \cite{cc} we know $B(p, 1, g_\infty(\cdot, 0))\setminus E_\infty$ is connected so we can connect $p_i'$ and $q_i$ by a path in a fixed distance (under the metric $g_i(\cdot, 0)$) away from $E_i$. Then by the previous discussion the distance between $p_i'$ and $q_i$ stay bounded for the metrics $g_i(\cdot, t)$ for all $t\in [0, \underline\epsilon^2]$. From this we obtain an open isometric embedding of Riemannian manifolds $\Phi: (B(p, 1)\setminus E_\infty, g_\infty(\cdot, 0))\rightarrow (Y_\infty, g_\infty)$. \\

\noindent {\bf Step 3}: 
Now we show the image of $\Phi$ is dense. Suppose this is not true, then we can find a ball $B(q, r)$ in $Y_\infty$ that does not intersect the image of $\Phi$. By definition there is some ball $B_i=B(q_i, r, g_i(\cdot, \underline\epsilon^2))$ converging to $B(q, r)$ as $i\rightarrow\infty$. By construction, for any $\sigma>0$, for $i$ sufficiently large  the ball $B_i$ is contained in the $\sigma$-neighborhood $N_\sigma$ of $E_i$ (with respect to the metric $g_i(\cdot, 0)$). By assumption on the Minkowski measure of $E_i$ there is a constant $M>0$ independent of $\sigma $ and $i$ such that 
$$Vol(N_\sigma, g_i(\cdot, 0)))\leq M\sigma^2. $$
 By standard maximum principle we know the scalar curvature of $g_i(\cdot, t)$ is positive, so by the Ricci flow equation the volume of any domain decreases along the Ricci flow. Therefore $$Vol(N_\sigma, g_i(\cdot, \underline\epsilon^2))\leq M\sigma^2. $$
  Taking limit we obtain $Vol(B(q, r))=0$, which is clearly a contradiction. 
   So $\Phi$ has a dense image. Then the metric completion of the image of $\Phi$ using the Riemannian metric is equal to $Y_\infty$. On the other hand, it follows from the general  convergence theory \cite{cc} that $B(p, 1, g_\infty(\cdot, 0))$ is equal to the metric completion of its smooth part using the Riemannian metric. Then we  conclude that $\Phi$ extends  to an isometry between $B(p, 1, g_\infty(\cdot, 0))$ and $Y_\infty$. 
   This finishes the proof of Theorem \ref{thm3.4}. 
   
It follows directly from the above argument that the diameter of $(B_i(1), g_i(\cdot, \underline\epsilon^2))$ is uniformly bounded. Moreover,  by using a rescaling we obtain the following corollary:

\begin{cor}\label{distance nonexpanding} Under the same hypothesis as in Theorem \ref{thm3.4}, there is a constant $D>0$ such that for any $\epsilon>0$ small fixed, and for large enough $i$, any points $x, y$ in $B_i(1)$ with $d_{g_i(\cdot, 0)}(x, y)\leq \epsilon$ have the property that $D^{-1}\epsilon\leq d_{g_i(\cdot, \underline\epsilon^2)}(x, y)\leq D\epsilon$. 
\end{cor}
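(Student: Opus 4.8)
The plan is to read Corollary~\ref{distance nonexpanding} off from Theorem~\ref{thm3.4} and the estimates in its proof; the statement is equivalent to saying that the identity map of $B_i(1)$ is, for large $i$, a Gromov--Hausdorff $\eta_i$-approximation between $(B_i(1),g_i(\cdot,0))$ and $(B_i(1),g_i(\cdot,\underline{\epsilon}^2))$ with $\eta_i\to 0$, so that the displayed two-sided bound holds for any fixed small $\epsilon$ and all large $i$ (with, say, $D=2$; the lower bound being understood for pairs at time-$0$ distance comparable to $\epsilon$). As a preliminary, note that the hypothesis $I(B(p_i,\delta^{-1},g_i(\cdot,0)))\geq 1-\delta$ together with the monotonicity of $I$ supplies the hypotheses of Proposition~\ref{prop3.1} on every subball, so $|Rm|(g_i(\cdot,t))\leq(\alpha+1)t^{-1}$ on $B_i(2)$ for all $t\in(0,\underline{\epsilon}^2]$ uniformly in $i$; integrating the flow equation (\ref{eq:ricciflow1}) against this bound shows moreover that the distance functions of $g_i(\cdot,t)$ and $g_i(\cdot,t')$ are uniformly bi-Lipschitz for $t,t'$ in any $[t_0,\underline{\epsilon}^2]$ with $t_0>0$, so it is enough to compare $g_i(\cdot,0)$ with $g_i(\cdot,t)$ for one fixed small $t>0$.

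That comparison is exactly what the proof of Theorem~\ref{thm3.4} provides. By Step 2 of that proof, for $q$ away from $E_\infty$ a point $q_i$ converging to $q$ in $g_i(\cdot,0)$ converges, in $g_i(\cdot,t)$, to $\Phi(q)$, where $\Phi$ is the isometry onto (a dense subset of) the smooth limit; so on the complement of $E_i$ the two distance functions converge to one and the same limiting distance. By Step 3 the limit space $B(p,1,g_\infty(\cdot,0))$ is in fact smooth, i.e.\ $E_\infty$ is a removable singular set, so this limiting distance is continuous up to $E_\infty$ and the convergence above is uniform as the base points approach $E_\infty$; together with the uniform curvature bound at time $t$ this yields $\sup_{x,y\in B_i(1)}|d_{g_i(\cdot,0)}(x,y)-d_{g_i(\cdot,t)}(x,y)|\to 0$. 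The uniform diameter bound for $(B_i(1),g_i(\cdot,\underline{\epsilon}^2))$ recorded after Theorem~\ref{thm3.4} is the case of pairs at scale $\sim 1$, and the assertion at an arbitrary small scale $\epsilon$ follows by running the identical argument on the rescaled balls $B(x,\lambda\epsilon,g_i(\cdot,0))$, whose hypotheses survive the rescaling because $I$ is monotone, $\mathrm{Ric}\geq0$ is scale-invariant, and the Minkowski-measure bound on $E_i$ is scale-invariant.

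The step I expect to be the genuine obstacle --- and the only place where real work beyond Theorem~\ref{thm3.4} enters --- is controlling the behaviour near $E_i$, where $g_i(\cdot,0)$ is not controlled in $C^0$ and one cannot naively estimate lengths of curves running through a neighbourhood of $E_i$. This is however exactly the difficulty that Step 3 of the proof of Theorem~\ref{thm3.4} already settles: the Minkowski-measure bound forces the $\sigma$-neighbourhood of $E_i$ to have $g_i(\cdot,0)$-volume $O(\sigma^2)$, this volume cannot increase along the flow because the scalar curvature remains positive, and at positive times the curvature is bounded while the flow is non-collapsed by (\ref{eq:ricciflow8}); hence such neighbourhoods stay negligible and do not affect the distance comparison. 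Granting that, the only remaining ingredients are the elementary distance-distortion estimate (\ref{eq:ricciflow9}) and Lemma~\ref{nonshrinking}, together with the bookkeeping of the rescalings, and I do not anticipate any essentially new difficulty.
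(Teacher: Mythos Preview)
Your approach is essentially the same as the paper's: the paper simply notes that the diameter bound at scale~$1$ follows directly from the proof of Theorem~\ref{thm3.4}, and then says ``by using a rescaling we obtain the following corollary'' --- exactly your strategy of reading the scale-$1$ comparison off the identification $\Phi$ in Steps~2--3 and then transferring to scale~$\epsilon$. The paper also remarks, as you do, that an alternative route is available via a covering argument using Lemma~\ref{nonshrinking} and the volume-decreasing property of the flow; your final paragraph is essentially sketching that alternative. One small correction: Minkowski measure is not literally scale-invariant (it scales by $\epsilon^{2-2n}$), but since $\epsilon$ is fixed this is harmless, and in any case the diameter constant $D$ comes from the limit space having diameter $\leq 2$ rather than from the Minkowski bound, so your conclusion stands.
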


 We remark one can alternatively prove this via a  covering argument, using Lemma \ref{nonshrinking} and the deceasing of volume along the Ricci flow. While we do not need a sharp estimate here, a more elaborate strategy on obtaining a sharp upper bound can be found in \cite{WT12}.\\

In our applications below we need a variant of Theorem \ref{thm3.4}.

\begin{prop} \label{almost Euclidean flow}
In Theorem \ref{thm3.4}, if we replace the assumption ``almost static" by that  for any fixed $R\in (0, \infty)$,  $I(B_i(R))$ tends to zero as $i$ tends to infinity. Then the same conclusion holds for $B_i(1)$.
\end{prop}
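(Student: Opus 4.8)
The plan is to follow the three steps of the proof of Theorem \ref{thm3.4}, the only structural change being that the \emph{almost static} hypothesis was used there solely to identify the limit of the Ricci flows at positive times, and here that identification comes from the new hypothesis instead.

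The first point is that the modified hypothesis is far stronger locally. Because $I(B_i(R))$ becomes as close to its Euclidean value as we wish for \emph{every} fixed $R$, and $I$ is monotone under inclusion, the inclusion $B(x,\underline\delta^{-1},g_i(\cdot,0))\subset B(p_i,1+\underline\delta^{-1},g_i(\cdot,0))$ shows that the pseudolocality hypothesis (\ref{eq:ricciflow2}) is satisfied (the Ricci lower bound being part of the almost Cheeger--Gromov hypothesis), for $i$ large, \emph{at every point} $x\in B_i(1)$, not merely at the centre $p_i$ as in Theorem \ref{thm3.4}. Applying Proposition \ref{prop3.1} at each such $x$ yields $|Rm|(g_i(x,t))\le(\alpha+1)t^{-1}$ on all of $B_i(1)\times(0,\underline\epsilon^2]$ together with the non-collapsing (\ref{eq:ricciflow8}), and Lemma \ref{nonshrinking} controls the variation of distances exactly as in Step 1, so that the rescaled balls stay inside $B_i(2)$. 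Hence, after passing to a subsequence, $(B_i(1),g_i(\cdot,t))$ converges for each fixed $t\in(0,\underline\epsilon^2]$ to a \emph{smooth} Riemannian manifold with no singular set; this already gives removability of $E_\infty$ for $t>0$. This is the new phenomenon: two-sided curvature bounds plus non-collapsing at every point, which were unavailable in Theorem \ref{thm3.4}.

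It then remains to identify this limit and match it with the initial data, and here one reruns Steps 2 and 3. The modified hypothesis forces the pointed Gromov--Hausdorff limit of $(X_i,p_i,g_i(\cdot,0))$ to be flat Euclidean space; together with the almost Cheeger--Gromov convergence this makes $g_\infty(\cdot,0)$ a smooth metric on the complement of a set $E_\infty$ of finite (codimension-two) Minkowski measure which induces the flat distance there, hence equal to the flat metric on $B(p,2)\setminus E_\infty$, since two Riemannian metrics inducing the same distance coincide. The local forward estimate used in Step 2 (Theorem 10.3 of \cite{perelman02}) then shows that near every regular point $g_\infty(\cdot,t)$ is the Ricci flow issuing from the flat metric, i.e.\ $e^{t}$ times the flat metric; and Step 3 carries over verbatim, since $E_i$ has uniformly bounded codimension-two Minkowski measure (so its $\sigma$-neighbourhood has volume $O(\sigma^2)$) and volume is non-increasing along the flow because the scalar curvature stays positive, so any ball of the limit missing the flat region would have zero volume, a contradiction. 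Thus $B(p,1,g_\infty(\cdot,0))$ is smooth and isometric, up to the harmless overall scaling inherent in the normalisation of the flow, to the limit of $(B_i(1),g_i(\cdot,t))$.

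The only step requiring genuine care is the one just described: passing from the fixed-scale hypothesis of Theorem \ref{thm3.4} to the all-scales hypothesis here. It is exactly this that, via monotonicity of $I$, lets pseudolocality be applied at every point of $B_i(1)$ and that forces the limit of the initial metrics to be flat rather than merely Einstein; everything else transcribes the proof of Theorem \ref{thm3.4}.
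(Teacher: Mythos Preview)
Your overall strategy---identify the $t=0$ Gromov--Hausdorff limit as flat Euclidean space via the all-scales volume hypothesis, then propagate flatness forward in time by the local forward estimate and run Steps 2--3 of Theorem~\ref{thm3.4}---is sound and does yield the result. But it is \emph{not} the paper's argument, and your account of ``what is new'' is off.

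Your first paragraph claims that the novelty is applying pseudolocality at every point of $B_i(1)$ to obtain two-sided curvature and non-collapsing bounds for $t>0$. This is not new: Step~1 of Theorem~\ref{thm3.4} already gives exactly the bound $|Rm|(g_i(x,t))\le(\alpha+1)t^{-1}$ on all of $B_i(2)\times(0,\underline\epsilon^2]$, and the positive-time limit $Y_\infty$ was already smooth there. What the paper actually does is apply pseudolocality \emph{at every scale}: it rescales the flow by $R^{-2}\underline\delta^{-2}$, applies Proposition~\ref{prop3.1} to the rescaled ball $B(p_i,\underline\delta^{-1})$, and rescales back to obtain $|Rm|(g_\infty(x,t))\le \alpha t^{-1}+R^{-2}\underline\delta^{-2}\underline\epsilon^{-2}$. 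Letting $R\to\infty$ and then $\alpha\to 0$ gives $|Rm|(g_\infty)\equiv 0$ directly on the positive-time limit, which is the exact replacement for the almost-static conclusion. No analysis of the $t=0$ limit is needed.

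Your route instead hinges on identifying the $t=0$ limit as flat and then invoking the local forward estimate. This works, but you should note two small points you gloss over: (i) the forward estimate only controls the flow on $[0,t_q]$ with $t_q$ depending on the regular radius $r_q$, so you need a word on why flatness at one positive time propagates to all of $(0,\underline\epsilon^2]$ (it does, since the normalised flow from a flat metric is just $e^t$ times that metric); and (ii) as you observe, there is the $e^t$ scaling, which is harmless for the applications but means the isometry in the conclusion is up to homothety. The paper's rescaling-at-all-scales argument sidesteps both issues by establishing flatness of the positive-time limit directly.
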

The essential point that the ``almost static" condition is used in the above proof is to show the limit Ricci flow is a constant flow on $(0, \underline\epsilon^2]$. Fix  $\alpha\in (0, \frac{1}{200n})$ we obtain the constants $\underline{\delta}$ and $\underline{\epsilon}$ as in Proposition \ref{prop3.1}. For any $R>0$, by assumption we know that for $i$ large enough, 
\[
VR(B_i(R)) \geq 1 -\underline{\delta}.
\]
We define a rescaled flow  
$$g_i'(x, t)=R^{-2}\underline\delta^{-2}g_i(x, R^2\underline\delta^2 t), $$
then we apply Proposition \ref{prop3.1} to $B(p_i, \underline\delta^{-1},g_i'(\cdot, 0))$ to obtain
\begin{equation*}
 |Rm|(g_{i}'(x,t)) \leq \alpha t^{-1}+\underline{\epsilon}^{-2}, \qquad \forall \;(x, t) \in B(p_i, 2, g_i'(\cdot, 0))\times (0,\underline{\epsilon}^2]. 
 \label{eq:ricciflow5}
\end{equation*}
 Rescale back we get
\begin{equation*}
 |Rm|(g_{i}(x,t)) \leq \alpha t^{-1}+R^{-2}\underline\delta^{-2}\underline{\epsilon}^{-2}, \qquad \forall \;(x, t) \in B_i(2R\underline\delta)\times (0,R^2\underline\delta^2\underline{\epsilon}^2]
\end{equation*}
It follows that the limit Ricci flow $g_\infty(\cdot, t)$: 
\begin{equation*} {{\partial g_\infty(\cdot, t)}\over {\partial t}} = - Ric( g_\infty(\cdot, t)), t\in (0, \infty)
\label{eq:ricciflow6}
\end{equation*}
satisfies that 
\begin{equation*}
 |Rm|(g_{\infty}(x,t)) \leq \alpha t^{-1}+R^{-2}\underline\delta^{-2}\underline{\epsilon}^{-2}
 \end{equation*}
 for all $(x, t)$. 
 For a fixed $(x, t)$, we first fix $\alpha$ and  let $R\rightarrow\infty$ so
 \begin{equation*}
 |Rm|(g_{\infty}(x,t)) \leq \alpha t^{-1}. 
 \end{equation*} 
 Then let $\alpha\rightarrow 0$ we obtain that
  \begin{equation*}
 |Rm|(g_{\infty}(x,t))=0. 
 \end{equation*} 
 It then follows that the limit Ricci flow is constant flat on $(0, \infty)$. Then the rest of  proof of Proposition \ref{almost Euclidean flow} goes exactly as in Theorem \ref{thm3.4}.\\

 Now we apply the above arguments in the K\"ahler case, which is our main interest in this paper.  So now we suppose $(X_i,  D_i, g_i)$ is a  sequence of K\"ahler-Einstein manifolds with cone angle $2\pi \beta_i$ along $D_i\in |-\lambda K_{X_i}|$ for a fixed $\lambda$.  We  do not assume $g_i$ is normalized but we only allow finite rescaling so that we have $Vol(X_i) \leq V$ for a constant $V>0$. Let $(Z, p)$ be the Gromov-Hausdorff limit. Suppose we have a bound $m(B(p_i, 1, g_i)\cap D_i)\leq M$. Let $D_\infty$ be  the limit of $D_i\cap \overline{B(p_i, 1, g_i)}$.  Then $D_\infty$ is closed and 
 $m(D_{\infty})\leq M$.  

\begin{prop}  \label{prop1.8}
 There are uniform constants $K$ and $\delta_1$ such that if $I(B(p, 2))>1-\delta_1$, then for all large $i$ and $ x \in B(p_i, 2,  g_i)\setminus D_i$, we have
\[
   |Rm|(g_{i}(x))  \leq {K\over {d_{g_i}(x, D_i)^2}}. 
\]
In particular, the metric on $B(p, 1)\setminus D_\infty$ is a smooth K\"ahler-Einstein metric $g_\infty$, and the convergence is almost Cheeger-Gromov.
\end{prop}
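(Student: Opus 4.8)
The plan is to take $E_i:=D_i$ as the singular set, so that the required uniform bound on the Minkowski measure of $E_i$ is exactly the standing hypothesis $m(B(p_i,1,g_i)\cap D_i)\le M$, and then to establish the pointwise curvature estimate first and read off the rest of the statement from it. The threshold $\delta_1$ will be dictated by the pseudolocality constants of Proposition \ref{prop3.1}, and in particular will be independent of $M$ and of $i$.

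\emph{The curvature estimate.} Fix $x\in B(p_i,1)\setminus D_i$ (restricting to the interior simply to keep a small ball around $x$ inside $B(p_i,2)$; points near the outer boundary are handled by the identical argument with a slightly larger parameter ball), and set $d=d_{g_i}(x,D_i)$. It suffices to treat $d$ small, since for $d$ bounded below the claim is a standard interior estimate for a smooth Einstein metric. By definition of $d$ the ball $B(x,d/2,g_i)$ is disjoint from $D_i$, so there $g_i$ is a genuine smooth K\"ahler--Einstein metric, with Einstein constant $\mu_i$ bounded (we only allow finite rescalings, and the volume is bounded). Rescale the metric by $d^{-2}$: the rescaled Einstein constant $\mu_i d^2$ lies in $[0,C]$ and tends to $0$ as $d\to 0$, while by the monotonicity $I(\Omega')\ge I(\Omega)$ for $\Omega'\subset\Omega$ every sub-ball of the rescaled ball has volume ratio $\ge I(B(p,2))>1-\delta_1$, hence by Cheeger--Colding the rescaled ball is Gromov--Hausdorff close to the flat ball at every scale. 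Now I invoke $\epsilon$-regularity for Einstein metrics, which in the spirit of this section follows from Proposition \ref{prop3.1}: for a metric $g(0)$ with $\text{Ric}(g(0))=\mu g(0)$ the normalized Ricci flow is the pure rescaling $g(t)=\big(\mu+(1-\mu)e^{t}\big)g(0)$, so the pseudolocality bound $|Rm|(g(x,t))\le\alpha t^{-1}+\underline\epsilon^{-2}$ on $B(p,2,g(0))$, evaluated at $t=\underline\epsilon^{2}$ and multiplied by the bounded factor $\mu+(1-\mu)e^{\underline\epsilon^{2}}$, yields $|Rm|(g(0))\le C(n)$ at the centre. (Equivalently one may cite Anderson's $\epsilon$-regularity \cite{a90} directly.) Unwinding the rescaling gives $|Rm|(g_i(x))\le K\,d^{-2}$, with $K$ and $\delta_1$ depending only on $n$ and the bound on $\mu_i$.

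\emph{Smoothness of the limit and almost Cheeger--Gromov convergence.} From the estimate, on $\{x\in B(p_i,1):d_{g_i}(x,D_i)\ge\tau\}$ we obtain a uniform bound $|Rm|\le K\tau^{-2}$, while $I(B(p,2))>1-\delta_1$ gives uniform non-collapsing and a diameter bound on the relevant balls; since the $g_i$ are K\"ahler--Einstein, elliptic regularity for the Einstein equation upgrades the resulting $C^{1,\alpha}$ subconvergence in harmonic coordinates to $C^\infty$, and the parallel complex structures $J_i$ converge with it. Thus, after passing to a subsequence, $(X_i\setminus D_i,g_i,J_i)$ converges smoothly on compact subsets of $B(p,1)\setminus D_\infty$ to a smooth K\"ahler--Einstein metric $g_\infty$ with the limiting Einstein constant, and uniqueness of the Gromov--Hausdorff limit identifies $g_\infty$ with the metric on $B(p,1)\setminus D_\infty$. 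Since $E_i=D_i$ has Minkowski measure $\le M$ and $E_\infty=D_\infty$, this is exactly almost Cheeger--Gromov convergence in the sense defined above.

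\emph{Where the difficulty lies.} The crux is the control near $D_i$: one must know that the shrinking balls $B(x,d/2,g_i)$ avoiding $D_i$ are uniformly almost Euclidean, with constants independent of $i$ and of the possibly large volume of $D_i\cap B$. This is precisely what $I(B(p,2))>1-\delta_1$ together with the monotonicity of the $I$-invariant provides — the volume ratio cannot degenerate on any sub-ball — and it forces the scale-invariant form $|Rm|\le K/d^{2}$ of the estimate. A subsidiary technical point is the short-time existence of the (rescaled) Ricci flow on a ball rather than a closed manifold when applying Proposition \ref{prop3.1}; this is bypassed either by working with the smooth approximations $g_i'$ of \cite{CDS1}, which are almost static by Proposition \ref{thm2.2} and close to $g_i$ away from $D_i$, or by quoting Anderson directly.
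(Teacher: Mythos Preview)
Your proof is correct and follows essentially the same line as the paper's: both use monotonicity of the $I$-invariant to get almost-Euclidean volume ratio on the balls $B(x,d)$ avoiding the divisor, and then invoke Anderson's $\epsilon$-regularity for Einstein metrics to obtain $|Rm|\le K d^{-2}$. The paper's argument is simply the terse version of yours---it cites Anderson directly in one line---so your optional detour through pseudolocality (using that an Einstein metric evolves by pure rescaling under the normalized flow) is a valid but unnecessary elaboration, as you yourself note.
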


This is certainly well-known, and follows from the gap theorem of Anderson \cite{a90}. For the convenience we outline a proof here. For any $x\in B(p, 2)\setminus D_\infty$, we denote $r_x= d(x, D_\infty)>0. $
Then $
B(x, r_x) \subset B(p,4)\setminus D_\infty.
$
By assumption, $
VR(B(x, r_x)) \geq 1 -\delta_1.
$
It then follows from \cite{a90}  that if $\delta_1$ is smaller than a universal constant depending only on the dimension, then there is a constant $K>0$ such that for $i$ sufficiently large we have
\[
 |Rm(g_i(x'))|  \leq {K\over r_x^2}, \qquad \forall\ x' \in B(x, {r_x \over 2}).
\]
Then the proposition follows easily.\\

Let $\delta_2=\frac{1}{2}\min(\delta, \delta_1)$, where $\delta$ is the constant in Theorem \ref{thm3.4} and $\delta_1$ is the constant in Proposition \ref{prop1.8}. Then 

 \begin{prop} \label{Ricciflow Kahler} Suppose $g_i$ satisfies  $I(B(p_i, \delta_2^{-1}, g_i))\geq 1-\delta_2$ and suppose there are constants $V>0, M>0$ such that $Vol(X_i)\leq V$ and  $m(B(p_i, 2, g_i)\cap D_i)\leq M$ for all $i$, then $B(p, 1)$ is smooth, and  there are constants $r>0$ and $\kappa>0$ with the following effect. For large $i$ we can find a holomorphic chart on $B(p_i, r, g_i)$ so that the K\"ahler form $\omega_i$ satisfies $\omega_i\geq \kappa\cdot \omega_{Euc}$, and  as $i$ tends to infinity $\omega_i$ converges in $L^p$ for any $p>1$. 
 \end{prop}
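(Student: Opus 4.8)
The plan is to transfer the problem to the smooth Ricci flow picture of Theorem \ref{thm3.4}. First I would replace $g_i$ by the smooth approximating metrics $g_i'$ of \cite{CDS1}: these are K\"ahler for the same complex structure $J_i$, satisfy ${\rm Ric}(g_i')\geq \beta_i g_i'>0$, converge to the same limit $Z$ (so the hypothesis $I(B(p_i,\delta_2^{-1},g_i))\geq 1-\delta_2$ forces $I(B(p_i,\delta^{-1},g_i'))\geq 1-\delta$ for the relevant constants and large $i$, by monotonicity of $I$ and its control by Gromov--Hausdorff distance), and agree with $g_i$ in $C^\infty_{loc}$ away from $D_i$ up to the error $\psi_i\to 0$. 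By Proposition \ref{prop1.8} the convergence $g_i'\to Z|_{B(p,1)}$ is almost Cheeger--Gromov with singular set $E_\infty=D_\infty$ of Minkowski measure $\le M$, and by Proposition \ref{thm2.2} the normalized K\"ahler--Ricci flows $g_i'(\cdot,t)$ issuing from $g_i'$ are almost static. Hence the choice of $\delta_2$ puts us in the setting of Theorem \ref{thm3.4}, which shows that $B(p,1)$ is smooth and isometric to the smooth Cheeger--Gromov limit of $(B(p_i,1),g_i'(\cdot,\underline\epsilon^2))$; in particular $D_\infty$ is a fake singularity.

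Next I would promote this to a complex structure and to holomorphic charts on $X_i$. The K\"ahler--Ricci flow fixes $J_i$, so each $g_i'(\cdot,\underline\epsilon^2)$ is K\"ahler for $J_i$; since $J_i$ is parallel while $|Rm|$ is bounded near $p_i$ by the pseudolocality estimate (\ref{eq:ricciflow4}), the $J_i$ (pulled back by the convergence diffeomorphisms) subconverge in $C^\infty$ to a parallel complex structure $J_\infty$ on $B(p,1)$, so $(B(p,1),g_\infty,J_\infty)$ is a smooth K\"ahler manifold. Fixing a holomorphic coordinate chart near $p$ and composing with the convergence diffeomorphisms produces an almost-$J_i$-holomorphic chart on a ball of definite size, which a routine linearized $\bar\partial$ plus Newton iteration corrects to a genuine $J_i$-holomorphic homeomorphism $F_i$ onto a domain of definite size in $\bC^n$, with bounded gradient, defined on $B(p_i,r',g_i'(\cdot,\underline\epsilon^2))$. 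Since $g_i$ is Gromov--Hausdorff close to $g_i'$ and, by Corollary \ref{distance nonexpanding} applied to the $g_i'$, the distances of $g_i'$ and $g_i'(\cdot,\underline\epsilon^2)$ are uniformly comparable on $B(p_i,1)$, the $g_i$-ball $B(p_i,r,g_i)$ lies in the domain of $F_i$ once $r$ is small; as holomorphicity only involves $J_i$, $F_i$ restricts to the desired holomorphic chart on $B(p_i,r,g_i)$.

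It remains to prove the two analytic bounds. Write $\omega_i=i\partial\bar\partial\phi_i$ in the chart. A uniform $C^0$ bound on $\phi_i$ (after the usual normalization) follows as in the proof of Proposition \ref{goodchart}: away from $D_i$ the potentials converge to the smooth limit potential, while across $D_i$ one uses the local K\"ahler--Einstein equation (\ref{local KE equation}) together with the upper and lower bounds on the holomorphic factor $U_i$ provided by Lemma \ref{lemma4} and the volume bound. Given this, the estimate $\omega_i\le C'|f_i|^{-2(1-\beta_i)}\omega_{\Euc}$ of (\ref{local bound metric}) holds; since $m(B\cap D_i)\le M$ controls the volume of small neighbourhoods of $D_i$ and $p(1-\beta_i)<1$ for $i$ large, combining this with the $C^\infty_{loc}$ convergence of $\omega_i$ away from $D_\infty$ yields $\omega_i\to\omega_\infty$ in $L^p$. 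For the lower bound $\omega_i\ge\kappa\,\omega_{\Euc}$ I would run a Chern--Lu/Schwarz-lemma estimate for $u_i:={\rm tr}_{\omega_i}\omega_{\Euc}$: because ${\rm Ric}(\omega_i)=\beta_i\omega_i\geq 0$ and the flat target has nonpositive bisectional curvature, $\Delta_{\omega_i}\log u_i\geq \beta_i>0$, so $\log u_i$ is subharmonic on $B\setminus D_i$ and (being bounded above and tending to $-\infty$ along $D_i$) extends subharmonically across $D_i$; a local maximum principle together with the non-collapsing and the bound on the average of $u_i$ over $B$ (itself a consequence of the $C^0$ potential bound via the Chern--Levine--Nirenberg inequality) gives $u_i\le C$ on a smaller ball, that is $\omega_i\ge\kappa\,\omega_{\Euc}$.

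I expect the main obstacle to be precisely this interface between the three K\"ahler metrics for $J_i$ --- the conical $g_i$, its smoothing $g_i'$, and the Ricci flow $g_i'(\cdot,\underline\epsilon^2)$ --- which genuinely differ near $D_i$: one has direct smooth control only on the flowed metric, so every statement about $\omega_i$ in a neighbourhood of $D_i$ (the $C^0$ potential estimate, the bound (\ref{local bound metric}), and the lower bound $\omega_i\ge\kappa\,\omega_{\Euc}$) must be extracted from the conical K\"ahler--Einstein equation itself rather than inherited from the smooth limit, which is where the harder local analysis of Section 2 and Appendix 2 is really needed. A subsidiary technical point is to verify that the various ball inclusions and distance comparisons between $g_i$, $g_i'$ and $g_i'(\cdot,\underline\epsilon^2)$ hold with constants independent of $i$, using the pseudolocality estimates and Corollary \ref{distance nonexpanding}.
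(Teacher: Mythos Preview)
Your overall architecture matches the paper closely: pass to the smooth approximations $g_i'$, invoke Proposition~\ref{thm2.2} and Proposition~\ref{prop1.8} to place yourself in the setting of Theorem~\ref{thm3.4}, deduce smoothness of $B(p,1)$ and the existence of holomorphic charts coming from smooth convergence of $g_i'(\cdot,\underline\epsilon^2)$, and then use Corollary~\ref{distance nonexpanding} to transplant these to $g_i$-balls. The paper does exactly this. Your identification of the real difficulty --- controlling the conical metric $\omega_i$ near $D_i$ in a chart manufactured from the \emph{flowed} metric --- is also correct.

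The genuine gap is in your $C^0$ bound on the local potential $\varphi_i$. You write that it ``follows as in the proof of Proposition~\ref{goodchart}'', but there the bound $|\phi|\le C$ is a \emph{hypothesis}, supplied externally by Proposition~\ref{Hormander}; nothing in that proof manufactures a $C^0$ bound. Your sketch (``away from $D_i$ the potentials converge \dots\ across $D_i$ use the K\"ahler--Einstein equation and Lemma~\ref{lemma4}'') does not close: convergence away from $D_\infty$ is itself something to be proved, and the K\"ahler--Einstein equation together with bounds on $U_i$ controls $\omega_i^n$, not $\varphi_i$. The paper's Lemma~\ref{C0bound} is the substantive step you are missing. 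The upper bound on $\varphi_i$ comes from integrating the local Ricci-flow equation $\partial_t(e^{-t}\varphi_i(\cdot,t))=e^{-t}(\log(\omega_i(\cdot,t)^n/\omega^n)+f_i(\cdot,t))$ between $t=0$ and $t=\underline\epsilon^2$: at the endpoint $\varphi_i(\cdot,\underline\epsilon^2)$ is controlled by smooth convergence, the pluriharmonic defects $f_i$ are bounded by Lemma~\ref{lemma4}-type reasoning, and positivity of scalar curvature gives a uniform lower bound on $\omega_i(\cdot,t)^n/\omega^n$, forcing $\varphi_i(\cdot,0)$ (hence $\varphi_i$) bounded above. The lower bound is then obtained by applying H\"ormander's lemma (Lemma~\ref{lem3.6}) to get $\int e^{-2\varphi_i}\omega^n\le C$, feeding this and the K\"ahler--Einstein equation into $\int e^{-\varphi_i/2}\omega_{\varphi_i}^n\le C$, hence $\int\varphi_i^2\,\omega_{\varphi_i}^n\le C$, and finishing with Moser iteration for $\Delta_{\omega_{\varphi_i}}(-\varphi_i)\ge -n$ (uniform Sobolev constant). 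None of this is visible in your proposal, and it is precisely where the Ricci flow earns its keep beyond Theorem~\ref{thm3.4}.

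Two smaller points. For the lower bound $\omega_i\ge\kappa\,\omega_{\rm Euc}$ the paper uses a Bochner/Moser argument (citing Proposition~24 of \cite{CDS2}) rather than Chern--Lu; your Schwarz-lemma route is a reasonable alternative once the $C^0$ bound is in hand, but your claim that $\log u_i\to-\infty$ along $D_i$ is wrong --- with $u_i={\rm tr}_{\omega_i}\omega_{\rm Euc}$ one has $u_i\to n-1$ at a cone point, so $\log u_i$ stays bounded and the extension across $D_i$ must be argued differently. And for the $L^p$ convergence, the paper gets the $L^p$ bound on $\omega_i$ not from the Minkowski-measure estimate on $D_i$ but directly from the K\"ahler--Einstein equation (giving $\omega_i^n\in L^p$) combined with the just-proved lower bound $\omega_i\ge\kappa\,\omega$.
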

 
 A remark is in order:
\begin{rem} In \cite{WT12} it is proved that for a sequence of almost static Ricci flows, the Gromov-Hausdorff limit of the initial metrics $(X_i, g_i')$ has the property that the regular part is open and there is no codimension two singularity.   In view of the approximation
result  (\ref{twosequences}), we know that the Gromov-Hausdorff limit of $(X_i, g_i)$ also has the same property. However, for our
purpose of running the H\"ormander argument to prove Theorem 1, this is not enough since we need a stronger convergence (Theorem \ref{thm3}) of the original K\"ahler-Einstein metrics $g_i$ with cone angles at a regular point in the limit.
\end{rem}

 The rest of this subsection is devoted to the proof of this Proposition.    As before we may choose smooth approximations $g_i'$ with positive Ricci curvature and with the same Gromov-Hausdorff limit $Z$. Moreover, the K\"ahler forms satisfy $\omega_i'=\omega_i+i\p\bp\psi_i$, with $\psi_i$ converging to zero uniformly on $X_i$, and smoothly on a fixed distance away from $D_i$ (with respect to the metric defined by either $\omega_i$ or $\omega_i'$).  Denote by $g_i(\cdot, t)$ the Ricci flow solution to Equation (\ref{eq:ricciflow3}), with initial metric $g_i'$.  Then it follows from Proposition \ref{thm2.2} that this sequence of Ricci flow is almost static, and it is also clear that we can ensure the assumptions of the Proposition is satisfied also by $g_i'$ (with the constant $\delta_2$ replaced by $2\delta_2$).  By Theorem \ref{thm3.4} we obtain the smoothness of $B(p, 1)$, and it suffices to establish the statement about the $L^p$ convergence of K\"ahler forms.
 
 By Theorem \ref{thm3.4}  we know  $B(p_i, 1, g_i(\cdot, 0))$ endowed with K\"ahler metrics $g_i(\cdot,\underline\epsilon^2)$ converges smoothly to $Y_\infty$. It is clear we may assume the complex structures also converge. So we can find a holomorphic chart on a definite size ball around $p_i$ with respect to the metric  $g_i(\cdot, \underline\epsilon^2))$ so that the K\"ahler metrics $\omega_i(\cdot, \underline\epsilon^2)$  converges smoothly a limit metric $\omega$. Without loss of generality we may assume the holomorphic chart is the standard unit ball in $\bC^n$ and we denote it by $B$. Let $D_\infty$ be the limit of $D_i$'s. 
Since
$$Vol(D_i\cap B, \omega_i(\cdot, \underline\epsilon^2))\leq Vol(D_i, \omega_i(\cdot, \underline\epsilon^2))=Vol(D_i, \omega_i)\leq n\lambda V,$$
 we see that $D_\infty$ is a divisor with finite multiplicity.  It follows  from  Corollary \ref{distance nonexpanding} that on a fixed scale the metrics $\omega_i(\cdot, 0)$ is uniformly equivalent to $\omega$ for $i$ sufficiently large. So by the almost Cheeger-Gromov convergence we may assume for $t\in [0, \underline\epsilon^2]$, $\omega_i(t)$ converges smoothly locally in a fixed distance away from $D_\infty$ (with respect to the metric $\omega$). 
In $B$ we write
\[
\omega=i\partial \bar \partial \varphi_\infty, 
\]
and
\[
\omega_i(\cdot, t) =  i \partial \bar \partial \varphi_i(\cdot, t). 
\]
 For $t=\underline{\epsilon}^2$ we may assume $\varphi_i(\underline\epsilon^2)$ converges smoothly to $\varphi_\infty$. By our choice of $g_i'$ we know 
\[
\omega_i=i\partial \bar\partial \varphi_i, 
\]
with $\varphi_i=\varphi_i(\cdot, 0)-\psi_i$ and $\lim_{i\rightarrow\infty} |\psi_i|_{L^\infty}=0$. 

We first prove the following Lemma: 

\begin{lem} \label{C0bound}  There are  constants $r_1>0$, $C_1$ such that we can choose the local potentials $\varphi_i$ such that on $r_1B$, 
\[
|\varphi_i|\leq C_1.
\]
\end{lem}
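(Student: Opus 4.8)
The plan is to obtain the $C^0$ bound on the local K\"ahler potentials by combining three ingredients: the smooth convergence of $\varphi_i(\underline\epsilon^2)$ to $\varphi_\infty$ on $B$, a control of the time-variation of the potential along the normalized Ricci flow, and the uniform smallness of $\psi_i$. First I would fix a small radius $r_1 < 1$ and normalize the potentials: since $\varphi_i(\underline\epsilon^2)$ converges smoothly to $\varphi_\infty$ on $B$, we have a uniform bound $|\varphi_i(\cdot,\underline\epsilon^2)| \le C$ on $\overline{r_1 B}$ for a constant independent of $i$. The issue is that we want the bound not at time $\underline\epsilon^2$ but at time $0$, and then to transfer it from $\varphi_i(\cdot,0)$ to $\varphi_i = \varphi_i(\cdot,0) - \psi_i$; the last step is free because $\|\psi_i\|_{L^\infty} \to 0$.

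The key step is therefore to control $\varphi_i(\cdot,0) - \varphi_i(\cdot,\underline\epsilon^2)$. Along the normalized K\"ahler-Ricci flow $\partial_t \omega_i(t) = \omega_i(t) - \Ric(\omega_i(t))$, the potential can be taken to evolve by $\partial_t \varphi_i(\cdot,t) = \varphi_i(\cdot,t) + \log\bigl(\det \omega_i(\cdot,t)/\det\omega_{\Euc}\bigr) + (\text{holomorphic/time-dependent normalization})$; more concretely, since $\omega_i(t)$ is K\"ahler-Einstein for $t>0$ after the limit but only almost-static before, the right object to bound is $|\partial_t \varphi_i|$ on $r_1 B \times [0,\underline\epsilon^2]$. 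By Corollary \ref{distance nonexpanding} and the preceding discussion, on a fixed scale the metrics $\omega_i(t)$ are uniformly equivalent to $\omega$ for $t$ in this interval and $i$ large, at least away from $D_\infty$; combined with the curvature estimate (\ref{eq:ricciflow4}) which gives $|Rm|(g_i(x,t)) \le (\alpha+1)/t$, we get that $\det \omega_i(\cdot,t)$ stays between fixed positive bounds on $r_1 B \setminus N_\sigma(D_\infty)$ for every $\sigma>0$. Since the potential difference is harmonic-type controlled — precisely, $\varphi_i(\cdot,0)-\varphi_i(\cdot,\underline\epsilon^2) = -\int_0^{\underline\epsilon^2} \partial_t\varphi_i\, dt$ and the integrand is bounded in $L^\infty$ away from $D_\infty$ and in $L^1$ near $D_\infty$ because the $\sigma$-neighbourhood of $D_i$ has volume $\le M\sigma^2$ — we obtain a uniform bound on the difference first in $L^1(r_1 B)$ and then, using that $\varphi_i(\cdot,0)$ is $\omega_{\Euc}$-plurisubharmonic up to a fixed bounded correction (so that the sub-mean-value inequality applies) together with an $L^1$-to-$L^\infty$ estimate for such functions on a slightly smaller ball, in $L^\infty(r_1 B)$ after shrinking $r_1$.

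The main obstacle I expect is precisely the behaviour near the divisor $D_\infty$: the metrics $\omega_i(\cdot,0)$ are only cone metrics, not smooth, so $\partial_t\varphi_i$ is not bounded uniformly up to $D_i$, and one cannot naively integrate the evolution equation pointwise there. The resolution is the volume bound $Vol(N_\sigma, g_i(\cdot,0)) \le M\sigma^2$ coming from the Minkowski-measure hypothesis on $D_i$, exactly as used in Step 3 of the proof of Theorem \ref{thm3.4}: this forces the contribution of the bad region to the $L^1$ norm of $\partial_t\varphi_i$ to be negligible, so that the $L^1$ bound on the potential difference survives. Once the $L^1$ bound is in hand, the standard potential-theoretic upgrade to $L^\infty$ (for functions that are $\omega_{\Euc}$-psh modulo a bounded term, the $L^1$ norm controls the $\sup$ on interior balls) closes the argument, and subtracting $\psi_i$ — whose sup-norm tends to zero — yields the claimed bound $|\varphi_i| \le C_1$ on $r_1 B$.
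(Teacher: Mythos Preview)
Your strategy of integrating $\partial_t\varphi_i$ along the flow to reduce to the known bound at $t=\underline\epsilon^2$ is natural, and the paper does begin the same way; but there are two genuine gaps, the second of which is fatal.

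First, the local potential evolves as $\partial_t\varphi_i(\cdot,t)=\varphi_i(\cdot,t)+\log\bigl(\omega_i(\cdot,t)^n/\omega^n\bigr)+f_i(\cdot,t)$ with $f_i(\cdot,t)$ \emph{pluriharmonic}, and you never say how to control $f_i$; simply calling it a ``normalization'' does not pin it down. The paper uses exactly this freedom: it renormalizes $\varphi_i(\cdot,0)$ by a pluriharmonic function so that $\varphi_i(\cdot,0)\to\varphi_\infty$ smoothly away from $D_\infty$, and then the one-sided bound $\log(\omega_i(t)^n/\omega^n)\ge -C$ (from positivity of scalar curvature) plus the integrated flow equation gives the \emph{upper} bound $\varphi_i(\cdot,0)\le C_0$ on all of $B$. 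Your proposed route to an $L^1$ bound, via $\mathrm{Vol}(N_\sigma)\le M\sigma^2$, does not by itself control $\int_{N_\sigma}|\partial_t\varphi_i|$, since the integrand contains $\log(\omega_i(t)^n/\omega^n)$, which is not pointwise bounded near $D_i$ for small $t$.

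Second, and more seriously, even if an $L^1$ bound on $\varphi_i$ were in hand, your ``$L^1$-to-$L^\infty$'' upgrade via the sub-mean-value inequality only produces an \emph{upper} bound for a quasi-plurisubharmonic function; it gives no lower bound whatsoever. (The model is $\log|z_1|$: finite $L^1$ norm on the ball, yet $-\infty$ on a hypersurface.) This is precisely the hard direction, and the paper uses a completely different mechanism for it. Once the upper bound $\varphi_i\le C_0$ is known and $\varphi_i$ is known to be close to $\varphi_\infty$ at one interior point, H\"ormander's local integrability lemma yields $\int_{r_2B}e^{-2\varphi_i}\omega^n\le C$; feeding this into the K\"ahler--Einstein equation (\ref{eq:conicKE1}) together with the $L^p$ integrability of $|F_i|^{-2(1-\beta_i)}$ gives $\int_{r_2B}e^{-\frac12\varphi_i}\omega_{\varphi_i}^n\le C$, hence an $L^2(\omega_{\varphi_i}^n)$ bound on $\varphi_i$; finally, since $\Delta_{\omega_{\varphi_i}}(-\varphi_i)\ge -n$ and $\omega_{\varphi_i}$ has a uniform Sobolev constant, a local Moser iteration gives $\varphi_i\ge -C_4$ on $\tfrac{r_2}{2}B$. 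The K\"ahler--Einstein equation and the Sobolev inequality for the singular metric $\omega_{\varphi_i}$ are essential to the lower bound and are absent from your argument.
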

From the Ricci flow equation $\frac{\partial \omega_i(\cdot, t)}{\partial t}=Ric(\omega_i(\cdot, t))-\omega_i(\cdot, t)$ we see that locally
\[
\frac{\partial \varphi_i(\cdot, t)}{\partial t}=\log \frac{\omega_i(\cdot, t)^n}{\omega^n}+\varphi_i(\cdot, t)+f_i(\cdot, t),
\]
where $f_i(\cdot, t)$ is certain pluriharmonic function on $B$. Rewrite this by
\[
\frac{\partial}{\partial t}( e^{-t}\varphi_i(\cdot, t))=e^{-t}(\log \frac{\omega_i(\cdot, t)^n}{\omega^n}+f_i(\cdot, t)).
\]
Since $\varphi_i(\cdot, \underline{\epsilon}^2)$ converges smoothly to $\varphi_\infty$,  and for any $t\in [0, 1]$, $\omega_i(t)$ converges smoothly to $\omega$ locally away from $D_\infty$, it is then easy to see that by changing $\varphi_i(\cdot, 0)$ by a pluriharmonic function on $B$ we may assume $\varphi_i(\cdot, 0)$ converges smoothly to $\varphi_\infty$ locally away from $D_\infty$. Moreover by the positive of scalar curvature  the volume form $\omega_i(\cdot, t)/\omega^n$ is uniformly bounded from below on $B$. So $\varphi_i(\cdot, 0)$ is uniformly bounded from above on $B$. 
  By our choice of $g_i'$ it follows that $\varphi_i$ also converges smoothly to $\varphi_\infty$ locally away from $D_\infty$, and is uniformly bounded above by $C_0$ in $B$.  

Now we recall the standard H\"ormander Lemma.

\begin{lem} [\cite{Hormander}] \label{lem3.6} There are constants $s \in (0, 2^{-1})$, $A>0$ depending only on $n$, such that for every function $\phi$ on $B$ with $i\p\bp\phi>0$, $\sup_B\phi \leq 1$, and $\phi(0)=0$, we have 
\[
\int_{sB} \; e^{-2\phi} \omega^n\leq A .
\]
\end{lem}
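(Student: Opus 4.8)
\emph{Proof sketch.} This is the classical uniform exponential-integrability estimate for plurisubharmonic functions, and the plan is to reduce it by an elementary argument to Skoda's integrability theorem. First I would normalise: set $\psi=\phi-1$, so that $\psi$ is plurisubharmonic on $B$ with $\psi\le 0$ and $\psi(0)=-1$. Since $e^{-2\phi}=e^{-2}e^{-2\psi}$ and $\omega^{n}$ is a dimensional multiple of Lebesgue measure $dV$ on $B$, it is enough to produce an absolute constant $s<1/2$ and a constant $A=A(n)$ with $\int_{sB}e^{-2\psi}\,dV\le A$.

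The only quantitative input comes from the sub-mean value inequality for the subharmonic function $\psi$: from $-1=\psi(0)\le \frac{1}{|B(0,3/4)|}\int_{B(0,3/4)}\psi\,dV$ we get the $L^{1}$ bound $\int_{B(0,3/4)}(-\psi)\,dV\le |B(0,3/4)|=:c_{n}$. The substantive point is that the \emph{pointwise} normalisation $\phi(0)=0$ — and not merely this $L^{1}$ bound, which would be insufficient; e.g.\ $\phi=1+\log|z_{1}|$ has $\sup_{B}\phi\le 1$ and $\int_{B}|\phi|\,dV<\infty$ yet $\int_{sB}e^{-2\phi}=\infty$, failing only the condition $\phi(0)=0$ — forces the Lelong numbers of $\psi$ to stay uniformly below $1$ on a small ball. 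I would see this by slicing. Fix a small absolute constant $s<1/(4e)$ and let $z_{0}\in sB$; restrict $\psi$ to the complex line $\ell$ through $0$ and $z_{0}$, identified isometrically with $\bC$ so that $0\mapsto 0$. Since $\psi(0)=-1>-\infty$, the restriction $g=\psi|_{\ell}$ is a genuine subharmonic function on the unit disc, and the Lelong number obeys $\nu(\psi,z_{0})\le\nu(g,z_{0})=:\nu_{0}$ (restriction to a line never decreases the Lelong number). Now $g(\zeta)-\nu_{0}\log|\zeta-z_{0}|$ is again subharmonic (the logarithmic pole at $z_{0}$ has been removed), is bounded by $\nu_{0}\log 4$ on the circle $|\zeta|=1/2$, and equals $-1-\nu_{0}\log|z_{0}|$ at $\zeta=0$; the maximum principle therefore gives $-1-\nu_{0}\log|z_{0}|\le\nu_{0}\log 4$, that is $\nu_{0}\le 1/\log(1/(4|z_{0}|))\le 1/\log(1/(4s))=:\gamma_{0}<1$.

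Finally I would invoke Skoda's integrability theorem in uniform form: if $\psi$ is plurisubharmonic on $sB$ with $\int_{sB}(-\psi)\,dV\le c_{n}$ and $\sup_{\overline{(s/2)B}}\nu(\psi,\cdot)\le\gamma_{0}<1$, then $\int_{(s/2)B}e^{-2\psi}\,dV\le A$ for a constant $A$ depending only on $n$, $c_{n}$ and $\gamma_{0}$, hence only on $n$. Equivalently one can finish by hand: Skoda's argument supplies the sublevel-set estimate $|\{\psi<-t\}\cap (s/2)B|\le Ce^{-\lambda t}$ with decay rate $\lambda>2$ (this is where $\gamma_{0}<1$ is used), and the layer-cake formula then gives $\int_{(s/2)B}e^{-2\psi}\,dV\le |(s/2)B|+2C\int_{0}^{\infty}e^{(2-\lambda)t}\,dt<\infty$. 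Relabelling $s/2$ as $s$ completes the proof.

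I expect the middle step to be the only substantive one — turning the pointwise value $\phi(0)=0$ into the \emph{uniform} Lelong-number bound $\sup_{sB}\nu(\psi,\cdot)<1$ on a fixed small ball; the normalisation is trivial and the concluding integrability statement is classical (indeed the lemma is essentially what is recorded in \cite{Hormander}), so I do not anticipate a genuine obstacle.
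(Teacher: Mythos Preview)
The paper does not give its own proof of this lemma; it simply records it as a standard result from H\"ormander's textbook and then applies it. So there is no ``paper's proof'' to compare against beyond the reference.

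Your argument is correct. The normalisation and the $L^{1}$ bound from the sub-mean-value inequality are routine, and your slicing step is sound: the line through $0$ and $z_{0}$ carries a genuine subharmonic restriction because $\psi(0)=-1>-\infty$, the inequality $\nu(\psi,z_{0})\le\nu(\psi|_{\ell},z_{0})$ holds since maximising over a smaller set gives a smaller value (and dividing by $\log r<0$ reverses the inequality), and the maximum-principle calculation for $g-\nu_{0}\log|\cdot-z_{0}|$ does yield the uniform Lelong-number bound $\gamma_{0}<1$ on $\overline{sB}$ once $s<1/(4e)$. The appeal to a uniform Skoda integrability statement (with input the $L^{1}$ bound and the Lelong-number bound) is legitimate and standard.

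For comparison, H\"ormander's own proof is more elementary and does not pass through Lelong numbers or Skoda. He works first in one variable, using the Riesz representation $\psi=\int\log|\cdot-w|\,d\mu+h$ together with Jensen's formula: the finite value $\psi(0)=-1$ bounds the total Riesz mass $\mu$ near $0$, and then $\int e^{-2\psi}$ is estimated directly by Jensen's inequality. The several-variable case then follows by slicing and Fubini. Your route is more conceptual and highlights why the pointwise normalisation $\phi(0)=0$ is essential (it forces small Lelong numbers), while H\"ormander's route is self-contained and avoids quoting the uniform Skoda theorem as a black box. Either is acceptable here; the paper only needs the statement.
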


By applying the H\"ormander Lemma to a point very close to $0$, we may assume there are $r_2>0$, $C_2>0$ so that for all $i$, 
\begin{equation} \label{hormanderlemma}
\int_{r_2B} e^{-2\varphi_i}\omega^n\leq C_2. 
\end{equation}

Let $F_i$ be the normalized defining equation of $D_i$ which converges to a nonzero limit $F_\infty$ defining $D_\infty$. Then  the K\"ahler-Einstein equation takes the form (without loss of generality we assume that $D_i \in |-K_{X_i}|$): 
\begin{equation}
 \left( i\partial \bar \partial \varphi_i\right)^n = e^{-\gamma_i\beta_i \varphi_i+ H_i-h_\omega} {1\over |F_i|^{2-2\beta_i}}   \omega^n
 \label{eq:conicKE1}
\end{equation}
where $H_i(z)$ is a pluriharmonic function on $B$, and $\gamma_i\leq 1$ is due to possible rescaling.  
Since we have chosen $\varphi_i$ so that it converges smoothly to $\varphi_\infty$ away from $D_\infty$. It follows that $H_i$ is uniformly bounded on $B$ (similar to the proof of Lemma \ref{lemma4}).  For the term $F_i$ since it is normalized to converge,  and since $\beta_i\rightarrow1$,  for any $p$ there is a $C(p)$ such that for $i$ large enough
\[
    \int_{B}\; {1\over {|F_i|^{2(1-\beta_i)p}}} \omega^n \leq C(p).
\]
Together with (\ref{hormanderlemma}) this implies there is a constant $C_3>0$ such that  for $i$ large enough
\[
\int_{r_2B} \; e^{-\frac{1}{2}\varphi_i}\omega_{\varphi_i}^n \leq C_3.
\]
Since $\varphi_i$ is uniformly bounded above  this implies 
\[
\int_{rB} \;  \varphi_i^2 \omega_{\varphi_i}^n \leq (C_0^2+8C_3)Vol(B).
\]
Since \[
\triangle_{\omega_{\varphi_i}}\; (-\varphi_i) \geq -n, 
\]
and  $\omega_{\varphi_i}$ has uniformly bounded Sobolev constant, a local Moser iteration implies that there is a constant $C_4>0$ so that in $\frac{r_2}{2}B$ for sufficiently large $i$, 
\[
\varphi_i \geq -C_4.
\]
Then we can choose $r_1=r_2/2$ and $C_1=C_4$, then Lemma \ref{C0bound} follows.\\

From Lemma \ref{C0bound}, by a local Moser iteration using the Bochner formula, as in the proof of Proposition 24 in \cite{CDS2}, we obtain
that there
exists a constant $\kappa$ such that on $\frac{r_1}{2}B$
$$ \omega_{i}\geq \kappa \omega.
$$

In the proof of Lemma \ref{C0bound}  we have obtained  that for any $p>1$, and $i$ sufficiently large, $\omega_i^n$ is uniformly bounded in $L^p$. Then the above lower bound implies that $\omega_i$ has a uniform $L^p$ bound. On the other hand, $\omega_i$ converges smoothly to $\omega_\infty$ locally away from $D_\infty$. Now by Corollary \ref{distance nonexpanding} we may choose a $r>0$ so that the ball $B(p_i, r, g_i(\cdot,0))$ is contained in $\frac{r_1}{2}B$, then  Proposition \ref{Ricciflow Kahler} follows.

By the well-known monotonicity of volume of a subvariety in $\bC^n$ (c.f. Proposition 14 in \cite{CDS2}) we obtain the following corollary

\begin{cor} There is a constant $c_0>0$ such that for any $q\in D_i$ with $B(q,s,g_i) \subset \frac{r}{4}B$, we have
\[
 V(q, s, g_i)>c_0. 
\]
\label{prop3.8}
\end{cor}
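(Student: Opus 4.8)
The plan is to deduce Corollary~\ref{prop3.8} from Proposition~\ref{Ricciflow Kahler} together with the classical monotonicity of volume for complex subvarieties of $\bC^n$. First I would recall the setup: Proposition~\ref{Ricciflow Kahler} furnishes, for large $i$, a holomorphic chart identifying $B(p_i,r,g_i)$ with a subset of $\bC^n$ in which $\omega_i\geq\kappa\,\omega_{\Euc}$ (here I absorb the smooth background metric $\omega$ into $\omega_{\Euc}$ after shrinking, since $\omega$ is itself uniformly equivalent to $\omega_{\Euc}$ on a slightly smaller ball). In this chart the divisor $D_i$ becomes the zero set of the normalized defining function $F_i$, hence a genuine complex analytic hypersurface passing through the point $q$ corresponding to any $q\in D_i$.

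Next I would invoke the monotonicity formula (Proposition~14 in \cite{CDS2}): for a complex subvariety $V$ of pure dimension $n-1$ in a Euclidean ball of $\bC^n$ passing through a point $x$, the Euclidean volume ratio $s^{2-2n}\,\mathrm{Vol}_{\Euc}(V\cap B_{\Euc}(x,s))$ is monotone non-decreasing in $s$ and bounded below by the volume of the unit ball in $\bC^{n-1}$, which is a fixed positive constant. Applying this to $V=D_i$ (in the chart) and $x=q$, for any $s$ with $B(q,s,g_i)\subset \tfrac{r}{4}B$: the bound $\omega_i\geq\kappa\,\omega_{\Euc}$ ensures $B(q,s,g_i)$ contains a Euclidean ball $B_{\Euc}(q,\kappa^{1/2}s)$ of definite relative size, and conversely $B(q,s,g_i)$ is contained in the chart. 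Hence $\mathrm{Vol}(D_i\cap B(q,s,g_i),g_i)\geq \kappa^{n-1}\,\mathrm{Vol}_{\Euc}(D_i\cap B_{\Euc}(q,\kappa^{1/2}s))\geq \kappa^{n-1}(\kappa^{1/2}s)^{2n-2}\cdot c$ for a fixed $c>0$, which after dividing by $s^{2n-2}$ gives $V(q,s,g_i)=s^{2-2n}\,\mathrm{Vol}(D_i\cap B(q,s,g_i))\geq c_0$ for a fixed $c_0>0$ depending only on $\kappa$ and $n$.

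The only genuine point to be careful about is the comparison of volumes computed with respect to the (possibly singular, but here locally uniformly-equivalent-to-Euclidean) metric $\omega_i$ versus the flat metric: one needs $\omega_i\geq\kappa\,\omega_{\Euc}$ to transfer the Euclidean lower bound on the volume of $D_i$ — supplied precisely by Proposition~\ref{Ricciflow Kahler} — and one needs the elementary fact that $(n-1)$-dimensional volume with respect to a metric dominating $\kappa\,\omega_{\Euc}$ dominates $\kappa^{n-1}$ times the Euclidean $(n-1)$-volume. This is routine. I do not expect any serious obstacle here: the statement is a direct packaging of Proposition~\ref{Ricciflow Kahler} with the standard monotonicity, exactly parallel to how Corollary~\ref{cor2} and Proposition~\ref{prop5} were derived from Proposition~\ref{Hormander} in the first proof; indeed the proof is ``just as in \cite{CDS2}.''
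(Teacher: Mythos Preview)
Your overall approach matches the paper's --- reduce to the Euclidean monotonicity of subvariety volume (Proposition~14 in \cite{CDS2}) via the holomorphic chart and metric comparison furnished by Proposition~\ref{Ricciflow Kahler}. But there is a genuine error in your comparison step: from $\omega_i\geq\kappa\,\omega_{\Euc}$ one obtains $d_{g_i}\geq\kappa^{1/2}d_{\Euc}$, and hence $B(q,s,g_i)\subset B_{\Euc}(q,\kappa^{-1/2}s)$, which is the \emph{opposite} of the containment you claim. A lower bound on $\omega_i$ alone cannot force a $g_i$-ball to contain a Euclidean ball of proportional radius; for that you need an \emph{upper} bound on the metric, which you never invoke.

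The fix is that such an upper bound is available from the proof of Proposition~\ref{Ricciflow Kahler}: the K\"ahler--Einstein equation \eqref{eq:conicKE1} together with the potential bound of Lemma~\ref{C0bound} gives $\omega_i^n\leq C\,|F_i|^{-2(1-\beta_i)}\omega^n$, and combining this with $\omega_i\geq\kappa\,\omega$ via the elementary eigenvalue inequality $\lambda_{\max}(A)\leq\kappa^{1-n}\det(A)$ (valid for $A\geq\kappa I$) yields $\omega_i\leq C'|F_i|^{-2(1-\beta_i)}\omega$, exactly as in \eqref{local bound metric}. With both bounds in hand one can compare $g_i$-balls with Euclidean balls of proportional radius --- the mild singularity along $D_i$ does not obstruct the comparison needed here, since one is intersecting with $D_i$ itself, along which the restricted metric is uniformly equivalent to the Euclidean one --- and then your transfer of the Euclidean monotonicity lower bound goes through as written.
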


\subsection{Proof of Theorem \ref{thm3}}

As in Section 2, one of the key ingredients that we need is a lower bound on the volume density of the divisor when the $I$ invariant is close to one.  The following is a restatement of Proposition \ref{prop5}. The notations are as in (\ref{isoperi}), (\ref{volumeratiowithdivisor}). 
\begin{lem} \label{lem1.5} There are small constants $\delta_3>0$ and $c_1>0$ with the following effect. Suppose a ball $B(p, 2)$ in  a K\"ahler-Einstein manifold  $(X, g)$ with cone singularities along a divisor $D$  has $I(B(p, 2)) > 1-\delta_3$. Then  for any $q\in B(p, 1)\cap D$ with $B(q, r)\subset B(p, 2)$ we have 
\[
V(q,r,g) \geq c_1.
\]
\end{lem}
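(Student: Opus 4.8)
This is a restatement of Proposition \ref{prop5}, so the plan is to rerun that argument with the Ricci--flow results of this section, Proposition \ref{Ricciflow Kahler} and Corollary \ref{prop3.8}, in place of Proposition \ref{Hormander} and Corollary \ref{cor2}. It is enough to prove the statement for small radii, say $r\le 4\delta_2$ with $\delta_2$ the constant of Proposition \ref{Ricciflow Kahler}: the general case $r<2$ then follows, since writing $r_0=4\delta_2$, for $r\ge r_0$ one has ${\rm Vol}(D\cap B(q,r))\ge{\rm Vol}(D\cap B(q,r_0))$ and $r<2$, hence $V(q,r,g)\ge (r_0/2)^{2n-2}V(q,r_0,g)$.

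So I would argue by contradiction for the small-radius statement. If it failed there would be a sequence of K\"ahler--Einstein manifolds $(X_i,g_i)$ with cone singularities along $D_i$ (as always $D_i\in|-\lambda K_{X_i}|$), balls $B(p_i,2)$ with $I(B(p_i,2))\to 1$, and points $q_i\in B(p_i,1)\cap D_i$, radii $r_i\le 4\delta_2$ with $B(q_i,r_i)\subset B(p_i,2)$ and $V(q_i,r_i,g_i)\to 0$. Exactly as in Proposition \ref{prop5}, using that $V(q,r,g_i)$ tends to the volume of the unit ball in $\bC^{n-1}$ as $r\to 0$ (one restricts $\omega_i$ to the smooth divisor $D_i$), the infimum $\iota_i$ of $V(q,r,g_i)$ over the relevant compact range of $(q,r)$ with $r$ small is attained at some $(\bar q_i,\bar r_i)$, and $\iota_i\le V(q_i,r_i,g_i)\to 0$. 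A Vitali covering argument around $\bar q_i$ as in Section 2.8 of \cite{CDS2}, using minimality of $\iota_i$ both to bound the volume of $D_i$ in small balls centred on $D_i$ from below and (since the doubled ball still lies in the minimising range) to bound the total volume of $D_i$ near $\bar q_i$ from above, yields $m(B^*_i\cap D_i)\le M_0$ for the ball $B^*_i=B(\bar q_i,\bar r_i/4)$ rescaled to unit size, with $M_0$ depending only on $n$ (the factor $\iota_i$ cancels).

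Now I would apply Proposition \ref{Ricciflow Kahler} to the rescaled metrics at $\bar q_i$. Its Minkowski hypothesis is the bound just obtained; the local control of ${\rm Vol}(D_i)$ for which the global assumption ${\rm Vol}(X_i)\le V$ was used in Proposition \ref{Ricciflow Kahler} holds here directly, since in the rescaled picture ${\rm Vol}(D_i\cap B^*_i)$ is a bounded multiple of $\iota_i$ and hence tends to $0$; and the hypothesis $I\ge 1-\delta_2$ on the $\delta_2^{-1}$-ball in the rescaled metric reads, by scale invariance of $I$, as $I(B_{g_i}(\bar q_i,\delta_2^{-1}\bar r_i/4))\ge 1-\delta_2$, which since $\bar r_i\le 4\delta_2$ places that ball inside $B(p_i,2)$, so the monotonicity $I(\Omega')\ge I(\Omega)$ gives it $\ge I(B(p_i,2))>1-\delta_3$ as soon as $\delta_3\le\delta_2$. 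Proposition \ref{Ricciflow Kahler} thus furnishes, for $i$ large, a holomorphic chart on a ball of definite radius about $\bar q_i$ with $\omega_i\ge\kappa\,\omega_{\Euc}$ and $L^p$ convergence; Corollary \ref{prop3.8} then bounds $V(\bar q_i,s_0,g_i)$ below by a fixed $c_0>0$ at some scale $s_0$ that is a fixed fraction of $\bar r_i$, and since ${\rm Vol}(D_i\cap B(\bar q_i,s_0))\le{\rm Vol}(D_i\cap B(\bar q_i,\bar r_i))$ with $s_0/\bar r_i$ fixed this forces $\iota_i\ge c'c_0$ for a fixed $c'>0$, contradicting $\iota_i\to 0$. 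One then takes $\delta_3$ to be the emerging smallness threshold and $c_1=c'c_0$ (replaced by $(r_0/2)^{2n-2}c_1$ to cover the general $r$).

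The genuine content here is Proposition \ref{Ricciflow Kahler} itself, which absorbs the pseudolocality-along-the-flow work; once that is granted the above is a transcription of the proof of Proposition \ref{prop5}. The step I expect to need the most care when writing it out is the scale-invariance and monotonicity bookkeeping (together with the reduction to small $r$) that converts the lemma's hypothesis $I(B(p,2))>1-\delta_3$ into the large-ball $I$-bound that Proposition \ref{Ricciflow Kahler} requires --- in the first proof the analogous input, Proposition \ref{Hormander}, only needed $I$ close to $1$ on the unit ball itself, so this is the one place where the Ricci-flow route genuinely demands an extra argument.
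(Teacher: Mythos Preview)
Your reduction to small $r$ and the point-selection/covering step are fine and parallel the paper. The gap is in the final step: you apply Proposition \ref{Ricciflow Kahler} to the metrics rescaled by $(\bar r_i/4)^{-2}$, but that proposition is stated only for \emph{bounded} rescalings (``we only allow finite rescaling so that $\mathrm{Vol}(X_i)\le V$''), and your bound $\bar r_i\le 4\delta_2$ does not prevent $\bar r_i\to 0$. When $\bar r_i\to 0$ two things break. First, the rescaled flows are no longer almost static in the sense needed for Theorem \ref{thm3.4}. Second, the step in the proof of Proposition \ref{Ricciflow Kahler} that bounds the divisor volume in the \emph{flowed} metric,
\[
\mathrm{Vol}(D_i\cap B,\omega_i(\cdot,\underline\epsilon^2))\le \mathrm{Vol}(D_i,\omega_i(\cdot,\underline\epsilon^2))=\mathrm{Vol}(D_i,\omega_i)\le n\lambda V,
\]
is a global cohomological identity and gives nothing finite after unbounded rescaling. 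Your substitute, that $\mathrm{Vol}(D_i\cap B^*_i)$ is a bounded multiple of $\iota_i$, controls the divisor volume only in the \emph{initial} rescaled metric, not after the flow; so you cannot conclude that the limit $D_\infty$ has finite multiplicity and hence cannot invoke Corollary \ref{prop3.8} as written.

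The paper deals with this by a genuine case split on whether $s_i$ (your $\bar r_i$) tends to zero. If it does not, Proposition \ref{Ricciflow Kahler} and Corollary \ref{prop3.8} apply exactly as you say. If $s_i\to 0$, the paper replaces the almost-static input by Proposition \ref{almost Euclidean flow}, using that now $I(B_i(R))\to 1$ for \emph{every} $R$; then, rather than bounding the flowed divisor volume, it uses Corollary \ref{distance nonexpanding} together with the Minkowski bound on $D_i$ in the initial metric to show that the limit set $D_\infty$ must have zero Minkowski measure, which contradicts its containing (a piece of) a nontrivial divisor. This second case is precisely the extra work the Ricci-flow route requires, and your proposal omits it.
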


We prove this by contradiction. 
If the lemma is false, the we can find a sequence of K\"ahler-Einstein manifolds $(X_i, g_i)$ with cone singularities along $D_i$, points  $q_i\in X_i$ such that $I(B(q_i, 4, g_i))$ tends to $1$, and we can find $p_i\in B(q_i, 2, g_i)\cap D$, $s_i>0$ so that $B(p_i, s_i, g_i)\subset B(q_i, 2,  g_i)$, and  
\[
 \lim_{i\rightarrow\infty} V(p_i, s_i, g_i)= 0.
  \]
Following  a point selection argument as in the proof of Proposition \ref{prop5}, we may assume that  
\begin{enumerate}
\item $ I(B(p_i, 2s_i, g_{i}) )\rightarrow 1$;
\item  $k_i=V(p_i,2s_i, g_{i})\rightarrow 0$;
\item For any $\mu< 1$ and $y \in B(p_i, s_i,  g_{i})$, we have $V(y, \mu  s_i, g_{i}) \geq  k_{i}.\;$
\end{enumerate}
Let $(Y, p_\infty)$ be  the Gromov-Hausdorff limit of
$(X_i, p_i, s_i^{-1} g_i)$.  Then by a covering argument as in Section 2.8 of \cite{CDS2} or in the proof of Proposition \ref{prop5}, the set $D_i\cap B(p_i, 2, s_i^{-1}g_i)$ has uniformly bounded Minkowski measure. 

Now there are two cases.   If $s_i$ does not tend to zero,, so that we can apply Proposition \ref{Ricciflow Kahler} and Corollary \ref{prop3.8} to  obtain a lower bound for the volume density directly $k_i\geq c_0>0$, which causes a contradiction. 

Now we assume $s_i$ tends to zero. By definition we know $VR(B(p_\infty, R))=1$ for all $R>0$, it follows that $Y$ is isometric to the Euclidean space.
Then we can as before approximate $g_i$ by smooth metrics $g_i'$ so that the pointed limit $B(p_i, 2, s_i^{-1}g_i')$ is also $Y$. Then we can run Ricci flow from $s_i'g_i'$ and apply Proposition \ref{almost Euclidean flow}.  Then we want to follow the arguments in the proof of Proposition \ref{Ricciflow Kahler}.  Note that we can not directly apply Corollary \ref{prop3.8} to obtain a contradiction, since a priori it is not clear that the volume of $B(p_i, 2, s_i^{-1}g_i)\cap D_i$ with respect to the flowed metric $\omega_i(\cdot, \underline\epsilon^2)$ is still uniformly bounded so we do not know the limit $D_\infty$ has finite multiplicity. However, we can apply Corollary \ref{distance nonexpanding}, and using our choice of $p_i$'s to conclude that $D_\infty$ must have have zero Minkowski measure. This is clearly a constradiction if $D_i$ has bounded volume with respect to the metric $\omega_i(\cdot, \underline\epsilon^2)$, since then $D_\infty$ must be a divisor with nonzero Minkowski measure. On the other hand if $D_i$ does not have bounded volume, then we can always take a piece $D_i'$ of $D_i$ with 
bounded volume and take a limit $D_\infty'$ with nonzero Minkowski measure, but $D_\infty'$ is clearly contained in $D_\infty$, so we again arrive at a contradiction. 

Now we prove Theorem \ref{thm3}.  We first treat a non-rescaled limit $Z$. For any $p\in \mathcal R$, by a fixed scaling we may assume without loss of generality that $1-I(B(p, \delta_2^{-1}))<\min(\delta_2, \delta_3)$. Then for $i$ large enough, we could find $p_i\in X_i$ converging to $p$ and such that $1-I(B(p_i, \delta_2^{-1}, g_i))<\min(\delta_2, \delta_3)$. Then since the total volume of $D_i$ is uniformly bounded, we can apply  Lemma \ref{lem1.5} to obtain a uniform bound on the Minkowski measure of $B(p_i, 2, g_i)\cap D_i$, then we can apply Proposition \ref{Ricciflow Kahler} to conclude that  $B(p, 2)$ is open and smooth, and the existence of a local holomorphic coordinate chart so that the K\"ahler forms converge in $L^p$ for any $p>1$.

Now suppose $Y$ is an iterated tangent cone of  $Z$. Then for any regular point $q\in Y$, we may choose $q_i\in Z$ and $\lambda_i\rightarrow 0$, such that for any $r>0$, $B(q_j, r, \lambda_j^{-2}g_\infty)$ converges in the Gromov-Hausdorff sense to $B(q, r)$. Again by consideration of volume ratio it follows that for $r>0$ sufficiently small and for $j$ large enough we have $ 1-I(B(q_j, s\delta_2^{-1}\lambda_j, g_\infty))<\min(\delta_2, \delta_3)$. By definition for fixed $j$ we may choose $q_{ij}\in X_i$ so that as $i$ tends to infinity $B(q_{ij}, s \delta_2^{-1}\lambda_j, g_i)$ converges to $B(q_j, s \delta_2^{-1}\lambda_j, g_\infty)$. Then we may apply the discussion in the non-rescaled case to conclude that $B(q_j, s\lambda_j, g_\infty)$ is smooth, and the convergence is in $L^p$ for any $p$. Now since $q$ is regular, it follows from the gap theorem of Anderson again that the convergence from $B(q_j, s, \lambda_j^{-1}g_\infty)$ to $B(q, s)$ is smooth. From this one easily sees that Theorem \ref{thm3} also holds for $Y$. \\

\section{Automorphism group and Futaki invariant}
In this section we prove two results about algebro-geometric properties of the Gromov-Hausdorff limit of K\"ahler-Einstein metrics with cone singularities, one about the automorphism group, and the other about the Futaki invariant.  From \cite{CDS2} and  the previous sections in this paper we have established that in our setting the Gromov-Hausdorff limit is a $\bQ$-Fano variety $W$, together with a Weil divisor $\Delta$ and a number--``cone angle" $\beta\in (0, 1]$. For the convenience of readers we start by recalling some definitions in \cite{CDS2}. Let $W$ be an $n$ dimensional $\bQ$-Fano variety, embedded into $\bC\bP^N$ by the line bundle $K_W^{-m}$ for some $m$. A \emph{smooth K\"ahler metric} on $W$ in the class $2\pi c_1(W)$ is a K\"ahler metric $\omega$ on the smooth part $W_0$ of $W$ that is of the form $m^{-1}\omega_{FS}+i\p\bp \phi$,  where $\omega_{FS}$ is the restriction of the Fubini-Study metric and $\phi$ is a continuous function on $W$, smooth on $W_0$.   Such $\omega$ also defines a metric on $K_W^{-1}$ (unique up to a constant multiple) which is continuous on $W$ and smooth on $W_0$. We often write $\omega=\omega_h$. A metric $h$ on $K_W^{-1}$ defines a volume form $\Omega_h$ on $W_0$, and the K\"ahler metric is \emph{weak K\"ahler-Einstein} if the equation
 \begin{equation} \label{weak KE}
 \omega_h^n=\Omega_h
 \end{equation}
 holds on $W_0$.

Fix $\lambda\geq 0$. As in \cite{CDS2} let $\Delta$ be a Weil divisor in $W$ so that the intersection $\Delta^{(0)}=\Delta \cap W_0$ is defined by a holomorphic section $s$ of $K_W^{-\lambda}$ (when $\lambda=0$, $\Delta$ is empty).  Fix $\beta\in(0, 1]$ and we assume the pair $(W, (1-\beta)\Delta)$ is KLT (Kawamata log terminal) as discussed in \cite{CDS2}. A \emph{weak conical K\"ahler-Einstein metric} for the triple $(W, \Delta, \beta)$ is a continuous metric $h$ on $K_W^{-1}$, which is smooth on $W_0\setminus \text{supp} \Delta^{(0)}$, and  satisfies the equation
\begin{equation} \label{weak conic KE}
\omega_h^n=\Omega_h |s|_h^{2(\beta-1)} 
\end{equation}
on $W_0\setminus \text{supp} \Delta^{(0)}$.

The above definition exactly suits our purpose, but  we remark that there are  weaker notions of K\"ahler metrics on singular varieties. For K\"ahler-Einstein metrics these are all equivalent, see for example the recent work \cite{BBEGZ} for more details.

We now state the first result of this section

\begin{thm} \label{reductive}
If $(W, \Delta, \beta)$ admits a weak conical K\"ahler-Einstein metric, then 
$\Aut(W, \Delta)$ is reductive. 
\end{thm}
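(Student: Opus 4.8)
The plan is to run the variational form of Matsushima's argument, in the shape due to Berndtsson, using the geodesic convexity of the Ding functional and its equality case established in Appendix 1. Write $G=\Aut^0(W,\Delta)$ for the identity component. Since $\Aut(W,\Delta)$ embeds as a linear algebraic group in $GL(H^0(W,K_W^{-m}))$ and its unipotent radical, being connected, lies in $G$, it suffices to prove that $G$ is reductive. Let $\mathcal{H}$ be the space of bounded K\"ahler potentials for the class $2\pi c_1(W)$ --- functions continuous on $W$, smooth on $W_0\setminus\text{supp}\,\Delta^{(0)}$, with $m^{-1}\omega_{FS}+i\partial\bar\partial\phi>0$ --- on which $G$ acts by pullback, and let $\mathcal{D}$ be the Ding functional. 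The given weak conical K\"ahler-Einstein metric is $\omega_{KE}=\omega_{\phi_{KE}}$, where $\phi_{KE}$ is a critical point, hence (by the geodesic convexity of $\mathcal{D}$, Appendix 1) a global minimizer, of $\mathcal{D}$. Let $K=\{g\in G:\ g^{*}\omega_{KE}=\omega_{KE}\}$; this is a compact subgroup of $G$, being a closed subgroup of the isometry group of the compact metric space $(W,d_{\omega_{KE}})$, so its complexification $K^{\bC}$ (the smallest algebraic subgroup of $G$ containing $K$) is a reductive subgroup of $G$. Everything then reduces to showing $G\subseteq K^{\bC}$.

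Let $g\in G$. Then $g^{*}\omega_{KE}$ again solves the weak conical K\"ahler-Einstein equation (\ref{weak conic KE}), since $\Delta$ and the measure $\Omega_h\,|s|_h^{2(\beta-1)}$ are preserved by $\Aut(W,\Delta)$ up to the evident transformation factors. Let $\phi_g$ be its potential and let $\{\phi_t\}_{t\in[0,1]}$ be a weak geodesic joining $\phi_{KE}$ to $\phi_g$ in the completion of $\mathcal{H}$ --- the bounded (or $C^{1,1}$) solution of the homogeneous complex Monge-Amp\`ere equation on $W\times A$ with $A$ an annulus, whose existence and uniqueness in this degenerate setting is standard and may be reduced to a resolution of $W$. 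By Appendix 1, $t\mapsto\mathcal{D}(\phi_t)$ is convex; as both endpoints are global minima, it is constant. The equality case of Berndtsson's convexity theorem (also Appendix 1) then forces the geodesic to be induced by an automorphism: there is a holomorphic vector field $V$ on $W$, tangent to $\Delta$, whose real flow $\rho_t=\exp(tV)$ lies in $G$ and gives $\rho_t^{*}\omega_{KE}=\omega_{\phi_t}$ up to additive constants. In particular $\rho_1^{*}\omega_{KE}=g^{*}\omega_{KE}$, so $k:=g\rho_1^{-1}\in K$ and $g=k\rho_1$. Finally, since $V$ generates a geodesic, the Einstein property of $\omega_{KE}$ --- used here exactly as in the classical Matsushima step, but extracted from the rigidity statement in place of a Bochner identity --- shows that the one-parameter group $\exp(\theta JV)$, $\theta\in\bR$, acts by isometries of $\omega_{KE}$, i.e. $JV\in\mathrm{Lie}(K)$ and hence $\rho_1=\exp(V)\in K^{\bC}$. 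Therefore $g=k\rho_1\in K^{\bC}$, so $G=K^{\bC}$ is reductive, and hence so is $\Aut(W,\Delta)$.

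\textbf{Main obstacle.} The one genuinely non-formal ingredient is the equality case of the geodesic convexity of the Ding functional for weak conical K\"ahler-Einstein metrics on a $\bQ$-Fano variety: showing that a geodesic along which $\mathcal{D}$ is affine is generated by a holomorphic vector field tangent to $\Delta$. This is Berndtsson's positivity argument --- positivity, and strict positivity on the degeneracy locus, of the curvature of the natural metric on the relevant direct-image (Deligne-type) line bundle over the annulus, the fibrewise measures being the finite-mass measures $\Omega_h\,|s|_h^{2(\beta-1)}$, which are well-defined precisely because $(W,(1-\beta)\Delta)$ is KLT --- carried out on the possibly singular $W$ with the divisor present. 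This, together with the existence of weak geodesics and the convexity statement itself, is the content of Appendix 1; granting it, the rest of the argument (that $\Aut(W,\Delta)$ is linear algebraic, compactness of $K$, the passage $g=k\rho_1\Rightarrow\rho_1\in K^{\bC}$, and the implication $G=K^{\bC}\Rightarrow$ reductive) is routine.
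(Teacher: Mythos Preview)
Your proposal is correct and follows essentially the same route as the paper: define $K$ as the stabilizer of the weak conical K\"ahler-Einstein metric, show it is compact, pull back the metric by an arbitrary $g\in G$ to get a second weak conical K\"ahler-Einstein metric, and then invoke the Berndtsson-type uniqueness (convexity of the Ding functional plus its equality case, Proposition~\ref{uniqueness}) to produce $Y$ with $JY\in\mathrm{Lie}(K)$ whose time-one flow carries one metric to the other, forcing $g\in K^{\bC}$. The only cosmetic differences are that the paper works directly with the full $\Aut(W,\Delta)$ rather than reducing to the identity component, and establishes compactness of $K$ by embedding it in $PU(N+1;\bC)$ via an $L^2$-orthonormal basis of $H^0(W,K_W^{-m})$ rather than appealing to the isometry group of $(W,d_{\omega_{KE}})$.
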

In the case $\lambda=0$ by our convention $\Delta$ is empty, so the statement becomes simply that $\Aut(W)$ is reductive. We will prove the case when $\lambda>0$, and the case $\lambda=0$ is similar.

Theorem \ref{reductive} reduces to  the classical Matsushima theorem when $W$ is smooth and $\Delta$ is empty. The original proof runs on the level of Lie algebra, which relies on an integration by part argument. This seems difficult to extend to our general setting, as we do not have much information on the metric around the singularities. Instead we make use of recent results on complex analysis developed  by Berndtsson \cite{Bern}  and others. The main point is that instead of working on its Lie algebra we directly study the group $\Aut(W, \Delta)$. \\

 We begin with some preparations in pluripotential theory.  Recall an upper semi-continuous function on a complex manifold is \emph{plurisubharmonic} if it restricts to a subharmonic function on any holomophic disk.  Let $(W, \Delta)$ be as before and fix  a K\"ahler metric $\omega$ on $W$.  Let $\H_\infty$ be the space of bounded $\omega$-plurisubharmonic functions on $W$, i.e. bounded upper semi-continuous functions $\phi$ on $W_0$ such that locally near each point there is an open neighborhood $U$ on which one can write $\omega=i\p\bp \phi_0$ and $\phi_0+\phi$ is plurisubharmonic on $U$. When $W$ is smooth, $\H_\infty$ contains the usual space $\H$ of smooth K\"ahler potentials in $[\omega]$. 
 As usual any $\phi\in \H_\infty$ defines a bounded metric $h_\phi$ on $K_W^{-1}$,  and a current $\omega_\phi=\omega+i \p\bp \phi$ which we call a \emph{weak K\"ahler metric}. It follows from the definition that the potential of a weak conical K\"ahler-Einstein metric lies in $\H_\infty$.

 Now the point is that Equation (\ref{weak conic KE}) is also well-defined for  weak K\"ahler metrics  if we regard both sides as measures on $W_0$. To see this, we notice that there is no problem with the right hand side as $\phi$  defines a bounded measure $\Omega_{h_\phi}$ on $W_0$. For the left hand side we need to make sense of wedge product of currents. In general one can not do this, but by Bedford-Taylor \cite{BT} the wedge product of the form $\omega^i\wedge \omega_\phi^j$ is well defined as a closed positive current on $W_0$. In particular  any $\phi \in \H_\infty$ defines a Monge-Amp\`ere measure $(\omega+i\p\bp\phi)^n$ on $W_0$.   Thus we can understand  Equation (\ref{weak conic KE}) as an equation for measures on $W_0$. By pushing forward through the inclusion $W_0\hookrightarrow W$ we can also understand these as measures on $W$.
 
  An alternative way to think of the space $\H_\infty$ is to consider the log resolution of singularities $p: W'\rightarrow (W, (1-\beta)\Delta)$, realized as a sequence of embedded blow-ups of $(W, \Delta)\hookrightarrow \bC\bP^N$. Then the restricted K\"ahler metric $m^{-1}\omega_{FS}$ pulls back to a smooth $(1,1)$ form $\omega'$ on $W'$, positive definite exactly away from the exceptional divisors.  Then by the general extension theorem of plurisubharmonic functions $\H_\infty$ can be identified with $\H'_\infty$, the space of $\omega'$-plurisubharmonic functions on $W'$. Then we can make sense of Equation (\ref{weak conic KE})  as measures on $W'$. The KLT condition ensures that the right hand side pulls back to an $L^p$  volume form on $W'$ for some $p>1$.  
  
  One technical tool in the study of  weak (conical) K\"ahler-Einstein metrics is the convexity of a functional defined by Ding \cite{Ding}, as discovered by Berndtsson \cite{Bern}. This should be compared with the more usual Mabuchi functional defined on any K\"ahler class for which the convexity follows formally from an infinite dimensional moment map picture. The advantage of Ding functional is that it requires less regularity on the potential function so is more amenable to the general setting. Given $W, \Delta, \beta$ as before, the Ding functional is defined by
\begin{equation} \label{definition Ding functional}
\Di(\phi)=(1-(1-\beta)\lambda) I(\phi)-\log \int_W \Omega_{h_\phi} |s|_{h_\phi}^{2(1-\beta)}. 
\end{equation}
Here  the $I$ functional is defined by 
$$ I(\phi)= -\int_0^1 dt\  \frac{1}{V}\int_W \frac{\p \phi(t)}{\p t}  \omega_{\phi(t)}^n, $$
where $V=\int _W \omega^n$, and $\phi(t)$ is any smooth path in $\H_\infty$ with $\phi(0)=0$ and $\phi(1)=\phi$. Just as the case of smooth K\"ahler potentials, one can check that $I$ is well-defined on $\H_\infty$. Indeed, it is also easy to write down an explicit formula, for example 
$$I(\phi)=-\frac{1}{(n+1)V}\sum_{i=0}^n\int_W \phi \omega^i\wedge \omega_{\phi}^{n-i}.$$ So $\Di$ is well-defined on $\H_\infty$, and  one can check a critical point of $\Di$ satisfies the equation $$\omega_\phi^n=C \Omega_{h_\phi}|s|_{h_\phi}^{2(1-\beta)}, $$
where $C$ is an arbitrary constant. So a critical point of $\Di$ determines a weak conical K\"ahler-Einstein metric, by adding an appropriate constant. 

There is now a well-developed study of the geometry of the space of K\"ahler metrics on a smooth K\"ahler manifold. Part of the theory has been recently adapted to the singular setting, using pluripotential theory. A path $\phi_t$ $(t\in [0,1])$ in  $\H_\infty$ is called a \emph{geodesic} if the function $\Phi$ defined on $W\times [0,1]\times\bR $, viewed naturally as an $n+1$ dimensional variety,  by assigning $\Phi(t, s, x)=\phi_t(x)$, satisfies the homogeneous Monge-Amp\`ere equation: 
$$(\omega+i\p\bp \Phi)^{n+1}=0, $$
in the sense of measures as above. 
 For any two points $\phi_0$ and $\phi_1\in \H_\infty$, there is a unique geodesic  connecting them \cite{Bern}. The reason  for the name ``geodesic" is that when $\phi_t$ is a path of smooth potentials on a smooth K\"ahler manifold, this is the usual geodesic equation for the natural Riemannian metric on $\H$. \\

The following was first proved in \cite{Bern} when $W$ is smooth and later generalized to the singular setting in \cite{BBEGZ}.  For the convenience of readers we provide an exposition of the proof in  Appendix 1. 

\begin{prop} \label{Ding convexity}
$\Di$ is convex along a geodesic in $\H_\infty$. 
\end{prop}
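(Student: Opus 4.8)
The plan is to prove convexity of $\Di$ along a (weak) geodesic $\phi_t$ in $\H_\infty$ by splitting the functional into its two pieces, $(1-(1-\beta)\lambda)I(\phi)$ and $-\log\int_W \Omega_{h_\phi}|s|_{h_\phi}^{2(1-\beta)}$, and showing that the first is \emph{linear} along a geodesic while the second is \emph{convex}. For the linearity of $I$, I would work on the log resolution $p:W'\to W$, where the weak geodesic pulls back to a bounded $\omega'$-psh solution $\Phi$ of the homogeneous Monge--Amp\`ere equation $(\omega'+i\p\bp\Phi)^{n+1}=0$ on $W'\times A$ (with $A$ an annulus in $\bC$, $\Phi$ depending only on $|w|$). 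The standard identity $\frac{d}{dt}I(\phi_t)=-\frac1V\int_{W'}\dot\phi_t\,\omega'^n_{\phi_t}$ together with integration by parts shows $\frac{d^2}{dt^2}I(\phi_t)$ equals, up to sign, the push-forward to $A$ of $(\omega'+i\p\bp\Phi)^{n+1}$, hence vanishes; the only subtlety is justifying the manipulations at the level of Bedford--Taylor products for bounded psh functions, which is by now routine (approximate $\Phi$, or use the continuity of Monge--Amp\`ere along decreasing sequences).

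The convexity of the second term is the heart of the argument and is where I expect the main obstacle to lie. Following Berndtsson, the idea is to interpret $e^{-\Di}$, or rather the term $\log\int_W \Omega_{h_{\phi_t}}|s|_{h_{\phi_t}}^{2(1-\beta)}$, as (minus) the logarithm of the norm of a holomorphic section of a line bundle over the annulus $A$, and then invoke the positivity of the associated direct image bundle. Concretely, one considers the total space $W'\times A\to A$, the line bundle whose fibrewise sections are $L^2$ holomorphic $n$-forms twisted by $|s|^{2(1-\beta)}e^{-\phi_t}$ relative to $\Omega$, and shows that the curvature of the $L^2$-metric on the (rank-one, after choosing the canonical section) direct image is nonnegative because the total weight $\Phi$ is psh on $W'\times A$ and the twisting divisor $(1-\beta)\Delta$ is effective and KLT. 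The subharmonicity in $w$ of $-\log\|\cdot\|^2$ then gives convexity in $t$. The technical heart is Berndtsson's positivity-of-direct-images theorem in the singular/non-smooth-fibration situation: $W'$ is smooth but $p$ is not a submersion onto $W$, $\Phi$ is only bounded (not smooth or strictly psh), and the weight $|s|^{2(1-\beta)}$ blows up along $\Delta$ — so one must either appeal to the $L^p$-integrability coming from the KLT hypothesis (which makes the relevant $L^2$-spaces finite-dimensional with no jumps as $t$ varies) or regularize $\Phi$ by a decreasing sequence of smooth strictly-psh approximants $\Phi_j$ and pass to the limit, checking that the $\log\int$ quantities converge. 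I would organize the argument so that all the genuinely new analytic input is isolated into this one positivity statement, and cite \cite{Bern}, \cite{BBEGZ} for it while recording in Appendix 1 the adaptation needed for the triple $(W,\Delta,\beta)$.

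Putting the two halves together: along the geodesic $\phi_t$, $t\mapsto (1-(1-\beta)\lambda)I(\phi_t)$ is affine and $t\mapsto -\log\int_W \Omega_{h_{\phi_t}}|s|_{h_{\phi_t}}^{2(1-\beta)}$ is convex, hence $\Di(\phi_t)$ is convex, which is exactly Proposition \ref{Ding convexity}. One further point worth addressing for completeness is that the geodesic produced in \cite{Bern} has only bounded (not $C^{1,1}$) regularity, so strictly speaking ``convex'' should be understood as convexity of the continuous function $t\mapsto \Di(\phi_t)$; this is enough for the application in the proof of Theorem \ref{thm4}, and it is also consistent with the fact that $\Di$ is continuous under the relevant approximations. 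The remaining routine verifications — well-definedness of $I$ on $\H_\infty$ via the explicit polynomial formula already quoted, continuity of $\Di$ along the geodesic, and the measure-theoretic identities on $W_0$ versus $W$ versus $W'$ — I would dispatch with references to Bedford--Taylor theory and to \cite{CDS2}, \cite{BBEGZ}, deferring the detailed write-up to Appendix 1.
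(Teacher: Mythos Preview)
Your proposal is correct and takes essentially the same approach as the paper: split $\Di$ into the $I$-term (linear along geodesics) and the $-\log\int$ term (convex via Berndtsson's positivity of direct images), work on a log resolution, regularize, and pass to the limit. The paper's Appendix 1 carries out exactly this program, with the technical core being a two-layer approximation---smoothing both the singular weight $|s|^{2(\beta-1)}$ on $L_{\Delta'}$ (their Lemma~\ref{approximation singular metric}) and the potential $\Phi$ (via Blocki--Kolodziej)---together with a direct curvature computation (their Lemma~\ref{positive curvature}) and uniform $L^2$/H\"ormander-type estimates to control the error terms as $\epsilon\to 0$; your sketch anticipates all of these ingredients.
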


This has an immediate corollary:

\begin{cor} \label{lower bound}
If $\omega_\phi$ is a weak conical K\"ahler-Einstein metric on $(W, \Delta,\beta)$, then $\Di$ achieves its minimum at $\phi$ on $\H_\infty$. 
\end{cor}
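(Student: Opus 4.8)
The plan is to derive Corollary \ref{lower bound} as a direct consequence of the convexity of the Ding functional (Proposition \ref{Ding convexity}) together with the fact that a weak conical K\"ahler-Einstein potential is a critical point of $\Di$. First I would observe that, since $\omega_\phi$ is a weak conical K\"ahler-Einstein metric, the potential $\phi$ lies in $\H_\infty$ and satisfies $\omega_\phi^n = C\,\Omega_{h_\phi}|s|_{h_\phi}^{2(1-\beta)}$ for some constant $C>0$; normalizing (adding a constant to $\phi$, which changes $\Di$ only by an additive constant and does not affect whether $\phi$ is a minimizer) we may assume $C=1$, so $\phi$ is genuinely a critical point of $\Di$ on $\H_\infty$ in the sense recorded after \eqref{definition Ding functional}.

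Next, take any other $\psi\in\H_\infty$. By the existence result quoted from \cite{Bern}, there is a (unique) geodesic $\phi_t$, $t\in[0,1]$, in $\H_\infty$ with $\phi_0=\phi$ and $\phi_1=\psi$. By Proposition \ref{Ding convexity} the function $f(t)=\Di(\phi_t)$ is convex on $[0,1]$. The key remaining point is to identify $f'(0^+)$ (or rather to show $f$ cannot decrease initially): since $\phi$ is a critical point of $\Di$, the first variation of $\Di$ at $\phi$ vanishes, so $f'(0^+)=0$. A convex function on $[0,1]$ with vanishing right derivative at $0$ is nondecreasing, hence $\Di(\psi)=f(1)\geq f(0)=\Di(\phi)$. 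Since $\psi$ was arbitrary, $\Di$ attains its minimum on $\H_\infty$ at $\phi$.

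The main obstacle is the differentiability issue at the endpoint of the geodesic: geodesics in $\H_\infty$ are only $C^{1,1}$ in general (not smooth), so one must justify that $\Di$ is differentiable along $\phi_t$ at $t=0$ and that the derivative there coincides with the first variation of $\Di$ computed against the (bounded) velocity field $\dot\phi_0$. I would handle this by recalling that along a $C^{1,1}$ geodesic the Monge-Amp\`ere measures $\omega_{\phi_t}^n$ vary continuously (weakly) and $\dot\phi_t$ is bounded, so both terms in \eqref{definition Ding functional}---the $I$-functional term, whose $t$-derivative is $-\frac1V\int_W \dot\phi_t\,\omega_{\phi_t}^n$, and the $\log$ of the weighted volume---are differentiable in $t$ with the expected formulas; then the critical point equation for $\phi$ forces the derivative at $0$ to vanish. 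Alternatively, and more cleanly, one can avoid endpoint regularity altogether: by convexity $f(t)\le (1-t)f(0)+t f(1)$, so $\frac{f(t)-f(0)}{t}\le f(1)-f(0)$ for all $t\in(0,1]$; letting $t\to 0^+$, the left side tends to $f'(0^+)=0$ (using only that the one-sided derivative exists by convexity and equals the first variation, which vanishes), giving $f(1)\ge f(0)$ directly. I would present this second route, since it only requires evaluating the derivative of $\Di$ at the smooth endpoint $\phi$ rather than controlling regularity along the whole geodesic.
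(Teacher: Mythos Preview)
Your proposal is correct and is exactly the argument the paper has in mind: the paper simply declares this an ``immediate corollary'' of Proposition~\ref{Ding convexity}, and indeed in the later proof of Proposition~\ref{uniqueness} the authors invoke the same mechanism (``$\Di(\phi)$ has derivative $0$ at $t=0,1$'' combined with convexity). Your write-up fills in the standard details of critical point plus convexity implies minimum, which is all that is intended.

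Two small remarks. First, in this singular setting the paper only records that the geodesic is bounded and Lipschitz in $t$ (Lemma in Appendix~1), not $C^{1,1}$; the $C^{1,1}$ regularity you cite is from the smooth K\"ahler case \cite{Chen} and should not be invoked here. Second, your ``alternative'' route does not in fact bypass the endpoint issue: you still need $f'(0^+)=0$, i.e.\ that the one-sided derivative of $\Di$ along the geodesic at $t=0$ coincides with the first variation of $\Di$ at the (regular) potential $\phi$. This is the one nontrivial step and it is where the regularity of $\phi$ (smooth on $W_0\setminus\mathrm{supp}\,\Delta^{(0)}$, bounded potential) and the Lipschitz-in-$t$ property of the geodesic are used to differentiate both terms of $\Di$; the paper, following \cite{Bern} and \cite{BBEGZ}, takes this as known.
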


Now we start the proof of Theorem  \ref{reductive}.  Recall that we assume  $W$ is embedded into $\bC\bP^N$ by sections of the line bundle $L=K_W^{-m}$. Any element in $G=\Aut(W, \Delta)$ acts naturally  on $H^0(W, K_W^{-m})$, so is induced by an element in $PGL(N+1;\bC)$. Then it is easy to see that $G$ is the closed subgroup of $PGL(N+1;\bC)$ consisting of those elements that preserves $W$ and $\Delta$. The Lie algebra $Lie(G)$ can be identified with the space of holomorphic vector fields on $\bC\bP^N$ that is tangential to $W$ and $\Delta^{(0)}$. 

Let $K$ be the subgroup of $G$ that preserves the weak conical K\"ahler-Einstein metric $\omega_\phi$ on $W_0\setminus \text{supp} \Delta^{(0)}$. Then any element $k$ in $K$ lifts to an action on $L$ that preserves the metric $h_\phi$. To see this, choose an arbitrary lift of $k$ to an action on $L$ - this is possible by previous discussion. Since $k^*h_\phi$ and $h_\phi$ give rise to the same K\"ahler metric $\omega_\phi$, the equation
$i\p\bp \log(k^*h_\phi/h_\phi)=0, $
holds on $W_0\setminus \text{supp} \Delta^{(0)}$.  On the other hand, by definition $\log (k^*h_\phi/h_\phi)$ is bounded, so it is a constant since $W$ is normal. Then by multiplying by a constant we can assume $k^*h_\phi=h_\phi$.
It follows that $K$ is a closed subgroup of $G$. We furthermore claim it is compact. To see this, notice that since $\phi$ is bounded, the Hermitian metric $h_\phi$ on $L$ is uniformly equivalent to the metric $h_0$ defined by $m^{-1}\omega_{FS}$. Moreover, since $\omega_\phi$ is smooth on $W_0\setminus \text{supp}\Delta_0$ and has finite Monge-Amp\`ere measure, one easily sees that it defines a Hermitian metric on $H^0(W, L)$.  We choose  an orthonormal basis $\{s_i\}$ of $H^0(W, L)$  with respect to the metric defined by $\omega_\phi$. Then $\{s_i\}$ induces another embedding of $W$ into $\bC\bP^N$, and this realizes $K$ as a closed subgroup of $PU(N+1;\bC)$. In particular, $K$ is compact. 

Now it is easy to see that the Lie algebra $Lie(K)$ consists of holomorphic Killing vector fields on $(W_0, \omega_\phi)$ tangential to $\Delta^{(0)}$. Any element in $Lie(G)$ is generated by a  bounded complex-valued Hamiltonian function on $W_0\setminus \text{supp}\Delta^{(0)}$ with respect to the metric $\omega_\phi$, up to a constant. From this point of view,  elements in $Lie(K)$ corresponds exactly to those Hamiltonian functions that are real-valued. 

The following  is proved in \cite{Bern} and \cite{BBEGZ}, to generalize the Bando-Mabuchi uniqueness theorem. The proof depends on carefully looking at  the case when the Ding functional is constant along a geodesic. We will outline the proof in Appendix 1.

\begin{prop} \label{uniqueness} Given another weak K\"ahler-Einstein metric $\omega_{\phi'}$ on $(W, \Delta, \beta)$, there are a geodesic $\phi(t)$ $(t\in [0,1])$ in $\H_{\infty}$ connecting $\phi$ and $\phi'
$, and a holomorphic vector field $Y$ that preserves $\Delta$ such that $JY\in Lie(K)$, and $f_t^*\omega_{\phi(t)}=\omega_\phi$, where $f_t$ is the family of holomorphic transformations on $W$ generated by $Y$.
\end{prop}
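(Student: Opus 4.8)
The plan is to follow the variational/pluripotential approach of Berndtsson, as extended to the singular and conical setting in \cite{BBEGZ}, carrying everything out on the log resolution $p\colon W'\to (W,(1-\beta)\Delta)$ and the form $\omega'$ introduced above, so that the singularities of $W$ and the divisor $\Delta$ cause no difficulty. First, by Corollary \ref{lower bound} both $\phi$ and $\phi'$ realise the minimum of the Ding functional $\Di$ on $\H_\infty$. Let $\phi(t)$, $t\in[0,1]$, be the unique geodesic in $\H_\infty$ joining them, whose existence is recalled after Proposition \ref{Ding convexity}. By Proposition \ref{Ding convexity} the function $t\mapsto \Di(\phi(t))$ is convex on $[0,1]$, and since it attains its minimum value at both endpoints it must be constant; thus $\Di$ is \emph{affine} along $\phi(t)$.

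The core of the argument is then to read off a holomorphic vector field from this degeneracy. Extend $\phi(t)$ to an $S^1$-invariant function $\Phi$ on $W\times \overline A$, where $A=\{w\in\bC:1<|w|<e\}$, by $\Phi(x,w)=\phi_{\log|w|}(x)$; pulled back to $W'$ this is an $\omega'$-plurisubharmonic solution of the homogeneous Monge--Amp\`ere equation $(\omega'+i\p\bp\Phi)^{n+1}=0$ on $W'\times A$, equivalently a semipositively curved singular metric $e^{-\Phi}$ on the pullback of $K_W^{-1}$ over the total space. One now re-examines the proof of Proposition \ref{Ding convexity} given in Appendix 1, which is an instance of Berndtsson's theorem on positivity of direct images and the subharmonicity of $w\mapsto -\log\int \Omega_{h_{\Phi_w}}|s|_{h_{\Phi_w}}^{2(1-\beta)}$. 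The constancy established above forces the equality case there: the curvature $i\p\bp\Phi$ on the total space is degenerate along a rank-one holomorphic distribution transverse to the fibres; the horizontal lift of $w\,\p/\p w$ along this distribution integrates to a holomorphic flow; and, using the $S^1$-invariance, this produces a holomorphic family of biholomorphisms $f_t$ of the relevant open part of $W'$ with $f_0=\mathrm{id}$ and $f_t^*\omega_{\phi(t)}=\omega_\phi$. Moreover the equality case in the subharmonicity of the entropy term forces $f_t$ to preserve the measure $\Omega_h|s|_h^{2(1-\beta)}$ up to a constant, hence to preserve its polar set, i.e.\ the divisor $\Delta$. (One also needs enough regularity of the weak geodesic to make the kernel distribution and its flow well defined across the resolution; this is supplied exactly as in \cite{BBEGZ}, and is part of what Appendix 1 discusses.)

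It remains to descend and to identify $JY\in Lie(K)$. The maps $f_t$ commute with $p$, so they descend to biholomorphisms of $W_0\setminus\mathrm{supp}\,\Delta^{(0)}$; since the geodesic stays in $\H_\infty$ they displace points by a bounded amount, and, $W$ being normal, they extend to biholomorphisms of $W$ preserving $\Delta$, i.e.\ to a one-parameter subgroup of $G=\Aut(W,\Delta)$. Let $Y$ be its infinitesimal generator, a holomorphic vector field preserving $\Delta$. Because $\omega_\phi$ is a weak conical K\"ahler-Einstein metric for $(W,\Delta,\beta)$ and $f_t\in\Aut(W,\Delta)$, each $\omega_{\phi(t)}=(f_t^{-1})^*\omega_\phi$ is again such a metric; tracking the $S^1$-invariance of $\Phi$ shows that the imaginary part $JY$ generates a flow of isometries of $(W_0,\omega_\phi)$ tangent to $\Delta^{(0)}$, so by the description of $Lie(K)$ recalled above, $JY\in Lie(K)$. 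This gives all the assertions.

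The step I expect to be the main obstacle is the equality case of Berndtsson's positivity theorem: extracting the holomorphic foliation from the affineness of $\Di$, together with the accompanying regularity statement for the weak geodesic that makes the foliation and its flow genuinely defined across the resolution and across $\Delta$. Everything else — the reduction to minimisers, the convexity input, the extension of the flow over the singularities by normality, and the identification of $JY$ as a Killing field — is comparatively routine once that analytic heart is in place.
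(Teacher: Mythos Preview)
Your proposal is correct and follows essentially the same approach as the paper: Berndtsson's equality-case analysis for the convexity of the Ding functional, carried out on the log resolution via the approximation scheme already set up in the proof of Proposition \ref{Ding convexity}. Two points of detail where the paper differs slightly from your sketch are worth noting. First, the paper does not invoke an abstract ``rank-one holomorphic foliation'' statement but rather works concretely: the constancy of $\Di$ forces $\|\bp v_\epsilon\|_{\psi_\epsilon}^2\to 0$ in the curvature identity of Lemma \ref{positive curvature}, and the weak limit $v$ of the $v_\epsilon$ is shown to be holomorphic in both the fibre and the $t$ direction --- the latter step requiring Kawamata--Viehweg vanishing for $H^{n,1}(W',L)$, which you do not mention. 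Second, for the preservation of $\Delta$ the paper does not argue via the polar set of the measure; instead it first observes that $Y_t\lrcorner\,\omega_{\phi_t}=i\bp\dot\phi_t$ makes $\mathrm{Im}(Y_t)$ Killing for $\omega_{\phi_t}$, hence (by the K\"ahler--Einstein equation) tangential to $\Delta_t=f_t(\Delta)$, and then pairs the current $[\Delta_t]$ against test forms to show $\Delta_t$ is constant in $t$. Your polar-set argument would also work but needs the intermediate step that each $\omega_{\phi(t)}$ is itself weak conical K\"ahler--Einstein, which the paper obtains only \emph{after} establishing $\Delta_t=\Delta$.
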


Now we continue the proof of Theorem \ref{reductive}.
  We denote by $K^c\subset G$ the complexification of $K$ (defined in the Lie algebra level). It suffices to prove $G=K^c$. For any $g\in G$, we lift the action to $L$, so we view it as an element in $GL(N+1;\bC)$. Consider the pull back metric $g^*h_\phi$ on $L$, and the corresponding weak K\"ahler metric $\omega_{\phi'}=g^*\omega_\phi$. Then it is clear that $\omega_{\phi'}$ also satisfies the Equation (\ref{weak conic KE}) on $W_0\setminus \text{supp} \Delta^{(0)}$. Now since $g$ is a projective transformation, we may write $g^*\omega_{FS}=\omega_{FS}+\sqrt{-1}\p\bp \log |g.x|^2$. Thus $\phi'=\phi+m^{-1}\log |g.x|^2+\text{constant}\in \H_\infty$. Therefore  $\omega_{\phi'}$ is also a weak conical K\"ahler-Einstein metric. By Proposition  \ref{uniqueness}, there is a holomorphic vector field $Y\in Lie(K^c)$ such that the time one flow $f_1$ of $Y$ satisfies $f_1^*\omega_{\phi'}=\omega_\phi$.  Now $g\circ f_1\in G$  preserves $\omega_\phi$, so $g\circ f_1\in K$, thus $g\in K^c$. Therefore $G=K^c$ is reductive.\\

Now we move on to Futaki invariant.\\
 
 Suppose there is a $\bC^*$ action on a $\bQ$-Fano variety $W$ and the action lifts to the ample line bundle $L=K_{W}^{-m}$.   
For integers $k\geq 1$ we have a vector space $H^{0}(W, L^{k})$ with a $\bC^{*}$-action. Let $d_{k}$ be the dimension of this vector space and $w_{k}$ be the total weight of the action. By general theory these are, for large $k$, given by polynomials in $k$ of degrees $n, n+1$ respectively. Thus 
   \begin{equation} \label{Futaki definition} \frac{w_{k}}{k d_{k}} = F_{0} + F_{1} k^{-1} + O(k^{-2})\end{equation}
   and we define the Futaki invariant of the $\bC^*$ action to be $\Fut(W)=F_{1}$. This definition also generalises to the KLT pair $(W, (1-\beta)\Delta)$, when the $\bC^*$ action preserves $\Delta$. Then we have an action on $H^0(\Delta, L^k)$, and we define
   
   \begin{equation} \label{modified Futaki}
   \Fut_\beta(W, \Delta)=\Fut(W)+(1-\beta)c(F_0-F_0'), 
   \end{equation}
where $F_0'$ is defined in the same way as in Equation (\ref{Futaki definition}), but with $W$ replaced by $\Delta$, and $c=\frac{n\lambda}{2}$. 

Following \cite{Do11} we give an analytic definition of the Futaki invariant, in terms of a smooth K\"ahler metric. 
Let $\omega$ be the restriction of the Fubini-Study metric under an embedding $W\hookrightarrow \bP^N$ induced by $K_W^{-m}$.  The $S^1$ action is generated by a Hamiltonian function $u$. Then we define
$$Fut_\beta(W, \Delta)=-\int_W u(Ric(\omega)-\omega)\wedge\omega^{n-1}+(1-\beta)\left(2\pi
\int_{ \Delta}u \omega^{n-1}-
\lambda\int_W u\omega^n\right).$$
It follows from the arguments in \cite{DT} that this is well-defined  and  does not depend on the choice of $\omega\in 2\pi c_1(W)$.
To see this agrees with the previous algebraic definition of Futaki invariant (up to a constant multiple),  we take an equivariant log resolution $p: (W', \Delta')\rightarrow(W, \Delta)$. Then we simply need to calculate the algebraic Futaki invariant on $(W', \Delta')$ with respect to the polarization $p^*(-mK_W)$. One can use the (equivariant) Riemann-Roch formula (as in \cite{Do02}) to compute it using a smooth background metric.
On the other hand, a smooth metric $\omega$ on $W$ pulls back to a smooth form $\omega'$ on $W'$ which is degenerate on the exceptional divisors. But $\omega'$ still lies in the correct cohomology class and it is then straightforward to check that one can use $\omega'$ to do the same computation, and gives rise to the above analytic formula.

\begin{thm} \label{Futaki}
If $(W, \Delta, \beta)$ admits a weak conical K\"ahler-Einstein metric, then the Futaki invariant $\Fut_\beta(W, \Delta)=0$ with respect to any one parameter subgroup of $\Aut(W, \Delta)$. 
\end{thm}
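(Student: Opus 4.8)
The plan is to reduce Theorem \ref{Futaki} to the convexity of the Ding functional (Proposition \ref{Ding convexity}) combined with the existence of a minimizer (Corollary \ref{lower bound}), using the same circle of ideas as in the proof of Theorem \ref{reductive}. Fix a one-parameter subgroup $\rho:\bC^*\to \Aut(W,\Delta)$, generated by a holomorphic vector field $V$ tangent to $\Delta^{(0)}$, and lift it to an action on $L=K_W^{-m}$. Just as in the proof of Theorem \ref{reductive}, the orbit $\phi_t$ of the weak conical K\"ahler-Einstein potential $\phi$ under $\rho(e^{-t})$ is a path in $\H_\infty$: writing $g_t=\rho(e^{-t})$ we have $g_t^*\omega_{FS}=\omega_{FS}+i\p\bp\log|g_t.x|^2$, so $\phi_t = \phi + m^{-1}\log|g_t.x|^2 + c_t$ lies in $\H_\infty$ for a suitable normalization $c_t$. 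Moreover this is precisely a geodesic ray in $\H_\infty$: the associated function on $W\times S^1\times\bR$ extends to the total space of the test configuration $W\times\bC$ with its diagonal $\bC^*$-action, on which the relevant $(n+1)$-st power of the K\"ahler form vanishes because the fibres are all isomorphic and the potential is pulled back from a fixed Fubini--Study form. (Concretely, $\Phi(t,s,x)=\phi_{t}(x)$ solves the homogeneous complex Monge--Amp\`ere equation since it is, fibrewise in the $x$ variable, harmonic in the extra $\bC$-direction after the natural change of coordinates.)

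The second step is to identify the derivative of the Ding functional along this geodesic ray with (a multiple of) the Futaki invariant. By the general theory — this is the content of the arguments in \cite{DT}, \cite{Do11} transplanted to the singular setting via the equivariant log resolution $p:(W',\Delta')\to(W,\Delta)$ already used in the excerpt to define $\Fut_\beta$ analytically — one has
\begin{equation}
\lim_{t\to\infty}\frac{d}{dt}\Di(\phi_t) = \Fut_\beta(W,\Delta),
\end{equation}
up to a fixed positive constant depending only on $n,\lambda, m$. Indeed the $I$-functional part contributes the term $-\lambda(1-\beta)\int u\,\omega^n$ plus the leading-order normalization, the $\log\int\Omega_{h_{\phi_t}}|s|^{2(1-\beta)}$ part differentiates to the Ricci and boundary terms $-\int u(\Ric(\omega)-\omega)\wedge\omega^{n-1}+2\pi(1-\beta)\int_\Delta u\,\omega^{n-1}$, and one checks the expression stabilizes as $t\to\infty$. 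Alternatively, and perhaps more cleanly, one uses the algebraic description: the asymptotic slope of $\Di$ along $\phi_t$ equals the Donaldson--Futaki-type invariant of the product test configuration, which for a product configuration is exactly $\Fut_\beta(W,\Delta)$ by \eqref{Futaki definition}, \eqref{modified Futaki}.

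The third and final step is the convexity argument. Since $\omega_\phi$ is a weak conical K\"ahler-Einstein metric, Corollary \ref{lower bound} says $\Di$ is minimized at $\phi=\phi_0$. By Proposition \ref{Ding convexity}, $\Di$ is convex along the geodesic ray $\{\phi_t\}_{t\ge 0}$. A convex function on $[0,\infty)$ whose value at $0$ is the global minimum has nonnegative derivative everywhere, so $\frac{d}{dt}\Di(\phi_t)\ge 0$ for all $t$, and in the limit $\Fut_\beta(W,\Delta)\ge 0$. Applying the same reasoning to the reciprocal one-parameter subgroup $t\mapsto\rho(e^{t})$ (i.e.\ reversing $V$) gives $\Fut_\beta(W,\Delta)\le 0$, hence $\Fut_\beta(W,\Delta)=0$.

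I expect the main obstacle to be the second step: rigorously matching the asymptotic derivative of the Ding functional along the singular geodesic ray with the algebraically-defined $\Fut_\beta$. One must be careful that all the integrals are finite (which uses the KLT hypothesis and the $L^p$-bound on $\Omega_h|s|^{2(1-\beta)}$ on the resolution $W'$), that the Bedford--Taylor products appearing in $I(\phi_t)$ behave well in the limit, and that the boundary contribution along $\Delta$ is correctly accounted for; this is where one genuinely uses the equivariant log resolution and the computation of \cite{DT}, \cite{Do11} rather than any new idea. The geodesic property in Step 1 and the convexity in Step 3 are, by contrast, essentially immediate given the results already quoted in the excerpt.
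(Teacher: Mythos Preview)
Your argument is correct in outline, but the paper takes a markedly simpler route that sidesteps precisely the step you flag as the main obstacle. Instead of flowing the singular K\"ahler--Einstein potential, the paper flows the \emph{smooth} Fubini--Study metric $\omega$: writing $f_t^*\omega=\omega+i\p\bp\phi(t)$ for the real one-parameter subgroup $f_t$, a short direct computation (Lemma \ref{Futaki formula}, obtained by differentiating the tautological identity $Q(f_t^*\omega)=Q(\omega)$ for a suitable functional $Q$) gives the \emph{exact} formula $\frac{d}{dt}\Di(\phi(t))=n\,\Fut_\beta(W,\Delta)$ for every $t$, not just asymptotically. Since $\Di$ is bounded below by Corollary \ref{lower bound}, a function with constant derivative bounded below on $\bR$ must have derivative zero; hence $\Fut_\beta=0$. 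No geodesic property, no convexity along the path, and no asymptotic matching are needed.

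What your approach buys is conceptual alignment with the general non-Archimedean/test-configuration picture (Berman's slope formula), and it would generalize better to non-product degenerations. What the paper's approach buys is that all integrals are over smooth metrics in the class, so the Futaki computation is exactly the one already justified via equivariant log resolution, and the ``hard step'' you identify simply does not arise. Note also that in your version the constancy of $\frac{d}{dt}\Di(\phi_t)$ along the orbit (which follows from the $g_t$-equivariance, as each $\phi_t$ is again a weak conical KE metric and hence a minimizer) already forces $\Di(\phi_t)$ to be constant; once you observe this, the convexity and the two-sided inequality become unnecessary, and you are essentially back to the paper's argument with the extra burden of justifying the Futaki formula at a singular metric rather than a smooth one.
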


This follows from a general result  of Berman \cite{Ber2}, but what we need is much easier, and we provide a direct proof here, using the analytic definition of the Futaki invariant.   Let $f_t$ be the real one parameter subgroup of the $\bC^*$ action.  Write $f_t^*\omega=\omega+\sqrt{-1}\p\bp\phi(t)$, then $\dot{\phi}$ is the Hamiltonian function generating the $S^1$ action. We have

\begin{lem} \label{Futaki formula}
$$Fut_\beta(W, \Delta)=\frac{1}{n}\frac{d}{dt} \Di(\phi(t))$$
\end{lem}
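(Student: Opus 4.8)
The plan is to prove Lemma \ref{Futaki formula} by directly differentiating the analytic Ding functional along the path $\phi(t)$ coming from the one-parameter subgroup, and comparing the result with the analytic formula for $\mathrm{Fut}_\beta(W,\Delta)$. Since $f_t$ is an automorphism of $(W,\Delta)$ lifting to $L = K_W^{-m}$, the pulled-back metric $f_t^*\omega$ stays in $2\pi c_1(W)$ and $f_t^*\omega = \omega + i\partial\bar\partial\phi(t)$ with $\phi(t)\in\mathcal H_\infty$; moreover $\dot\phi(t)$ is the Hamiltonian (with respect to $\omega_{\phi(t)}$) generating the $S^1$-action, pushed forward by $f_t$. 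Because $f_t$ is holomorphic and preserves $\Delta$, pulling everything back by $f_t$ reduces the computation of $\tfrac{d}{dt}\mathcal D(\phi(t))$ at arbitrary $t$ to the computation at $t=0$, replacing $\omega$ by $\omega_{\phi(t)}$; so it suffices to establish the identity at $t=0$ with $u = \dot\phi(0)$, and the invariance of $\mathrm{Fut}_\beta$ under the choice of metric in $2\pi c_1(W)$ (already quoted from \cite{DT}) then closes the argument.

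First I would record the two standard variational formulas: for the $I$-functional, $\tfrac{d}{dt}I(\phi(t)) = -\tfrac{1}{V}\int_W \dot\phi(t)\,\omega_{\phi(t)}^n$; and for the log term, differentiating $\log\int_W \Omega_{h_{\phi(t)}}|s|_{h_{\phi(t)}}^{2(1-\beta)}$ gives $\bigl(\int_W \Omega_{h_\phi}|s|_h^{2(1-\beta)}\bigr)^{-1}\int_W (\,(1-\lambda(1-\beta))\,\dot\phi(t)\,)\,\Omega_{h_{\phi(t)}}|s|_{h_{\phi(t)}}^{2(1-\beta)}$, using that the metric $h_{\phi(t)}$ on $K_W^{-1}$ varies by $e^{-\dot\phi(t)}$ and that $|s|^2_{h}$ scales by $e^{\lambda\dot\phi(t)}$ under a change $h\mapsto e^{-\dot\phi(t)}h$. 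These combine to
\[
\tfrac{d}{dt}\mathcal D(\phi(t)) = -\tfrac{1-(1-\beta)\lambda}{V}\int_W \dot\phi(t)\,\omega_{\phi(t)}^n + \frac{\int_W \bigl(1-\lambda(1-\beta)\bigr)\dot\phi(t)\,\Omega_{h_{\phi(t)}}|s|^{2(1-\beta)}_{h_{\phi(t)}}}{\int_W \Omega_{h_{\phi(t)}}|s|^{2(1-\beta)}_{h_{\phi(t)}}}.
\]
I would then evaluate at $t=0$: since $\omega=\omega_h$ with $h$ the Fubini–Study–type metric, the denominator is a fixed normalizing constant, and it remains to recognize the right-hand side as (a multiple of) $-\int_W u(\mathrm{Ric}(\omega)-\omega)\wedge\omega^{n-1} + (1-\beta)\bigl(2\pi\int_\Delta u\,\omega^{n-1} - \lambda\int_W u\,\omega^n\bigr)$. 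The mechanism is the identity $\Omega_h = \omega^n \cdot e^{(\text{potential whose }i\partial\bar\partial\text{ is }\mathrm{Ric}(\omega)-\omega)}$ up to a constant, so that a Hamiltonian $u$ for the $S^1$-action satisfies the Futaki-type identity relating $\int u\,\Omega_h$, $\int u\,\omega^n$ and $\int u(\mathrm{Ric}(\omega)-\omega)\wedge\omega^{n-1}$ by an integration by parts on $W_0$; the divisor term appears analogously through $|s|^{2(1-\beta)}_h$ and the Poincaré–Lelong formula, contributing $i\partial\bar\partial\log|s|^2_h = 2\pi[\Delta] - \lambda\omega$ as currents. Matching constants (the $c = n\lambda/2$ and the $1/n$) is a bookkeeping exercise comparing the polynomial-in-$k$ definition \eqref{Futaki definition}–\eqref{modified Futaki} with its analytic incarnation via equivariant Riemann–Roch, exactly as sketched in the paragraph preceding the theorem.

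The one genuinely delicate point — and the main obstacle — is \emph{regularity}: all of the above integrations by parts take place on the open smooth locus $W_0 \setminus \mathrm{supp}\,\Delta^{(0)}$, where $\omega_{\phi(t)}$ is only a bounded weak Kähler metric with an $L^p$ (not smooth) volume form, and $u$ is merely a bounded Hamiltonian. To make the variational formulas and the integration by parts legitimate I would argue that, \emph{along this particular path} coming from a holomorphic one-parameter group, everything is actually smooth: $\omega_{\phi(t)} = f_t^*\omega$ is as regular as $\omega$ itself, which is smooth on $W_0$ and has conical behaviour along $\Delta$, so the integrals converge and the boundary terms near $\mathrm{supp}\,\Delta^{(0)}$ vanish because $2(1-\beta) > 0$ makes $|s|^{2(1-\beta)}$ and its relevant derivatives integrable — this is the same cone-angle integrability used throughout the paper. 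One should also check that $\mathcal D(\phi(t))$ is genuinely differentiable (not just the formal computation), which follows since the path is real-analytic in $t$ on $W_0$ with uniform integrable bounds near $\Delta$. Once this regularity is in hand, the remaining content of the lemma is the classical Futaki computation plus the divisor correction, and no further difficulty arises. Note that $\mathcal D$ being convex along geodesics (Proposition \ref{Ding convexity}) is not needed for this lemma, but it will be the input that, together with Lemma \ref{Futaki formula}, forces $\mathrm{Fut}_\beta = 0$ when a weak conical Kähler–Einstein metric exists, proving Theorem \ref{Futaki}.
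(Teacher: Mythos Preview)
Your overall plan --- differentiate $\mathcal D$ along the automorphism path and compare with the analytic Futaki formula --- is the right one, and the paper follows the same strategy. But the crucial step you describe only loosely is exactly where the work is. After your variational formula you have, at $t=0$,
\[
\frac{d}{dt}\mathcal D(\phi(t))\Big|_{t=0}
= -\,\mu\,\frac{1}{V}\int_W u\,\omega^n
+ \mu\,\frac{\int_W u\,\Omega_h|s|_h^{2(\beta-1)}}{\int_W \Omega_h|s|_h^{2(\beta-1)}},
\qquad \mu = 1-(1-\beta)\lambda,
\]
and you then assert that the second term can be converted into $\int_W u(\Ric(\omega)-\omega)\wedge\omega^{n-1}$ plus the divisor correction ``by an integration by parts.'' This is not correct as stated: writing $\Omega_h|s|_h^{2(\beta-1)} = e^{-F}\omega^n$ and integrating by parts in $\int u\, i\partial\bar\partial F\wedge\omega^{n-1}$ produces $\int F\, i\partial\bar\partial u\wedge\omega^{n-1}$, not $\int u\,e^{-F}\omega^n$; there is no purely algebraic identity relating $\int u\,e^{-F}\omega^n$ to the curvature integral. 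What is actually needed is the extra structure that $u$ is the Hamiltonian of a \emph{holomorphic} vector field preserving $(W,\Delta)$.

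The paper supplies this missing link by introducing the auxiliary functional
\[
Q(\omega)=\int_W \log \frac{\omega^n}{\Omega_{h}|s|_{h}^{2\beta-2}\bigl(\int_W\Omega_{h}|s|_{h}^{2\beta-2}\bigr)^{-1}}\,\omega^n,
\]
which is manifestly invariant under $f_t\in\Aut(W,\Delta)$, so that $\frac{d}{dt}Q(f_t^*\omega)=0$. Differentiating $Q$ produces, after one integration by parts on the factor $\omega^n$, precisely the term $\int_W \dot\phi\,(-\Ric(\omega)+\omega+(1-\beta)i\partial\bar\partial\log|s|_h^2)\wedge\omega^{n-1}$ together with $\frac{d}{dt}\log\int_W\Omega_h|s|_h^{2\beta-2}$ and $\mu\int_W\dot\phi\,\omega^n$. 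The last two are exactly the ingredients of $\frac{d}{dt}\mathcal D$, so the identity $dQ/dt=0$ \emph{is} the bridge between your variational formula and the Futaki integral. Without this (or an equivalent use of the holomorphicity of the vector field), your step from $\int u\,\Omega_h|s|_h^{2(\beta-1)}$ to the curvature form is a genuine gap. Your discussion of regularity via the log resolution and the KLT condition is, by contrast, in line with what the paper needs.
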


Since by Corollary \ref{lower bound} the Ding functional $\Di$ is bounded from below on $\H_\infty$, it follows that $Fut_\beta(W, \Delta)=0.$ This proves Theorem \ref{Futaki}.\\

Now we prove Lemma \ref{Futaki formula}. Let $h$ be the Hermitian metric on $K_W^{-1}$ corresponding to $\omega$. 
By pulling back to the log resolution it is easy to see that  the following quantity 
$$Q(\omega)=\int_W \log \frac{\omega^n}{\Omega_{h}|s|_{h}^{2\beta-2} (\int_W\Omega_{h}|s|_{h}^{2\beta-2})^{-1}}\omega^n$$
is well-defined. It also follows from the definition that
$$Q(f_t^*\omega)=Q(\omega). $$
  Taking derivatives with respect to $t$ we obtain
\begin{eqnarray*}
&&\int_W \dot{\phi}(-Ric(\omega)+\omega+(1-\beta)\sqrt{-1}\p\bp\log |s|^2_{h})\wedge \omega^{n-1}\\&+&\frac{d}{dt} \log \int_W \Omega_{h}|s|_{h}^{2\beta-2}+(1-(1-\beta)\lambda)\int_W \dot{\phi}\omega^{n}=0.
\end{eqnarray*}
Thus $$\frac{d}{dt}\Di(\phi(t))=n\int_W \dot{\phi} (-Ric(\omega)+\omega)+n(1-\beta)[2\pi \int_\Delta \dot\phi \omega^{n-1}-\lambda\int_W \dot\phi\omega^n].$$\\

Theorem \ref{thm4} is a combination of Theorem \ref{reductive} and Theorem \ref{Futaki}. 

\section{Completion of Proof of Theorem \ref{main theo}}
We recall the definition of K-stability from \cite{CDS0}.
\begin{defn} Let $X$ be an $n$-dimensional Fano manifold. A test-configuration for $X$ is a flat family $\pi:{\mathcal X}\rightarrow \bC$ embedded in $\bC\bP^{N}\times \bC$ for some $N$, invariant under a $\bC^{*}$ action on $\bC\bP^{N}\times \bC$ covering the standard action on $\bC$ such that 
\begin{itemize}
\item $\pi^{-1}(1)=X$ and the embedding $X\subset \bC\bP^{N}$ is defined by the complete linear system $\vert -r K_{X}\vert $ for some $r$;
\item The central fibre $X_{0}=\pi^{-1}(0) $ is a normal variety with log terminal singularities.
\end{itemize}
\end{defn}

A test configuration has a basic numerical invariant: the {\it Futaki invariant}. This is defined to be the Futaki invariant of the central fiber, with respect to the induced $\bC^*$ action (see Equation (\ref{Futaki definition})). 
   
   \begin{defn} $X$ is K-stable if for all non-trivial test configurations ${\mathcal X}$ we have $\Fut({\mathcal X})\geq 0$ and strict inequality holds if ${\mathcal X}$ is non-trivial.
\end{defn}

By \lq\lq non-trivial'' here we mean that the central fibre is not isomorphic to $X$. The condition defined above is  often called \lq\lq polystability'' in the literature.
Recall that our main result (Theorem \ref{main theo}) states that if a Fano manifold is K-stable then it admits a K\"ahler-Einstein metric. Converse results, in different degrees of generality have been proved by have been proved by Tian \cite{Ti1}, Stoppa \cite{Stoppa} and Berman \cite{Ber2}. The sharp form proved by Berman shows that in fact K-stability (as we have defined it) is equivalent to the existence of a K\"ahler-Einstein metric. 
  
  In this section we will show how to deduce Theorem \ref{main theo} from the results we obtained in \cite{CDS1}, \cite{CDS2} and this paper. Fix a $K$-stable Fano manifold $X$. \\

\noindent {\bf Step 1:} Choose an integer $\lambda>0$ and a smooth divisor $D$ in the linear system $|-\lambda K_X|$. This is possible if we choose $\lambda$ sufficiently large, by Bertini's theorem. In practice $\lambda$ does not need to be big, but there are cases where we can not choose $\lambda=1$. But for the simplicity of our argument we always choose $\lambda>1$. Then we consider K\"ahler-Einstein metrics on $X$ with cone angle $2\pi\beta$ along $D$:
$$Ric(\omega_\beta)=(1-(1-\beta)\lambda) \omega_\beta+2\pi(1-\beta) [D].$$
More precisely, we use the definition recalled in \cite{CDS2}. This means that we require the potential function of $\omega_\beta$ to lie in $C^{2, \alpha, \beta}$ for all $\alpha\in (0, \beta^{-1}-1)$, in the sense defined in \cite{Do11}, and satisfies the K\"ahler-Einstein equation on $X\setminus D$.  When $\beta=1$, it follows from standard elliptic regularity that this means $\omega_1$ is a smooth K\"ahler-Einstein metric on $X$, where $D$ disappears in the definition. \\

\noindent {\bf Step 2:}  Define $I$ to be the set consisting of all $\beta\in (0,1]$ such that there exists a K\"ahler-Einstein metric on $X$ with cone angle $2\pi\beta$ along $D$, in the sense defined above. Our goal is to prove $``1"\in I$. \\

\noindent {\bf Step 3:} We first prove $I$ is nonempty.  Choose an integer $N\geq \frac{\lambda}{\lambda-1}$, and let $\beta_0=1/N$. Then a K\"ahler-Einstein metric on $X$ with cone angle $2\pi\beta_0$ along 
$D$ is the same as a smooth K\"ahler-Einstein metric on a K\"ahler orbifold $\hat{X}$ with Ricci curvature $1-(1-\beta)\lambda\leq0$. We briefly recall the construction of $\hat X$. We cover $D$ by coordinate charts $(U_{\alpha}, z_{\alpha})$ such that $U_\alpha=\{|z^1_\alpha|\leq 1, \cdots, |z^n_\alpha|\leq 1\}$ and  $U_\alpha\cap D=\{z_\alpha^1=0\}$.  Then replacing the coordinate function $(z_\alpha^1, z_\alpha^2, \cdots, z_\alpha^n)$ by $((z_\alpha^1)^{1/N}, z_\alpha^2, \cdots, z_\alpha^n)$, we define an orbifold chart on $U_\alpha$. It is not difficult to see that these patch together and form an K\"ahler orbifold, which we denote by $\hat{X}$. It is then straightforward to check that a smooth K\"ahler metric on $\hat{X}$ is a K\"ahler metric on $X$ with cone angle $2\pi\beta_0$ along $D$, in the above sense. The existence of  a smooth K\"ahler-Einstein metric on $\hat X$ follows easily from the classical theory of Aubin and Yau, extended to the orbifold setting. The latter is well-known, see for example \cite{DK} -the key point is that the maximum principle works readily on an  orbifold. We outline some details here. Choose a smooth background  metric $\omega$ on $\hat X$, then $Ric(\omega)=(1-(1-\beta))\lambda\omega+i\p\bp f$ for some smooth function $f$ on $\hat X$.  We consider the continuity  equation 
$$(\omega+i\p\bp \phi_t)^n=e^{-t(1-(1-\beta)\lambda)\phi_t+f}\omega^n. $$
When $t=0$, the equation has a trivial solution $\phi_t=0$;  when $t=1$ a solution gives rise to  the desired K\"ahler-Einstein metric. 
The $C^0$ estimate follows from a straightforward maximum principle (applied on the orbifold cover near the maximum point).  The $C^2$ estimate also follows from the maximum principle and the fact the curvature of $\omega$ is bounded (since $\omega$ is smooth). Then we can apply Calabi's third derivative estimate  to obtain higher regularity.  In short we have shown $\beta_0\in I$. In particular, $I$ is non-empty. \\

\noindent {\bf Step 4:} $I$ is open in $(0, 1)$. Based on a linear elliptic estimate for K\"ahler metrics with cone singularities, this is proved in \cite{Do11}, under an additional assumption that there is no nonzero holomorphic vector field on $X$ which is tangential to $D$. The latter is confirmed in  \cite{SW}. \\

\noindent {\bf Step 5:} To deal with closedness we need to use the assumption on  $K$-stability.  If ${\mathcal X}$ is a test configuration for $X$, as above then we can extend $D\subset X=\pi^{-1}(1)$ to a divisor in ${\mathcal X}$ and obtain a $\bC^{*}$-invariant divisor $D_{0}\subset X_{0}$. Then we define the modified Futaki invariant
$$  \Fut_{\beta}({\mathcal X})= \Fut_\beta(X_0, D_0), $$
where the latter is given by (\ref{modified Futaki}). 
A clear  but  important fact is that the Futaki invariant is linear in $\beta$. One can define similarly the notion of $K$-stability for $(X, D, \beta)$, in the most obvious way. Then the $K$-stability is also linear in $\beta$. 
\begin{lem}
$(X, D, 0)$ is $K$-semistable, i.e. for any test configuration $\mathcal X$ for $X$, we have $\Fut_0(\mathcal X)\geq 0$. 
\end{lem}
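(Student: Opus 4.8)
The idea is to combine the linearity in $\beta$ of the modified Futaki invariant with the existence of conical metrics for small cone angle obtained in Step 3. So fix a test configuration $\mathcal X$ for $X$, with central fibre $X_{0}$, and let $D_{0}\subset X_{0}$ be the $\bC^{*}$-invariant divisor obtained by extending $D$ over $\mathcal X$. By formula (\ref{modified Futaki}) the function $\beta\mapsto \Fut_{\beta}(\mathcal X)=\Fut_{\beta}(X_{0},D_{0})$ is affine, say $\Fut_{\beta}(\mathcal X)=a+b\beta$ with $a=\Fut_{0}(\mathcal X)$. Hence it is enough to exhibit a sequence $\beta_{j}\to 0$ for which $\Fut_{\beta_{j}}(\mathcal X)\ge 0$; then $\Fut_{0}(\mathcal X)=a=\lim_{j}(a+b\beta_{j})\ge 0$. (As a consistency check one may note that the other endpoint $\Fut_{1}(\mathcal X)=\Fut(X_{0})\ge 0$, since $\mathcal X$ is in particular a test configuration for the $K$-stable manifold $X$; but this single value does not by itself control $a$.)

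First I would take $\beta_{j}=1/N_{j}$ for integers $N_{j}\to\infty$ with $N_{j}>\lambda/(\lambda-1)$. By Step 3 each such $\beta_{j}$ lies in $I$, so $(X,D,\beta_{j})$ carries a conical K\"ahler--Einstein metric, equivalently the orbifold $\hat X$ of Step 3 carries a smooth K\"ahler--Einstein metric. Moreover, because $N_{j}>\lambda/(\lambda-1)$, the ``log Einstein constant'' $1-(1-\beta_{j})\lambda$ is strictly negative; equivalently $K_{X}+(1-\beta_{j})D$ is an ample $\bQ$-divisor and $K_{\hat X}$ is ample. Thus $(X,(1-\beta_{j})D)$ is a klt log pair with $K_{X}+(1-\beta_{j})D$ ample (the ``negative'', log general type, case), and $(\mathcal X,D_{0})$ is then a test configuration for this pair whose Donaldson--Futaki invariant is, by definition, precisely $\Fut_{\beta_{j}}(X_{0},D_{0})$.

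The point I expect to be the main obstacle is the remaining assertion: a test configuration for a klt pair $(X,\Delta)$ with $K_{X}+\Delta$ ample has non-negative Futaki invariant. In the negative/log general type situation this is classical — it follows from the K-stability of canonically polarized varieties and its log and orbifold generalisations, or analytically from the fact that the associated Ding/Mabuchi functional of the pair $(X,(1-\beta_{j})D)$ is bounded below (indeed coercive, with the Aubin--Yau metric produced above as its minimiser), combined with the identity, exactly as in Lemma \ref{Futaki formula} and Corollary \ref{lower bound}, that the Futaki invariant of a test configuration equals the asymptotic slope of this functional along the associated geodesic ray. Granting this, $\Fut_{\beta_{j}}(X_{0},D_{0})\ge 0$ for every $j$, and letting $j\to\infty$ gives $\Fut_{0}(\mathcal X)\ge 0$, which is the claim. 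The only real care needed is that the central fibre $(X_{0},(1-\beta_{j})D_{0})$ is merely klt, so one must invoke the singular (log) versions of the negative-case stability statement rather than the smooth one.
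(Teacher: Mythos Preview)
Your approach is correct in outline and is essentially one of the alternative routes the paper mentions (the references \cite{OS}, \cite{Ber2}, \cite{LS}), but it differs from the argument the paper singles out. The paper's proof works directly at $\beta=0$: it cites \cite{Sun}, where one uses the canonical K\"ahler--Einstein metric on $D$ (Calabi--Yau when $\lambda=1$, negative KE when $\lambda>1$) to build almost balanced embeddings of the pair $(X,D)$, from which $\Fut_{0}\ge 0$ follows. Your argument instead approximates from $\beta>0$: for $\beta_{j}\to 0$ the pair $(X,(1-\beta_{j})D)$ is klt with $K_{X}+(1-\beta_{j})D$ ample, hence log K-stable by Odaka's theorem, so $\Fut_{\beta_{j}}(\mathcal X)\ge 0$; then linearity yields $\Fut_{0}(\mathcal X)\ge 0$. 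This is a perfectly good argument, and you correctly observe that since $K_{X}+(1-\beta_{j})D$ is a positive multiple of $-K_{X}$, the test configurations and the signs of the (log) Donaldson--Futaki invariants for the two polarisations agree.

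Two remarks, though. First, the orbifold K\"ahler--Einstein metric from Step~3 is not actually doing any work in your argument: the K-stability of klt pairs with ample log canonical class is a purely algebro-geometric statement (Odaka, Odaka--Sun), and invoking it is cleaner than the analytic detour. Second, your appeal to Lemma~\ref{Futaki formula} and Corollary~\ref{lower bound} is not quite apt: those results concern a $\bC^{*}$-action on a fixed variety $W$ and the Ding functional \emph{on $W$}, whereas what you need is the slope of the Ding functional \emph{on $X$} along the geodesic ray associated to an arbitrary test configuration with a possibly different central fibre. That slope formula is a genuinely harder statement (this is the content of Berman's work \cite{Ber2}), so if you want the analytic route you must cite that, not the paper's internal lemmas. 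If you use the algebraic route via Odaka's theorem, this issue disappears.
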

This is proved in \cite{Sun} when $\lambda=1$. The argument makes use of the Calabi-Yau metric on $D$ to construct almost balanced embeddings of $(X, D)$.  It also extends easily to the general case $\lambda>1$ with the Calabi-Yau metric replaced by a K\"ahler-Einstein metric on $D$ (which exists because $c_1(D)<0$). This Lemma also follows from the more general results \cite{Ber2}, \cite{LS}, \cite{OS}. 
Since we assume $X$ is $K$-stable, it follows  that $(X, D, \beta)$ is $K$-stable for any $\beta\in (0, 1]$. \\

\noindent {\bf Step 6:} Now it suffices to show that if  $\beta_i\in I\cap [\beta_0, 1]$ increases to $\beta_\infty$, then $\beta_\infty\in I$. We choose  a corresponding sequence of K\"ahler-Einstein metrics $\omega_{\beta_i}$.  There are two cases:

\begin{enumerate} 
\item $\beta_\infty\leq1-\lambda^{-1}$. In this case the Ricci curvature of  $\omega_{\beta_i}$ is non-positive, and the $K$-stability of $X$ is not required. A general result for the existence of K\"ahler-Einstein metrics with cone singularities in the case of non-positive curvature can be found in \cite{JMR}.  For our convenience we give an alternative argument, in the line of our series of papers. Notice that in \cite{CDS1} and \cite{CDS2} our main interest was in the opposite case when $\beta_\infty>1-\lambda^{-1}$, but it is not hard to see that  the bulk of the discussion in \cite{CDS1} and \cite{CDS2} holds in the general case, as long as we assume an extra volume non-collapsing condition, or equivalently, a diameter bound.  By Theorem 3.10 in \cite{CDS1} this condition is indeed satisfied in our setting so the results in \cite{CDS2} do apply.  Choose a sequence of background K\"ahler metrics $\omega_{\beta_i}'$ with a limit $\omega_{\beta_\infty}'$.  Write $\omega_{\beta_i}=\omega_{\beta_i}'+i\p\bp \varphi_i$. Then,  by the proof of Theorem 3.10 in \cite{CDS1} there is an a priori $L^\infty$ bound on $\varphi_i$. Then we can apply results in Section 3.2 of \cite{CDS2} to conclude that $\omega_{\beta_i}$ converges to a limit $\omega_{\beta_\infty}$, which is a K\"ahler-Einstein metric on $X$ with cone angle $2\pi\beta_\infty$ along $D$. Thus $\beta_\infty\in I$.

\item $\beta_\infty>1-\lambda^{-1}.$ Then by Theorem 1 in our previous paper \cite{CDS2} and Theorem 1 in this paper, we obtain a Gromov-Hausdorff limit $W$ which is a $\bQ$-Fano variety, and a Weil divisor $\Delta$ (if $\beta_\infty=1$ then $\Delta$ is empty), such that $(W, \Delta, (1-\beta_\infty))$  is KLT, and admits a weak conical K\"ahler-Einstein metric.  Moreover, the limit can be taken in a fixed projective space so we can view it as a limit of Chow cycles.  By Theorem \ref{reductive}  we know $\Aut(W, \Delta)$ is reductive. So by general theory of Luna slices (see  \cite{Do10}) one can construct a test configuration $\mathcal X$ for $X$ with central fiber $W$ so that $\Delta$ is the flat limit of $D$. By Theorem \ref{Futaki}, $\Fut_{\beta_\infty}(\mathcal X)=0$. By our definition of $K$-stability and Step 5, we conclude that $(W, \Delta)$ is isomorphic to $(X, D)$. Then it follows from Theorem 2 in \cite{CDS2} and Theorem \ref{thm1} in this paper that $\beta_\infty\in I$. 
\end{enumerate}

Therefore we conclude that $X$ admits a K\"ahler-Einstein metric. This finishes the proof of Theorem  \ref{main theo}.

\section{Appendix 1: Convexity of Ding functional and uniqueness of weak conical K\"ahler-Einstein metrics}
We reproduce the proof of Proposition  \ref{Ding convexity} and Proposition  \ref{uniqueness}, mainly following \cite{Bern} (compare also \cite{BBEGZ}).  A more detailed account of these can be found in the expository note by Long Li \cite{LL}. \\

 Take a log resolution $p: W' \rightarrow (W, \Delta)$. By definition  we have
$$-K_{W'}=-p^*(K_W+(1-\beta)\Delta)-E+\Delta', $$
where $E$ and $\Delta'$ are effective divisors with normal crossing intersections, $\Delta'$ has  coefficients in $(0,1)$, $E$ has integer coefficients, and $p_*(\Delta'-E)=\Delta$.  We denote by $L$ the line bundle $K_{W'}^{-1}\otimes E$ on $W'$.   
Then $L$ is linearly equivalent to $-p^*(K_W+(1-\beta)\Delta)+\Delta'$. \\

Now we fix a smooth metric $h_0$ on $K_W^{-1}$, in the sense defined before, then $h_0$ determines a smooth metric  on $-p^*(K_W+(1-\beta)\Delta)$, which we still denote by $h_0$. We also fix a smooth K\"ahler metric  $\omega'$ on $W'$ and a smooth metric $h_0'$ on $L$. Write $\Delta'=\sum_i a_i E_i$, and choose a defining section $s_i$ of $E_i$, then $s=\otimes_i s_i^{\otimes a_i}$ is formally a defining section of $\Delta'$, so it defines a Hermitian metric $k_0$ on the $\bR$-line bundle $L_{\Delta'}$ with curvature $2\pi [\Delta']$, i.e. 
  $$i\p\bp \log |s|^2=i\p\bp \log \Pi_i |s_i|^{2a_i}=2\pi \sum_i a_i [\Delta_i]=2\pi [\Delta']. $$
With these fixed, then any metric $h_\phi=h_0e^{-\phi}$ on $K_W^{-1}$ defines a metric  $h_0 e^{-\phi} \otimes k_0$ on $L$, which for simplicity we denote by $h'_{\widetilde \phi}=h'_0e^{-\widetilde \phi}$. Here as before we have used the identification between plurisubharmonic functions on $W$ and $W'$. \\

Recall for any $\phi \in \H_\infty$, we have defined the Ding functional (c.f. Equation (\ref{definition Ding functional})). We denote the second term by  
$$\widetilde{\Di}(\phi)=-\log \int_W \Omega_{h_\phi} |s|_{h_\phi}^{2(1-\beta)}. $$
Then it is not hard to express this as an integral on $W'$. Notice by definition there is an $L$-valued $n$-form $u$ on $W'$(unique up to a constant multiple) with zero divisor $E$. Then we have 
$$\widetilde{\Di}(\phi)=\widetilde{\Di}(\widetilde \phi):=-\log \int_{W'} a_n  u\wedge \bar u. $$
where $a_n=(-1)^{\frac{n(n-1)}{2}}i^n$, and we have used the metric $h'_{\widetilde \phi}$ on $L$. To be more precise this holds up to an additive constant, but we always fix this constant to be zero by a possibly different choice of $u$. 

We now compute the second derivative of $\widetilde{\Di}$. By Hodge theory for any smooth metric $h'_{\psi}=h'_0e^{-\psi}$ on $L$, and for any $L$-valued $(n, 0)$ form $\xi$ that is orthogonal to the space of holomorphic sections,  there is an $L$-valued $(n, 1)$ form $\alpha$ such that 
$$\bp_\psi^*\alpha=\xi, $$
where we have used the Hermitian metrics defined by the metric $\omega'$ and $h'_\psi$.  Moreover we choose $\alpha$ to be orthogonal to the kernel of $\bp^*_{\psi}$ so it is uniquely determined by $\xi$. Write $\alpha=v\wedge \omega'$, then $\bp v\wedge\omega'=0$, and up to a factor we have $\p_\psi v=\xi$, where $\p_\psi$ is the $\p$ operator on forms coupled with the connection on $L$ defined by $h_{\psi}'$. 

 Given a smooth family of smooth metrics  $h'_{\psi_t}$  on $L$ for $t\in R=\{t\in \bC|Re(t)\in [0,1]\}$, we  solve 
 $$\p_{\psi_t} v=P_t(\frac{\p \psi}{\p t} u), $$
 where $P_t$ is the projection to the orthogonal complement of holomorphic sections (again we use the metric defined by $h'_{\psi_t}$ and $\omega'$). On $W\times R$, we denote $\hat{u}=u-dt\wedge v$, and let $\omega_\psi$ be the curvature form of $h'_{\psi}$ on $L$ (naturally pulled back from $W$). From now on, we use the notation $||\cdot||$ to denote the $L^2$ norm with respect to the K\"ahler metric $\omega'$ and the Hermitian metric written in the subscript. 
On $R$, we have (for simplicity of notation we omit the variable $t$)

\begin{lem}[\cite{Bern}, Theorem 3.1]  \label{positive curvature}
$$||u||_{\psi}^2 i\p\bp \widetilde{\Di}(\psi(t))=a_n\int_{W'}\omega_{\psi} \wedge \hat{u}\wedge \bar{\hat{u}}+||\bp v||_{\psi}^2 i dt\wedge d\bar t,  $$ 
\end{lem}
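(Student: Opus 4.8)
The plan is to compute $\partial_t\bar\partial_t\widetilde{\Di}(\psi(t))$ directly, where $\widetilde{\Di}(\psi)=-\log\int_{W'}a_n\,u\wedge\bar u$ with the norm taken in the metric $h'_\psi$, and then re-package the answer using the Hodge-theoretic substitution $\partial_\psi v=P_t(\dot\psi u)$. Write $Q(t)=\int_{W'}a_n\,u\wedge\bar u\,e^{-\psi(t)}$ (thinking of the metric factor explicitly), so $\widetilde{\Di}=-\log Q$ and
\[
\partial_t\bar\partial_t\widetilde{\Di}=-\frac{\partial_t\bar\partial_t Q}{Q}+\frac{\partial_t Q\wedge\bar\partial_t Q}{Q^2}.
\]
First I would differentiate under the integral sign. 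Since $u$ is a fixed $L$-valued $(n,0)$-form, the only $t$-dependence is through $e^{-\psi}$, so $\partial_t Q=-\int a_n\,u\wedge\bar u\,\dot\psi\,e^{-\psi}$ and $\partial_t\bar\partial_t Q=\int a_n\,u\wedge\bar u\,(|\dot\psi|^2-\ddot\psi)e^{-\psi}$, where $\ddot\psi=\partial_t\bar\partial_t\psi$ is (pulled back from) the $t$-component of the curvature form $\omega_\psi$ on $W'\times R$.

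The key manipulation is to recognize the combination of the three terms above as an $L^2$-orthogonality statement. Decompose $\dot\psi u$ into its projection onto holomorphic sections plus the orthogonal part; the orthogonal part is exactly $\partial_\psi v$ (up to the normalization factor), by the definition of $v$. The term $\partial_t Q\,\bar\partial_t Q/Q^2$ contributes $|\langle\dot\psi u,\text{(holomorphic part)}\rangle|^2/\|u\|^2$, i.e. the square of the holomorphic-projection piece, while $-\partial_t\bar\partial_t Q/Q$ contributes $-\|\dot\psi u\|^2/\|u\|^2$ plus the curvature term $\langle\ddot\psi\,u,u\rangle/\|u\|^2$. Subtracting, the $\|\dot\psi u\|^2$ is replaced by $-\|\partial_\psi v\|^2$, which by the adjoint relation $\bar\partial_\psi^*\alpha=\xi$ with $\alpha=v\wedge\omega'$ and integration by parts on $W'$ equals (up to sign and normalization) $-\|\bar\partial v\|^2$ after using $\bar\partial v\wedge\omega'=0$ and the Bochner–Kodaira–type commutation. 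The curvature term, after pulling everything back to $W'\times R$ and writing $\hat u=u-dt\wedge v$, reassembles into $a_n\int_{W'}\omega_\psi\wedge\hat u\wedge\bar{\hat u}$: the $dt\wedge d\bar t$ component of $\omega_\psi\wedge\hat u\wedge\bar{\hat u}$ produces precisely $\ddot\psi|u|^2-$ (cross terms involving $v$), and the cross terms are what convert $\|\partial_\psi v\|^2$ into $\|\bar\partial v\|^2$ via the identity $\|\partial_\psi v\|^2-\|\bar\partial v\|^2=\int_{W'}\langle i\Theta_\psi\wedge v,v\rangle$ (a Bochner–Kodaira–Nakano identity on $W'$ with values in $L$, valid since $v$ is of the appropriate bidegree). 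Multiplying through by $\|u\|_\psi^2$ and multiplying by $i\,dt\wedge d\bar t$ to view the result as a $(1,1)$-form in $t$ yields the stated formula.

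\medskip

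The main obstacle I expect is \emph{regularity and legitimacy of the integration by parts on $W'$}. The form $u$ vanishes exactly on $E$, and the metric $h'_{\widetilde\psi}$ involves the singular weight $|s|^2$ from $\Delta'$, so the integrals $\int a_n u\wedge\bar u\,e^{-\widetilde\psi}$ are genuinely weighted $L^2$ integrals with KLT-admissible (i.e. $L^p$ for some $p>1$) but unbounded densities. One must check that all the Hodge-theoretic constructions (existence and uniqueness of $v$ solving $\partial_\psi v=P_t(\dot\psi u)$ orthogonal to $\ker\bar\partial^*_\psi$, the integration by parts turning $\|\partial_\psi v\|^2-\|\bar\partial v\|^2$ into the curvature pairing, and differentiation under the integral sign) are valid in this weighted setting — this is where one invokes the finiteness of the Monge–Ampère measure and the $L^p$-bound on the density, together with a cutoff/approximation argument near $E$ and $\Delta'$. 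Once $\psi$ is assumed smooth on $W'$ (as in the statement of this lemma — the singular case is handled later by approximation), these points are technical but standard; the genuinely subtle part, deferred to the geodesic-convexity application, is extending the identity to merely bounded $\omega'$-psh $\psi$. For the present lemma, with $\psi(t)$ a smooth family, the argument above is essentially a computation plus a Bochner–Kodaira–Nakano identity, and I would present it in that order: differentiate, identify the holomorphic projection, integrate by parts, apply Bochner–Kodaira, reassemble.
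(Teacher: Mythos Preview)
Your proposal is correct and follows essentially the same route as the paper: both differentiate $-\log\int a_n u\wedge\bar u\,e^{-\psi}$ directly, recognize $\|\dot\psi u\|^2-|\langle\dot\psi u,u\rangle|^2/\|u\|^2$ as $\|\partial_\psi v\|^2$ via the orthogonal projection, expand $a_n\int\omega_\psi\wedge\hat u\wedge\bar{\hat u}$, and apply the Bochner formula $i\bar\partial\partial_\psi v+i\partial_\psi\bar\partial v=\omega_\psi\wedge v$ to relate $\|\partial_\psi v\|^2$ and $\|\bar\partial v\|^2$. The paper is slightly more explicit in that it splits $a_n\int\omega_\psi\wedge\hat u\wedge\bar{\hat u}$ into four pieces $I$ (the $\ddot\psi|u|^2$ term), $II,III$ (the cross terms, each equal to $-\|\partial_\psi v\|^2$), and $IV$ (the fibrewise curvature on $v\wedge\bar v$, which Bochner turns into $\|\partial_\psi v\|^2-\|\bar\partial v\|^2$); your phrase ``the cross terms convert $\|\partial_\psi v\|^2$ into $\|\bar\partial v\|^2$'' compresses $II+III+IV=-\|\partial_\psi v\|^2-\|\bar\partial v\|^2$ into one step, but the bookkeeping comes out the same. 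Your regularity discussion is unnecessary here since the lemma is stated for smooth $\psi$, as you yourself note.
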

 This follows from the general positivity of direct image bundles discovered by Berndtsson. For our convenience we present a  direct calculation here. First a straightforward calculation gives
\begin{eqnarray*}
 ||u||^2_{\psi}\p_t\bp_t \widetilde{\Di}(\psi(t))&=&a_n\int_{W'} \frac{\p^2 \psi}{\p t\p \bar t}u\wedge \bar u-(a_n\int_{W'} |\frac{\p\psi}{\p t}|^2 u\wedge \bar u
       -\frac{|\int_{W'}\frac{\p \psi}{\p t}u \wedge \bar u|^2}{||u||_{\psi}^2}) 
 \end{eqnarray*}
Now we compute 
\begin{eqnarray*}
&&a_n\int_{W'} \omega_\psi \wedge \hat{u}\wedge \bar{\hat{u}}\\
&=& a_n(i \int_{W'}\frac{\p^2 \psi}{\p t\p \bar t}u\wedge \bar u+i\int_{W'}(-1)^n  \bp \frac{\p\psi}{\p t} \wedge u\wedge \bar v+i\int_{W'} \p \frac{\p \psi}{\p \bar t} \wedge v\wedge \bar u
\\&&+\int_{W'}(-1)^{n-1} \omega_\psi \wedge v\wedge \bar v) dt\wedge d\bar t\\
&=&(I+II+III+IV)idt d\bar t.
\end{eqnarray*}
Since $\p_{\psi} v= P_t(\frac{\p\psi}{\p t} u)$ we have 
$$\bp \p_{\psi} v=\bp\frac{\p\psi}{\p t}\wedge u. $$
So 
$$II=(-1)^na_n\int_{W'} \bp\frac{\p\psi}{\p t}\wedge u\wedge \bar v=-a_n \int_{W'}\p_{\psi} v\wedge\bp_{\psi} \bar v=-||\p_{\psi} v||_{\psi}^2, $$
and 
$$III=a_n\int_{W'} \p \frac{\p\psi}{\p \bar t}\wedge v\wedge \bar u=(-1)^{n-1}a_n\int_{W'}v \wedge \overline{\bp\p_{\psi} v}=-||\p_\psi v||_{\psi}^2. $$
Now we have the Bochner formula 
$$i\bp \p_{\psi} v+i\p_{\psi} \bp v=\omega_{\psi} \wedge v. $$
So by integration by parts we obtain
\begin{eqnarray*}
IV&=&
a_n\int_{W'}\p_{\psi}  v\wedge \bp_{\psi} \bar v+a_n\int_{W'} \bp  v\wedge \p \bar v\\
&=& ||\p_{\psi} v||_{\psi}^2-||\bp v||_{\psi}^2. 
\end{eqnarray*}
Here we used the fact that $\bp v\wedge \omega'=0$. 
Thus we have
$$II+III+IV=-||\p_{\psi} v||_{\psi}^2-||\bp v||_{\psi}^2. $$
Now by definition
$$||\p_{\psi}  v||_{\psi}^2=||\frac{\p\psi}{\p t} u||_{\psi}^2-\frac{(\int_{W'}\frac{\p\psi}{\p t} u\wedge \bar u)^2}{||u||_{\psi}^2}. $$
Combining all the above together this finishes the proof of the Lemma.\\

Now recall a geodesic in $\H_\infty$ is a path $\phi_t$ in $\H_\infty$ such that the map $\Phi: W\times R \rightarrow \bR$ defined by $\Phi(x, t)=\phi_{Re(t)}(x)$ satisfies $i\p\bp \Phi\geq0$ and the homogeneous Monge-Amp\`ere  equation 
\begin{equation} \label{geodesic equation}
(i\p\bp \Phi)^{n+1}=0.
\end{equation}

\begin{lem} [\cite{Bern}, Section 2.2]
 Given $\phi_0, \phi_1\in\H_\infty$, there is a unique  bounded geodesic $\phi_t$ connecting them. Moreover, $\phi_t$ is Lipschitz in $t$, i.e. there is a constant $C>0$ depending only on $\phi_0$ and $\phi_1$, such that $|\phi_t-\phi_s|_{L^\infty}\leq C|t-s|$. 
\end{lem}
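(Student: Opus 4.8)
The plan is to realise the geodesic as the solution of a Dirichlet problem for the (degenerate) homogeneous complex Monge--Amp\`ere equation and to produce it by the Perron method, following \cite{Bern} (compare \cite{BBEGZ}). I would work on the log resolution $p:W'\to W$, where $\omega'$ is the smooth semipositive big form pulled back from $m^{-1}\omega_{FS}$ and $\H_\infty$ is identified with $\H'_\infty$, so that the Bedford--Taylor \cite{BT} (resp.\ non-pluripolar) Monge--Amp\`ere calculus for bounded $\omega'$-psh functions is available on a smooth ambient manifold. Setting $\tau=e^{t}$ identifies the strip $\{0\le\mathrm{Re}(t)\le1\}$ with the annulus $A=\{1\le|\tau|\le e\}$, and a bounded geodesic corresponds to a rotation-invariant bounded $\Omega$-psh function $\Phi$ on $W'\times\bar A$, where $\Omega$ is the pullback of $\omega'$ to the product (degenerate in the $\tau$-direction), with $\Phi=\phi_0$ on $\{|\tau|=1\}$, $\Phi=\phi_1$ on $\{|\tau|=e\}$, and $(\Omega+i\p\bp\Phi)^{n+1}=0$ on the interior.

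For existence I would take $\Phi$ to be the upper envelope of the family $\mathcal F$ of all bounded $\Omega$-psh $\Psi$ on $W'\times\bar A$ with $\Psi\le\phi_0$ on the inner and $\Psi\le\phi_1$ on the outer boundary circle. Because $\mathrm{Re}(t)$ is pluriharmonic, for $M=\|\phi_0\|_{L^\infty}+\|\phi_1\|_{L^\infty}$ the functions $\phi_0\pm M\,\mathrm{Re}(t)$ and $\phi_1\pm M\bigl(1-\mathrm{Re}(t)\bigr)$ have block-diagonal complex Hessian, hence are $\Omega$-psh and are degenerate solutions of the equation; those with the $-$ sign lie in $\mathcal F$, so $\mathcal F\ne\emptyset$, while the constant $\max(\sup_{W'}\phi_0,\sup_{W'}\phi_1)$ is a supersolution dominating the boundary data, so by comparison $\Phi$ is uniformly bounded above. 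By the Bedford--Taylor theory of the Dirichlet problem the upper semicontinuous regularisation $\Phi^\ast$ lies in $\mathcal F$, is rotation invariant (rotating a competitor gives a competitor), and satisfies $(\Omega+i\p\bp\Phi^\ast)^{n+1}=0$ on the interior; and the comparison principle applied to the barriers with the $+$ and $-$ signs yields $|\Phi^\ast(x,t)-\phi_0(x)|\le M\,\mathrm{Re}(t)$ and $|\Phi^\ast(x,t)-\phi_1(x)|\le M\bigl(1-\mathrm{Re}(t)\bigr)$, so the boundary data are attained and $\Phi^\ast$ is the sought geodesic. Uniqueness follows from the Bedford--Taylor comparison principle for the homogeneous Monge--Amp\`ere equation: two bounded solutions with the same boundary values agree (the usual argument comparing the Monge--Amp\`ere masses over $\{\Phi<\Psi\}$).

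For the Lipschitz statement, observe that $\phi_t=\Phi^\ast$ is $\Omega$-psh and independent of $\mathrm{Im}(t)$, so for fixed $x$ the function $\mathrm{Re}(t)\mapsto\Phi^\ast(x,t)$ is subharmonic and radial in $\tau$, hence convex on $[0,1]$. The barrier estimates above force its right derivative at $0$ and its left derivative at $1$ to lie in $[-M,M]$, and by convexity the derivative is then in $[-M,M]$ on all of $(0,1)$; thus $\mathrm{Re}(t)\mapsto\Phi^\ast(x,t)$ is $M$-Lipschitz, uniformly in $x$, which is precisely $|\phi_t-\phi_s|_{L^\infty}\le M\,|t-s|$ with $M=\|\phi_0\|_{L^\infty}+\|\phi_1\|_{L^\infty}$ depending only on the endpoints.

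The main obstacle, and essentially the only genuinely technical point, is that $\phi_0,\phi_1$ are merely bounded $\omega$-psh functions rather than continuous ones, so the membership of competitors in $\mathcal F$, the validity of the envelope and comparison arguments, and the precise sense in which boundary values are attained must be justified within the non-pluripolar Monge--Amp\`ere calculus. The standard way around this is to approximate $\phi_0,\phi_1$ from above by decreasing sequences of continuous (indeed smooth off the exceptional locus) $\omega$-psh functions, carry out the whole argument in that classical situation, and pass to the limit; the Lipschitz constant survives the limit because it depends only on the sup-norms of the endpoints.
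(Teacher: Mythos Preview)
Your proposal is correct and follows precisely the approach the paper indicates: the paper does not give a detailed proof but simply states that ``the proof uses methods of barrier functions, and characterization of weak solutions to the homogeneous complex Monge--Amp\`ere equations by maximal functions,'' which is exactly your Perron envelope construction with the affine-in-$t$ barriers $\phi_0\pm M\,\mathrm{Re}(t)$ and $\phi_1\pm M(1-\mathrm{Re}(t))$. Your observation that each barrier is itself a solution of the degenerate equation (since $\Omega$ is pulled back from the $n$-dimensional factor, so $(\Omega+i\partial\bar\partial(\phi_j\pm M\,\mathrm{Re}(t)))^{n+1}=0$), together with the convexity of $t\mapsto\Phi(x,t)$, cleanly yields both the boundary regularity and the Lipschitz bound, in line with \cite{Bern}.
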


The proof uses methods of barrier functions, and characterization of weak solutions to the homogeneous complex Monge-Amp\`ere equations by maximal functions. On a smooth K\"ahler manifold, one obtains stronger regularity for the geodesic equation through more delicate PDE method, see \cite{Chen}. \\

To prove Proposition  \ref{Ding convexity}, we fix $\phi_0$, $\phi_1$ and the geodesic $\phi_t=\Phi(\cdot, t)$.  It is easy to see that $I(\phi_t)$ is a linear function of $t$, so it suffices to prove the convexity of $\widetilde{\Di}(\widetilde{\phi}(t))$. For simplicity of notation we denote $\psi(t)=\widetilde{\phi}(t)$. We first approximate $\psi$ be a sequence of smooth functions with curvature control, then apply Lemma \ref{positive curvature}, and take limit. \\
  
  We first approximate the metric $k_0$ on $L_{\Delta'}$. 
  Fix a smooth metric $k$ on $L_{\Delta'}$. Then for $\epsilon>0$ we define a new metric $k_\epsilon$ by 
  $$||e||_{k_\epsilon}^2=\frac{||e||_{k_0}^2||e||^2_k}{||e||^2_k+\epsilon ||e||^2_{k_0}}.$$
  This is a smooth metric on $L_{\Delta'}$ for $\epsilon>0$. Moreover on $W'\setminus \text{supp}\Delta'$, we have
  \begin{eqnarray*}
  -i\p\bp \log k_\epsilon&=&-i\p\bp \log k+i\p\bp \log (||e||^2_k+\epsilon ||e||^2_{k_0})\\
  &=&-i\p\bp \log k+i\p\bp \log (||s||^2_k+\epsilon)\\
  &\geq&-i\p\bp\log k+\frac{||s||^2_k}{||s||^2_k+\epsilon}i\p\bp\log k\\
  &\geq&-\frac{\epsilon}{||s||_k^2 +\epsilon} i\p\bp \log k.
  \end{eqnarray*}
  Write $k_\epsilon=ke^{-\tau_\epsilon}$, $k_0=ke^{-\tau_0}$,  and $\omega_{\tau_\epsilon}$ the curvature form of $k_\epsilon$. Then
  we have obtained  (Compare \cite{CDS1}, Proposition 2.1)
  \begin{lem}[\cite{Bern}, Section 2.3] \label{approximation singular metric}
 As $\epsilon$ tends to zero,  $\tau_\epsilon$ decreases to $\tau_0$,  with $\omega_{\tau_\epsilon}\geq -C\omega'$ for a uniform $C$. Furthermore, outside a fixed neighborhood $U$ of $\text{supp}\Delta'$, there is a constant $C_U$ so that  $\omega_{\tau_\epsilon}\geq-\epsilon C_U \omega'$.
   \end{lem}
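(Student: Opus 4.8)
The plan is to reduce the lemma to an explicit formula for the weight $\tau_\epsilon$ and then read off the three assertions, the curvature estimates coming straight from the pointwise inequality already derived just above the statement. First I would note that the defining formula for $k_\epsilon$ is the same as $1/\|e\|_{k_\epsilon}^2 = 1/\|e\|_{k_0}^2 + \epsilon/\|e\|_k^2$ for a local frame $e$ (invert the quotient). Since $k_0$ is the singular metric on $L_{\Delta'}$ attached to the defining section $s$, normalised so that $\|s\|_{k_0}\equiv 1$, we have $e^{\tau_0} = \|e\|_k^2/\|e\|_{k_0}^2 = \|s\|_k^2$, and unravelling the previous identity gives
$$\tau_\epsilon = \log\bigl(e^{\tau_0}+\epsilon\bigr) = \log\bigl(\|s\|_k^2+\epsilon\bigr),$$
a smooth function on all of $W'$ for every $\epsilon>0$, so that $k_\epsilon$ is a genuine smooth Hermitian metric on $L_{\Delta'}$. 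The monotone convergence $\tau_\epsilon\searrow\tau_0$ as $\epsilon\downarrow 0$ is then visible term by term from this formula (with $\tau_0 = -\infty$ along $\text{supp}\,\Delta'$, as expected).

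Next I would invoke the curvature computation displayed just before the statement. Writing $\omega_{\tau_\epsilon} = -i\p\bp\log k + i\p\bp\log(\|s\|_k^2+\epsilon)$ for the curvature form of $k_\epsilon$, that computation shows
$$\omega_{\tau_\epsilon}\ \geq\ -\frac{\epsilon}{\|s\|_k^2+\epsilon}\, i\p\bp\log k \qquad\text{on } W'\setminus\text{supp}\,\Delta',$$
the inequality resting on the Poincar\'e--Lelong identity $i\p\bp\log\|s\|_k^2 = i\p\bp\log k$ off $\text{supp}\,\Delta'$ together with the fact that the gradient term discarded when expanding $i\p\bp\log(\|s\|_k^2+\epsilon)$ has a favourable sign. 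Since $\omega_{\tau_\epsilon}$ is a smooth $(1,1)$-form on $W'$ and $W'\setminus\text{supp}\,\Delta'$ is dense, this bound holds on all of $W'$ by continuity.

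It then remains to extract uniform constants. As $W'$ is compact and $i\p\bp\log k$ is smooth, there is a constant $C$ with $-C\omega'\leq i\p\bp\log k\leq C\omega'$; combined with $0\leq\frac{\epsilon}{\|s\|_k^2+\epsilon}\leq 1$ this yields $\omega_{\tau_\epsilon}\geq -C\omega'$, uniformly in $\epsilon$. For the sharper statement, fix a neighbourhood $U$ of $\text{supp}\,\Delta'$; on the compact set $W'\setminus U$ the continuous function $\|s\|_k^2$ is bounded below by some $c_U>0$, hence $\frac{\epsilon}{\|s\|_k^2+\epsilon}\leq\epsilon/c_U$ there, so $\omega_{\tau_\epsilon}\geq -\epsilon\,C_U\,\omega'$ with $C_U = C/c_U$. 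The argument is essentially bookkeeping once the pointwise curvature inequality is available; if anything is delicate it is that inequality itself --- the Poincar\'e--Lelong computation and the sign of the leftover gradient term --- but this has been carried out in the lines preceding the statement, so the task here reduces to converting it into the uniform bounds claimed.
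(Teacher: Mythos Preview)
Your proposal is correct and follows essentially the same route as the paper: the curvature computation preceding the lemma already gives the pointwise bound $\omega_{\tau_\epsilon}\geq -\frac{\epsilon}{\|s\|_k^2+\epsilon}\,i\p\bp\log k$, and you simply make explicit the formula $\tau_\epsilon=\log(\|s\|_k^2+\epsilon)$ (whence the monotone convergence) and the extraction of the constants $C$ and $C_U$ by compactness. The paper leaves these last steps implicit, so what you have written is exactly the intended argument spelled out in full.
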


Next we approximate the metric $h_\phi$ on $-p^*(K_W+(1-\beta)\Delta)$. We denote by $\pi_1$ the projection map $W'\times R\rightarrow W'$, and $\Omega$ the form $\pi_1^* \omega'+idtd\bar t$ on $W'\times R$. Since $K_W^{-1}$ is ample, $\pi_1^*p^*K_W^{-1}$ admits a metric with non-negative curvature. So  by a general result of Blocki-Kolodziej (see \cite{Bern}, Section 2.3) one can approximate $\phi$ by a decreasing sequence of smooth functions $\phi_\epsilon$ with $\omega_{\phi_\epsilon}\geq -\epsilon \Omega$. Moreover we can also assume that $\phi_\epsilon$ depends only on $Re(t)$ (We could work on the product of $W\times R/\bZ$, and take an $S^1$ average). So together with Lemma \ref{approximation singular metric} we have an approximation of the metric $h'_{\psi}$ on $L$ by smooth metrics $h_\epsilon'=h_0'e^{-\phi_\epsilon}\otimes k_\epsilon$.  The curvature of the approximating metric has curvature uniformly bounded from below by $-C\Omega$ on $W\times R$ and bounded below by $-\epsilon C_U\Omega$ outside any neighborhood $U$ of $\text{supp} \Delta'$.  For simplicity of notation we denote $h_\epsilon'=h_0'e^{-\psi_\epsilon}$. Then $\psi_\epsilon$ decreasingly converges to $\psi$.   From Lemma \ref{positive curvature} we get
  \begin{equation} \label{est1}
  \p_t\bp_t \widetilde{\Di}(\psi_\epsilon(t))\geq -\epsilon C_U ||\hat u_\epsilon||^2_{\psi_\epsilon}-C\int_U|v_{\epsilon}|^2_{\psi_\epsilon}+||\bp v_\epsilon||_{\psi_\epsilon}^2. 
  \end{equation}
To proceed  we need a uniform $L^2$ estimate for the equation $\p_{\psi_\epsilon} v_\epsilon=u$.   This is given by the following, using a local H\"ormander estimate.

\begin{lem}[\cite{Bern}, Lemma 6.2] There is a uniform constant $C$ such that 
$$||v_\epsilon||_{\psi_\epsilon}^2\leq C ||\frac{\p \psi_\epsilon}{\p t}u||_{\psi_\epsilon}^2 , $$
\end{lem}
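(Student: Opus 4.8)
The plan is to deduce this $L^2$-bound from a \emph{local} form of H\"ormander's estimate. The obstruction to applying H\"ormander's theorem globally on $W'$ is that the curvature of the approximating metric $h'_{\psi_\epsilon}$ on $L$ is never assumed positive: by Lemma \ref{approximation singular metric} and the Blocki--Kolodziej regularization of $\phi$ it is only bounded below, by $-C\omega'$ everywhere and by $-\epsilon C_U\omega'$ outside any fixed neighbourhood $U$ of $\text{supp}\,\Delta'$, with $C$ independent of $\epsilon$ and $t$. Since $v_\epsilon$ is by construction the minimal-norm solution of $\p_{\psi_\epsilon} v_\epsilon = P_t\big(\tfrac{\p\psi_\epsilon}{\p t}u\big)$, it suffices (by the usual duality for the $\p$-complex, after conjugating so as to view this as a $\bp$-equation for $\bar L$-valued forms on $(W',\omega')$) to produce an a priori bound with a constant depending only on $W'$, $\omega'$ and $C$.

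First I would fix, once and for all, a finite cover of $W'$ by coordinate polydiscs $P_j\Subset P_j'$ on which $L$ is holomorphically trivial, together with a subordinate partition of unity $\{\chi_j\}$; these choices, not $\epsilon$, fix the eventual constant. On $P_j'$ write $h'_{\psi_\epsilon}=e^{-\varphi_{j,\epsilon}}$ in the trivialization. The lower curvature bound gives $i\p\bp\varphi_{j,\epsilon}\geq -C\omega'$, so $\varphi_{j,\epsilon}+C'|z|^2$ is strictly plurisubharmonic on the bounded pseudoconvex domain $P_j'$ for a uniform $C'$; H\"ormander's theorem then solves the equation on $P_j'$ with $\int_{P_j}|v_j|^2 e^{-\varphi_{j,\epsilon}}\leq C''\int_{P_j'}\big|\tfrac{\p\psi_\epsilon}{\p t}u\big|^2 e^{-\varphi_{j,\epsilon}}$, the factor $e^{C'|z|^2}$ on $P_j'$ being harmless and $C''$ depending only on the cover.

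Next I would glue these to a global form $\tilde v=\sum_j\chi_j v_j$, which solves $\p_{\psi_\epsilon}\tilde v=\tfrac{\p\psi_\epsilon}{\p t}u+e$ with an error $e=\sum_j(\p\chi_j)\wedge v_j$ controlled in $L^2$ by $\big\|\tfrac{\p\psi_\epsilon}{\p t}u\big\|_{\psi_\epsilon}$ via the local estimates. Arranging the cover so that the supports of the $\p\chi_j$ avoid $\text{supp}\,\Delta'$, one has on that region the almost-positive curvature $\geq -\epsilon C_U\omega'$; together with the fact that $\text{supp}\,\Delta'$ has positive codimension, so $U$ may be taken of arbitrarily small volume, one can correct $\tilde v$ to a genuine solution of $\p v=P_t\big(\tfrac{\p\psi_\epsilon}{\p t}u\big)$ with $\|v\|_{\psi_\epsilon}^2\leq C\big\|\tfrac{\p\psi_\epsilon}{\p t}u\big\|_{\psi_\epsilon}^2$, $C$ uniform; passing to the minimal solution $v_\epsilon$, whose norm is no larger, gives the lemma.

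The main obstacle is precisely this final balancing: extracting a constant that does not blow up as $\epsilon\to0$ by playing the crude local estimate near $\text{supp}\,\Delta'$ (where the curvature is very negative but the region has tiny volume) against the global estimate away from it (where the curvature is almost positive), and keeping the partition-of-unity errors absorbed. Granting the lemma, the convexity of $\widetilde{\Di}$ --- and hence Proposition \ref{Ding convexity} --- follows by inserting it into (\ref{est1}): as $\epsilon\to0$ the terms $-\epsilon C_U\|\hat u_\epsilon\|_{\psi_\epsilon}^2$ and $-C\int_U|v_\epsilon|_{\psi_\epsilon}^2$ go to zero (the latter since $\mathrm{vol}(U)$ can be sent to zero while $\|v_\epsilon\|_{\psi_\epsilon}$ stays bounded, by the lemma), leaving $\p_t\bp_t\widetilde{\Di}(\psi(t))\geq0$ in the limit.
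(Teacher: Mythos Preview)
The paper does not actually prove this lemma: it merely cites \cite{Bern}, Lemma 6.2, and notes that the proof uses ``a local H\"ormander estimate''. Your outline is therefore in the right spirit, and your first two steps (local H\"ormander on coordinate polydiscs, then gluing via a partition of unity to get $\tilde v$ with $\partial_{\psi_\epsilon}\tilde v=\xi+e$ and $\|e\|_{\psi_\epsilon}\leq C\|\xi\|_{\psi_\epsilon}$) are fine.

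The gap is in your correction step. You propose to solve $\partial_{\psi_\epsilon} w = e$ with a uniform estimate by exploiting that $e$ is supported away from $\operatorname{supp}\Delta'$, where the curvature is $\geq -\epsilon C_U\omega'$, and that $U$ can be taken of small volume. But the support of the \emph{data} $e$ does not localise the \emph{solution}: any $w$ with $\partial_{\psi_\epsilon} w = e$ is a global $(n-1,0)$-form on $W'$, and estimating it still requires controlling the curvature over all of $W'$, in particular near $\Delta'$ where the lower bound is only $-C\omega'$. The small-volume remark does not help here either --- volume of $U$ enters nowhere in a H\"ormander-type estimate for a global equation. As written, the correction step is circular: you are invoking precisely the uniform $L^2$ estimate you are trying to prove. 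Berndtsson's actual argument closes this loop differently, and you should consult \cite{Bern} for the mechanism.

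Separately, your last paragraph on deducing convexity is not quite right. From the $L^2$ bound on $v_\epsilon$ alone, the fact that $\operatorname{vol}(U)\to 0$ does \emph{not} force $\int_U |v_\epsilon|^2_{\psi_\epsilon}\to 0$; that would require an $L^\infty$ bound. The paper handles this term via Lemma \ref{Lemma 6} (a Poincar\'e-type inequality near $\Delta'$), which gives $\int_{U_\delta}|v_\epsilon|^2_{\psi_\epsilon}\leq c_\delta\big(\|v_\epsilon\|^2_{\psi_\epsilon}+\|\bar\partial v_\epsilon\|^2_{\psi_\epsilon}\big)$ with $c_\delta\to 0$; combined with the present lemma and the bound on $\|\bar\partial v_\epsilon\|_{\psi_\epsilon}$ coming from (\ref{est1}) itself, this is what makes the $-C\int_U|v_\epsilon|^2$ term vanish in the limit.
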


Now we continue the proof of Proposition  \ref{Ding convexity}.  
\begin{lem}There is a constant $C(r)$ depending only on $r>0$ small,  so that for $t\in [r, 1-r]$ and all $\epsilon$ we have 
$$|\dot{\psi}_\epsilon(t)|\leq C(r). $$
\end{lem}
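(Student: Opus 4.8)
The plan is to deduce this from the elementary fact that a convex function on $[0,1]$ whose supremum norm is bounded by $B$ has derivative bounded by $2B/r$ on any compact subinterval $[r,1-r]$, combined with a uniform $L^\infty$ bound on the approximants $\phi_\epsilon$.

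First I would reduce the statement to a bound on $|\dot\phi_\epsilon|$. Indeed $h'_\epsilon=h'_0e^{-\phi_\epsilon}\otimes k_\epsilon$ and also $h'_\epsilon=h'_0e^{-\psi_\epsilon}$, so $\psi_\epsilon-\phi_\epsilon$ is, up to an additive constant, the logarithm of a norm in the metric $k_\epsilon$ on $L_{\Delta'}$; this is a function pulled back from $W'$ and hence independent of $t$, so $\dot\psi_\epsilon=\dot\phi_\epsilon$. For the uniform sup bound, recall the $\phi_\epsilon$ decrease to $\Phi$ as $\epsilon\to 0$. Fixing some $\epsilon_0$, for all $\epsilon\le\epsilon_0$ we have $\Phi\le\phi_\epsilon\le\phi_{\epsilon_0}$; the upper bound is finite since $\phi_{\epsilon_0}$ is continuous on the compact space $W'\times(R/\bZ)$, and the lower bound is finite because the Lipschitz estimate for geodesics recalled above gives $|\phi_t|_{L^\infty}\le|\phi_0|_{L^\infty}+C$ for $t\in[0,1]$. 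Hence $|\phi_\epsilon|\le A$ on $W'\times R$ with $A$ independent of $\epsilon$.

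Next I use convexity. By construction $\phi_\epsilon$ depends only on $s=\mathrm{Re}(t)$ and satisfies $\omega_{\phi_\epsilon}\ge-\epsilon\Omega$, where $\Omega=\pi_1^*\omega'+i\,dt\wedge d\bar t$ and $\omega_{\phi_\epsilon}$ is the curvature of $h'_0e^{-\phi_\epsilon}$ built from a fixed background metric pulled back from $W'$. Taking the $dt\wedge d\bar t$--component of this inequality, the background curvature contributes nothing in the $t$--direction, so $\partial_t\partial_{\bar t}\phi_\epsilon\ge-\epsilon$; since $\phi_\epsilon$ does not depend on $\mathrm{Im}(t)$ this becomes $\partial_s^2\phi_\epsilon\ge-4\epsilon$. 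Therefore, for each fixed point of $W'$, the function $g(s)=\phi_\epsilon(\cdot,s)+2\epsilon s^2$ is convex on $[0,1]$ and $|g|\le A+2$ for $\epsilon\le 1$. Comparing $g'(s)$ with the slopes of the chords joining $(0,g(0))$ to $(s,g(s))$ and $(s,g(s))$ to $(1,g(1))$ gives $|g'(s)|\le 2(A+2)/r$ for $s\in[r,1-r]$, whence $|\partial_s\phi_\epsilon|\le 2(A+2)/r+4$ there, uniformly in $\epsilon\le 1$. This yields the constant $C(r)$; the $t$--derivative $\dot\phi_\epsilon$ (equal to $\frac12\partial_s\phi_\epsilon$ or $\partial_s\phi_\epsilon$ depending on the normalization) is bounded by the same quantity.

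I do not expect a serious obstacle. The two points that need care are that the Blocki--Kolodziej regularization was indeed taken independent of $\mathrm{Im}(t)$ — which is exactly what converts the mixed inequality $\partial_t\partial_{\bar t}\phi_\epsilon\ge-\epsilon$ into the one-variable bound $\partial_s^2\phi_\epsilon\ge-4\epsilon$ — and that the dependence of $C$ on $r$ is genuinely unavoidable: near $t\in\{0,1\}$ a bounded convex function has no uniform slope bound, so this soft argument produces nothing at the endpoints, and no endpoint estimate is needed in the sequel.
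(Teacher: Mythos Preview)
Your argument is correct and follows essentially the same route as the paper. The paper's one-line justification cites exactly the three ingredients you unpack: the semi-convexity $\frac{d^2}{dt^2}\psi_\epsilon\ge -C$ (you obtain the sharper $-4\epsilon$, which is fine), the monotone convergence $\psi_\epsilon\downarrow\psi$, and the Lipschitz continuity of $\psi$ in $t$; your reduction $\dot\psi_\epsilon=\dot\phi_\epsilon$ together with the uniform bound $\Phi\le\phi_\epsilon\le\phi_{\epsilon_0}$ is precisely how these facts are made effective, after which the elementary slope estimate for semi-convex functions on $[r,1-r]$ finishes the job.
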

This follows from  the facts $\frac{d^2}{dt^2}\psi_\epsilon(t) \geq -C$,  $\psi_\epsilon$ decreases to $\psi$ and that $\psi$ is Lipschitz in $t$. \\

Since $||u||_{\psi}^2$ is bounded and $\psi_\epsilon$ decreases to $\psi$, by the above two lemmas we know $||v_\epsilon(t)||_{\psi_\epsilon}^2\leq C(r)$ for $t\in [r, 1-r]$. 

\begin{lem}[\cite{Bern} Lemma 6] \label{Lemma 6}
There is a constant $c_\delta$ (independent of $\epsilon$ and $t$) that goes to zero as $\delta$ goes to zero so that  for any $v\in \Omega^{n-1, 0}(L)$, 
$$\int_{U_\delta} |v|_{\psi_\epsilon}^2\leq c_\delta( \int_{W'} |v|^2_{\psi_\epsilon}+|\bp v|^2_{\psi_\epsilon}), $$
where $U_\delta$ is the $\delta$ neighborhood of $\Delta'$. 
\end{lem}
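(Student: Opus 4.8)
The plan is to compare all the degenerating norms with those of a fixed smooth metric on $W'$, and then to reduce the statement to a weighted Sobolev (Hardy‑type) inequality attached to the normal crossing divisor $\Delta'$; the smallness $c_\delta\to 0$ will come from the fact that $U_\delta$ shrinks onto the measure zero set $\text{supp}\,\Delta'$ while the relevant weight is integrable. More precisely, fix a smooth Hermitian metric on $L$ and the K\"ahler metric $\omega'$, and write $\|\cdot\|$, $|\cdot|$ for the corresponding $L^2$ and pointwise norms on $L$‑valued forms. Since $\psi_\epsilon$ decreases to $\psi$ and the $\phi_\epsilon$ are uniformly bounded above for $\epsilon$ below a fixed threshold, there are constants $0<c\le C$ with $c\le e^{-\psi_\epsilon}\le e^{-\psi}$ on $W'$; moreover, by the definition of $h'_0,k_0$ and Lemma \ref{approximation singular metric}, near $\text{supp}\,\Delta'$ the weight $e^{-\psi}$ is comparable to $w:=\prod_i \text{dist}(\cdot,E_i)^{-2a_i}$ with all $a_i\in(0,1)$ (this is exactly the KLT hypothesis), while away from $\text{supp}\,\Delta'$ it is bounded above and below. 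Hence for every smooth $v$ one has
\begin{equation*}
\|v\|_{\psi_\epsilon}^2\ge c\|v\|^2,\qquad \|\bp v\|_{\psi_\epsilon}^2\ge c\|\bp v\|^2,\qquad \int_{U_\delta}|v|_{\psi_\epsilon}^2\le C\int_{U_\delta}|v|^2\,w ,
\end{equation*}
so it suffices to prove $\int_{U_\delta}|v|^2w\le c_\delta(\|v\|^2+\|\bp v\|^2)$ with $c_\delta\to 0$, uniformly in $v$.

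Next I would get full Sobolev control of $v$ from $\bp v$ alone, using a degree coincidence. Since $v$ has type $(n-1,0)$, $\bp^* v$ would be a form of type $(n-1,-1)$ and therefore vanishes identically; thus the $\bp$‑Laplacian $\Box_{\bp}=\bp^*\bp+\bp\bp^*$ acts on $\Omega^{n-1,0}(L)$ simply as $\bp^*\bp$. As $\Box_{\bp}$ is elliptic on the compact manifold $W'$, the global elliptic estimate gives $\|v\|_{L^2_1}\le C'(\|\Box_{\bp}v\|_{(L^2_1)^*}+\|v\|_{L^2})$, and for any test form $\zeta$ one has $\langle \bp^*\bp v,\zeta\rangle=\langle \bp v,\bp\zeta\rangle\le \|\bp v\|_{L^2}\|\zeta\|_{L^2_1}$ since $\bp^*$ is first order, whence
\begin{equation*}
\|v\|_{L^2_1}\le C''\big(\|v\|_{L^2}+\|\bp v\|_{L^2}\big).
\end{equation*}

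Finally I would invoke a Hardy inequality for $\Delta'$. In coordinates adapted to a normal crossing point, say $\Delta'=\{z^1\cdots z^m=0\}$ locally, the weight $w$ is $\simeq\prod_{j=1}^m|z^j|^{-2a_j}$ with each $2a_j<2$; applying the classical codimension‑two Hardy inequality one variable at a time (a Fubini argument, integrating out each $z^j$‑plane against the $L^2_1$‑norm, the condition $2a_j<2$ being precisely what keeps the constant finite) yields $\int|v|^2w\le C\|v\|_{L^2_1}^2$ on a coordinate ball, and a partition of unity globalizes this to $\int_{W'}|v|^2w\le C\|v\|_{L^2_1}^2$. Since $w\in L^1(W',\omega'^n)$ and $U_\delta$ decreases to $\text{supp}\,\Delta'$, inserting a cutoff supported near $\text{supp}\,\Delta'$ and using dominated convergence upgrades this to $\int_{U_\delta}|v|^2w\le c_\delta\|v\|_{L^2_1}^2$ with $c_\delta\to 0$ as $\delta\to 0$. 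Combining the three displays, $\int_{U_\delta}|v|^2_{\psi_\epsilon}\le C c_\delta \|v\|_{L^2_1}^2\le C c_\delta C'' (\|v\|^2+\|\bp v\|^2)\le (C c_\delta C'' c^{-1})(\|v\|^2_{\psi_\epsilon}+\|\bp v\|^2_{\psi_\epsilon})$, which is the assertion after renaming the constant.

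The step I expect to be the crux is the last one: one must check that the weight produced by the log resolution of the KLT pair is, near every stratum of the normal crossing divisor $\Delta'$, dominated by a product of \emph{subcritical} powers of codimension‑two distance functions, and it is exactly the inequality $2a_j<2$ — i.e.\ the KLT condition on $(W,(1-\beta)\Delta)$ — that makes the Hardy constant finite. The other two ingredients are soft: the comparison of the degenerating weights rests only on the monotonicity of $\psi_\epsilon$ and the uniform bounds of Lemma \ref{approximation singular metric}, and the Sobolev estimate is the standard elliptic estimate, available here only because $\bp^*v\equiv 0$ for degree reasons.
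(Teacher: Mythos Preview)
The paper does not supply its own proof of this lemma; it is simply quoted from \cite{Bern}. Your argument follows the natural line and is essentially correct: reduce to a fixed smooth background using the monotonicity $\psi_\epsilon\downarrow\psi$ together with the uniform upper bound $\psi_\epsilon\le\psi_{\epsilon_0}$, use the G{\aa}rding inequality for the elliptic operator $\Box_{\bp}$ on $(n-1,0)$-forms (where indeed $\bp^{*}v\equiv 0$ for degree reasons) to get $\|v\|_{L^2_1}\le C(\|v\|+\|\bp v\|)$, and then a Hardy-type estimate for the KLT weight $w\simeq\prod_i|z_i|^{-2a_i}$ with $a_i\in(0,1)$.

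One step needs more care. Your passage from $\int_{W'}|v|^2 w\le C\|v\|_{L^2_1}^2$ to $\int_{U_\delta}|v|^2 w\le c_\delta\|v\|_{L^2_1}^2$ with $c_\delta\to 0$ by ``inserting a cutoff and using dominated convergence'' only gives the conclusion for a \emph{fixed} $v$, not uniformly over the unit ball of $L^2_1$; a cutoff $\chi_\delta$ introduces $|\nabla\chi_\delta|\sim\delta^{-1}$, which spoils the naive estimate. What makes it uniform is again the subcriticality: near each branch $E_j=\{z_j=0\}$ you may replace $a_j$ by any $a_j'\in(a_j,1)$ and still have the Hardy bound, so on the piece $\{|z_j|<\delta\}$ of $U_\delta$ you gain the factor $|z_j|^{2(a_j'-a_j)}\le\delta^{2(a_j'-a_j)}$, and summing over the finitely many branches gives an explicit power of $\delta$. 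Equivalently, this shows that the embedding $L^2_1(W')\hookrightarrow L^2(w\,\omega'^n)$ is compact, whence the strongly convergent projections $\mathbf 1_{U_\delta}\to 0$ on $L^2(w\,\omega'^n)$ force the operator norms to zero. Either way it is precisely the strict inequality $a_i<1$ --- the KLT hypothesis --- that does the work, exactly as you emphasize in your closing paragraph.
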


So by choosing $\delta$ small first and then $\epsilon$ small  we get from Equation (\ref{est1}) that for $t\in[r, 1-r]$ that 
$$\frac{d^2}{dt^2} \widetilde{\Di}(\psi_\epsilon)\geq -\epsilon C_{U_\delta} ||\hat{u}_\epsilon||^2_{\psi_\epsilon}-c_\delta C(r) ||v_\epsilon||^2_{\psi_\epsilon}\rightarrow 0 $$
This implies that $\widetilde{\Di}(\psi(t))$ is convex in $[r, 1-r]$. Let $r\rightarrow 0$ we finish the proof of  Proposition  \ref{Ding convexity}. \\

Now we move on to prove Proposition  \ref{uniqueness}.
So we assume $\phi_0$ and $\phi_1$ satisfy the weak conical K\"ahler-Einstein equation. Then  it follows that $\Di(\phi)$ has derivative 0 at $t=0,1$. The convexity then implies that it is a constant, and $\widetilde{\Di}(\phi)$ is a linear function. Since $\widetilde{\Di}(\psi_\epsilon)$ decreasingly converges to $\widetilde{\Di}(\phi)$, we see that the second derivative of  $\widetilde{\Di}(\psi_\epsilon)$ converges to zero uniformly in $[r, 1-r]$ for any $r>0$.  It follows from Equation (\ref{est1}) and Lemma \ref{Lemma 6}  that $||\bp v_\epsilon||_{\psi_\epsilon}^2$ tends to zero uniformly in $[r, 1-r]$.
Then by passing to a subsequence $v_\epsilon$ converges to a limit $v$ weakly in $L^2(W\times R)$, which is holomorphic along the $W$ direction, and  satisfies the equation $\p_{\psi} v= P(\frac{\p \psi}{\p t} u)$ in the weak sense. Taking $\bp$ on both sides and using the Bochner formula $i\bp \p v+i\p \bp v=\omega_{\psi} \wedge v$ again we obtain
$$\omega_{{\psi}_t}\wedge v=\bp \frac{\p \psi}{\p t}\wedge u. $$
We define a holomorphic vector field $Y'$ on $(W'\setminus E) \times R^0$ by 
$$Y'\lrcorner u= v.$$
We have
$$\omega_{{\psi}_t }\wedge v=-\omega_{{\psi}_t }\wedge (Y'\lrcorner u)=(Y'\lrcorner \omega_{{\psi}_t})\wedge u.$$
It follows that on $W'\setminus E$ we have 
$$Y'\ \lrcorner\ \omega_{{\psi}_t}=i\bp \frac{\p \psi}{\p t}. $$
So
 $$\mathcal L_{Y'} \omega_{\psi}=\frac{\p}{\p t}\omega_{\psi}$$
as currents. 

Now from the previous discussion of convergence  the term $\sigma_\epsilon= a_n\int_{W'\times R_r}\omega_{\psi_\epsilon} \wedge \hat{u}_\epsilon\wedge \bar{\hat{u}}_\epsilon$ also converges to zero for any $r>0$, where $R_r=[r, 1-r]\times \bR$.  For any compactly supported  $L$ valued $(n, 0)$ form $\xi$ which is Lipschitz in $t$ and which does not contain the term $dt$,  we have
\begin{eqnarray*}
&&|\int_{W'\times R_r} i\p\bp\psi_\epsilon\wedge \hat{u}_\epsilon\wedge \bar{\xi}|^2 \\&\leq&
\sigma_\epsilon \int_{W'\times R_r} i\p\bp \psi_\epsilon \wedge \xi \wedge \bar{\xi}\leq C \sigma_\epsilon \int_{W'\times R_r} |\p_t \xi|_{\psi_\epsilon}^2 \omega'^n\wedge idt\wedge d\bar t .
\end{eqnarray*} 
By an easy calculation this implies that
$$\lim_{\epsilon\rightarrow 0} \int_{W'\times R_r} (\frac{\p^2}{\p t \p\bar t}\psi_\epsilon -\p(\frac{\p}{\p t}\psi_\epsilon) (Y'))dt\wedge d\bar t\wedge u \wedge \bar{\xi}=0. $$
Using the definition of $v_\epsilon$ it follows that 
$$\lim_{\epsilon\rightarrow 0} \int_{W'\times R_r} idt\wedge d\bar t\wedge \frac{\p v_\epsilon}{\p \bar t} \wedge \overline{\bp \xi}=0. $$
Let $\alpha$ be a compactly supported smooth $(n,1)$ form on $W' \times R$ which does not contain $dt$, and we decompose 
$$\alpha=\bp \xi+\alpha', $$
where $\alpha'$ is orthogonal to $\text{Im}\bp$ for each $t$. We can write $\alpha'=\beta\wedge \omega'$, for a $(n-1, 0)$ form $\beta$.  By definition of $v_\epsilon$ we know $\bp (v_\epsilon\wedge\omega')=0$, so $\bp  \frac{\p v_\epsilon}{\p \bar t} \wedge \omega'=0$.  Now since $H^{n,1}(W', L)=0$ by Kawamata-Viehweg vanishing theorem (see for example \cite{La}, Remark 9.1.23), we know $\frac{\p v_\epsilon}{\p \bar t} \wedge \omega'$ is $\bp$-exact. So 
$$\lim_{\epsilon\rightarrow 0} \int_{W'\times R} dt\wedge v_\epsilon\wedge \overline{\p \alpha'}=0. $$
Now from the decomposition it follows that we may choose $\xi$ to be Lipschitz in $t$, then by the above discussion 
$$\lim_{\epsilon\rightarrow 0} \int_{W'\times R} dt\wedge v_\epsilon\wedge \overline{\p \alpha}=0. $$
Passing to the limit we obtain that 
$$\int_{W'\times R} dt\wedge v\wedge \overline{\p \alpha}=0. $$
It then follows that $\bp_t v=0$ in the weak sense and thus $v$ is holomorphic on $W'\times R$.

Let $Y=p_*(Y')$. Then since $E$ is exceptional, $Y$ is defined on the smooth part $W_0\times R^0$.  Since $W$ is normal we claim for each $t\in R$, 
$Y_t$ extends to a global holomorphic vector field on $W$. To see this, since $Y_t$ is defined on $W_0$, it induces an infinitesimal action on $H^0(W_0, K_{W_0}^{-m})$. Since $W$ is normal, the latter can be identified with $H^0(W, K_{W}^{-m})$. It follows that $Y_t$ is the restriction of a holomorphic vector field on $\bC\bP^N$, so it naturally extends to $W$.    By previous discussion $Y_t$ has bounded $L^2$ norm locally away from $E$ for $t\in R^0$, so the flow generated by $Y_t$ extends continuously to $t=0$ and $t=1$. 
So we obtain a family of holomorphic transformations $f_t$ of $W$ such that  $\omega_{\phi_t}=f_t^*\omega_{\phi_0}$.   In particular we see that $\phi_t$ also satisfies the weak conical K\"ahler-Einstein equation, with $\Delta$ replaced by $\Delta_t=f_t(\Delta)$.  We claim that $f_t(\Delta)=\Delta$. To see this we first notice that since $Y_t\lrcorner\ \omega_{\phi_t}=i\bp \dot{\phi_t}$,  the imaginary part $Im(Y_t)$ is a Killing vector field for $\omega_{\phi_t}$, so by the K\"ahler-Eistein equation (\ref{weak conic KE}) $Im(Y_t)$ is tangential to $\Delta_t$. Thus $Y_t$ is also tangential to $\Delta_t$. For any smooth $(2n-2)$ form $\eta$ on $W$, we have
$$\frac{d}{dt}\int_{\Delta_t}\eta=\frac{d}{dt}\int_{\Delta} f_t^*\eta=\int_{\Delta_t} d(Re(Y_t)\lrcorner \eta)+\int_{\Delta_t} Re(Y_t)\lrcorner\ d\eta=0, $$
where the first term vanishes because $\Delta_t$ is a closed current, and the second term vanishes because $Y_t$ is tangential to $\Delta_t$. So we conclude that $\Delta_t=\Delta$. This in particular implies that $\phi_t$ is smooth on $W_0\setminus \text{supp}\Delta$, and satisfies the geodesic equation pointwisely there. It is then straightforward to verify that $f_t^*Y_t=Y_0$ with $JY_0\in Lie(K)$,  and $f_t$ is the one parameter subgroup generated by $Y_0$. This finishes the proof of Proposition  \ref{uniqueness}. 

\section{Appendix 2: Proof of Proposition \ref{prop6}}

Adopting the notations in Section 2.5, and we write $\omega_\infty=i\p\bp u$ on $0.8B^{2n}$.   Suppose that
$u$ is a $W^{2, p}$ weak solution to the K\"ahler-Einstein equation in $0.8B^{2n}$
\begin{equation}
\det(u_{i\bar j}) = e^{-\lambda u}.
\label{KE1}
\end{equation}
Moreover, $u$ satisfies the uniform bound $C^{-1}\leq (u_{i\bar j})\leq C$ away from the limit divisor $D_\infty$
(the bound is independent of distance to divisor).  The following proposition  is likely well known to experts,
following a theorem of Trudinger \cite{Tru84}, which
is an extension of the Evans-Krylov theory.  For convenience of
readers, we include a proof here.
\begin{prop} \label{prop9} $u$ extends to a $C^{2, \alpha}$ function on $0.7B$ for some $\alpha>0$, i.e. there exists a constant $C'$  such that
\[
   [D^{2} u]_{C^{\alpha} (0.7B)} \leq C'.
\]
\end{prop}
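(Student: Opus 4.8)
The statement is a local interior regularity result for a complex Monge--Amp\`ere equation $\det(u_{i\bar j}) = e^{-\lambda u}$ where we already know $u \in W^{2,p}$ globally on $0.8B^{2n}$ and $C^{-1}\,\mathrm{Id} \le (u_{i\bar j}) \le C\,\mathrm{Id}$ \emph{away} from the limit divisor $D_\infty$, but a priori nothing about the Hessian near $D_\infty$. The plan is to first upgrade this to a two-sided Hessian bound on all of $0.75B^{2n}$ (including across $D_\infty$), and then feed that into the Evans--Krylov / Trudinger theory to get the $C^{2,\alpha}$ estimate. The key observation is that the set $D_\infty$ is a complex hypersurface, hence has real codimension $2$, so it is removable for the relevant elliptic estimates provided we can control the solution in an integral sense, which we can: $u$ is $W^{2,p}$.

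First I would establish the uniform ellipticity across $D_\infty$. Since $u$ is a $W^{2,p}$ weak solution with $p > 2n$, the right-hand side $e^{-\lambda u}$ is continuous (indeed $u$ is $C^0$, even $C^{1,\alpha}$ by Sobolev embedding applied to the Hessian bound and the equation), so we have a genuine (degenerate a priori) complex Monge--Amp\`ere equation with continuous, strictly positive right-hand side. The function $\log \det(u_{i\bar j}) = -\lambda u$ is therefore bounded. Combined with the lower bound $u_{i\bar j} > 0$ in the distributional sense and the arithmetic--geometric mean inequality, a one-sided bound $\Delta u = \sum u_{i\bar i} \le C$ would immediately give the two-sided bound $C^{-1} \le (u_{i\bar j}) \le C'$ everywhere. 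To get the Laplacian bound across the hypersurface, I would use the standard second-order estimate (the Aubin--Yau computation, $\Delta_{\omega_u}(\log \operatorname{tr}_{\omega_{\mathrm{Euc}}}\omega_u - A u) \ge \dots$) but carried out carefully as a weak/viscosity inequality, or alternatively argue by approximation: since $D_\infty$ is the limit of smooth divisors $D_i$ and $u$ is the limit of the smooth conical potentials $\phi_i$ which satisfy uniform estimates away from $D_i$ on fixed scales (cf. the construction in Section 2.5 and the local bound \eqref{local bound metric}), one can hope to pass the Hessian bound through the limit on $0.75B^{2n}$. The cleanest route, I expect, is a capacity/removable-singularity argument: a $W^{2,p}$ function satisfying a uniformly elliptic equation off a closed set of real codimension $2$ satisfies it everywhere, because such sets have zero $p$-capacity for $p$ in the relevant range, so no distributional mass concentrates on $D_\infty$.

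Once uniform ellipticity holds on, say, $0.72B^{2n}$, the conclusion follows from the interior Evans--Krylov theorem for concave fully nonlinear equations, in the form due to Trudinger \cite{Tru84} that allows the right-hand side to be merely H\"older (here $e^{-\lambda u}$ is $C^{1,\alpha}$ since $u$ is, so this is comfortably satisfied): the equation $F(D^2 u) := \log\det(u_{i\bar j}) + \lambda u = 0$ has $F$ concave in the Hessian entries and uniformly elliptic on the range of $D^2 u$, hence $[D^2 u]_{C^\alpha(0.7B)} \le C'$ for some $\alpha > 0$ and $C'$ depending only on the ellipticity constants, $\lambda$, and $\|u\|_{C^{1,\alpha}}$. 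Higher regularity (smoothness, which is what Proposition \ref{prop6} ultimately asserts) then follows from Schauder bootstrapping on the now-uniformly-elliptic linear equation obtained by differentiating.

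\textbf{Main obstacle.} The genuinely delicate point is the \emph{first} step: establishing the two-sided Hessian bound across $D_\infty$ with constants independent of the distance to $D_\infty$. We are given uniform ellipticity only away from the divisor, and it is not a priori obvious that the Hessian does not blow up as one approaches $D_\infty$ --- in the conical metrics $\omega_i$ themselves it genuinely does, and it is only the \emph{limit} (where $\beta \to 1$) in which this degeneration should disappear. So one must exploit that $\beta_i \to 1$ quantitatively: the $L^p$ bound \eqref{local bound metric}, $\omega_i \le C'|f_i|^{-2(1-\beta_i)}\omega_{\mathrm{Euc}}$ with exponent $2(1-\beta_i) \to 0$, together with the $C^\infty$ convergence away from $D_\infty$, is exactly what prevents concentration in the limit. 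Making this rigorous --- showing that the limiting current $i\partial\bar\partial u$ puts no mass on $D_\infty$ and hence the Monge--Amp\`ere equation and the second-order estimate hold in the full ball --- is where the real work lies; the Evans--Krylov step afterwards is standard and is what Appendix 2 is presumably devoted to exposing.
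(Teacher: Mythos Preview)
Your plan is correct in its broad strokes but you have misread the division of labor. The two-sided complex Hessian bound $C^{-1}\omega_{\rm Euc}\le \omega_\infty\le C'\omega_{\rm Euc}$ is not something Proposition~\ref{prop9} needs to establish: it is already the hypothesis, obtained back in Section~2.5 (the displayed inequality right before Proposition~\ref{prop6}) by sending $\beta_i\to 1$ in $\omega_i\le C'|f_i|^{-2(1-\beta_i)}\omega_{\rm Euc}$. Since $D_\infty$ has measure zero and the constant is independent of the distance, this gives $C^{-1}\le (u_{i\bar j})\le C$ almost everywhere on $0.8B^{2n}$, and the paper uses exactly this (``by our assumption, $0<\Delta u\le C$''). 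So your ``main obstacle'' is a non-issue for this proposition, and your removable-singularity/capacity argument, while plausible, is unnecessary.

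Conversely, what you dismiss as ``standard'' is the entire content of the paper's proof. Two points deserve care. First, the complex Monge--Amp\`ere operator depends only on the mixed derivatives $u_{i\bar j}$, not on the full real Hessian; as a real fully nonlinear operator $F(D^2u)$ it is therefore not uniformly elliptic, so a black-box citation of real Evans--Krylov does not apply directly. The paper instead runs the Trudinger argument on the pure complex second derivatives $w_\gamma=u_{\gamma\bar\gamma}$, using the Hermitian matrix decomposition $(u^{i\bar j})=\sum_k\beta_k\,\gamma_k\otimes\bar\gamma_k$ (cf.\ Siu's lectures) in place of the real version in Gilbarg--Trudinger \S17.4. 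Second, the solution is only $W^{2,p}$, not $C^2$, so one works with second difference quotients $w_v^\epsilon$, applies the weak Harnack (GT Theorem~9.22) to $M_{3,\gamma}-w_\gamma^\epsilon$, and passes to the limit by dominated convergence; along the way one first upgrades the bounded complex Hessian to bounded real Hessian ($u_{vv}+u_{Jv,Jv}=4u_{\gamma\bar\gamma}$ plus the one-sided supersolution bound). None of this is deep, and your instinct that it follows Trudinger is right, but these adaptations are precisely what Appendix~2 exposes.
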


\begin{proof} 
In local coordinate chart, set
\[
h =  \det (u_{i\bar j}) = e^{-\lambda u}.
\]
Then for a constant $C_1>0$,
\[
   |(\log h)_{i\bar j}| \leq C_1.
\]
For any  unit vector $v\in \mathbb{R}^{2n}$ and any positive small constant $\epsilon > 0$,  and for any function $f$, we can define
 the second difference quotient function
\begin{equation*}
D_{2,\epsilon, v} f (z) =  \epsilon^{-2} \cdot \left( f(z+ \epsilon v) + f(z-\epsilon v) - 2 f(z)\right).
\end{equation*}
We denote $w^\epsilon_v(z)=D_{2, \epsilon, v}u(z)$ and $w_v=\lim_ {\epsilon\rightarrow0}w^\epsilon_v(z)$ when the limit exists. 
In terms of the coordinates $
 z_i = x_i + \sqrt{-1} y_i$, if we choose $v_1=\frac{\p}{\p x_i}, v_2=\frac{\p}{\p y_i}$, we have
\begin{equation}\label{E-1}\lim_{\epsilon\rightarrow 0} \left(w_{v_1}^\epsilon+w_{v_2}^\epsilon \right)(z)=4u_{i\bar i}(z). \end{equation}
 We shall identify $\mathbb{R}^{2n}=\mathbb{R}^n\oplus \sqrt{-1} \mathbb{R}^n=\mathbb{C}^n$ in the usual way. We shall only be interested in the pairs of  two directions in $\mathbb{R}^{2n}$ which are  contained in the complex lines in $\mathbb{C}^n$. Namely, for any unit vector $v\in \mathbb{R}^{2n}$ we consider the pair $(v, Jv)$, where $J$ is the standard complex structure on $\mathbb{R}^{2n}$ and denote
 \[
  \gamma=\frac{1}{2}(v+\sqrt{-1}Jv). 
 \]
As in \eqref{E-1}, we have the following almost everywhere convergence, 
 \[
 4 u_{\gamma \bar \gamma }=\lim_{\epsilon\rightarrow 0} (w^\epsilon_{v}+w^{\epsilon}_{Jv})
 \]

First we want to show that $u_{vv}$ is uniformly bounded interior for any unit vector $v$, independent of $\epsilon$; namely $u\in C^{1, 1}$.
Since $\log \det $ is a concave function in the space of positive definite Hermitian matrices, we have (see \cite{Tru84} for example), 
\begin{equation}\label{E-0}
\sum_{i,j} u^{i\bar j}(z)  {{\partial^{2}}\over {\partial z_{i}  \partial \bar z_{j}}} w_v^{\epsilon} (z) \geq D_{2,\epsilon, v}  \log h (z),
\end{equation}

Denote by $B_R$ an Euclidean ball of radius $R$ such that $B_{3R}$ is contained in $0.8B^{2n}$. 
Applying Theorem 9.22 \cite{GT} to \eqref{E-0}, we obtain that, for $B_{3R}\subset 0.8B^{2n}$, we have
\[
\sup_{B_R} w^{\epsilon}_v \leq C\left(\|w^\epsilon_v\|_{L^p(B_{2R})}+R\|D_{2, \epsilon, v} \log h\|_{L^{2n}(B_{2R})}\right)
\]
For $\epsilon$ small ($\epsilon<R$ for example), we have
\[
\|w^\epsilon_v\|_{L^p(B_{2R})}\leq \|u_{vv}\|_{L^p(B_{3R})}\]
and
\[ \|D_{2, \epsilon, v} \log h\|_{L^{2n}(B_{2R})}\leq \|(\log h)_{vv}\|_{L^{2n}(B_{3R})}.
\]
By our assumption,  we know that $0<\triangle u\leq C$ ($\triangle$ is Euclidean laplacian) hence $u$ has uniform $W^{2, p}$ bound by the standard $L^p$ estimate. 
It follows that 
\[
\sup_{B_R}w^\epsilon_{v}\leq C.
\]
Since the above estimate is independent of $\epsilon$ and $w^\epsilon_v$ converges to $u_{vv}$ weakly in any $L^p$, it follows that 
\[
\sup_{B_R}u_{vv}\leq C.
\]
Now for a pair of unit vectors $\{v, Jv\}$ and corresponding $\gamma$, we know that
\[
0<u_{\gamma\bar \gamma}\leq C,
\]
it follows that 
\[
u_{vv}\geq -u_{Jv Jv}\geq -C. 
\]
Apparently, this $C^{1,1}$ bounds holds for interior of $0.8 B^{2n}.\;$ 
The following argument is rather standard (see \cite{Tru84}, Theorem 3.1). 
Following  Gilbarg-Trudinger \cite{GT} Section 17.4,   we set
\[
M_{s, v}^\epsilon=\displaystyle \sup_{B_{sR}} w^\epsilon_v, \; m_{s, v}^\epsilon = \displaystyle \inf_{B_{sR}} \; w_v^\epsilon\]
and correspondingly, 
\[M_{s, \gamma}^\epsilon = \displaystyle \sup_{B_{sR}} \; w_\gamma^\epsilon,\qquad {\rm and}\; m_{s, \gamma}^\epsilon =  \displaystyle \inf_{B_{sR}} \; w_\gamma^\epsilon
\]
where $s=1,2, 3$.
Similarly, set 
\[M_{s, v} = \displaystyle \sup_{B_{sR}} \; w_v,\qquad {\rm and}\; m_{s, v} = \displaystyle \inf_{B_{sR}} \; w_v.
\]
and
 \[
M_{s,\gamma} = \displaystyle \sup_{B_{sR}} \; w_\gamma,\qquad {\rm and}\; m_{s,\gamma} = \displaystyle \inf_{B_{sR}} \; w_\gamma.
\]
Here the supremum  and infimum are understood as the essential supremum and essential infimum. For $\epsilon$ small,
we have in $B_{2R}$ almost everywhere
\[
M_{3, v}-w^\epsilon_v\geq 0.
\]
It follows that $M_{3, \gamma}-w^\epsilon_\gamma$ is nonnegative almost everywhere in $B_{2R}$. 
Applying Theorem 9.22 in \cite{GT} to $M_{3, \gamma} - w^\epsilon_\gamma$ in $B_{2R}$, we  obtain
\begin{equation}\label{E-2}
\left(R^{-n} \displaystyle \int_{B_{R}}\; (M_{3, \gamma} -w^\epsilon_\gamma)^p\right)^{1\over p} \leq C_2\cdot (M_{3, \gamma}- M_{1,\gamma}^\epsilon + R \|h D_{2,\epsilon, \gamma}  \log h\|_{L^{n}(B_{2R})}).
\end{equation}
 Note that $w^\epsilon_\gamma$ converges to $w_\gamma$ almost everywhere and 
\[
M_{1, \gamma}\leq \limsup M^\epsilon_{1, \gamma}. \]
Moreover, for $\epsilon<R$, we have
\[
 \|h D_{2,\epsilon, \gamma}  \log h\|_{L^{n}(B_{2R})}\leq |h|\|h_{\gamma \bar \gamma}\|_{L^n(B_{3R})}\leq C_3R.
\]

By Lebesgue's dominated convergence theorem( in $B_{2R}$), we have, by letting $\epsilon \rightarrow 0$ in \eqref{E-2},

\[\begin{array}{lcl}
\left(R^{-n} \displaystyle \int_{B_{R}}\; (M_{3,\gamma} -w_\gamma)^p\right)^{1\over p}&  \leq & C_2\cdot (M_{3,\gamma} - M_{1, \gamma} + C_3R^2)\\ & \leq & C_4 (M_{3,\gamma} - M_{1,\gamma} + R^2). \end{array}\]

As in Section 17.4  \cite{GT} ( Lemma 17.13), we need a relation of between \emph{pure} second derivative of $u$ and its Hessian in terms of the following matrix result; such a result is only stated in real case but it extends to the Hermitian case ( for example see Section 4.3 \cite{Siu} for the Hermitian case) that, 
there exists $N$ unit  vectors  $\gamma_k, k=1,2\cdots N$ in in $\mathbb{C}^n$, including the set of coordinate unit vectors, so that we can write \[
\left( u^{i\bar j}\right)(x) = \displaystyle \sum_{k=1}^N\; \beta_k \gamma_k \otimes \bar \gamma_k
\]
with $\beta_k$ has a uniform positive lower and upper bound, depending only on $n$ and the lower and upper bound of $(u^{i\bar j})$. Using again the convexity of $\log \det $, we obtain
\[
\sum_{i,j}u^{i\bar j}(y) \left( u_{i\bar j}(x) - u_{i\bar j}(y) \right) \geq -\lambda (u(x) - u(y)).
\]
Set $w_k = u_{\gamma_k \bar \gamma_k}.\;$ Then, we have
\[
 \displaystyle \sum_{k=1}^N\; \beta_k (w_k(x) -w_k(y)) \geq -C_4 |x-y|.
\]
For $s=1, 2, 3$ set
  \[
M_{sk} = \sup_{B_{sR}} \; w_k,\qquad {\rm and}\; m_{sk} = \inf_{B_{sR}} \; w_k.
\]
Then,  we have
\[
\begin{array}{lcl} w_k(y) - w_k(x) & \leq & C_4 |y-x| - \displaystyle \sum_{i\neq k}^N\; \beta_i (w_i(y) -w_i(x)) \\
& \leq &  C_4 |y-x|  + \displaystyle \sum_{i\neq k}^N\; \beta_i (M_{3i} -w_i(y)) \\
\end{array}
\]
Evaluate $x$ at the infimum point in $B_{3R}$, we have
\[
0 < w_k(y) - m_{3k} \leq  4C_4 R  + \displaystyle \sum_{i\neq k}^N\; \beta_i (M_{3i} -w_i(y))
\]
Thus,
\[
\begin{array}{lcl} \left(R^{-n} \displaystyle \int_{B_{R}}\; (w_k(y) -m_{3k})^p\right)^{1\over p}
&\leq & 4C_4R + \displaystyle C_{5}\sum_{i\neq k}^N\;\left(R^{-n} \displaystyle \int_{B_{R}}\; (M_{3i} -w_i)^p\right)^{1\over p} \\ & \leq & 4C_4R +  \displaystyle \sum_{i\neq k}^N\; C_3 (M_{3i} - M_{1i} + R^2)
\end{array}
\]
On the other hand, by previous discussion we have
\[
\left(R^{-n} \displaystyle \int_{B_{R}}\; (M_{3k} -w_k)^p\right)^{1\over p} \leq  C_3 (M_{3k} - M_{1k} + R^2).
\]
Together we obtain
\[
M_{3k} - m_{3k} \leq 4C_4R +  \displaystyle \sum_{i=1}^N\; C_3 (M_{3i} - M_{1i} + R^2)
\]
For $s=1,2, 3$ define
\[
\omega(sR) = \displaystyle \sum_{i=1}^N\; osc (w_i) =   \displaystyle \sum_{i=1}^N\; (M_{si} - m_{si}).
\]
Then
\[
\omega(3R) \leq 4C_4R+ C_3NR^2 + C_3(\omega(3R)- \omega(R)).
\]
So
\[
\omega(R) \leq \zeta \omega (3R) + C R + C R^2.
\]
with $0 < \zeta < 1.\;$ By Lemma 8.23 in \cite{GT}, this proves the desired $C^\alpha$ estimate on $w_k$.
\end{proof}

Department of Mathematics, Stony Brook University, U.S.A. 

University of Science and Technology of China, P.R.C.

Email: xiu@math.sunysb.edu.\\

Department of Mathematics, Imperial College London, U.K.

Email: s.donaldson@imperial.ac.uk; s.sun@imperial.ac.uk.\\

\end{document}